              \def\version{25 January 2023}	        	%
\numberwithin{equation}{section}
\font\tenBbb=msbm10 
\font\sevenBbb=msbm7 
\font\fiveBbb=msbm5 
\def\2{\mathbf 2}
\def\la{\langle}
\newcommand{\R}     {\mathbb{R}} 
\newcommand{\Z}     {\mathbb{Z}} 
\newcommand{\N}     {\mathbb{N}} 
\renewcommand{\P}   {\mathbb{P}} 
\newcommand{\E}     {\mathbb{E}} 
\newcommand{\Q}     {\mathbb{Q}}
\newcommand{\calA}{\mathcal A}
\newcommand{\calX}{\mathcal X}
\renewcommand{\la}{\lambda}
\newcommand{\veps}{\varepsilon}
\newcommand{\vphi}{\varphi}
\newcommand{\sse}{\subseteq}
\newcommand{\epsi}{\veps > 0}	
\newcommand{\reci}[1]{\frac{1}{#1}}
\newcommand{\sra}{\rightarrow}
\newcommand{\Llra}{\Longleftrightarrow}
\newcommand{\downto}{\downarrow}
\newcommand{\xdownarrow}[1]{%
	{\left\downarrow\vbox to #1{}\right.\kern-\nulldelimiterspace}
}
\newcommand{\xuparrow}[1]{%
	{\left\uparrow\vbox to #1{}\right.\kern-\nulldelimiterspace}}
\newcommand{\lr}[1]{\left(#1\right)}
\newcommand{\lrset}[1]{\left\{#1\right\}}
\newcommand{\dt}{\ \mathrm{d}t}
\newcommand{\ds}{\ \mathrm{d}s}
\newcommand{\w}{\omega}
\newcommand{\SP}[2]{\left\langle #1 , #2 \right\rangle}
\renewcommand{\tilde}{\widetilde}
\renewcommand{\hat}{\widehat}
\renewcommand{\P}{\mathbb{P}}
\renewcommand{\E}{\mathbb{E}}
\newcommand{\V}{\mathbb{V}}
\newcommand{\1}{\mathbb{1}}
\def\1{{\mathchoice {1\mskip-4mu\mathrm l}      
{1\mskip-4mu\mathrm l} 
{1\mskip-4.5mu\mathrm l} {1\mskip-5mu\mathrm l}}} 
\def\comment#1{} 
\newtheoremstyle{thm}{2ex}{2ex}{\itshape\rmfamily}{} 
{\bfseries\rmfamily}{}{1.7ex}{} 
\newtheoremstyle{rem}{1.3ex}{1.3ex}{\rmfamily}{} 
{\itshape\rmfamily}{}{1.5ex}{}
\newtheorem{definition}{Definition}[section]
\newtheorem{theorem}[definition]{Theorem}
\newtheorem{corollary}[definition]{Corollary}
\newtheorem{lemma}[definition]{Lemma}
\newtheorem{notation}[definition]{Notation}
\newtheorem{proposition}[definition]{Proposition}
\theoremstyle{definition}
\newtheorem{remark}[definition]{Remark}
\newtheorem{example}[definition]{Example}
\definecolor{Red}{rgb}{1,0,0}
\begin{document} 
 
\title[A stochastic adaptive dynamics model with mutation, dormancy 
and transfer]
{A stochastic adaptive dynamics model for bacterial populations with 
	mutation, dormancy and transfer}
\author[Jochen Blath, Tobias Paul and András Tóbiás]{}
\maketitle
\thispagestyle{empty}
\vspace{-0.5cm}

\centerline{\sc Jochen Blath{\footnote{Goethe-Universität Frankfurt, Robert-Mayer-Straße 10, 60325 Frankfurt am Main, {\tt blath@math.uni-frankfurt.de}}}, Tobias Paul{\footnote{HU Berlin, Rudower Chaussee 25, 12489 Berlin, {\tt t.paul@math.hu-berlin.de}}} and András Tóbiás{\footnote{Budapest University of Technology and Economics, M\H{u}egyetem rkp. 3., H-1111 Budapest and Alfréd Rényi Institute of Mathematics, Reáltanoda utca 13-15., 1053 Budapest, Hungary,  {\tt tobias@cs.bme.hu}}}}
\renewcommand{\thefootnote}{}
\vspace{0.5cm}

\bigskip

\centerline{\small(\version)} 
\vspace{.5cm} 
 
\begin{quote} 
{\small {\bf Abstract:}} This paper introduces a stochastic adaptive dynamics model for the interplay of several crucial traits and mechanisms in bacterial evolution, namely dormancy, horizontal gene transfer (HGT), mutation and competition. In particular, it combines the recent model of Champagnat, M\'el\'eard and Tran (2021) involving HGT with the model for competition-induced dormancy of Blath and Tóbiás (2020). 
%

Our main result is a convergence theorem which describes the evolution of the different traits in the population on a `doubly logarithmic scale' as piece-wise affine functions. Interestingly, even for a relatively small trait space, the limiting process exhibits a non-monotone dependence of the success of the dormancy trait on the dormancy initiation probability. Further, the model establishes a new `approximate coexistence regime' for multiple traits that has not been observed in previous literature.

\end{quote}

\bigskip\noindent 
{\it MSC 2010.} 60J85, 92D25.

\medskip\noindent
{\it Keywords and phrases.} Dormancy, seed bank, competition, horizontal gene transfer, mutation, stochastic population model, large population limit, multitype branching process with immigration, multitype logistic branching process, invasion fitness, individual-based model, coexistence.

\setcounter{tocdepth}{3}


\setcounter{section}{0}

\begin{comment}{
This is not visible.}
\end{comment}

\section{Introduction and Biological Motivation}

\subsection{Motivation and Previous Work}

The stochastic individual based modelling and analysis of the dynamics and evolution of bacterial populations has attracted significant interest in recent years (see e.g.\ \cite{champagnat2005microscopic,FM04,BILLIARD201648, BCFMT18, LFL17,BB18}). This can on the one hand be motivated externally by the relevance of bacterial population dynamics in biology, medicine and industry, and on the other hand internally by the presence of interesting and distinctive features which invite new modelling approaches and  lead to new patterns and results. Two of these distinct features, which have only rather recently been incorporated in population genetic/dynamic models in a systematic way, are {\em horizontal gene transfer} and {\em dormancy}. 

The first feature, horizontal gene transfer (HGT), can in an abstract sense be understood as the ability of individuals to transfer parts of their genome (resp.\ the corresponding traits) to other living individuals, for example via exchange of plasmids during bacterial conjugation \cite{LT46}. This is in contrast to the hereditary `vertical transfer', where genes are copied from parent to daughter cell during binary fission. Essentially, HGT may thus be interpreted as an evolutionary strategy to increase the production of (one's own) favourable traits. HGT comes in several different forms, but for the assumptions of this paper, we will only consider a mechanism that can be motivated from transfer via conjugation. However, it is known that carrying a large quantity of plasmids slows down cell division and as such reduces the reproduction rate (cf.~\cite{BALTRUS2013489}). Such a trade-off leads to interesting questions about the optimality of HGT strategies. HGT has received increasing attention from the modelling side in the last decades, and is now considered as an additional and relatively novel major evolutionary force in bacterial populations (see e.g.\ \cite{BP14, KW12, SL77}).

A second common feature in microbial population dynamics is the wide-spread ability of individuals to enter a reversible state of low/vanishing metabolic activity. Such a dormancy trait comes in many guises, but the general feature seems to be that it allows individuals to survive (e.g.\ in the form of an endospore or cyst) during adverse conditions. It can be triggered by environmental cues (responsive switching), but may also happen spontaneously (stochastic bet hedging) see \cite{LJ11, LdHWB20} for recent overviews. Again, as for HGT, such a trait comes with a significant reproductive trade-off, since the maintenance of a dormancy trait requires a substantial machinery, and thus consumes resources which are unavailable for reproduction.

Interestingly, both mechanisms (HGT and dormancy) also play a crucial role in the context of antibiotic resistance, though in very different ways. While the exchange of resistance genes via horizontal transfer can lead to multi-resistant microbial populations (see e.g.~\cite{multires}), dormancy in the form of {\em persister cells} can be the cause of chronic infections, since these dormant cells with their vanishing metabolism seem to be  protected from antibiotic treatment (\cite{Le10}).

However, HGT and dormancy are of course not the only features of bacterial population dynamics, and interact with classical mechanisms such as reproduction (and hereditary effects), mutation, selection, and competition. Only recently, the joint effects resulting from these mechanisms seem to have moved into the focus of mathematical modellers.
However, given the complexity of bacterial dynamics and the underlying mechanisms, and in view of the sheer number of different evolutionary forces involved in such communities, it is clear that mathematical modelling has to start with simple, idealized scenarios in order to begin to understand basic patterns emerging from such complex interactions.  This process has been initiated  in the last decade.

Indeed, the papers of Billiard et al \cite{BILLIARD201648, BCFMT18} have investigated the consequences of a simple directional HGT mechanism in stochastic individual based models with a focus on its interplay with competition, mutation, and the maintenance of polymorphic variability. In \cite{champagnat2019stochastic}, the approach is transferred and extended into an adaptive dynamics setting with moderately large mutation rates (as previously considered in \cite{DM11}, see also \cite{coquille2020stochastic}), providing a rather new and sophisticated mathematical machinery that leads to interesting scaling limits and emergent behaviour on a `doubly logarithmic scale'. It is shown that HGT can have major consequences for the long-term behaviour of the affected systems, including coexistence, evolutionary suicide and evolutionary cyclic behaviour, depending on the strength of the transfer rate.

Regarding dormancy (and the resulting seed banks), this feature has now been well established as an evolutionary force in population genetics, starting with \cite{kaj_krone_lascoux_2001}, and become a topic of investigation in coalesence theory (cf.~\cite{Blath,Genetics,BGKW18}). In ecology, dormancy and seed banks have been investigated for several decades, starting with Cohen \cite{Co66}, and this lead to a rich (mostly deterministic) theory, see e.g.\ \cite{LdHWB20} for many further references. Traditional seed bank theory is complemented by quantitative research on phenotypic switches in microbial communities, cf. e.g.\  \cite{KL05}. However, the mathematical  analysis of dormancy in stochastic individual based models,  in particular in the framework of adaptive dynamics, seems to be still in its infancy. Yet, several building blocks are already available. The interplay with competition has been investigated in \cite{blath2020invasion}, where it is shown that dormancy traits responding to competitive pressure  can invade and fixate in a resident population despite a substantial reproductive trade-off. One step further, the interplay of dormancy with competition and directional HGT has been investigated in \cite{blath2020interplay}, where coexistence regimes of HGT and dormancy traits are being established.

\subsection{Overview of the Present Paper}

In the present paper, we are attempting to combine the evolutionary forces of mutation, selection, competition, HGT and dormancy within the adaptive dynamics framework of \cite{champagnat2019stochastic}. In particular, we aim to obtain an analogue of their key convergence result, and to investigate the resulting macroscopic behaviour in dependence of the strength of a `dormancy initiation parameter'.

Let us briefly sketch some of the aspects of our model. We will consider a finite set of possible traits $\mathcal{X}$ where each trait reproduces randomly. The trait space is the intersection of a constant multiple of the integer grid $\Z^2$ with the square $[0,4]^2$. The first coordinate $x$ of the trait $(x,y)$ expresses the strength of dormancy (increasing with $x$), and the second coordinate $y$ corresponds to the strength of HGT (increasing with $y$), as we will explain below. To incorporate reproductive trade-offs, the birth rate of an individual of trait $(x,y)$ is strictly decreasing both in $x$ and in $y$. Further we consider natural death at a fixed rate 1 for any active individual, which may be thought of as death by age. We also involve `death by competition' for active individuals. This gives the death rates a dependence on the current population size. Now, traits $(x,y)$ can become dormant instead of dying by competition with probability proportional to $x$. The dormant individuals are not competing for resources and hence do not contribute towards nor are affected by death by competition. Dormant individuals will also not take part in reproduction nor horizontal transfer.  The dormant individuals will switch back to their active state at a fixed rate and have only a natural death rate, which usually is less than the one for active individuals. For horizontal transfer, we will assume that at a population size dependent rate, any given two active individuals meet. In this event, the individual with the `stronger' HGT trait, ie.\ with the higher $y$-coordinate, transfers its trait to the other individual. Lastly, mutations occur randomly at birth with a power law with respect to the carrying capacity $K$. More precisely, the probability of a mutation at birth is $K^{-\alpha}$ for some $\alpha \in (0,1)$. The mutations will either increase the $x$-coordinate or the $y$-coordinate, to the next possible value. In particular, we assume that it is not possible for both the ability to become dormant and the ability to perform horizontal transfer to be improved by one mutation. 

We are interested in the dynamics of our model on the $\log K$ time-scale as $K\to\infty$. Our main result Theorem~\ref{Theorem: Main Theorem} describes convergence properties as in \cite[Theorem 2.1]{champagnat2019stochastic} or \cite[Theorem 2.2]{coquille2020stochastic}. However, in its proof the auxiliary processes that we have to consider are now mostly bi-type (with one component representing the active individuals of a trait and the other component representing the dormant ones), which goes beyond their frameworks. Regarding our bi-type setting, some invasion properties have been studied in \cite{blath2020invasion}, where the form of HGT is slightly different.

Here, the mutation rate scales like $K^{-\alpha}$ for some power $\alpha \in (0,1)$. Consequently, mutants relevant for the evolution of the population are not separated from each other in time. This is a major difference from the classical `Champagnat scaling' discussed in \cite{champagnat2005microscopic}, where mutations are less frequent and cannot influence each other. In the polynomial mutation regime, under suitable assumptions, the logarithm of the size of any trait (with base $K$) converges to a piecewise linear function on the $\log K$ time scale as $K\to\infty$, as we will discuss below. In a population genetic framework, such a mutation regime was studied in \cite{DM11} in a model with clonal interference. In the adaptive dynamics literature, this scaling of mutations occurred before in \cite{Charline2017,BCS19}. From a mathematical point of view, the main novelty of the paper \cite{champagnat2019stochastic} is the systematic study of logistic birth-and-death processes with non-constant immigration, as it was also noted in \cite{coquille2020stochastic}.

In our analysis, we will assume that the population is always of the same order as the carrying capacity, which already poses significant technical challenges, as the length of the present manuscript indicates. In particular, behaviours such as evolutionary suicide are not included in our analysis. In Section \ref{Section: Examples}, we will explore the limiting dynamics for a couple of fixed parameters. We are able to recover some cyclic behaviour already observed in \cite{champagnat2019stochastic}. In addition, the introduction of dormancy seems to allow for the system to be driven towards a state of coexistence in the following sense: At no point in time there are more than two traits with size of order $K$, but on the $\log K$ timescale, there exists a finite time $T_1<\infty$ such that for all $\veps>0$ there exists a time $T_0<T_1$ such that on the time interval $[T_0,T_1]$ at least three traits are of order at least $K^{1-\veps}$, which means that at least three traits are simultaneously macroscopic on a suitable interval. This behaviour has been found previously by \cite{coquille2020stochastic} in the case of asymmetric competition without HGT. In the model studied in \cite{DM11}, the set of points where the limiting piecewise linear process changes slopes may also have a finite accumulation point, see Lemma 1 therein.

The remainder of this paper is organized as follows. In Section~\ref{Section: Model} below, we present our model and our main result. Section~\ref{Section: Examples} contains numerical results regarding some fixed choices of parameters for our model. The proof of our main convergence result, Theorem \ref{Theorem: Main Theorem} will be carried out in Section \ref{Section: Proof}.

In preparation of proving the convergence properties for our model, we analyse bi-type branching processes in Appendix \ref{Section: Appendix B}. We will see that similar properties hold for bi-type processes as they have been shown in \cite[Appendix B]{champagnat2019stochastic} for one-type processes. However, the addition of a second component to the considered processes is sufficient to only allow the ideas of the proof to carry over. The details of the proofs, in particular Theorem \ref{firstconvergencetheorem}, are more involved and require significant amounts of preparation.

In Appendix \ref{Section: Appendix C}, we consider several properties of logistic branching processes. Here, we can also make use of the ideas from \cite{blath2020invasion}, since we are interested in showing that after some time an initially resident trait is driven towards a small population size, while an invasive species becomes resident. As there are many cases of this competition to be distinguished, we also make use of the ideas in \cite{blath2020interplay} in the case of competition between a bi-type process and a single-type process.

\section{Presentation of the Model and Main Result}\label{Section: Model}
We construct a continuous time Markov jump process as follows:  Let $K\in\N$ be a number, which controls the population size and is referred to as the \emph{carrying capacity}. Further we consider the \emph{trait space} $\calX\coloneqq\lrset{0,\delta,\ldots,L\delta}^2=([0,4]\cap\delta\N)^2$, where $\delta>0$ is a fixed real number and $L\coloneqq\lfloor\tfrac{4}{\delta}\rfloor$. Here, the choice of the number 4 is arbitrary, it follows the paper~\cite{champagnat2019stochastic}. As already anticipated, the first coordinate $x$ of the trait $(x,y)$ of an individual expresses the strength of dormancy of the individual, and the second coordinate $y$ of its trait expresses its strength of HGT. For each trait $(x,y)$ we may have \emph{active} or \emph{dormant} individuals (in fact, if $x=0$, then individuals cannot be dormant). We use the notation $N^{K,a}_{m,n}(t)$ and $N^{K,d}_{m,n}(t)$ to refer to the active and dormant population size respectively of trait $(m\delta ,n\delta)$ at time $t\geq 0$.

 \begin{itemize}
	\item Active individuals of trait $(x,y)$ give birth to another individual at rate 
	\[ b(x,y)=4-\frac{x+y}{2}. \]
Fixing $\alpha\in(0,1)$, the child carries the trait $(x+\delta,y)$ with probability $\tfrac{K^{-\alpha}}{2}$, and with the same probability it carries the trait $(x,y+\delta)$. Otherwise the offspring has trait $(x,y)$. Also, if a mutated trait would not belong to $\calX$ anymore, the offspring does not mutate and carries the parental trait $(x,y)$. The decreasing birth rate as $x$ and $y$ increase reflects the trade-off between high reproduction and other survival mechanisms.
	\item There is competition over resources between active individuals, which we incorporate into the death rate. Let $C>0$ and $p\in(0,\tfrac{1}{4})$ be fixed. Active individuals of trait $(x,y)\in\calX$ die at rate 
	\[ d((x,y),N^{K,a})=1+\frac{C(1-px)N^{K,a}}{K}, \] where $N^{K,a}$ denotes the entire active population size $\textstyle
	N^{K,a}=\sum_{m,n=0}^{L}N^{K,a}_{m,n}$.
	\item Active individuals of trait $(x,y)$ can become dormant at rate 
	\[ c((x,y),N^{K,a})=\frac{CpxN^{K,a}}{K}. \] In particular, we are interested in 'competition induced switching', where due to competition from other individuals a part of the population becomes dormant. Individuals with a high value in the first trait component $x$ are thus able to efficiently avoid death in favour of dormancy.
	\item Dormant individuals of any trait die at a natural rate $\kappa\geq 0$ and become active again at rate $\sigma>0$. Usually $\kappa$ will be a small rate, significantly less than $1$, so that dormant individuals are less likely to die than active individuals. This reflects the immunity of dormant individuals to external pressures.
	\item An active individual of trait $(x,y)$ can transfer its trait to a given active individual with trait $(\tilde{x},\tilde{y})$ at rate \[
	\tau((x,y),(\tilde{x},\tilde{y}),N^{K,a})=\frac{\tau}{N^{K,a}}\1_{y>\tilde{y}}.
	\]
	Note that dormant individuals are neither affected by nor are able to perform transfer. Here, traits with a large second component $y$ are advantageous.

\end{itemize}

\begin{figure}[htbp]
	\begin{tikzpicture}
	\draw[->,thick](0,0)--(4,0);
	\draw[->,thick](0,0)--(0,4);
	\draw[dotted, step=.6cm](0,0) grid (3.6,3.6);
	\node at (-0.5,-0.2) {$(0,0)$};
	\node at (0.5,-0.4) {$(\delta,0)$};
	\node at (4.1,-0.3) {Dormancy};
	\node at (-0.6,0.6) {$(0,\delta)$};
	\node at (-0.5,4.1) {HGT};
	\node at (-0.7,3.6) {$(0,L\delta)$};
	\foreach \x in {0,0.6,1.2,1.8,2.4,3,3.6}{
		\foreach \y in {0,0.6,1.2,1.8,2.4,3,3.6}
		{\node[circle, inner sep=0pt, minimum size=4pt, fill=black] at (\x,\y) {};
	}};
	\end{tikzpicture}
	\caption{A visualization of the trait space $\calX$. The strength of dormancy in a trait increases as the first component increases and the strength of HGT increases with the second component.}
\end{figure}
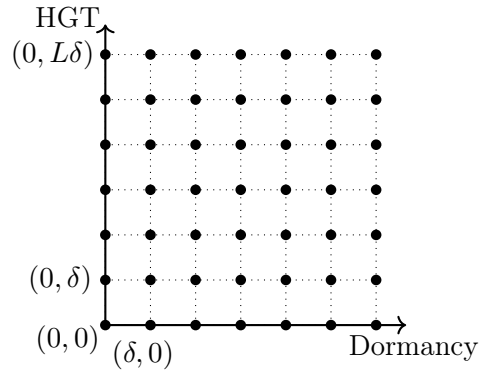

Comparing with Theorem \ref{Theorem: Main Convergence}, we are concerned with the total size of each trait $(m\delta,n\delta)\in\calX$, which we will denote by $N_{m,n}^K(t)\coloneqq N_{m,n}^{K,a}(t)+N_{m,n}^{K,d}(t)$, and the corresponding exponents \begin{align}
N_{m,n}^K(t\log K)=K^{\beta_{m,n}^K(t)}-1\quad\Llra\quad\beta_{m,n}^K(t)\coloneqq\frac{\log(1+N_{m,n}^K(t\log K))}{\log K}.\label{Definition: beta}
\end{align}
We are interested in the behaviour of $\beta_{m,n}^K$ as $K\to\infty$, that is, we want to understand the evolution of the population sizes on the $\log K$ timescale. Since our death and dormancy rate are dependent on the population size, there may be two cases: Either there is a single trait $(x,y)$, which has a population size of order $K$, in which case we refer to the trait $(x,y)$ as \emph{resident}; or the entire population is of size $o(K)$, in which case we refer to the trait with the largest population size as \emph{dominant}. For our purposes, we will only consider the case where there is always one resident trait.

Now, assume that the trait $(x,y)=(m\delta,n\delta)$ is resident. Then, for large $K$, we can approximate the dynamics of $(N_{m,n}^{K,a}(t),N_{m,n}^{K,d}(t))$ as $K(z^a(t),z^d(t))$, where $(z^a(t),z^d(t))$ solves the ordinary differential equation 
\begin{align}
\begin{aligned}
\dot{z}^a(t)&=\lr{3-\frac{x+y}{2}-Cz^a(t)}z^a(t)+\sigma z^d(t)\\
\dot{z}^d(t)&=Cpx(z^a(t))^2-(\kappa+\sigma)z^d(t).
\end{aligned}\label{approximateODE}
\end{align}
Indeed, this approximation follows from \cite[Theorem 11.2.1]{ethier2009markov}. We want to calculate a stable equilibrium of this system, which has already been done in \cite[Section 2.2]{blath2020invasion}. There it is shown that the only coordinate-wise non-negative asymptotically stable equilibrium of the system (\ref{approximateODE}) is given for $3-\tfrac{x+y}{2}>0$ as $(\bar{z}^a_{m,n},\bar{z}^d_{m,n})$, where \begin{align}
\bar{z}^a_{m,n}=\frac{(3-\frac{x+y}{2})(\kappa+\sigma)}{C(\kappa+(1-px)\sigma)}\quad\text{and}\quad \bar{z}^d_{m,n}=\frac{px(3-\frac{x+y}{2})^2(\kappa+\sigma)}{C(\kappa+(1-px)\sigma)^2}.\label{equilibrium}
\end{align}
Observe that this also holds true in the case where $x=0$, in which case the equilibrium size of the dormant population is $0$, and the active population size is $\tfrac{1}{C}(3-\tfrac{x+y}{2})$, which corresponds to the equilibrium of the differential equation \[
\dot{z}(t)=\lr{3-\frac{x+y}{2}-Cz(t)}z(t).
\]
If $3-\tfrac{x+y}{2}<0$, then there is no positive equilibrium and the fixed point $(0,0)$ becomes asymptotically stable. This can be seen from linearizing the system (\ref{approximateODE}), which yields the Jacobian \[
A(0,0)=\begin{pmatrix}
3-\tfrac{x+y}{2}& \sigma\\
0 & -(\kappa+\sigma)
\end{pmatrix},
\]
whose determinant is positive and trace is negative. Hence both eigenvalues must be negative, showing that in this case $(0,0)$ indeed is a stable equilibrium.

In order to have a well-defined process, we also need to introduce a starting condition. Initially, we assume the trait $(0,0)$ to be close to its equilibrium, which is of size \begin{align}
N_{0,0}^{K,a}(0)=\left\lfloor\frac{3K}{C}\right\rfloor.\label{startingcondition1}\end{align}
Since the effective mutation rate in a population of order $K^c$ is $K^{c-\alpha}$, we choose all other starting conditions to be \begin{align}N_{0,n}^{K,a}(0)=\lfloor K^{1-n\alpha}\rfloor \quad\text{ and }\quad (N_{m,n}^{K,a}(0),N_{m,n}^{K,d}(0))=\lfloor (K^{(1-(m+n)\alpha)},K^{(1-(m+n)\alpha)})\rfloor\label{startingcondition2}\end{align}
if $n\alpha< 1$ and $(m+n)\alpha< 1$ respectively and $0$ otherwise.
Indeed, this choice is consistent with Lemma \ref{Lemma: Strong mutation}, which would suggest that on the $\log K$ timescale we otherwise would immediately obtain a population of our chosen initial size. In addition, this choice shows that \[
\beta_{m,n}^K(0)\xrightarrow{K\to\infty}(1-(m+n)\alpha)\vee 0.
\] 

Our next goal is to define the \emph{invasion fitness} -- also known as the initial rate of growth -- $S((\tilde{x},\tilde{y}),(x,y))$ of a single individual of trait $(\tilde{x},\tilde{y})$ in a population, where the trait $(x,y)$ is resident, i.e.~at its equilibrium size. Hence, we consider the active population given by $K\bar{z}^a$ from (\ref{equilibrium}). In particular we assume $\tfrac{x+y}{2}<3$. We distinguish two cases:\begin{description}
	\item[Case 1: $\tilde{x}=0$] In this case, the population size of trait $(0,\tilde{y})$ follows the dynamics of a usual one-dimensional birth and death process. Hence, we define the initial growth rate $S$ as the asymptotic difference of birth and death rate, where we need to take into account the horizontal transfer as additional births or deaths as follows \begin{align*}
	S((0,\tilde{y}),(x,y))&\coloneqq \lim\limits_{K\to\infty}b(0,\tilde{y})-d((0,\tilde{y}),K\bar{z}^a)+\frac{K\bar{z}^a}{K\bar{z}^a+1}\tau\1_{y<\tilde{y}}-\frac{K\bar{z}^a}{K\bar{z}^a+1}\tau\1_{y>\tilde{y}}\\&=3-\frac{\tilde{y}}{2}-\frac{(3-\frac{x+y}{2})(\kappa+\sigma)}{\kappa+(1-px)\sigma}+\tau\operatorname{sign}(\tilde{y}-y).
	\end{align*}
	\item[Case 2: $\tilde{x}>0$] Here we have transfer between the active and dormant populations. Hence the growth rate corresponds to that of a bi-type branching process. Being consistent with the definition thereof in Appendix \ref{Section: Appendix B}, we define the components of (\ref{Eigenvalue}) asymptotically in accordance with Notation \ref{Notation: Growth}. We set\begin{align*}
	r_1&\coloneqq \lim\limits_{K\to\infty}b(\tilde{x},\tilde{y})-d(K\bar{z}^a)+\frac{K\bar{z}^a}{K\bar{z}^a+1}\tau\1_{y<\tilde{y}}-\frac{K\bar{z}^a}{K\bar{z}^a+1}\tau\1_{\tilde{y}<y}\\
	&=3-\frac{\tilde{x}+\tilde{y}}{2}-\frac{(3-\frac{x+y}{2})(\kappa+\sigma)}{\kappa+(1-px)\sigma}+\tau\operatorname{sign}(\tilde{y}-y),\\
	r_2&\coloneqq 0-\kappa-\sigma=-(\kappa+\sigma),\\
	\sigma_1&\coloneqq \lim\limits_{K\to\infty}\frac{Cp\tilde{x}(K\bar{z}^a+1)}{K}=\frac{p\tilde{x}(3-\frac{x+y}{2})(\kappa+\sigma)}{\kappa+(1-px)\sigma}, \qquad
	\sigma_2 \coloneqq \sigma.
	\end{align*}
	Then the invasion fitness is defined by \[
	S((\tilde{x},\tilde{y}),(x,y))\coloneqq\frac{r_1+r_2+\sqrt{(r_1-r_2)^2+4\sigma_1\sigma_2}}{2}.
	\]
	This number is the largest eigenvalue of the mean matrix of the corresponding approximating bi-type branching process, which is given by \[
	\begin{pmatrix}
	r_1 & \sigma_1\\
	\sigma_2 & r_2
	\end{pmatrix}
	\]
	We refer to Appendix \ref{Section: Appendix B} for details on the derivation of the initial growth rate of bi-type branching processes.
\end{description}
Note that distinguishing these two cases is necessary: If we were to model the behaviour of individuals of traits $(0,\tilde{y})$ as bi-type branching processes without switching into the dormant state, we would have -- using the definition from the second case with $\sigma_1=0$ -- that \[S((\tilde{x},\tilde{y}),(x,y))=\max\lrset{r_1,r_2}\geq-(\kappa+\sigma).\]
In particular, for bi-type processes the invasion fitness is bounded from below by the total rate at which individuals exit the dormancy component. This lower bound is not reasonable for individuals which cannot become dormant.

\begin{example}\label{Example: No Competition}
We are not able to exclude the possibility of long-term coexistence in the sense that $\operatorname{sign}(S((\tilde{x},\tilde{y}),(x,y)))=-\operatorname{sign}(S((x,y),(\tilde{x},\tilde{y})))$. As an example we may choose $C=1$, $\tau=1.3$, $\delta=0.9$, $\kappa=0$, $\sigma=1$ and $p=0.23$. Then an explicit computation shows \[
S((2\delta,4\delta),(0,2\delta))\approx 0.22\quad\text{ and }\quad S((0,2\delta),(2\delta,4\delta))\approx 0.29. 
\]
In these cases, an invasion would lead to coexistence, which we will exclude from our main theorem.
\end{example}

Using the above definitions of the invasion fitness, we can state our convergence result, which is very similar to \cite[Theorem 2.1]{champagnat2019stochastic}.

\begin{theorem}\label{Theorem: Main Theorem}
	Let $\alpha\in(0,1)$, $\delta\in(0,4)$, $\tau\geq 0$, $p\in(0,\tfrac{1}{4})$, $\kappa\geq 0$ and $\sigma>0$ such that 
	$S((\tilde{x},\tilde{y}),(x,y))\neq 0$ for all $(x,y),(\tilde{x},\tilde{y})\in\calX$ with $(x,y)\neq(\tilde{x},\tilde{y})$. Further assume that the transitions are as in the beginning of this section and that the initial conditions \emph{(\ref{startingcondition1})} and \emph{(\ref{startingcondition2})} are satisfied.
	\begin{enumerate}[label=\emph{(\roman*)}]
		\item Then there exists a time $T_0>0$ such that the sequences $\beta_{m,n}^K(t)$ from (\ref{Definition: beta}) converge as $K\to\infty$ in probability in $L^\infty([0,T])$ for all $T<T_0$ towards a deterministic piecewise affine continuous function $t\mapsto\beta_{m,n}(t)$ such that $\beta_{m,n}(0)=(1-(m+n)\alpha)\vee 0$, which is characterized as follows.
		\item We define the sequence $s_k\geq 0$ and $(m_k^*,n_k^*)\in\{0,\ldots,L\}^2$ inductively: Set $s_0=0$ and $(m_1^*,n_1^*)=(0,0)$. Assume that for $k\geq 1$ we have constructed $s_{k-1}<T_0$ and $(m_k^*,n_k^*)$ and assume that $\beta_{m,n}(s_{k-1})\neq 0$ for some $(m\delta,n\delta)\in\calX$. Then we define \[
		s_k\coloneqq\inf\lrset{t>s_{k-1}\mid \exists(m,n)\neq (m_k^*,n_k^*),\ \beta_{m,n}(t)=\beta_{m_k^*,n_k^*}(t)}
		\]
		Using this definition, we can distinguish three cases:
		\begin{enumerate}[label=\emph{(\alph*)}]
			\item If $\beta_{m_k^*,n_k^*}(s_k)>0$ define \[
			(m_{k+1}^*,n_{k+1}^*)=\arg\max\limits_{(m,n)\neq (m_k^*,n_k^*)}\beta_{m,n}(s_k)
			\]
			if the argmax is unique. Otherwise we stop the induction and set $T_0=s_k$.
			\item If in case \emph{(a)} we have \[S((m_k^*\delta, n_k^*\delta),(m_{k+1}^*\delta,n_{k+1}^*\delta))<0\quad\text{ and }\quad S((m_{k+1}^*\delta,n_{k+1}^*\delta),(m_k^*\delta,n_k^*\delta))>0,\] and $(m_{k+1}^*+n_{k+1}^*)\delta< 6$, then we continue our induction. Otherwise set $T_0=s_k$ and stop the induction.
			\item If there exists some $(m,n)\in\{0,\ldots,L\}^2\setminus\{(m_k^*,n_k^*)\}$ such that $\beta_{m,n}(s_k)=0$ and $\beta_{m,n}(s_k-\veps)>0$ for all $\veps>0$ sufficiently small, then we also stop the induction and set $T_0=s_k$.
		\end{enumerate}
	\item The function $\beta_{m,n}$ is defined for $t\in[s_{k-1},s_k]$ as \[
	\beta_{0,0}(t)=\left[\1_{\beta_{0,0}(s_{k-1})>0}\lr{\beta_{0,0}(s_{k-1})+\int_{s_{k-1}}^{t}S((0,0),(m_k^*\delta,n_k^*\delta))\ds}\right]\vee 0
	\]
	and for $m\neq 0 $ or $n\neq 0$ we have \begin{align*}
	\beta_{m,n}(t)&=\lr{\beta_{m,n}(s_{k-1})+\int_{t_{(m,n),k}\wedge t}^{t}S((m\delta,n\delta),(m_k^*\delta,n_k^*\delta))\ds}\\
	&\quad \vee(\beta_{m-1,n}(t)-\alpha)\vee (\beta_{m,n-1}(t)-\alpha) \vee  0,
	\end{align*}
	where $\beta_{-1,n}\equiv \beta_{m,-1}\equiv 0$	and the time $t_{(m,n),k}$ is defined by \[
	t_{(m,n),k}\coloneqq\begin{cases}
	\inf\lrset{t\geq s_{k-1}\mid \beta_{m-1,n}(t)=\alpha\ \text{or } \beta_{m,n-1}(t)=\alpha},&\quad\text{if }\beta_{m,n}(s_{k-1})=0\\
	s_{k-1}&\quad\text{otherwise}.
	\end{cases}
	\]
	\end{enumerate}
\end{theorem}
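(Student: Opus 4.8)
The plan is to adapt the inductive scheme of \cite[Theorem 2.1]{champagnat2019stochastic} and \cite[Theorem 2.2]{coquille2020stochastic} to the bi-type setting of the present model, proceeding epoch by epoch on the intervals $[s_{k-1},s_k]$ and using the auxiliary approximation results on bi-type branching processes (Appendix~\ref{Section: Appendix B}, notably Theorem~\ref{firstconvergencetheorem}) and on logistic branching processes with immigration (Appendix~\ref{Section: Appendix C}). The induction hypothesis at time $s_{k-1}$ will be: exactly one trait $(m_k^*\delta,n_k^*\delta)$ is resident, i.e.\ its rescaled pair $(N^{K,a}_{m_k^*,n_k^*}/K,N^{K,d}_{m_k^*,n_k^*}/K)$ is close to the equilibrium $(\bar z^a_{m_k^*,n_k^*},\bar z^d_{m_k^*,n_k^*})$ of \eqref{approximateODE}, all other traits satisfy $\beta^K_{m,n}(s_{k-1})\approx\beta_{m,n}(s_{k-1})<\beta_{m_k^*,n_k^*}(s_{k-1})$, and the $(m,n)$ with $\beta_{m,n}=0$ have genuinely microscopic (bounded-in-$K$ or at most $K^{o(1)}$) size. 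One must first verify this hypothesis at $k=1$ from the initial conditions \eqref{startingcondition1}--\eqref{startingcondition2}: trait $(0,0)$ starts near equilibrium by \cite[Theorem 11.2.1]{ethier2009markov}, and the subpopulations $(0,n)$ and $(m,n)$ start at the prescribed powers of $K$ with $\beta^K_{m,n}(0)\to(1-(m+n)\alpha)\vee0$.

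Given the hypothesis at $s_{k-1}$, the core of one induction step is to show that, as long as $(m_k^*\delta,n_k^*\delta)$ remains resident, every non-resident trait $(m,n)$ behaves like a branching process in the environment created by the resident: if $m=0$ it is a one-type birth-and-death process with per-capita growth rate converging to $S((0,n\delta),(m_k^*\delta,n_k^*\delta))$ (Case~1 of the invasion-fitness definition), while if $m>0$ it is a bi-type branching process with mean matrix $\big(\begin{smallmatrix} r_1 & \sigma_1\\ \sigma_2 & r_2\end{smallmatrix}\big)$ as in Case~2, whose Perron root is exactly $S((m\delta,n\delta),(m_k^*\delta,n_k^*\delta))$. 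Each such trait also receives immigration at rate $\asymp K^{-\alpha}$ from its two ``parents'' $(m-1,n)$ and $(m,n-1)$, so its exponent obeys $\beta_{m,n}(t)=\big(\beta_{m,n}(s_{k-1})+\int S\,\mathrm ds\big)\vee(\beta_{m-1,n}(t)-\alpha)\vee(\beta_{m,n-1}(t)-\alpha)\vee 0$, with the time shift $t_{(m,n),k}$ accounting for when immigration first makes a currently-extinct trait reappear (this is the content of Lemma~\ref{Lemma: Strong mutation} / the ``$K^{c-\alpha}$ effective immigration'' heuristic). The resident itself stays near its equilibrium until $s_k$ by a logistic-process stability estimate (Appendix~\ref{Section: Appendix C}), since total competitive pressure from the $o(K)$ traits is negligible on the $\log K$ scale. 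Concatenating these pieces gives uniform-in-probability convergence of $\beta^K_{m,n}$ to the asserted piecewise-affine $\beta_{m,n}$ on $[s_{k-1},s_k]$, where $s_k$ is the first time two exponents coincide.

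The delicate point is the \emph{transition at $s_k$}: when the challenger $(m_{k+1}^*\delta,n_{k+1}^*\delta)=\arg\max_{(m,n)\neq(m_k^*,n_k^*)}\beta_{m,n}(s_k)$ reaches exponent $1$ simultaneously with the resident (case~(a) with $\beta_{m_k^*,n_k^*}(s_k)>0$), one must show that under the sign conditions in (b) — $S(\text{old},\text{new})<0$, $S(\text{new},\text{old})>0$ — the old resident is driven down to a microscopic size while the new one stabilizes at its equilibrium, so that the induction hypothesis is restored at $s_k$ with $(m_{k+1}^*,n_{k+1}^*)$ resident. This is precisely the ``invasion/fixation'' analysis: one needs a coupled logistic comparison for two competing (bi-type vs.\ bi-type, or bi-type vs.\ one-type) logistic branching processes, showing that the relevant $2$- or $3$-component logistic ODE obtained by linearizing and adding competition has the new equilibrium as a global attractor from the relevant initial data, and that the stochastic system tracks it up to $O(\log K/\log K)$ errors in exponent. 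I expect this step — handling the many sub-cases of competition between a bi-type and a single-type trait, and controlling the behaviour while both are macroscopic but before one wins — to be the main obstacle; it is where the ideas of \cite{blath2020invasion,blath2020interplay} are imported and where the bi-type structure genuinely complicates the \cite{champagnat2019stochastic} argument. The condition $(m_{k+1}^*+n_{k+1}^*)\delta<6$ (equivalently $b-\tfrac{x+y}{2}>3-6=$ positivity of the new equilibrium, i.e.\ $3-\tfrac{\tilde x+\tilde y}{2}>0$) ensures the new resident actually has a positive equilibrium; if it fails, or if an argmax is non-unique, or if some trait hits exponent $0$ from above (case (c), i.e.\ a genuine extinction we cannot track past), the induction terminates and $T_0=s_k$. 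Finally one checks $T_0>0$ (the first epoch has positive length because at $t=0$ the resident exponent $1$ strictly dominates all others) and that the construction is self-consistent, e.g.\ that only finitely many epochs occur before $T_0$ — or, as in the coexistence discussion and \cite{DM11,coquille2020stochastic}, one allows $\{s_k\}$ to accumulate at $T_0$, which is still compatible with $L^\infty([0,T])$-convergence for every $T<T_0$.
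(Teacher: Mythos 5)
Your proposal follows essentially the same approach as the paper: the epoch-by-epoch induction on $[s_{k-1},s_k]$, identification of the invasion fitness with the Perron root of the bi-type mean matrix, coupling with BBPI processes during the long ``resident'' phases, invoking the logistic competition propositions of Appendix~\ref{Section: Appendix C} during the switch of residents, and using the termination conditions (a)--(c) exactly for the reasons you state. One refinement worth flagging: your induction hypothesis lets zero-exponent traits be merely of microscopic (polynomially small) size, but the paper actually needs such traits to be \emph{extinct} (population exactly $0$), which it obtains via Lemma~\ref{lemmaextinction} combined with Lemma~\ref{lemmanomigration}; this is precisely why condition (ii)(c) terminates the induction when some $\beta_{m,n}$ hits $0$ only at time $s_k$ --- in that case the trait has not yet had time to extinguish, and one cannot cleanly reduce the competition phase to a two-trait problem.
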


The proof of this theorem will be discussed in Section \ref{Section: Proof}. In light of the convergence theorems derived in Appendix \ref{Section: Appendix B}, this result is not very surprising. The defined fitness function determines the rate of growth of the corresponding branching process in the same way that the largest eigenvalue of the mean matrix of a single or bi-type branching process does. 

Also, note that the fitness functions are constant on each time interval, so we may replace the integral by multiplying with the length of the integrated interval. We have chosen this representation to allow for a more direct comparison with \cite[Theorem 2.1]{champagnat2019stochastic}.

\begin{remark}
	We will shortly discuss the conditions, listed in part (ii) in our theorem, which lead to an end of the induction. \begin{enumerate}
		\item[(a)] At the time $s_k$ at least one new trait, other than the previously resident trait, becomes of order $K$ in the population. Hence, we want to ensure that the resulting competition between the different traits only occurs between two traits, so that we can apply our results from Appendix \ref{Section: Appendix C}. This condition requires at most two traits to be of size of order $K$ simultaneously.
		\item[(b)] As we have seen in Example \ref{Example: No Competition}, there is not necessarily competitive exclusion. The first requirement in this case ensures that the invading trait becomes resident, while the initially resident trait declines in size, so there is no coexistence. The second condition $(m_{k+1}^*+n_{k+1}^*)\delta<6$ is needed for the invading trait to have a positive equilibrium size.
		\item[(c)] If there is a trait, which is almost, but not fully, extinct at the time at which there is a change in the resident trait, we are not able to determine against which of the two traits of size $K$ there is competition. Therefore, we want to ensure each trait with small population size to be fully extinct at the time when a change in the resident trait occurs.
	\end{enumerate}
\end{remark}

\section{Examples}\label{Section: Examples}
\subsection{Examples for limiting Functions in Theorem \ref{Theorem: Main Theorem}}
In this section we will consider specific, arbitrary choices of parameters for our model to find some range of resulting behaviours for the limiting functions $\beta_{m,n}$ established in Theorem \ref{Theorem: Main Theorem}. As we will see, the dynamics are already quite complicated in the case of very few traits. In particular, a full analysis in the case of $2\delta<4<3\delta$ as in \cite[Section 3]{champagnat2019stochastic} is not feasible. The main problem from our simulation appears to be the non-periodicity of our systems.

For all of the upcoming examples we choose $\delta=1.51$, $\tau=1.3$, $\kappa=0$, $\sigma=1$ and $\alpha=0.5$. We will vary the dormancy parameter $p$ which will show us plenty of qualitatively different results.
\vspace{-0.1cm}
\begin{example}[$p=0.21$]\label{Example: p=0.21}
For now we let $p=0.21$. Then we can plot the limiting function and obtain the following graphics.
\vspace{-0.2cm}
\begin{figure}[htbp]
	\centering
	\begin{minipage}[b]{0.49\linewidth}
		\includegraphics[width=1\textwidth]{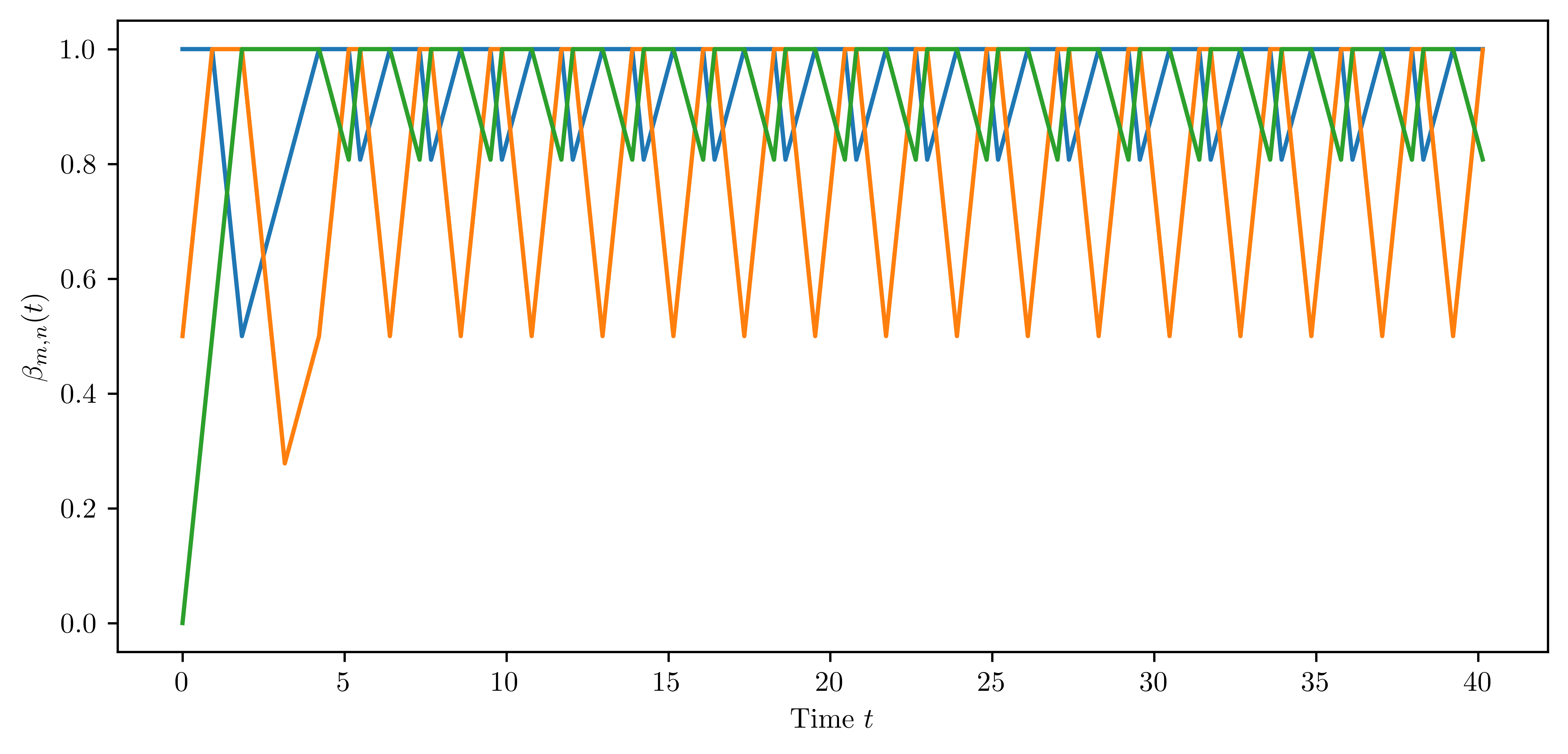}
	\end{minipage}
	\begin{minipage}[b]{0.49\linewidth}
		\includegraphics[width=1\textwidth]{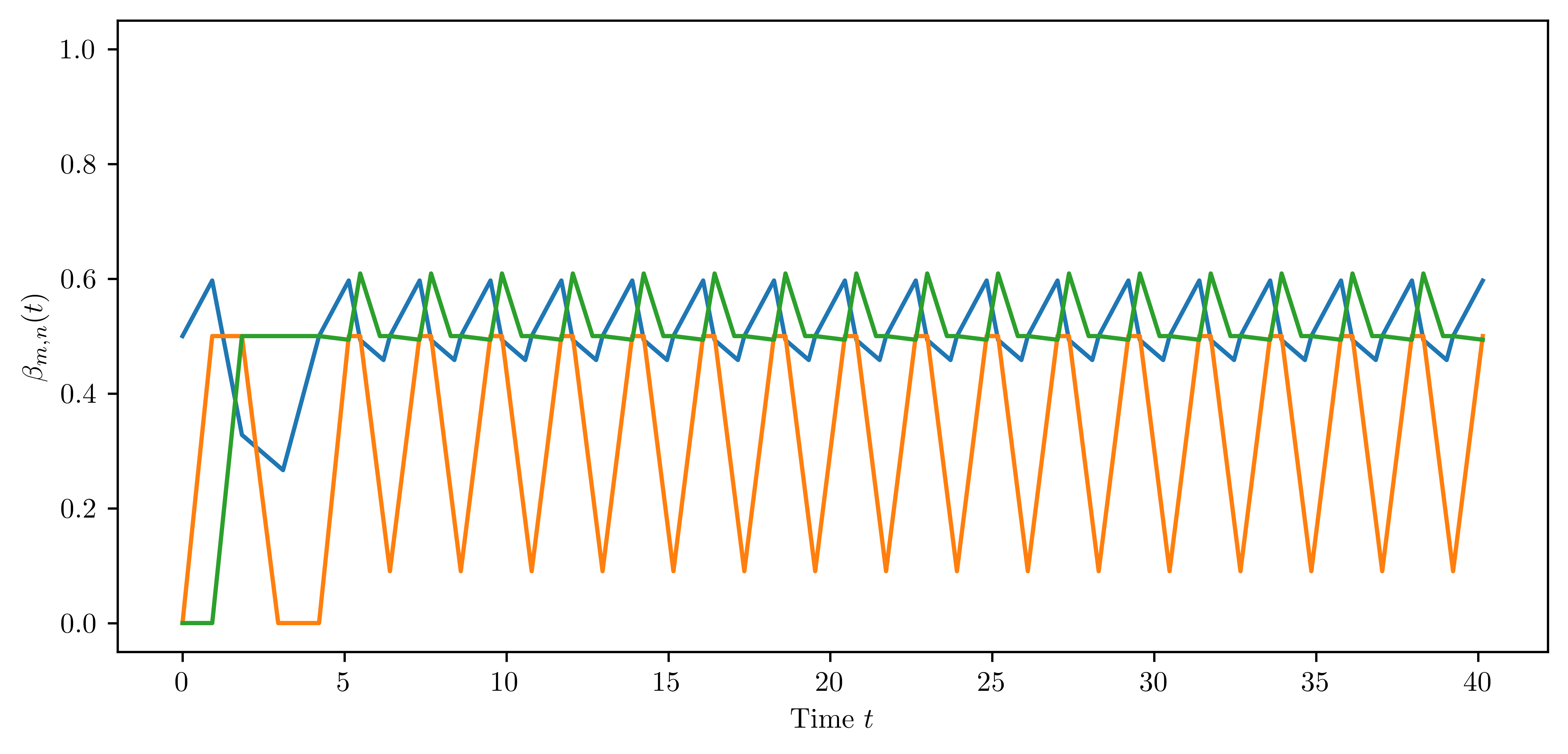}
	\end{minipage}
	\includegraphics[width=0.49\textwidth]{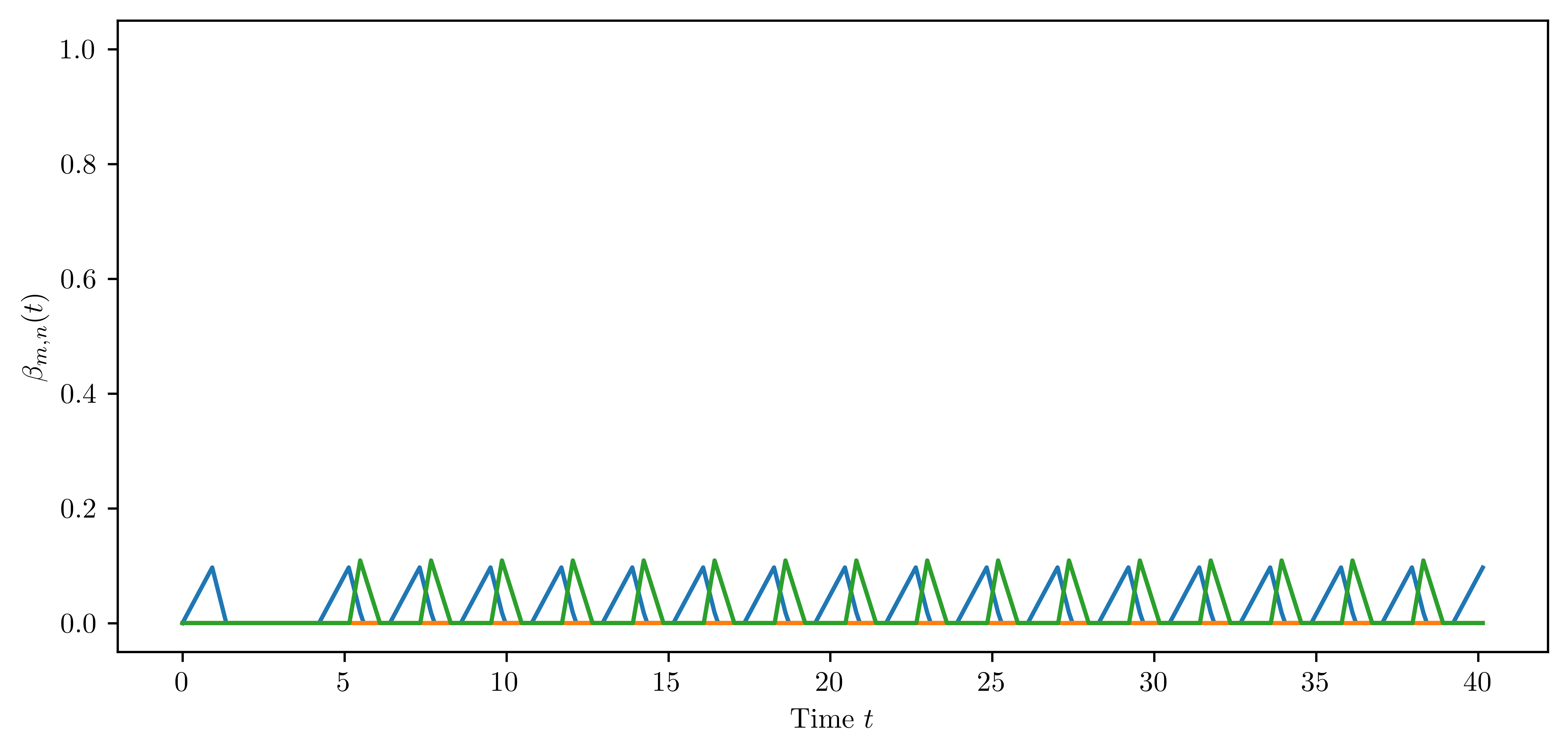}
	\caption{($p=0.21$) Top left: The dynamics of $\beta_{0,0}$ (blue), $\beta_{0,1}$ (orange) and $\beta_{0,2}$ (green). Top right: The dynamics of $\beta_{1,0}$ (blue), $\beta_{1,1}$ (orange) and $\beta_{1,2}$ (green). Bottom: The dynamics of $\beta_{2,0}$ (blue), $\beta_{2,1}$ (orange) and $\beta_{2,2}$ (green).}
\end{figure}

In this case we see that a similar behaviour as in \cite{champagnat2019stochastic} is recovered: all traits exhibit (almost) periodicity. This stems from the fact that the traits with a dormancy component are not sufficiently fit. While the trait $(\delta,0)$ is fit against the trait $(0,0)$ and the trait $(\delta,2\delta)$ is fit against the trait $(0,\delta)$, especially during the times when the trait $(0,2\delta)$ is resident, all dormancy traits have a negative fitness and are only kept alive through the incoming migration. Hence, the essential components of the dynamics can be reduced to the case without dormancy. 
\end{example}

\begin{example}[$p=0.22$]\label{Example: Coexistence}
In this case, the resulting dynamics are given in the figure below.
\vspace{-0.2cm}
\begin{figure}[h!]
	\centering
	\begin{minipage}[b]{0.49\linewidth}
		\includegraphics[width=1\textwidth]{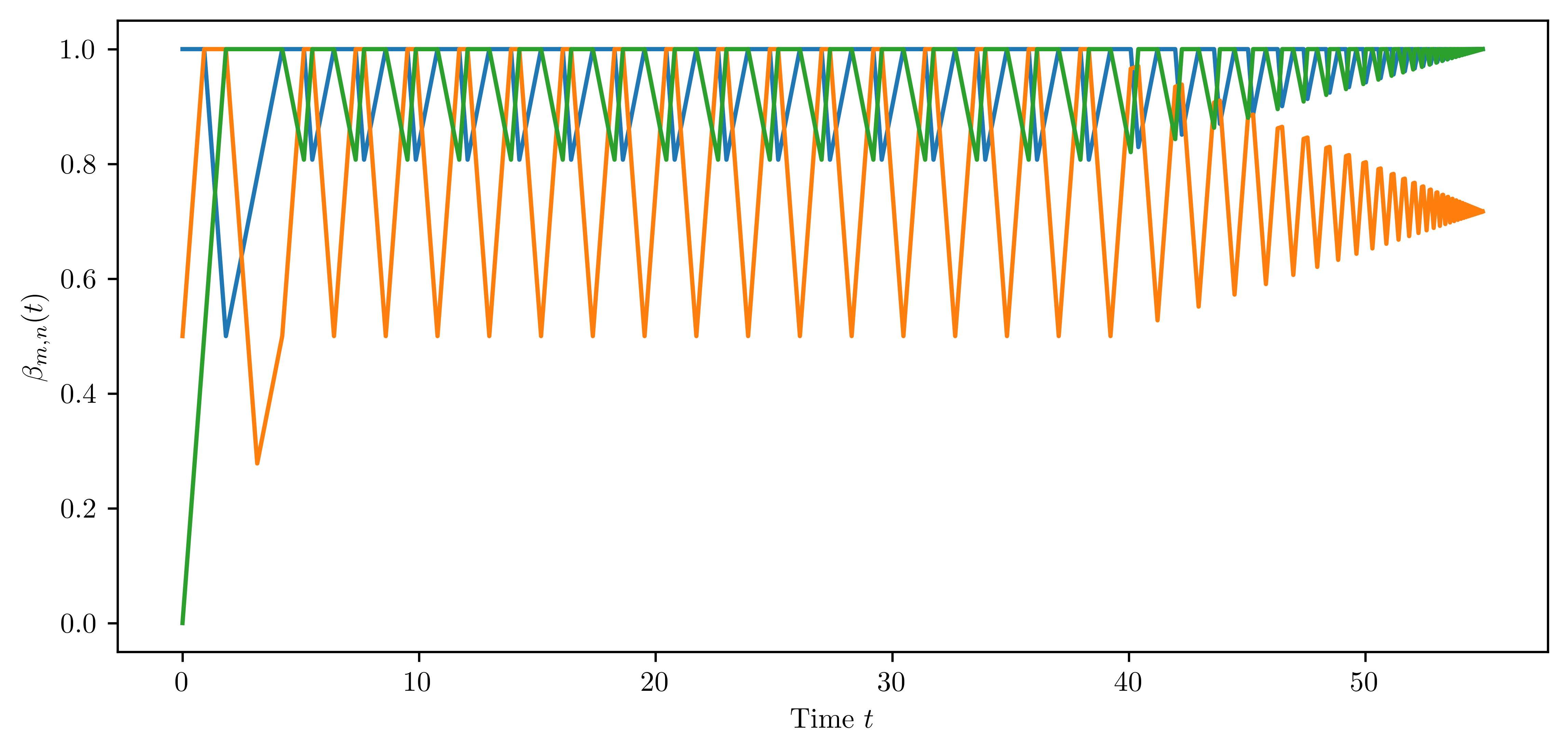}
	\end{minipage}
	\begin{minipage}[b]{0.49\linewidth}
		\includegraphics[width=1\textwidth]{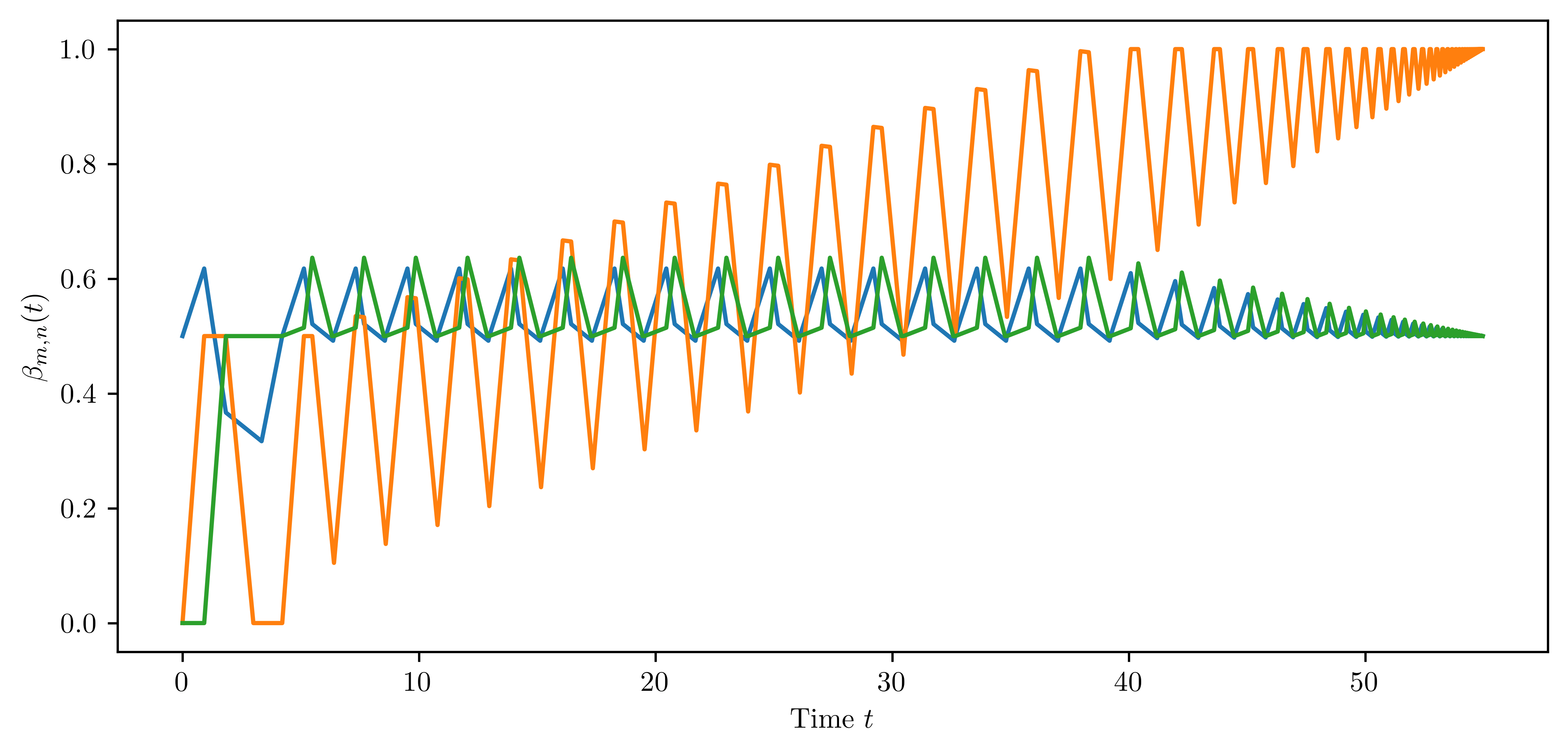}
	\end{minipage}
	\includegraphics[width=0.49\textwidth]{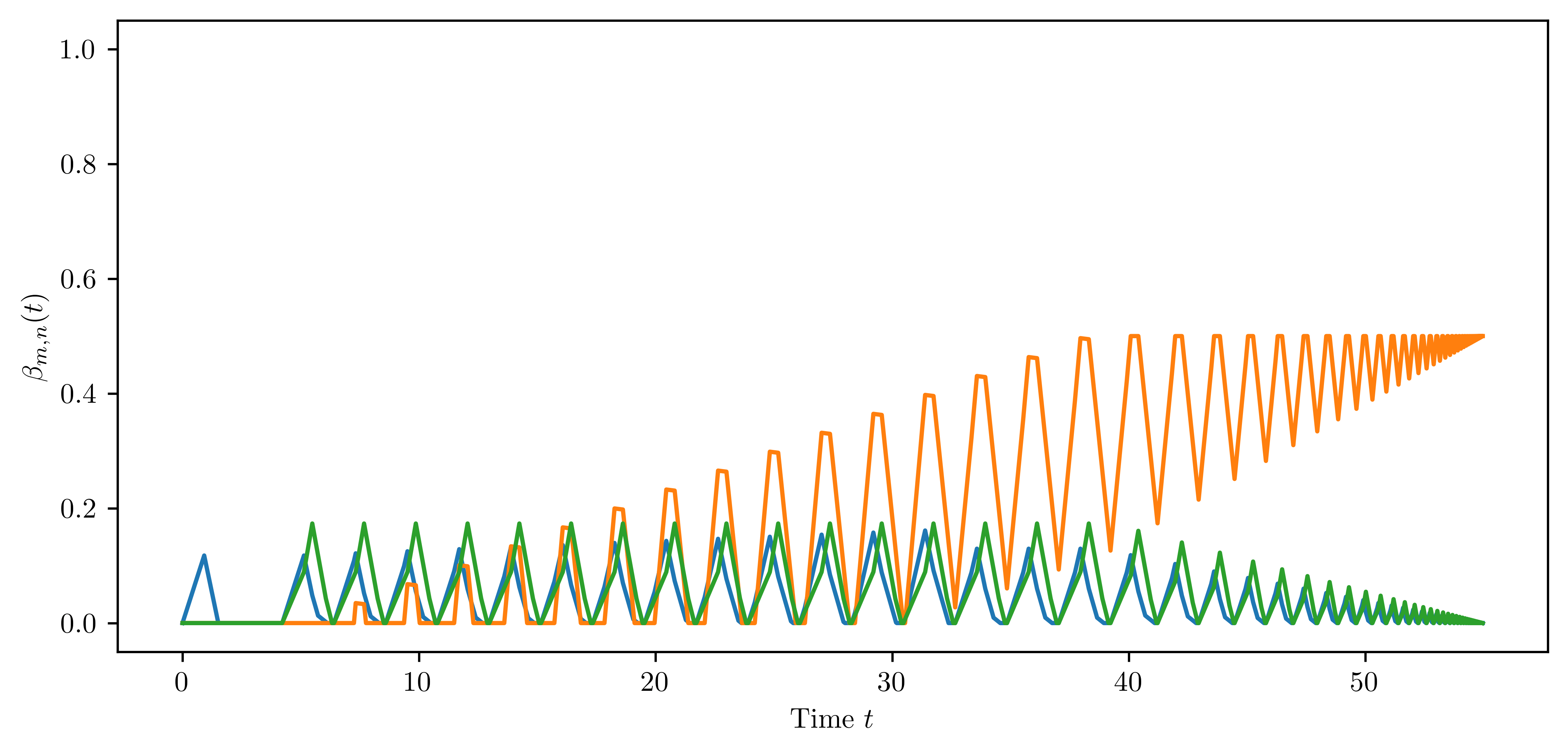}
	\caption{($p=0.22$) Top left: The dynamics of $\beta_{0,0}$ (blue), $\beta_{0,1}$ (orange) and $\beta_{0,2}$ (green). Top right: The dynamics of $\beta_{1,0}$ (blue), $\beta_{1,1}$ (orange) and $\beta_{1,2}$ (green). Bottom: The dynamics of $\beta_{2,0}$ (blue), $\beta_{2,1}$ (orange) and $\beta_{2,2}$ (green).}
\end{figure}

Here, two phases are to be distinguished: At first, we observe a very similar behaviour as in the case $p=0.21$. In fact, for the traits $(0,\ell\delta)$, $\ell\in\{0,1,2\}$, the functions are at first identical to the previous case. However, the trait $(\delta,\delta)$ is now sufficiently fit that its population size overall increases with each cycle until at one point it becomes resident. From this point onwards, we see that the functions are approaching a coexistence limit in the sense that for all $k$ we have $s_{k+1}>s_k$ with $\textstyle\lim_{k\to\infty}s_k<\infty$ and \[
\lim\limits_{k\to\infty}\beta_{0,0}(s_k)=\lim\limits_{k\to\infty}\beta_{0,2}(s_k)=\lim\limits_{k\to\infty}\beta_{1,1}(s_k)=1.
\]
We will prove this claim below. Thus, although we have excluded the possibility of coexistence of any traits in the formulation of our Theorem \ref{Theorem: Main Theorem} by demanding that the fitness functions need to have opposite signs, the system converges to an equilibrium. The reason behind this is the fact that we have demanded opposite signs, but the absolute values of the relative fitnesses of two traits are not necessarily, and often will not be, the same. This allows traits with dormancy to experience a large growth while they are not resident and fit against the dominant trait, but only a slow decline in population size when they are unfit against the dominant trait. In \cite{champagnat2019stochastic} the fitness functions are antisymmetric functions in the sense that $S(x,y)=-S(y,x)$ for traits $x,y$ and therefore such behaviour cannot be observed. The traits $(2\delta,\ell\delta)$ are again only driven by immigration through mutations.

We will now show inductively that the sequence $(s_k)_{k\in\N}$ converges by considering the system where there are only the traits $(0,0)$, $(\delta,\delta)$ and $(0,2\delta)$. This reduction is justified by our simulations above, since all other traits become of order $o(K)$ after time $40$. Further we assume the initial condition of our reduced system to be \[
\beta_{0,0}(0)=1,\quad \beta_{1,1}(0)=1,\quad\text{and}\quad \beta_{0,2}(0)=x_0\in(0,1),
\]
that is, we assume that at the starting point of the system, the trait $(\delta,\delta)$ has just become resident in the population which is only possible, if the trait $(0,0)$ has been previously resident. In particular, the trait $(0,2\delta)$ is unfit against the trait $(0,0)$ and therefore must be of order $o(K)$. We will now construct a sequence of intermediate times until a similar configuration with $\beta_{0,0}(t)=\beta_{1,1}(t)=1$ and $\beta_{0,2}(t)=x_1>x_0$ is reached as is displayed in Figure \ref{Fig: Illustration}. We calculate the individual fitnesses as determined by the fitness function. We obtain \begin{alignat*}{4}
 &S((\delta,\delta),(0,0))&&=\frac{-\delta+\tau-\sigma+\sqrt{(\tau-\delta+\sigma)^2+12p\delta}}{2}\quad && S((0,2\delta),(0,0))&&=\tau-\delta\\
	&S((0,0),(\delta,\delta))&&=3-\frac{3-\delta}{1-p\delta}-\tau\quad && S((0,2\delta),(\delta,\delta))&&=3-\delta-\frac{3-\delta}{1-p\delta}+\tau\\
	&S((\delta,\delta),(0,2\delta))&&=\frac{-\tau-\sigma+\sqrt{(\sigma-\tau)^2+4p\delta(3-\delta)}}{2}\quad && S((0,0),(0,2\delta))&&=\delta-\tau.
\end{alignat*}

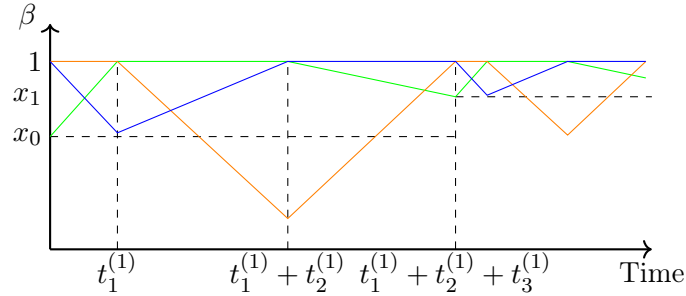
\begin{figure}[htbp]
	\begin{tikzpicture}
	\draw[->,thick](0,1.5)--(8,1.5);
	\draw[->,thick](0,1.5)--(0,4.5);
	\node at (-0.3,3) {$x_0$};
	\node at (-0.3,3.53) {$x_1$};
	\node at (8,1.2) {Time};
	\node at (-0.3,4.6) {$\beta$};
	\node at (-0.2,4) {$1$};
	\node at (0.895,1.2) {$t_1^{(1)}$};
	\node at (3.16,1.2) {$t_1^{(1)}+t_2^{(1)}$};
	\node at (5.39,1.2) {$t_1^{(1)}+t_2^{(1)}+t_3^{(1)}$};
	\draw[dashed] (0,3) -- (5.39,3);
	\draw[dashed] (0.895,1.5) -- (0.895,4);
	\draw[dashed] (3.16,1.5) -- (3.16,4);
	\draw[dashed] (5.39,1.5) -- (5.39,4);
	\draw[dashed] (5.39,3.53) -- (8,3.53);
	\draw[green] (0,3) -- (0.895,4) -- (3.16,4) -- (5.39,3.53) -- (5.81,4) -- (6.88,4) -- (7.92,3.78);
	\draw[orange] (0,4) -- (0.895,4) -- (3.16,1.91) -- (5.39,4) -- (5.81,4)-- (6.88,3.02) -- (7.92, 4);
	\draw[blue] (0,4) -- (0.895,3.05) -- (3.16,4) -- (5.39,4) -- (5.81, 3.55)-- (6.88,4) -- (7.92,4);
	\end{tikzpicture}
	\caption{Illustration for the successive construction of the times $t_i^{(1)}$. Blue represents $\beta_{0,0}$, orange is $\beta_{1,1}$ and green is $\beta_{0,2}$.}
	\label{Fig: Illustration}
\end{figure}
We can therefore explicitly calculate that in this system the trait $(0,2\delta)$ becomes resident after time \[
t_1^{(1)}\coloneqq\frac{1-x_0}{S((0,2\delta),(\delta,\delta))}= C_1(1-x_0).
\]
At this time, we have the sizes \[
\beta_{0,0}(t_1^{(1)})=1+t_1^{(1)}\cdot S((0,0),(\delta,\delta)),\quad \beta_{1,1}(t_1^{(1)})=1\quad\text{ and }\quad \beta_{0,2}(t_1^{(1)})=1.
\]
In the next step, the traits are competing with $(0,2\delta)$. Therefore the trait $(0,0)$ becomes resident after time \[
t_2^{(1)}\coloneqq\frac{1-(1+t_1^{(1)}\cdot S((0,0),(\delta,\delta)))}{S((0,0),(0,2\delta))}=-\frac{S((0,0),(\delta,\delta))}{S((0,0),(0,2\delta))}\cdot t_1^{(1)}= C_2(1-x_0).
\]
We obtain \[
\beta_{0,0}(t_1^{(1)}+t_2^{(1)})=1, \quad \beta_{1,1}(t_1^{(1)}+t_2^{(1)})=1+t_2^{(1)}\cdot S((\delta,\delta),(0,2\delta))\quad\text{ and }\quad \beta_{0,2}(t_1^{(1)}+t_2^{(1)})=1.
\]
The third phase of this system consists of competition of the other traits with $(0,0)$. In this case, the trait $(\delta,\delta)$ becomes resident again after time \[
t_3^{(1)}\coloneqq\frac{1-(1+t_2^{(1)}\cdot S((\delta,\delta),(0,2\delta)))}{S((\delta,\delta),(0,0))}=-\frac{S((\delta,\delta),(0,2\delta))}{S((\delta,\delta),(0,0))}\cdot t_2^{(1)}= C_3(1-x_0).
\]
We can calculate that \[
\beta_{0,0}(t_1^{(1)}+t_2^{(1)}+t_3^{(1)})=1,\quad\beta_{1,1}(t_1^{(1)}+t_2^{(1)}+t_3^{(1)})=1\quad\text{and}\quad\beta_{0,2}(t_1^{(1)}+t_2^{(1)}+t_3^{(1)})=1+t_3^{(1)}\cdot S((0,2\delta),(0,0)).
\]
In particular, we recover our starting condition after time $t_1^{(1)}+t_2^{(1)}+t_3^{(1)}$ where $S((0,2\delta),(0,0))<0$ implies \begin{align*}
\beta_{0,2}(t_1^{(1)}+t_2^{(1)}+t_3^{(1)})=1-C(1-x_0)=:x_1,
\end{align*}
with $C\in(0,1)$. Repeating this process inductively shows that after the $n$-th such cycle, we obtain the condition \[
\beta_{0,0}\lr{\sum_{k=1}^{n}t_1^{(k)}+t_2^{(k)}+t_3^{(k)}}=1,\quad \beta_{1,1}\lr{\sum_{k=1}^{n}t_1^{(k)}+t_2^{(k)}+t_3^{(k)}}=1\]
and \[ \beta_{0,2}\lr{\sum_{k=1}^{n}t_1^{(k)}+t_2^{(k)}+t_3^{(k)}}=1-(1-x_0)C^n=:x_n.
\]
Thus, as $n\to\infty$, the functions converge to $1$ at the endpoint of each cycle. It remains to show, that the time steps are summable. Indeed, we find \begin{align*}
\sum_{k=1}^{n}t_1^{(k)}+t_2^{(k)}+t_3^{(k)}&=\sum_{k=1}^{n}C_1\cdot (1-x_k)+C_2\cdot (1-x_k)+C_3\cdot (1-x_k)\\
&=(C_1+C_2+C_3)\sum_{k=1}^{n}(1-x_0)C^k,
\end{align*}
which converges as $n\to\infty$. Therefore, we have proven that our choice of parameters leads to coexistence after finite time.

\end{example}

\begin{example}[$p=0.23$]
	We obtain the following functions.
	\begin{figure}[htbp]
		\centering
		\begin{minipage}[b]{0.49\linewidth}
			\includegraphics[width=1\textwidth]{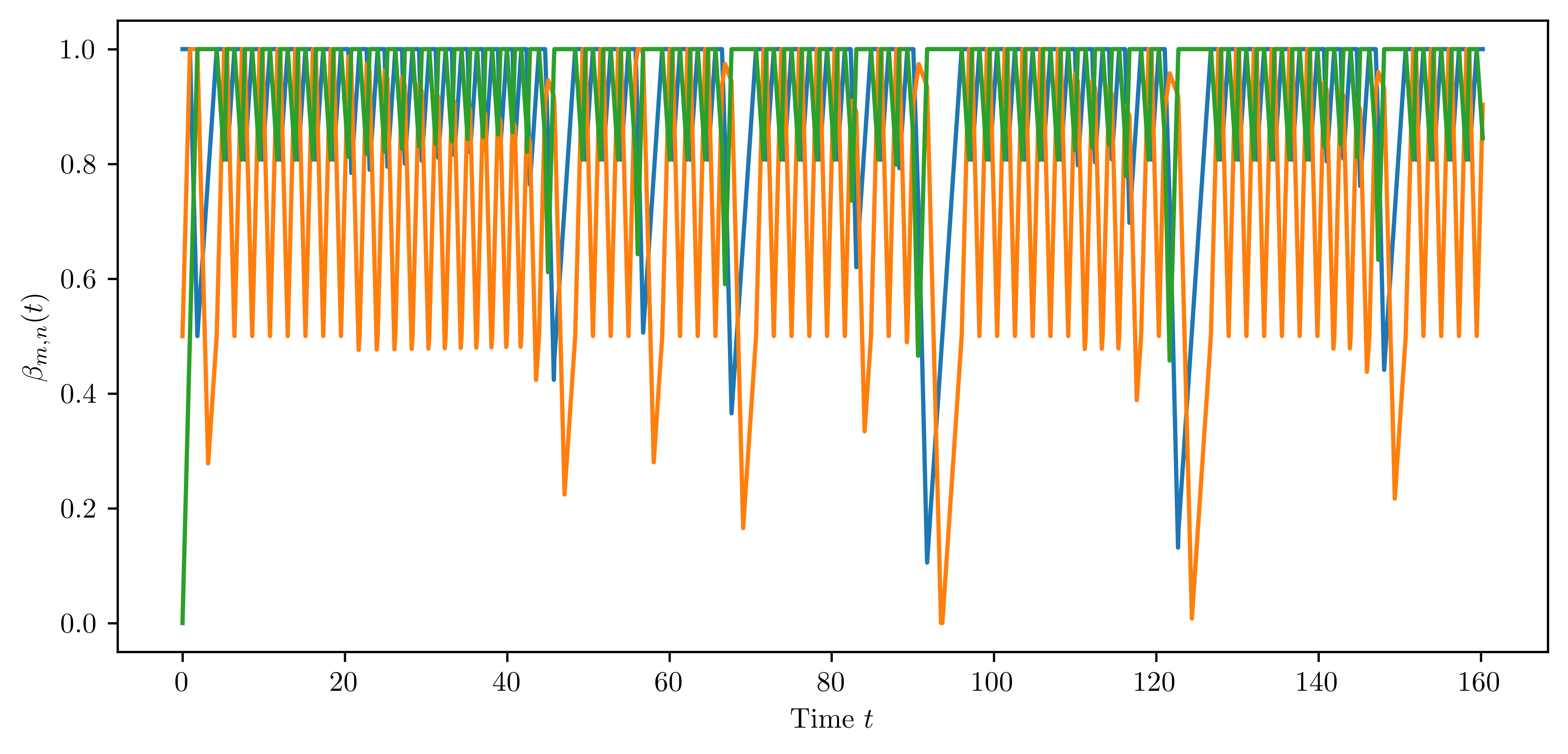}
		\end{minipage}
		\begin{minipage}[b]{0.49\linewidth}
			\includegraphics[width=1\textwidth]{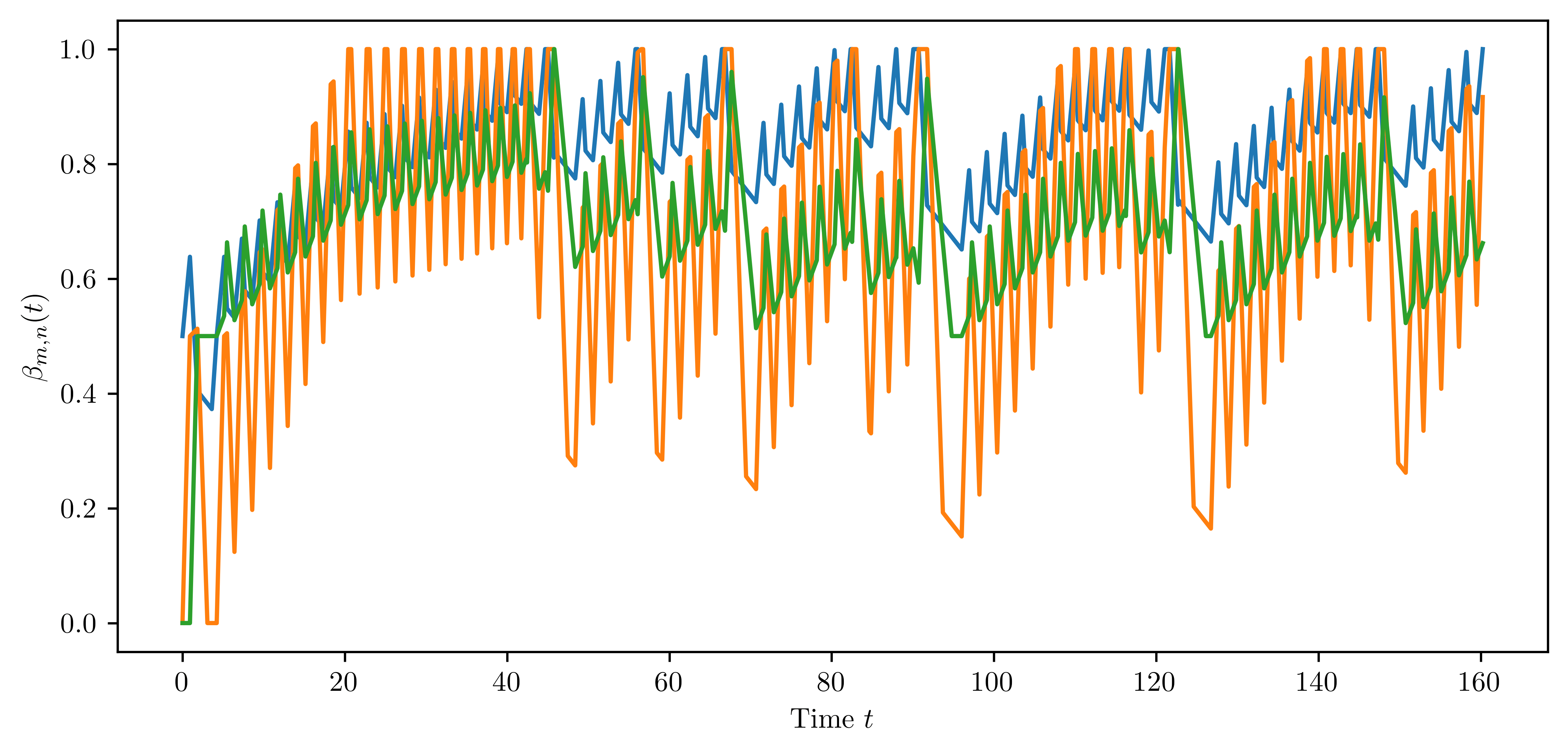}
		\end{minipage}
		\includegraphics[width=0.49\textwidth]{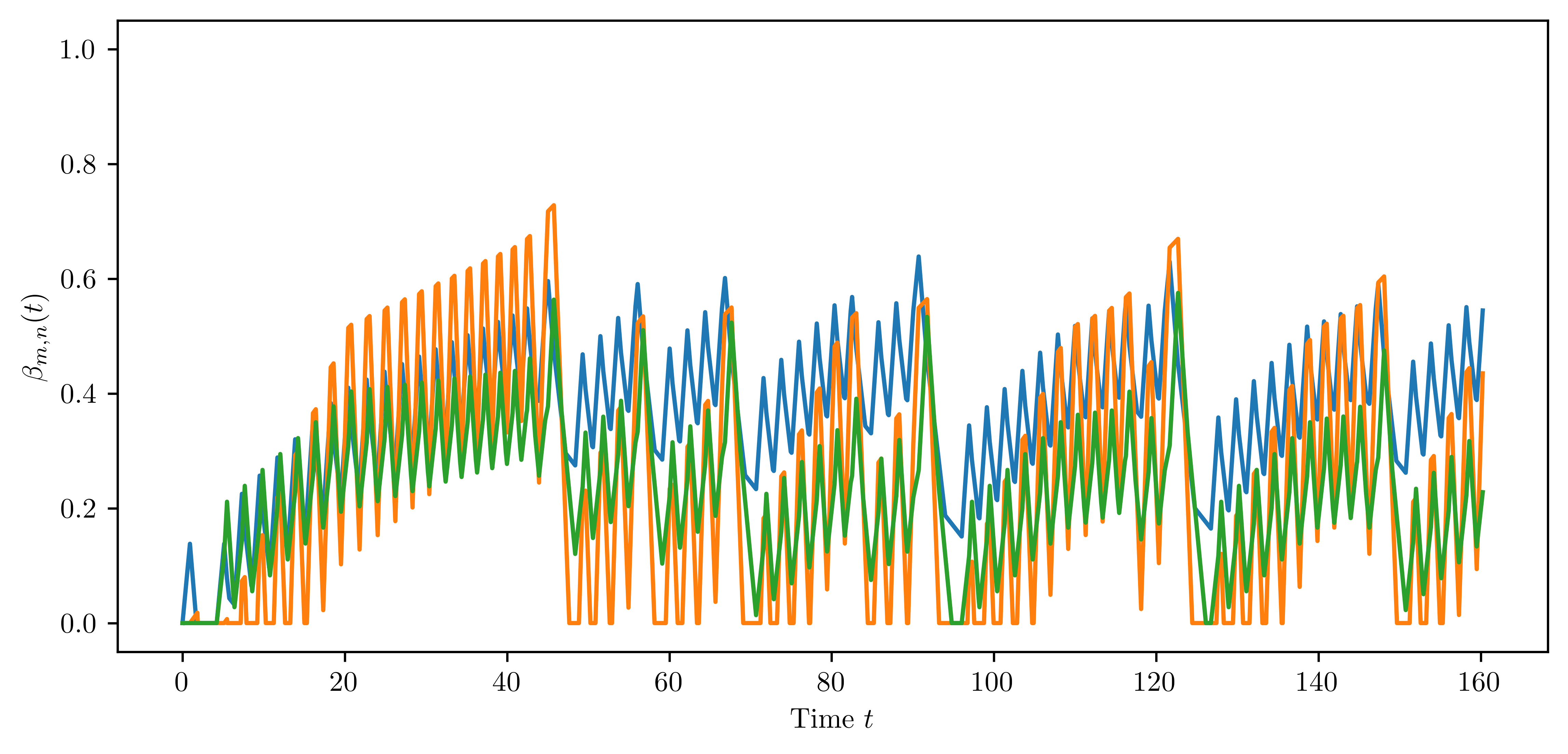}
		\caption{($p=0.23$) Top left: The dynamics of $\beta_{0,0}$ (blue), $\beta_{0,1}$ (orange) and $\beta_{0,2}$ (green). Top right: The dynamics of $\beta_{1,0}$ (blue), $\beta_{1,1}$ (orange) and $\beta_{1,2}$ (green). Bottom: The dynamics of $\beta_{2,0}$ (blue), $\beta_{2,1}$ (orange) and $\beta_{2,2}$ (green).}
	\end{figure}

Here, there are even more phases to distinguish: At first we have the growth phase of the trait $(\delta,\delta)$ until it becomes resident for the first time shortly after time $20$. Note that, due to the increased value of $p$, the traits $(\delta,0)$ and $(\delta,2\delta)$ have an increased fitness as well and are slightly increasing in size. Now the larger value of $p$ increases the equilibrium population size of trait $(\delta,\delta)$, which in turn implies that while $(\delta,\delta)$ is resident, the traits $(0,0)$ and $(0,2\delta)$ have a lower fitness. Thus the times for which the traits $(0,0)$ and $(0,2\delta)$ are resident will be prolonged slightly. This is sufficient for the trait $(\delta,0)$ (which only has a positive fitness while the trait $(0,0)$ is resident) to become resident for the first time around time $43$. Since the advantage from dormancy is not large enough to give the traits $(0,\ell\delta)$ an overall negative fitness, we then have alternating times during which the traits $(\delta,\ell\delta)$ are growing and the traits $(0,\ell\delta)$ are cyclically resident followed by a short phase where one or more of the traits $(\delta,\ell\delta)$ become resident and the traits $(0,\ell\delta)$ experience a short but sharp decline in size. We do not know if the functions become periodic eventually, however from simulations we conjecture that this is not necessarily the case.
\end{example}

\begin{example}[$p=0.234$]
	Here, we observe an interesting change in the dynamics. Due to the increased fitness of the dormant individuals, it takes a shorter amount of time until one of the traits with dormancy becomes resident. In addition, they are able to stay resident for longer periods. Since the trait $(0,0)$ is unfit against both $(\delta,0)$ and $(\delta,\delta)$, it has an overall lower fitness. Other than in the case $p=0.23$, the trait $(\delta,2\delta)$ does not become resident fast enough to prevent $(0,0)$ from becoming extinct. Once $(0,0)$ is extinct, it cannot be resurrected since there are no incoming mutations. Hence we see a significant change.  The dormant traits are not yet strong enough to prevent the trait $(0,2\delta)$ from becoming resident. After $(0,2\delta)$ is resident, all traits with dormancy become extinct or are only kept alive due to incoming mutations.
	
	\begin{figure}[htbp]
		\centering
		\begin{minipage}[b]{0.49\linewidth}
			\includegraphics[width=1\textwidth]{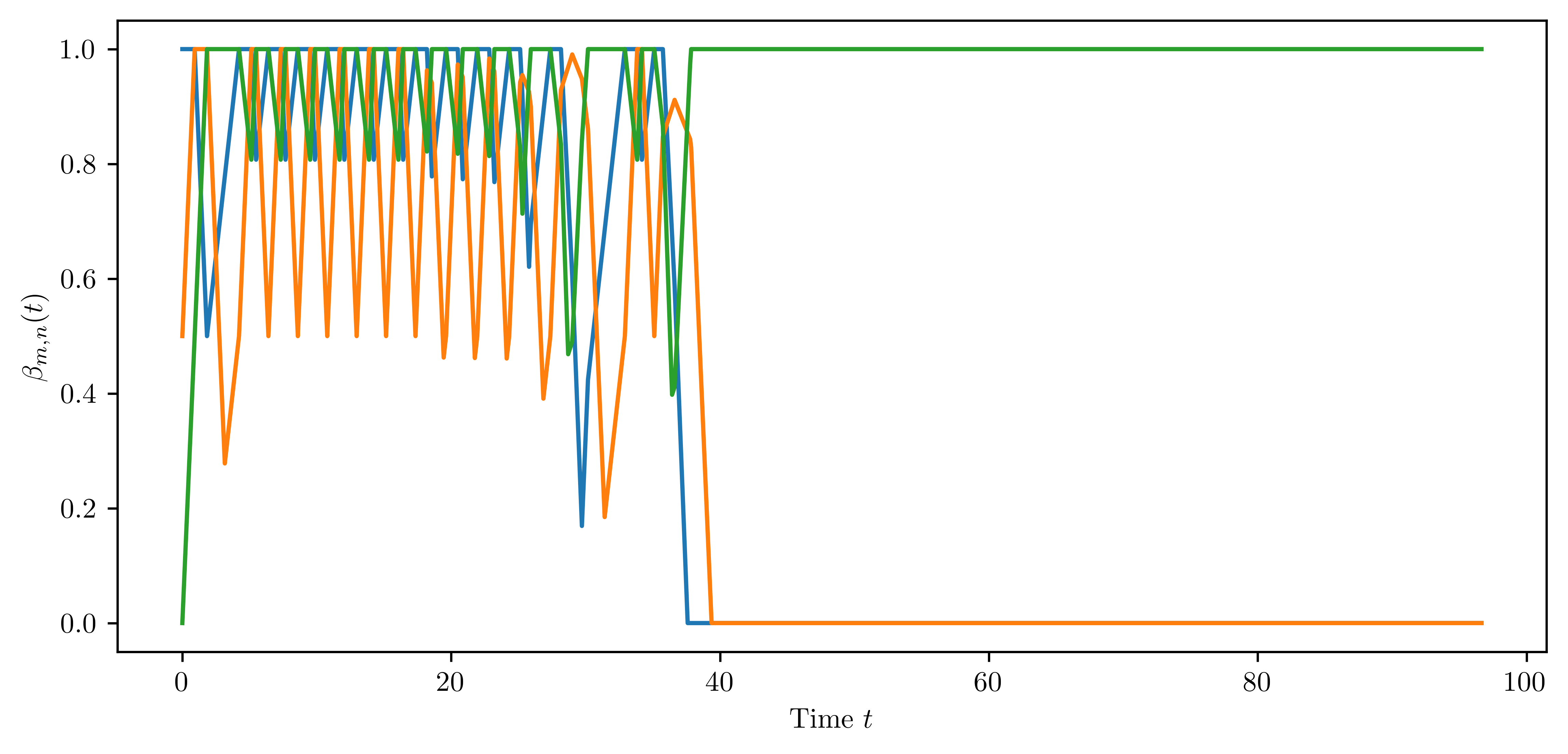}
		\end{minipage}
		\begin{minipage}[b]{0.49\linewidth}
			\includegraphics[width=1\textwidth]{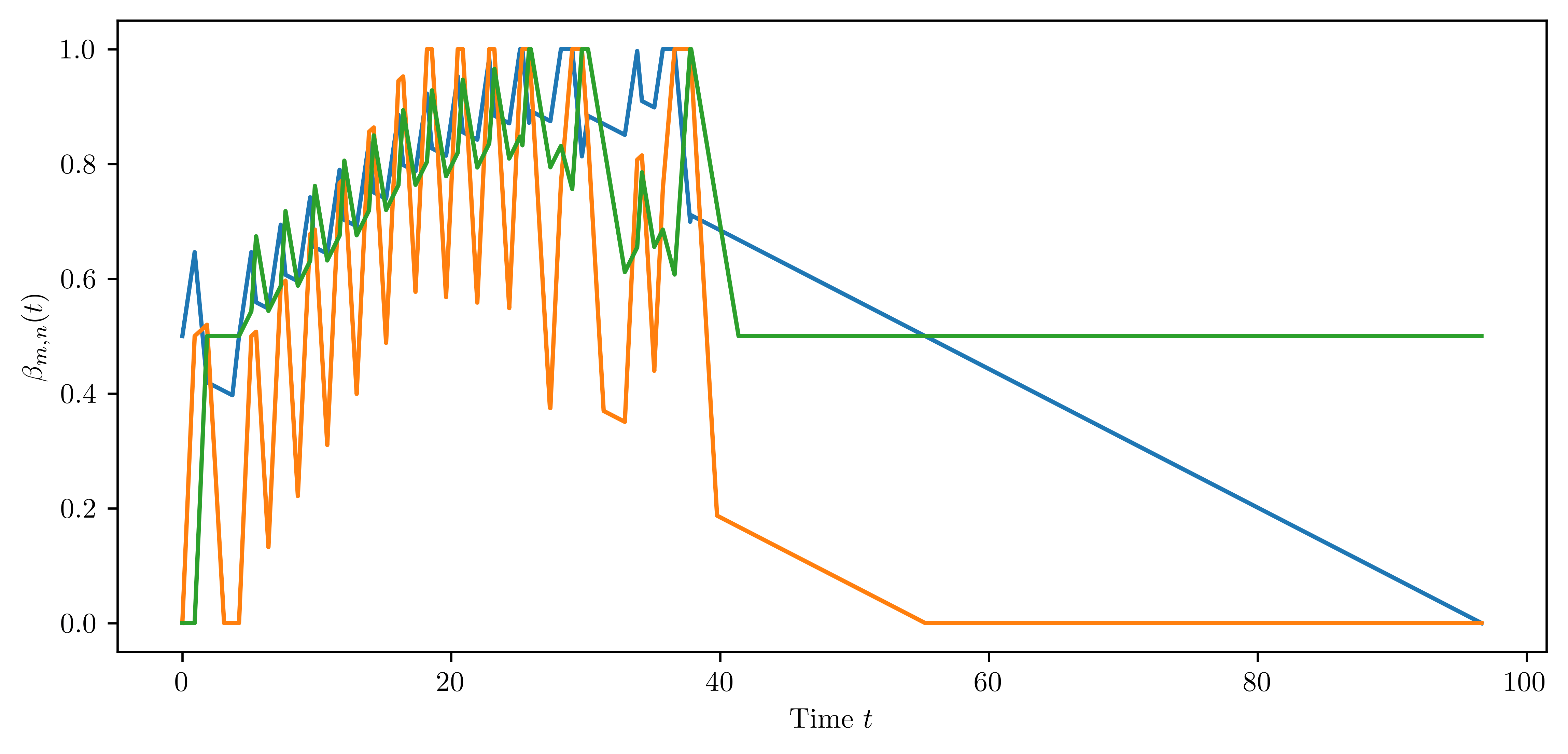}
		\end{minipage}
		\includegraphics[width=0.49\textwidth]{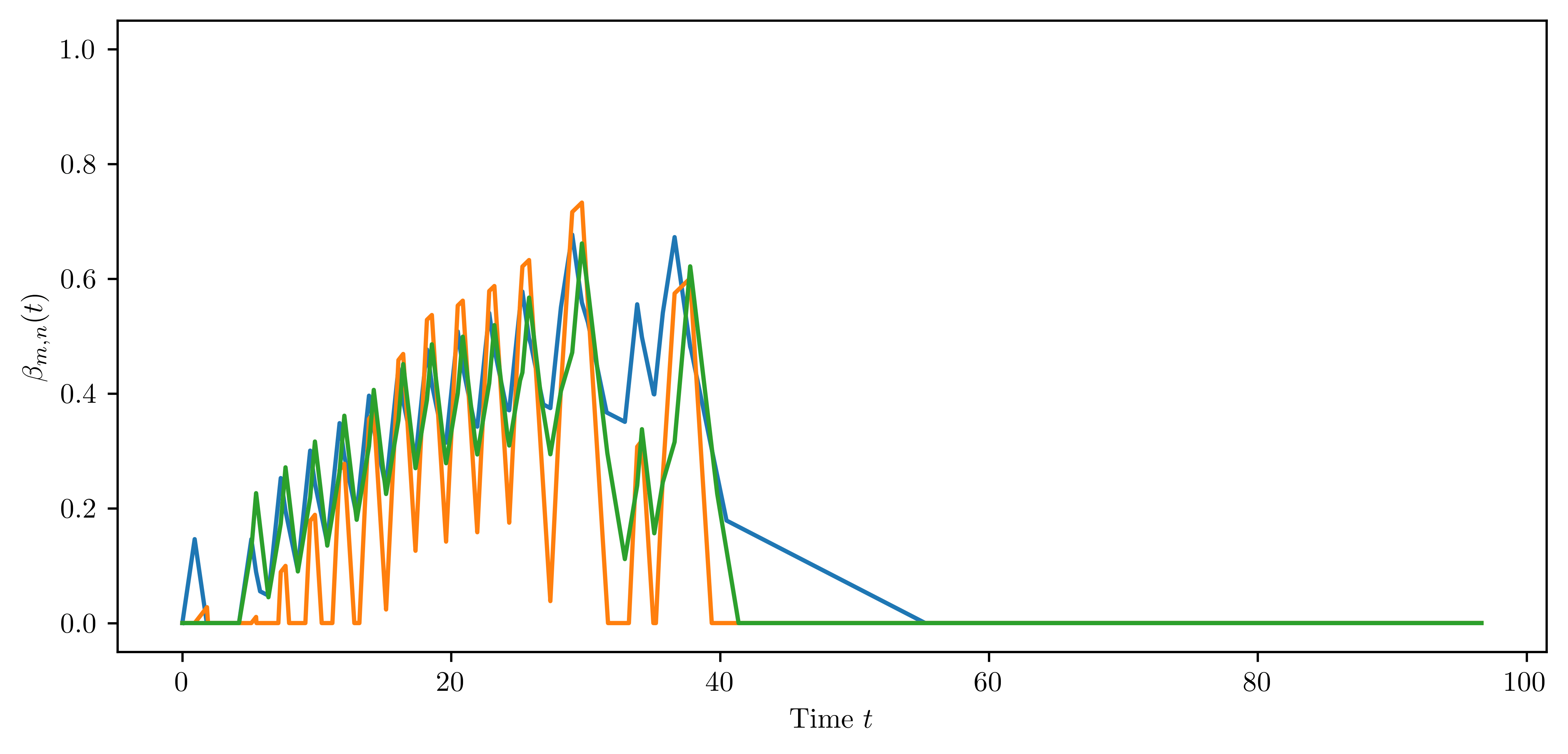}
		\caption{($p=0.234$) Top left: The dynamics of $\beta_{0,0}$ (blue), $\beta_{0,1}$ (orange) and $\beta_{0,2}$ (green). Top right: The dynamics of $\beta_{1,0}$ (blue), $\beta_{1,1}$ (orange) and $\beta_{1,2}$ (green). Bottom: The dynamics of $\beta_{2,0}$ (blue), $\beta_{2,1}$ (orange) and $\beta_{2,2}$ (green).}
	\end{figure}

\end{example}

\begin{example}[$p=0.24$]
	In this case, the dormancy is sufficiently strong such that the overall fitness of the non-dormant traits is negative when the traits $(\delta,\ell\delta)$ become resident. Thus we are getting again coexistence as in Example \ref{Example: Coexistence}, but now between the three traits $(\delta,\ell\delta)$. 
	\begin{figure}[h]
		\centering
		\begin{minipage}[b]{0.49\linewidth}
			\includegraphics[width=1\textwidth]{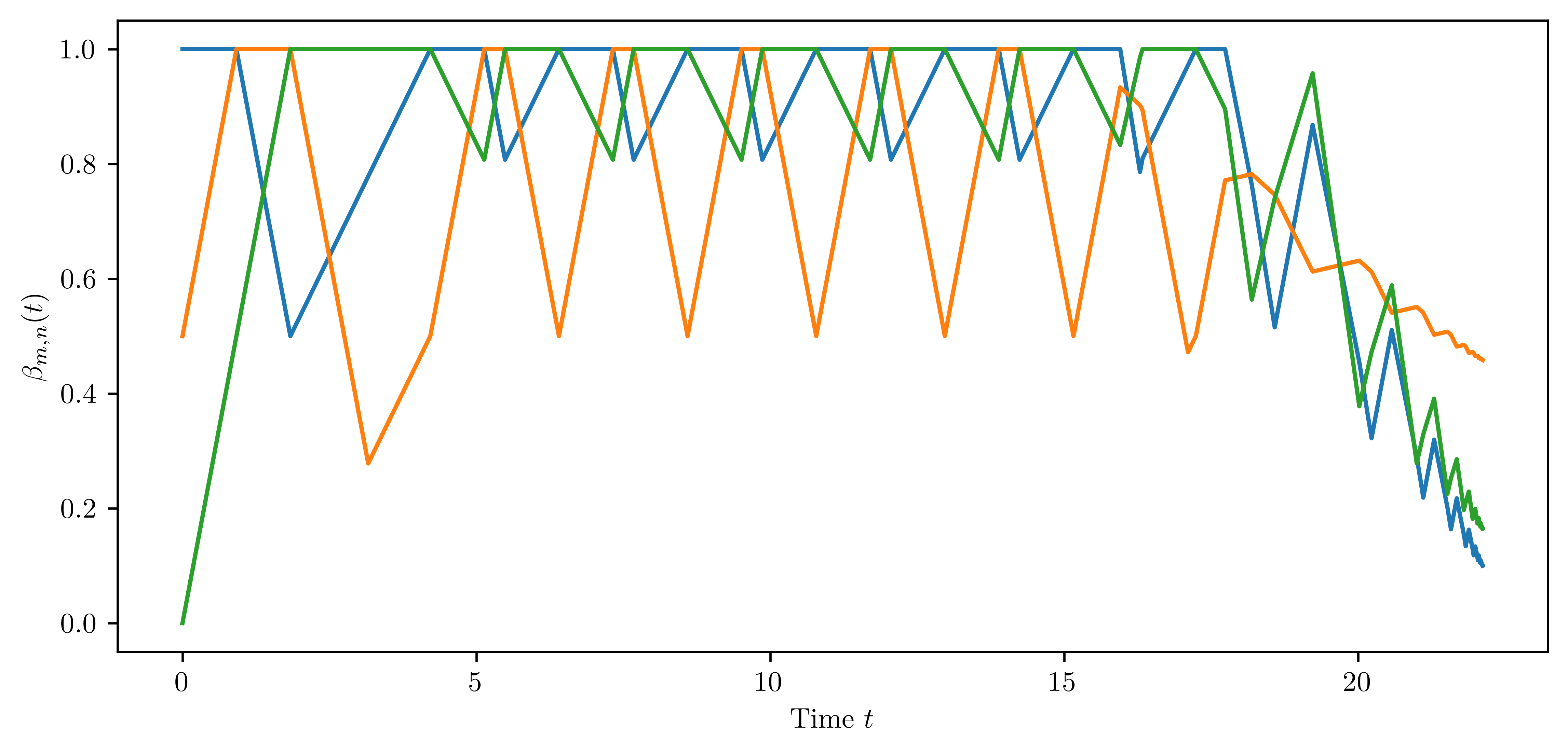}
		\end{minipage}
		\begin{minipage}[b]{0.49\linewidth}
			\includegraphics[width=1\textwidth]{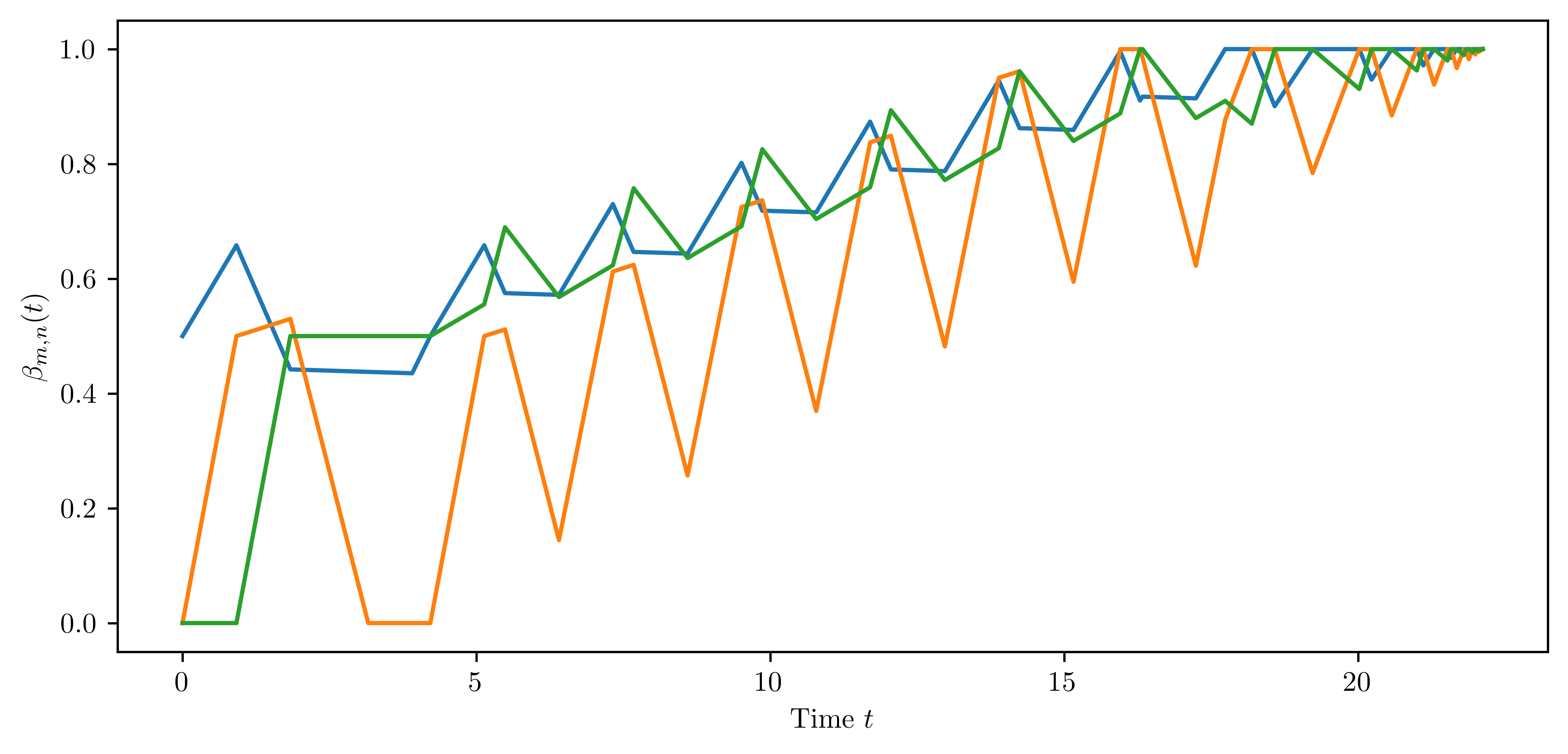}
		\end{minipage}
		\includegraphics[width=0.49\textwidth]{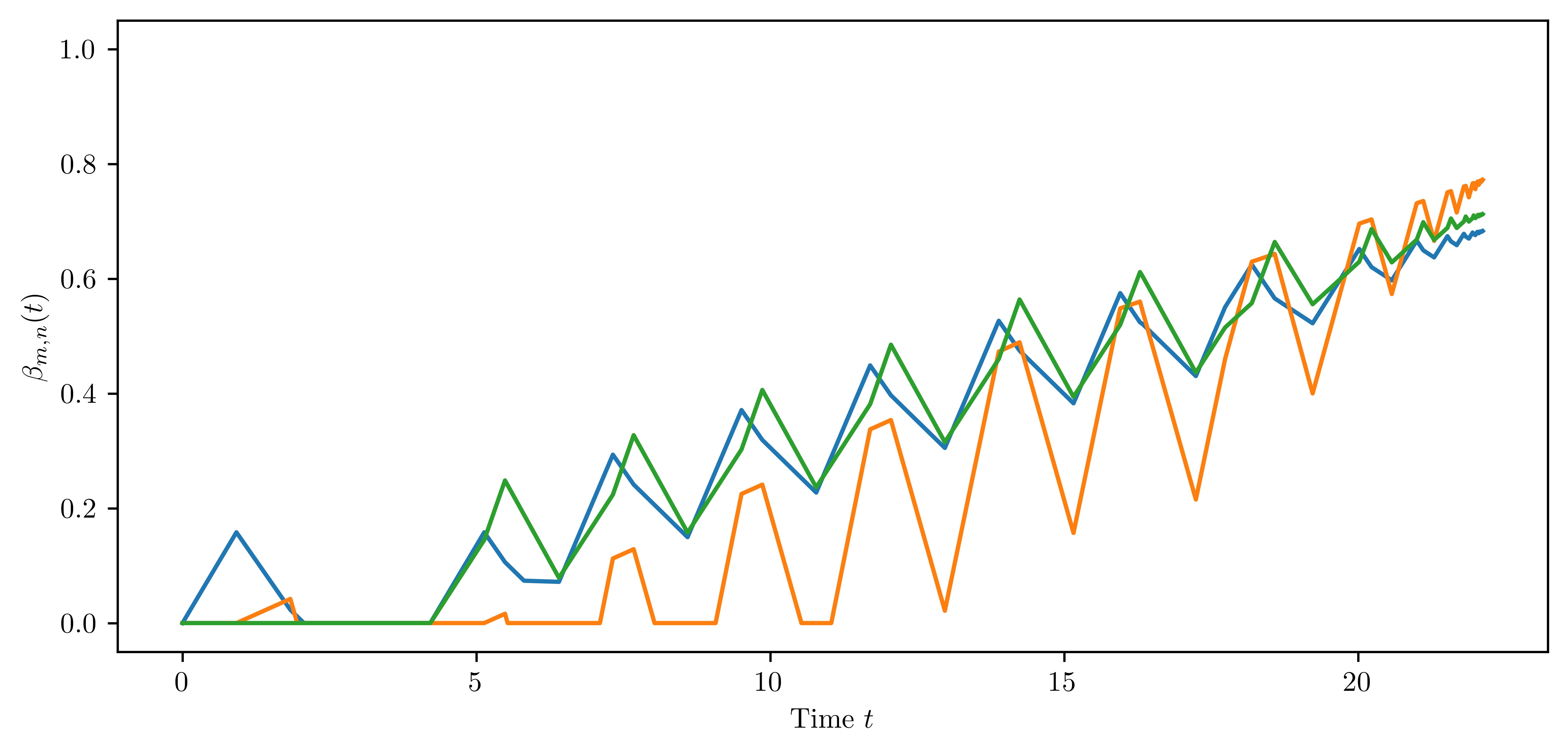}
		\caption{($p=0.24$) Top left: The dynamics of $\beta_{0,0}$ (blue), $\beta_{0,1}$ (orange) and $\beta_{0,2}$ (green). Top right: The dynamics of $\beta_{1,0}$ (blue), $\beta_{1,1}$ (orange) and $\beta_{1,2}$ (green). Bottom: The dynamics of $\beta_{2,0}$ (blue), $\beta_{2,1}$ (orange) and $\beta_{2,2}$ (green).}
	\end{figure}
	
\end{example}
\subsection{Extending Theorem \ref{Theorem: Main Theorem}}

One may ask the question of how the dynamics change as we alter the remaining parameters. Note that Theorem \ref{Theorem: Main Theorem} does not cover the case where a trait $(x,y)\in\calX$ with $\tfrac{x+y}{2}>3$ becomes dominant. Although we have not 
treated this case formally, the convergence claimed in Theorem \ref{Theorem: Main Theorem} should extend in a natural way: If the trait $(x,y)\in\calX$ becomes dominant but is unfit on its own, then the entire population size drops to $o(K)$ immediately on the $\log K$ timescale. Therefore, we can define the fitness functions in these cases as before, but omit all factors which are scaled by $K$, that is we set the death rate to $1$ and the switching rate from active to dormant to $0$. Then, we obtain for $\tilde{x}>0$ the fitness \[
S((\tilde{x},\tilde{y}),(x,y))\coloneqq \frac{r_1+r_2+\sqrt{(r_1-r_2)^2+4\sigma_1\sigma_2}}{2}=\max\lrset{r_1,r_2}
\]
where we use the notation $r_1\coloneqq 3-\tfrac{\tilde{x}+\tilde{y}}{2}+\tau\operatorname{sign}(\tilde{y}-y)$, $r_2\coloneqq -(\kappa+\sigma)$, $\sigma_1\coloneqq0$ and $\sigma_2\coloneqq \sigma$. For $\tilde{x}=0$ we set \[
S((0,\tilde{y}),(x,y))\coloneqq r_1=3-\frac{\tilde{y}}{2}+\tau\operatorname{sign}(\tilde{y}-y).
\]
Also, we need to use this definition of the fitness function when the population size is of the order $o(K)$ but the dominant trait has not reached a size of order $K$. With these extensions to the fitness function, the limiting functions $\beta_{m,n}$ should satisfy the formula stated in Theorem \ref{Theorem: Main Theorem} (iii). Note that the fitness of the traits with dormancy is bounded from below by $r_2=-(\kappa+\sigma)$. Hence it may happen that at a point where there are exactly two dominant traits and normally a change in the dominant trait would occur both traits have the same negative slope. In these cases it is not obvious how to continue. Indeed, using the definition of the times $s_k$ we would obtain $s_{k+1}=s_k$ and we cannot proceed any further.

\begin{example}
	Let $\delta=1.85$, $\tau=1.3$, $p=0.248$ , $\kappa=0$, $\sigma=1$ and $\alpha=0.5$. In this case we get the functions displayed in Figure \ref{Fig:d185p0248}.
	\begin{figure}[htbp]
		\centering
		
		\begin{minipage}[b]{0.49\linewidth}
			\includegraphics[width=1\textwidth]{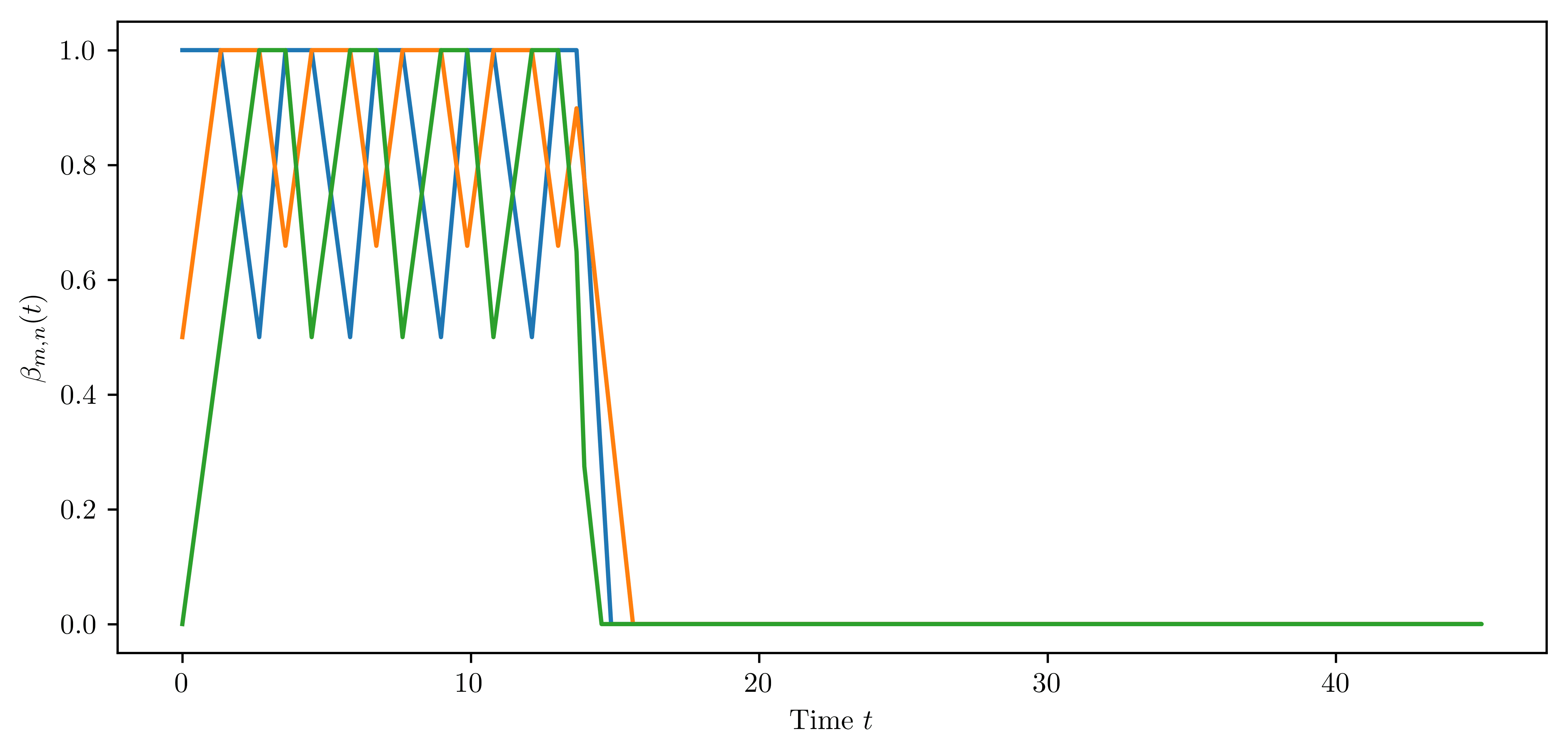}
		\end{minipage}
		\begin{minipage}[b]{0.49\linewidth}
			\includegraphics[width=1\textwidth]{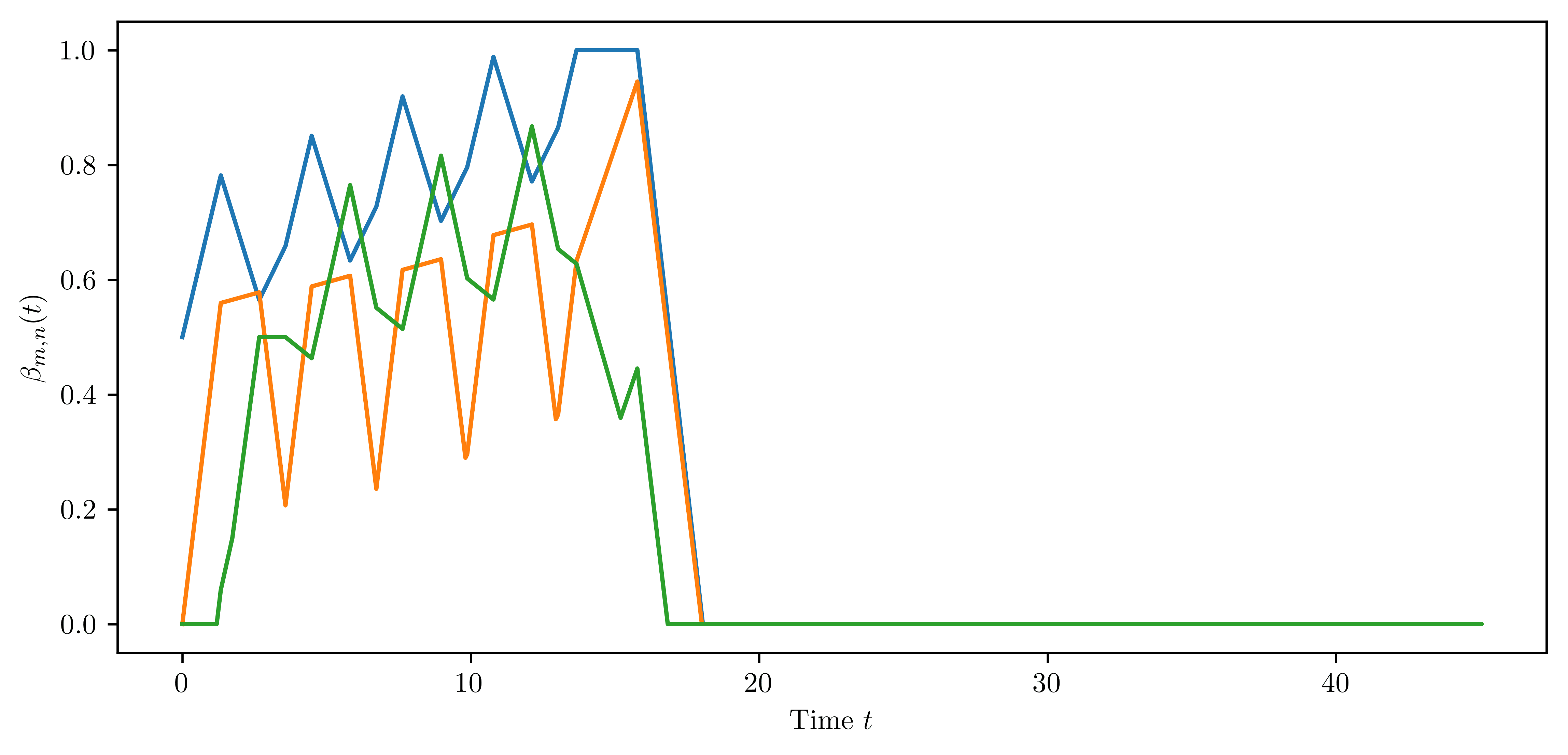}
		\end{minipage}
		\includegraphics[width=0.49\textwidth]{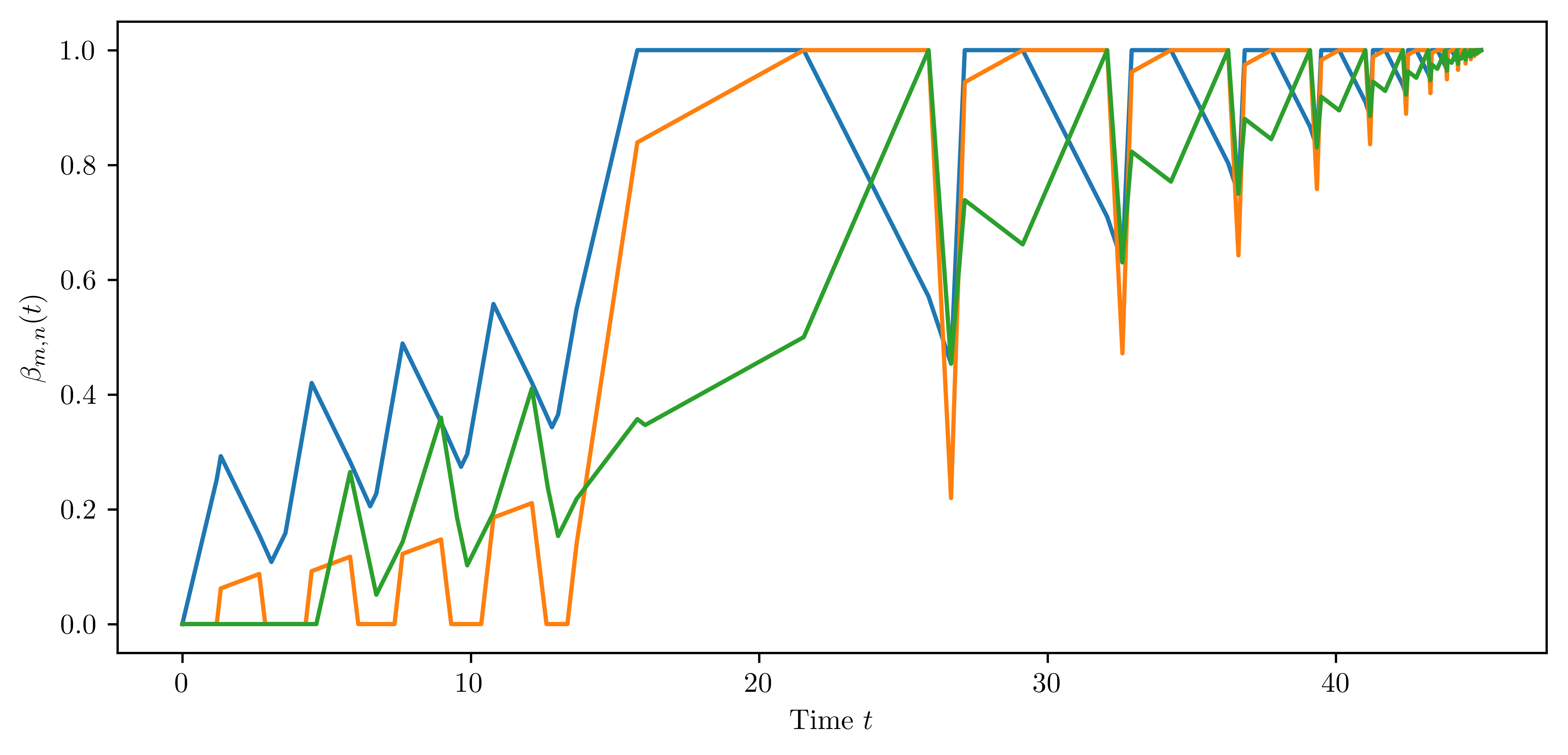}
		\caption{Functions for Example 3.6. Top left: The dynamics of $\beta_{0,0}$ (blue), $\beta_{0,1}$ (orange) and $\beta_{0,2}$ (green). Top right: The dynamics of $\beta_{1,0}$ (blue), $\beta_{1,1}$ (orange) and $\beta_{1,2}$ (green). Bottom: The dynamics of $\beta_{2,0}$ (blue), $\beta_{2,1}$ (orange) and $\beta_{2,2}$ (green).}
		\label{Fig:d185p0248}
	\end{figure}

Interestingly, here we have a finite time horizon $T_0$ and convergence of $\beta_{2,\ell}(s_k)\to 1$ as $k\to\infty$ for $\ell=0,1,2$ although there are repeatedly short periods of macroscopic extinction where the entire population size is of order $o(K)$.
\end{example}
\begin{example}
	If we set $\delta=1.92$, $\tau=1.3$, $p=0.248$ , $\kappa=0$, $\sigma=1$ and $\alpha=0.5$, that is all parameters the same as in the previous example but for $\delta$, then a similar but simultaneously new behaviour emerges.
	
	\begin{figure}[htbp]
		\centering
		\begin{minipage}[b]{0.49\linewidth}
			\includegraphics[width=1\textwidth]{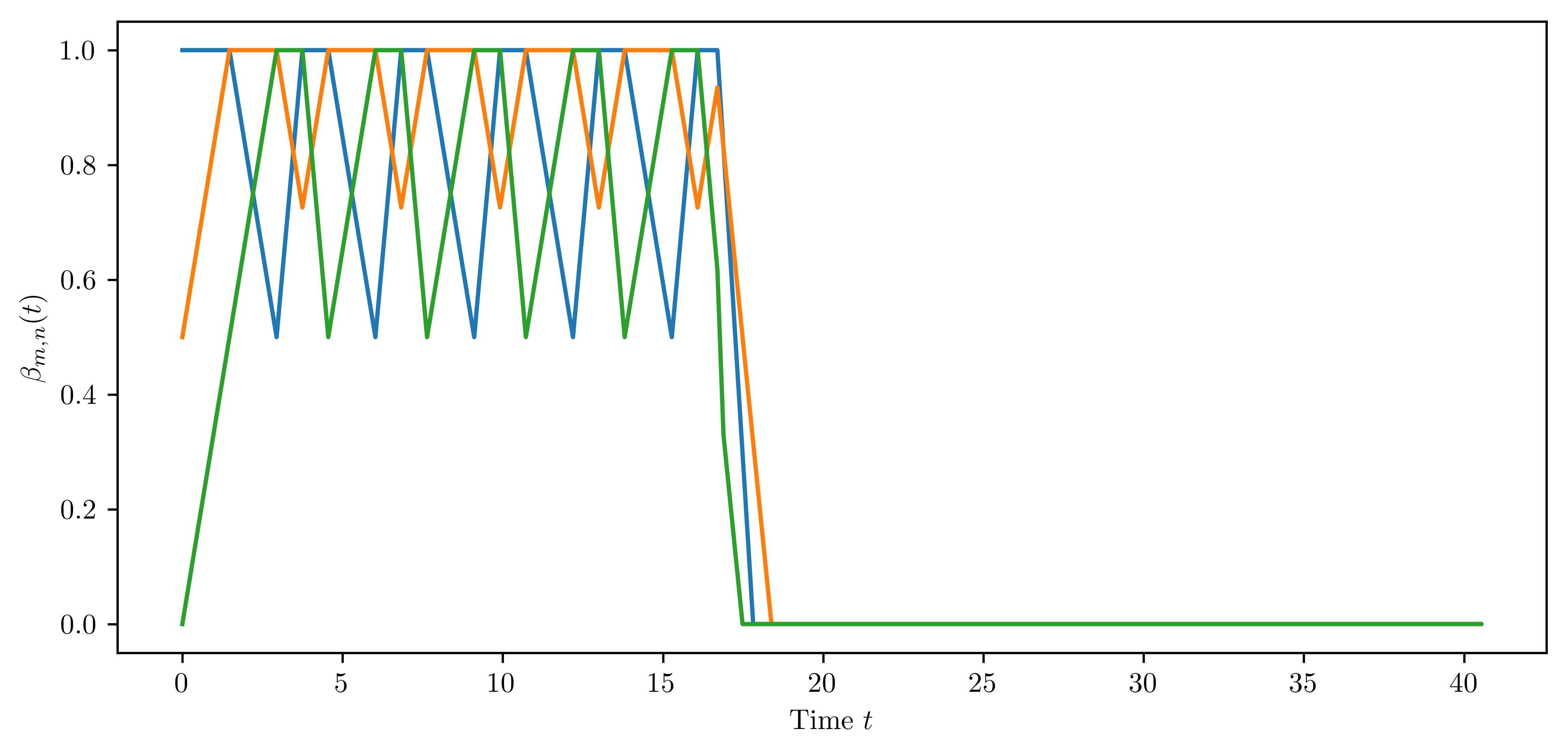}
		\end{minipage}
		\begin{minipage}[b]{0.49\linewidth}
			\includegraphics[width=1\textwidth]{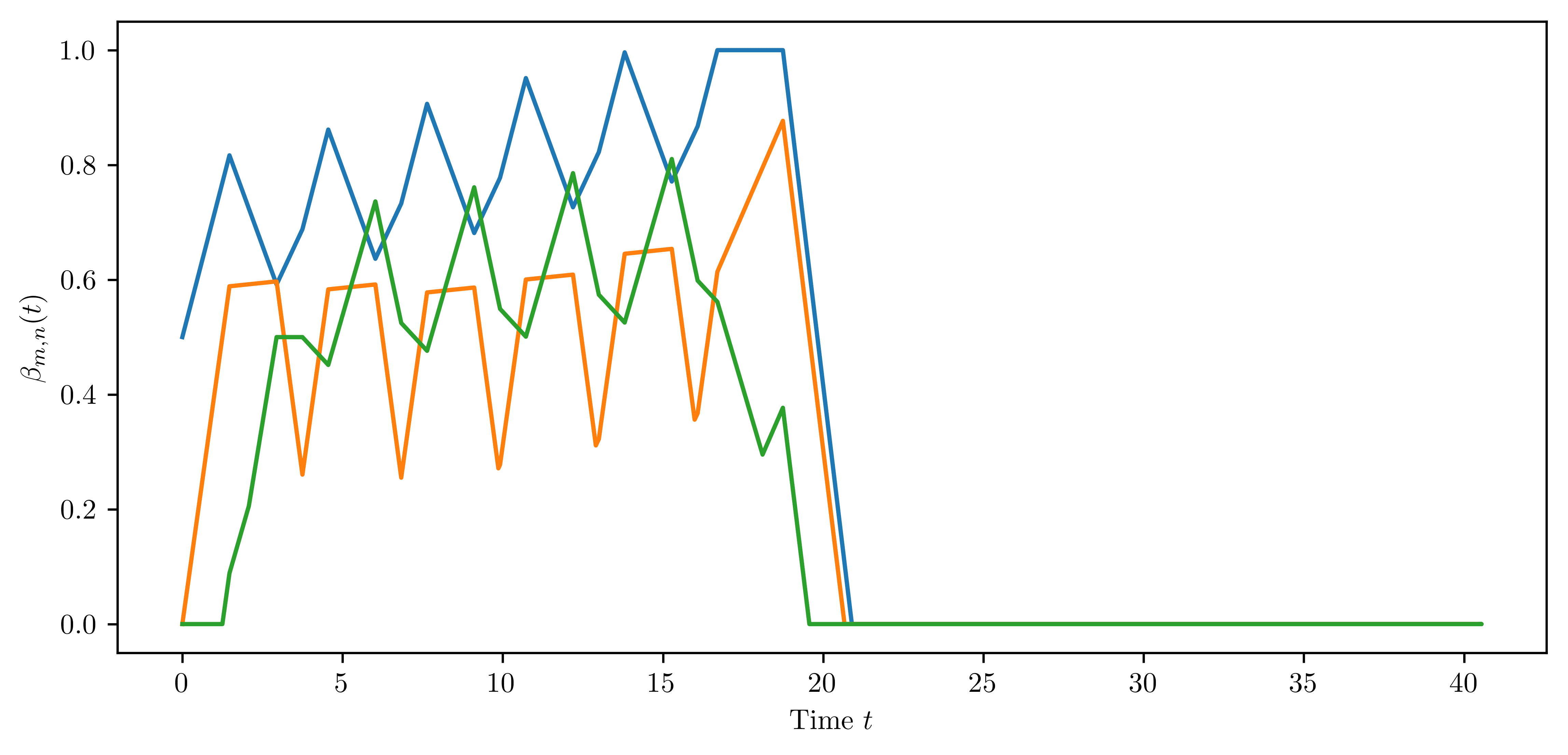}
		\end{minipage}
		\includegraphics[width=0.49\textwidth]{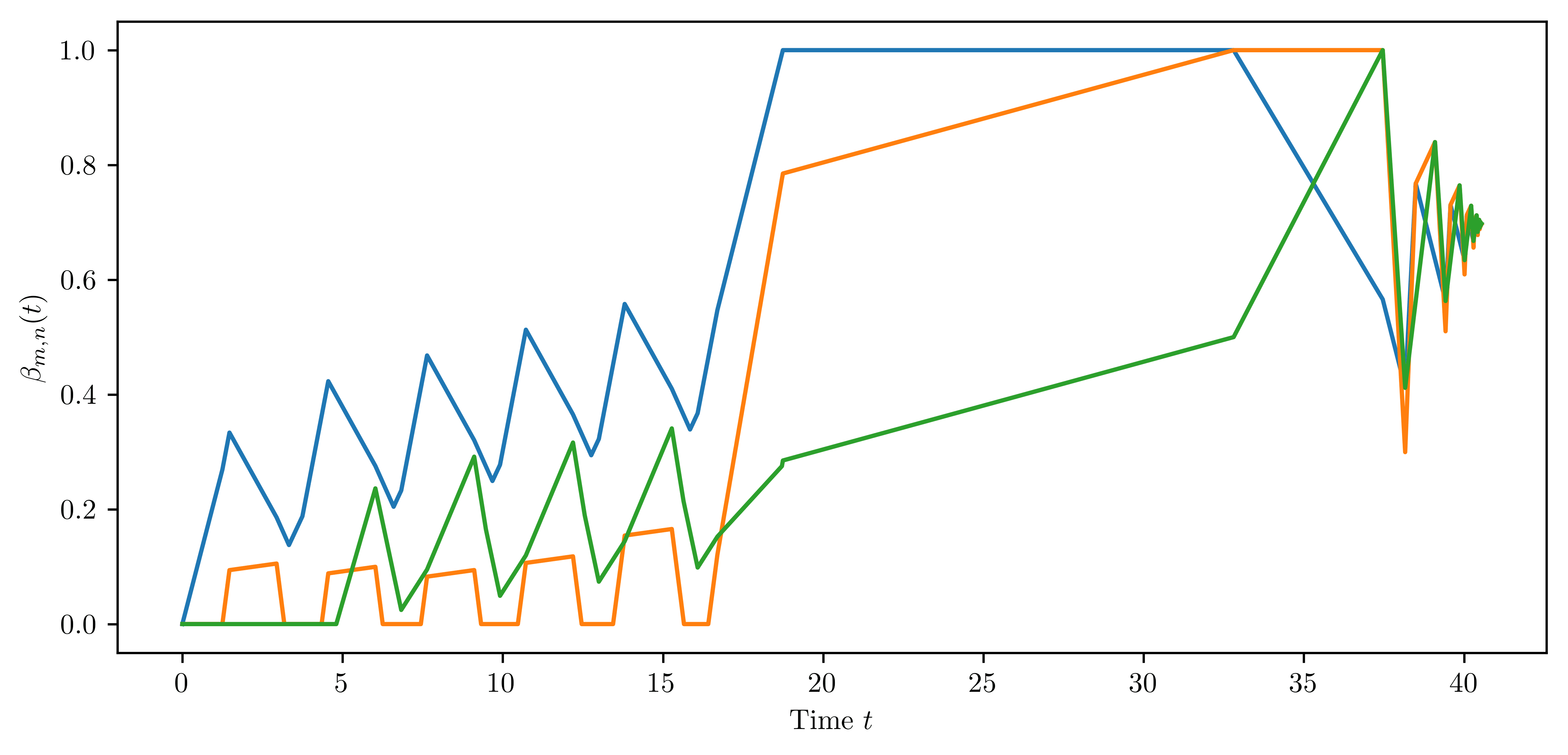}
		\caption{Functions for Example 3.7. Top left: The dynamics of $\beta_{0,0}$ (blue), $\beta_{0,1}$ (orange) and $\beta_{0,2}$ (green). Top right: The dynamics of $\beta_{1,0}$ (blue), $\beta_{1,1}$ (orange) and $\beta_{1,2}$ (green). Bottom: The dynamics of $\beta_{2,0}$ (blue), $\beta_{2,1}$ (orange) and $\beta_{2,2}$ (green).}
	\end{figure}

	The new aspect here is the finite time horizon $T_0$ where we have again convergence of $\beta_{2,\ell}(s_k)$ as $k\to\infty$ for $\ell=0,1,2$, but now $\textstyle\lim_{k\to\infty}\beta_{2,\ell}(s_k)<1$. We may say that in this case the system of individuals is generally unfit, since it is not able to remain of order $K$ at least periodically.
\end{example}

  In all of our simulations where an unfit trait becomes dominant, we have observed either one of the mentioned convergences or two traits with the same negative slope. In particular, we have not been able to observe evolutionary suicide and conjecture that due to the introduction of dormancy, evolutionary suicide is not possible. The reason for our conjecture lies in our fundamental modelling assumptions: only traits which can become dormant can also be unfit. Furthermore, assuming $\delta,C,\alpha,\tau,\kappa$ and $\sigma$ fixed, due to the continuity of the functions $\beta_{m,n}$, we conjecture that the qualitative behaviours observed (cyclic, driving towards coexistence, alternating but not periodic patterns) can be categorized into values of $p$ coming from open intervals $I\sse(0,\tfrac{1}{4})$ and as such it would be interesting to explicitly calculate these threshold values.
  
\subsection{Simulations}
Another point of interest is the size of the carrying capacity $K$. We know from Theorem \ref{Theorem: Main Theorem} that as $K\to\infty$ the exponents of the stochastic system converge under suitable rescaling of time towards the functions $\beta_{m,n}$. However, in reality the carrying capacity will be finite and thus we may ask how large $K$ needs to be, such that the limiting functions $\beta_{m,n}$ give a good description of the stochastic system, more precisely $\beta_{m,n}^K$. For this we conducted simulations but came to the conclusion, that explicitly simulating the Markov process is not feasible for $K>10^6$. The reason is twofold: On the one side, we need to increase the time horizon for the simulations as $K$ increases (since we are working on the $\log K$ time scale) and on the other side, the time steps between events become smaller as the population size increases.

\begin{figure}[htbp]
	\centering
	\begin{minipage}[b]{0.49\linewidth}
		\includegraphics[width=1\textwidth]{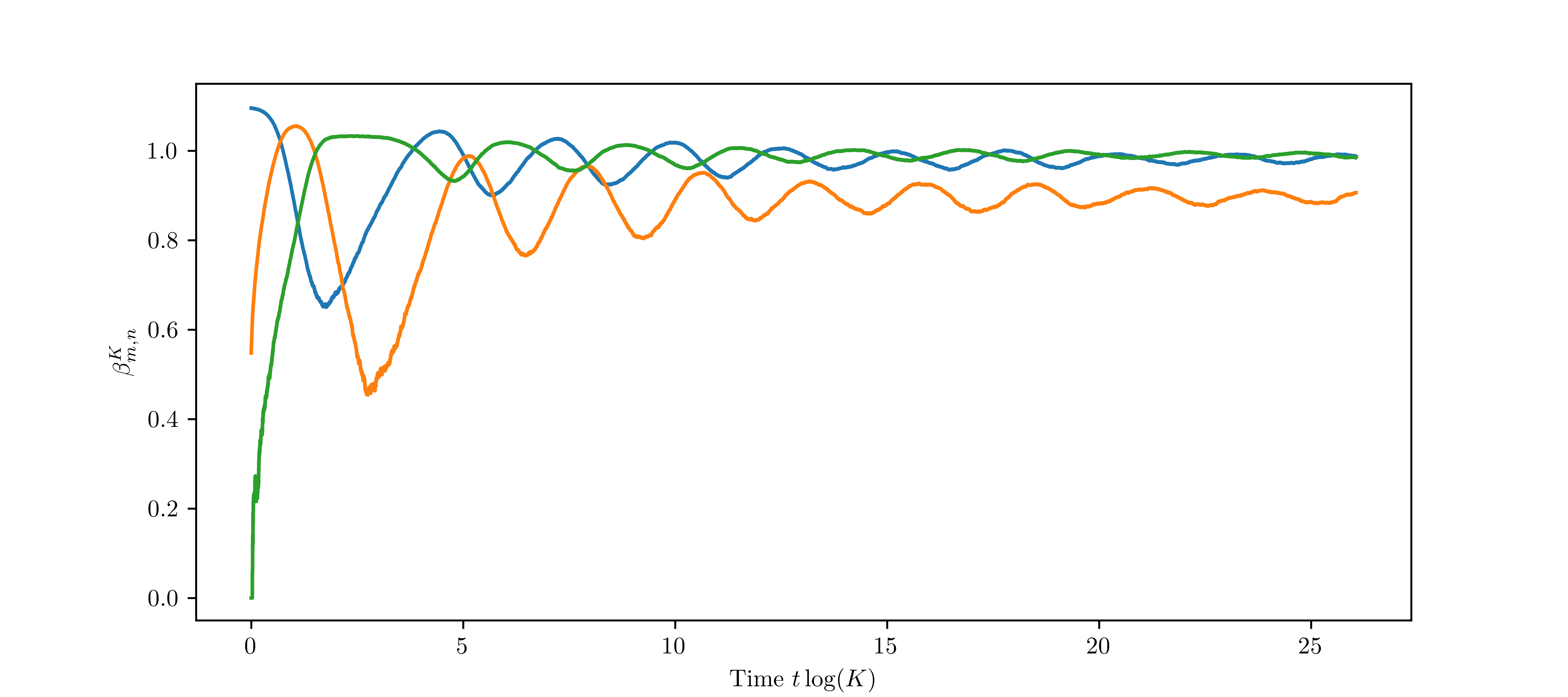}
	\end{minipage}
	\begin{minipage}[b]{0.49\linewidth}
		\includegraphics[width=1\textwidth]{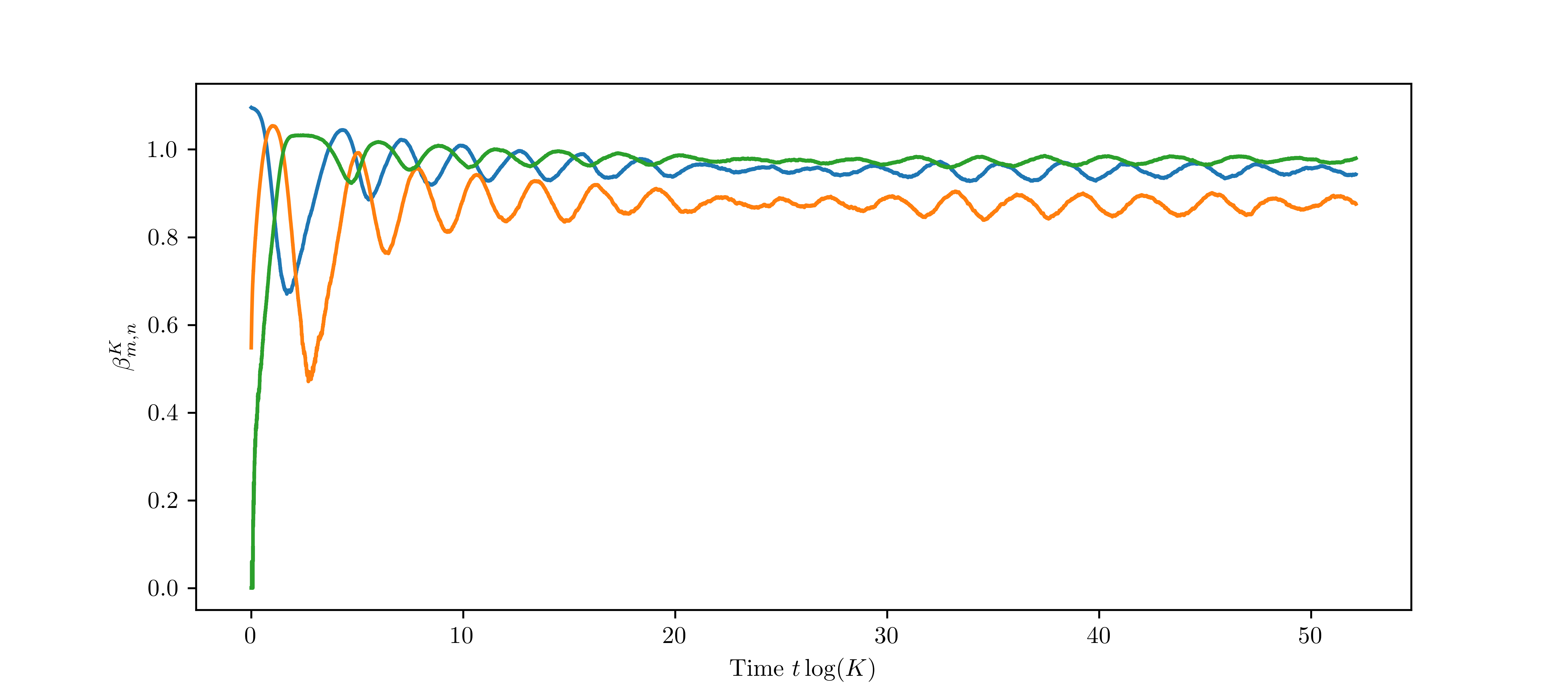}
	\end{minipage}
	\begin{minipage}[b]{0.49\linewidth}
		\includegraphics[width=1\textwidth]{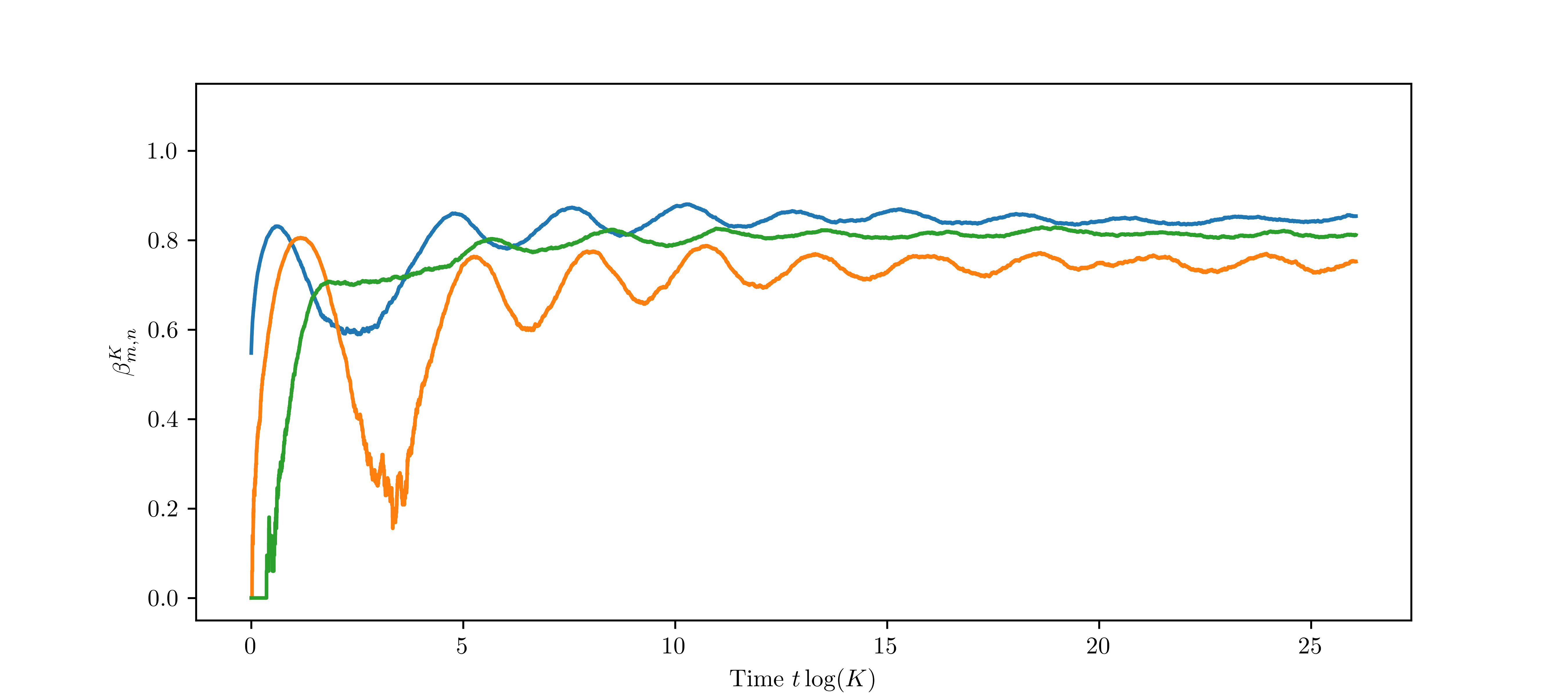}
	\end{minipage}
	\begin{minipage}[b]{0.49\linewidth}
		\includegraphics[width=1\textwidth]{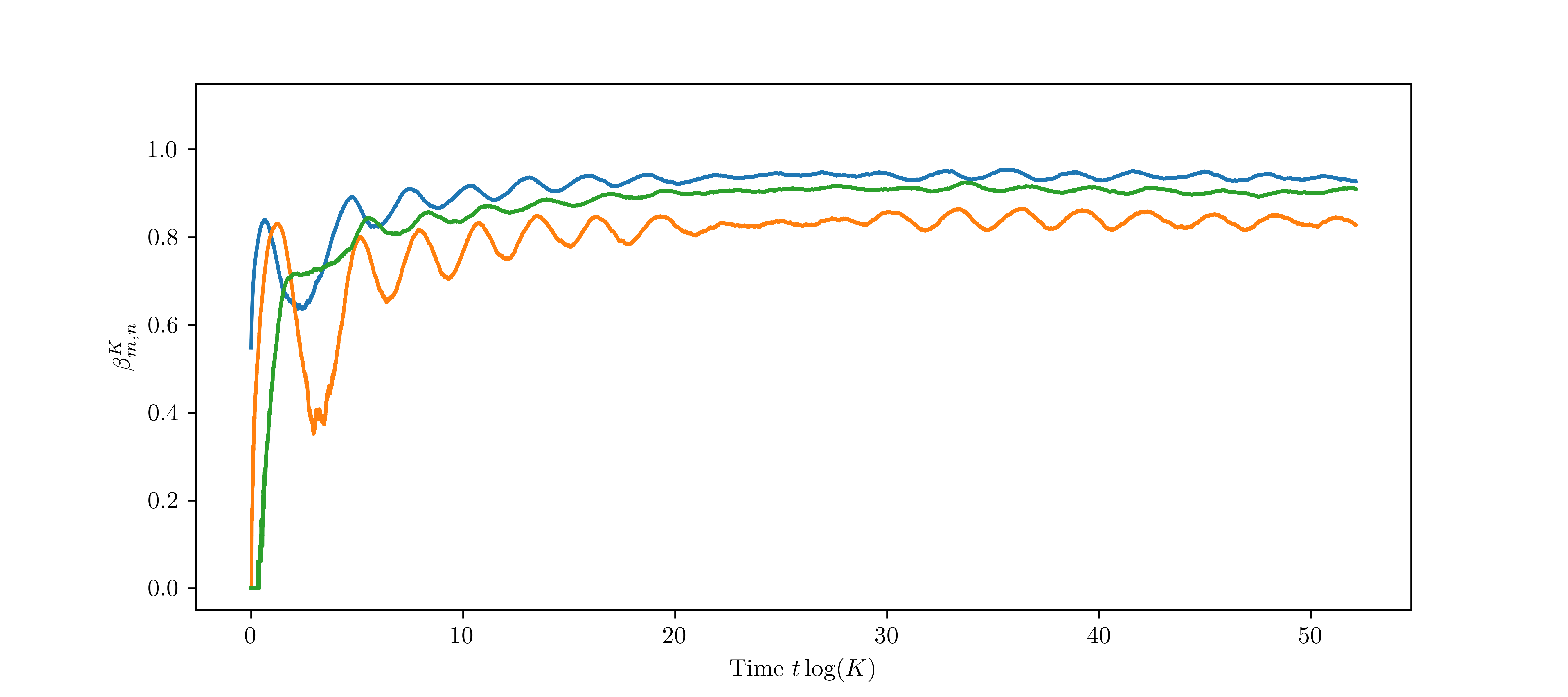}
	\end{minipage}
	\begin{minipage}[b]{0.49\linewidth}
		\includegraphics[width=1\textwidth]{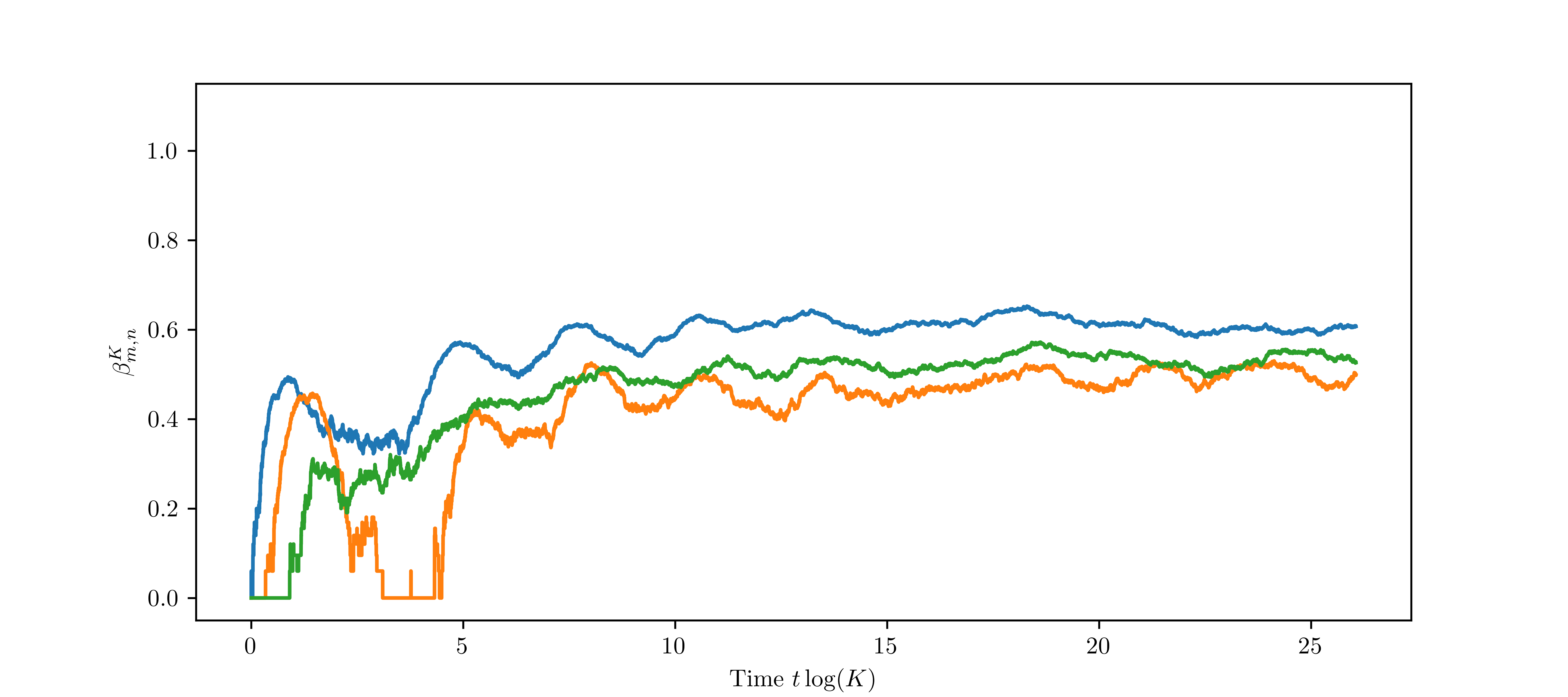}
	\end{minipage}
	\begin{minipage}[b]{0.49\linewidth}
		\includegraphics[width=1\textwidth]{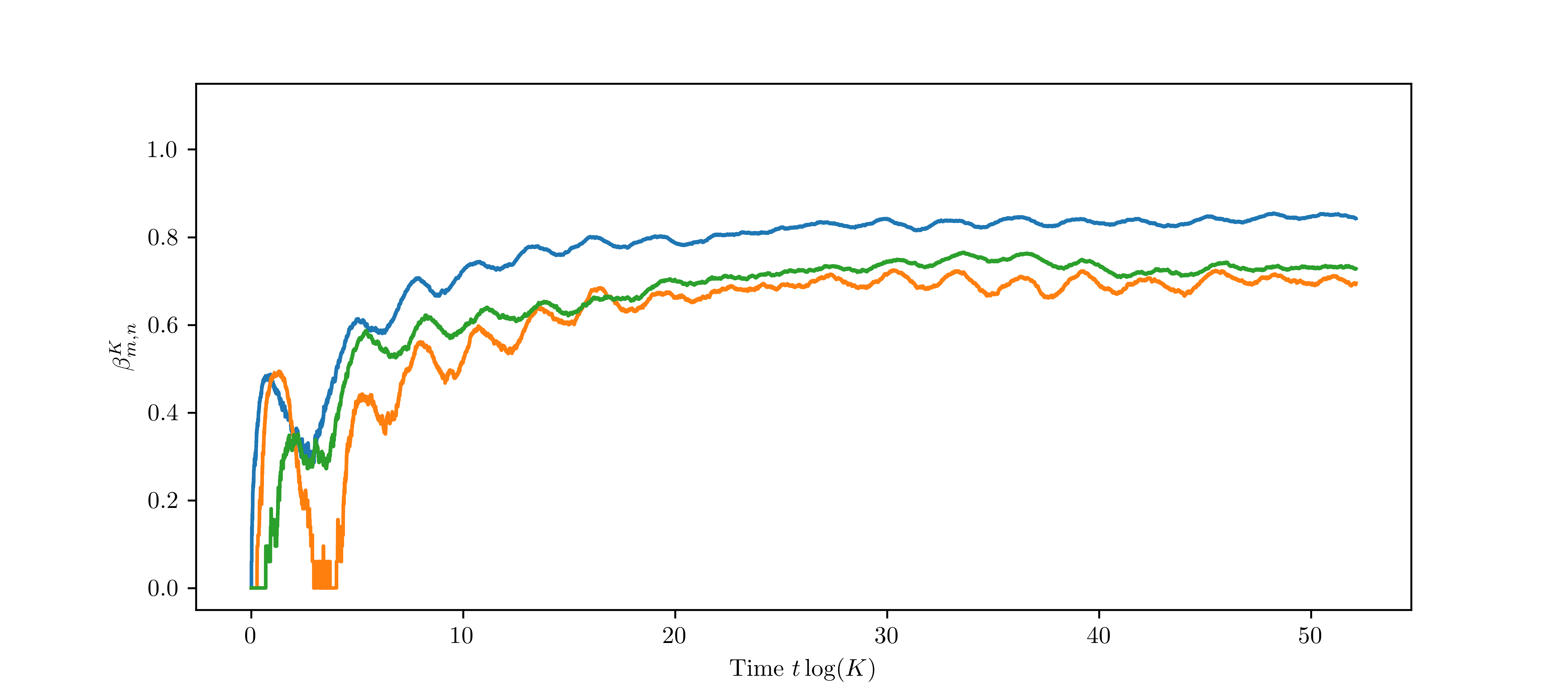}
	\end{minipage}
	\caption{Left: Simulations with the parameters as in Example \ref{Example: p=0.21} and $C=1$, $K=10^5$. Right: Simulations with parameters as in Example \ref{Example: Coexistence} and $C=1$, $K=10^5$. From top to bottom we are increasing the index $m$ of $\beta_{m,n}^K$ by $1$ and in each plot $n=0$ is blue, $n=1$ is orange and $n=2$ is green.}\label{fig:10e5}
\end{figure}

From our simulations with $K=10^5$ in Figure \ref{fig:10e5} we are able to see, that the stochastic process resembles very little spontaneous jumps when the population size is large. Note that the images on the bottom of Figure~\ref{fig:10e5} appear to be filled with jumps visible to the eye, which is due to the fact that $\sqrt{K}\approx 316$, so having an exponent of size $\tfrac{1}{2}$ means in terms of the population that around $316$ individuals are alive. Therefore, a single event causes a relatively large change in the population. Otherwise, the curves appear to be smooth, which leads us to a more efficient way of simulating the dynamics. We know, that on compact intervals the dynamics of $(\tfrac{N_{m,n}^{K,a}}{K},\tfrac{N_{m,n}^{K,d}}{K})$ without migration can be approximated by the solution of the differential equation \begin{align*}
	\dot{x}_{m,n}^a(t)&=\sigma x_{m,n}^d(t)\\
	&\hspace{-1cm}+x_{m,n}^a(t)\left[3-\frac{(m+n)\delta}{2}-C\sum_{m',n'=0}^{L}x_{m',n'}^a(t)+\tau\frac{\sum_{m'=0}^{L}\lr{\sum_{n'=0}^{n-1} x_{m',n'}^a(t)-\sum_{n'=n+1}^{L} x_{m',n'}^a(t)}}{\sum_{m',n'=0}^{L}x_{m',n'}^a(t)}\right]\\
	\dot{x}_{m,n}^d(t)&=pm\delta\cdot Cx_{m,n}^a(t)\sum_{m',n'=0}^{L}x_{m',n'}^a(t)-(\sigma+\kappa)x_{m,n}^d(t).
	\end{align*}
	
We also need to take into account the mutations which occur at birth with probability $K^{-\alpha}$. Since this probability tends to $0$ as $K\to\infty$, we do not have a mutation term in the differential equation on its own. However, as we are more interested in simulating the dynamics for some fixed $K$, we alter the derivative of the active component to be \begin{align*}
\dot{x}_{m,n}^a(t)\leftarrow\dot{x}_{m,n}^a(t)+\lr{4-\tfrac{(m+n-1)\delta}{2}}K^{-\alpha}(x_{m-1,n}(t)+x_{m,n-1}(t)),
\end{align*}
which leads to a mixed approximation of the stochastic system. Now, choosing $K$ fixed, we have on one side the usual approximation via an ODE and on the other side we have a non-zero mutation probability which is in accordance with the model. Determining the solution to these systems is numerically very efficient compared to a direct simulation and allows us to simulate the behaviour for large $K$. We refer to the exponents of the population sizes determined by solving the system as $\gamma_{m,n}^K$ However, we need to choose time steps $\Delta t$ for solving the ODE, which leads to complications: The process $N_{m,n}^K$ is only taking integer values, so in particular, if the rescaled process satisfies $\tfrac{N_{m,n}^K}{K}<\tfrac{1}{K}$, then the population should be extinct. Now, if $\Delta t$ is too small compared with $1/K$, then it may happen that the immigration during a time step of length $\Delta t$ is not sufficiently strong to start the population. Another numerical issue is the time horizon, on which we need to solve the differential equation. After rescaling, we need to solve until time $T\log K$, which in our cases would usually have $T\in[50,200]$ and thus may lead to some numerical instabilities. In particular, systems such as in Example 3.4 are sensitive to small deviations.

\begin{figure}[htbp]
	\centering
	\begin{minipage}[b]{0.49\linewidth}
		\includegraphics[width=1\textwidth]{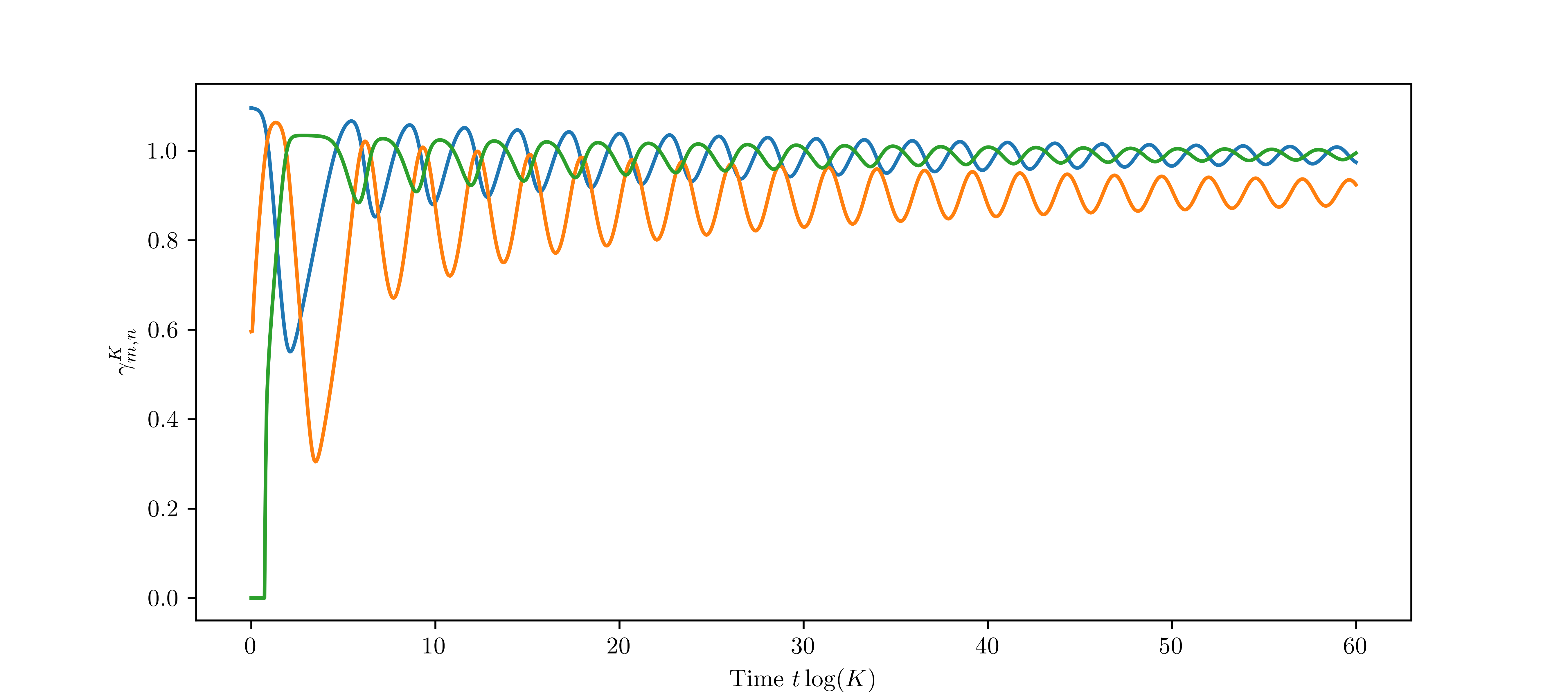}
	\end{minipage}
	\begin{minipage}[b]{0.49\linewidth}
		\includegraphics[width=1\textwidth]{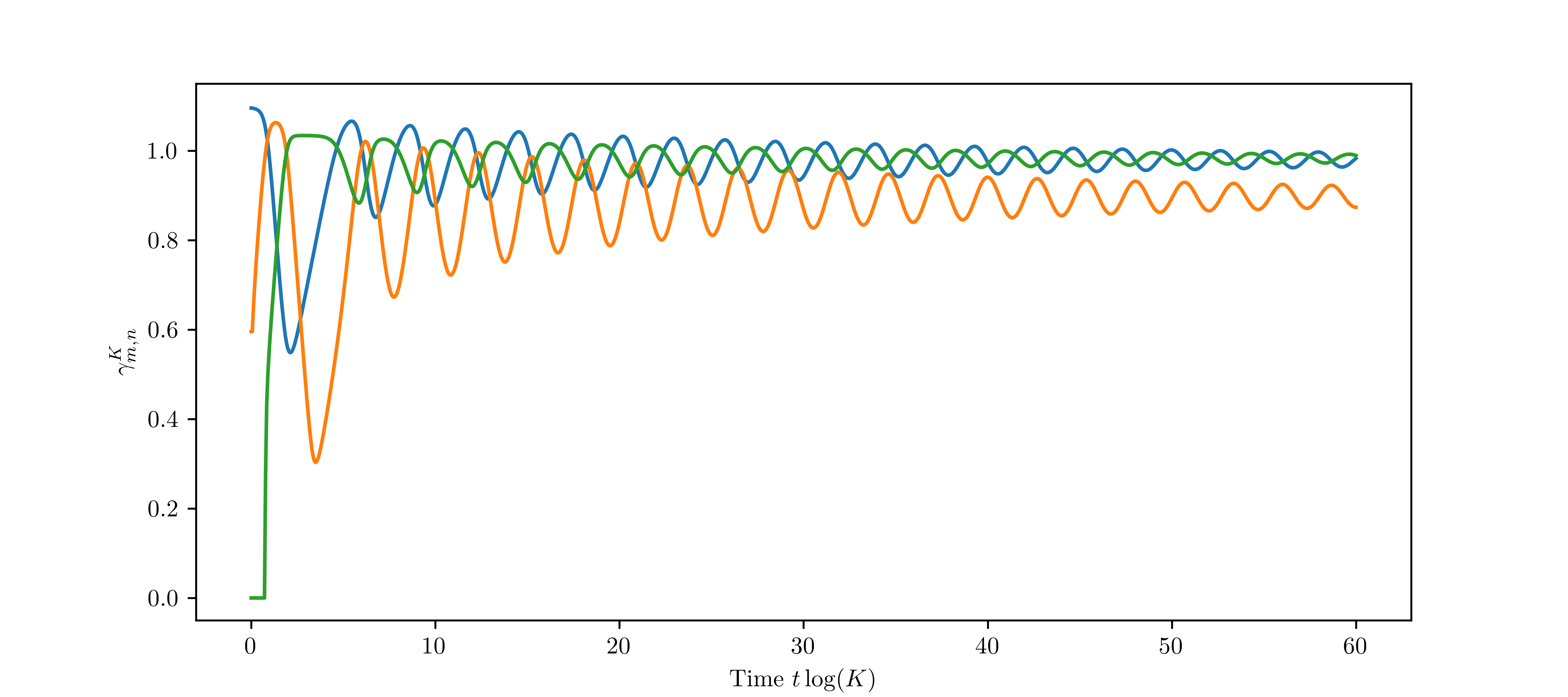}
	\end{minipage}
	\begin{minipage}[b]{0.49\linewidth}
		\includegraphics[width=1\textwidth]{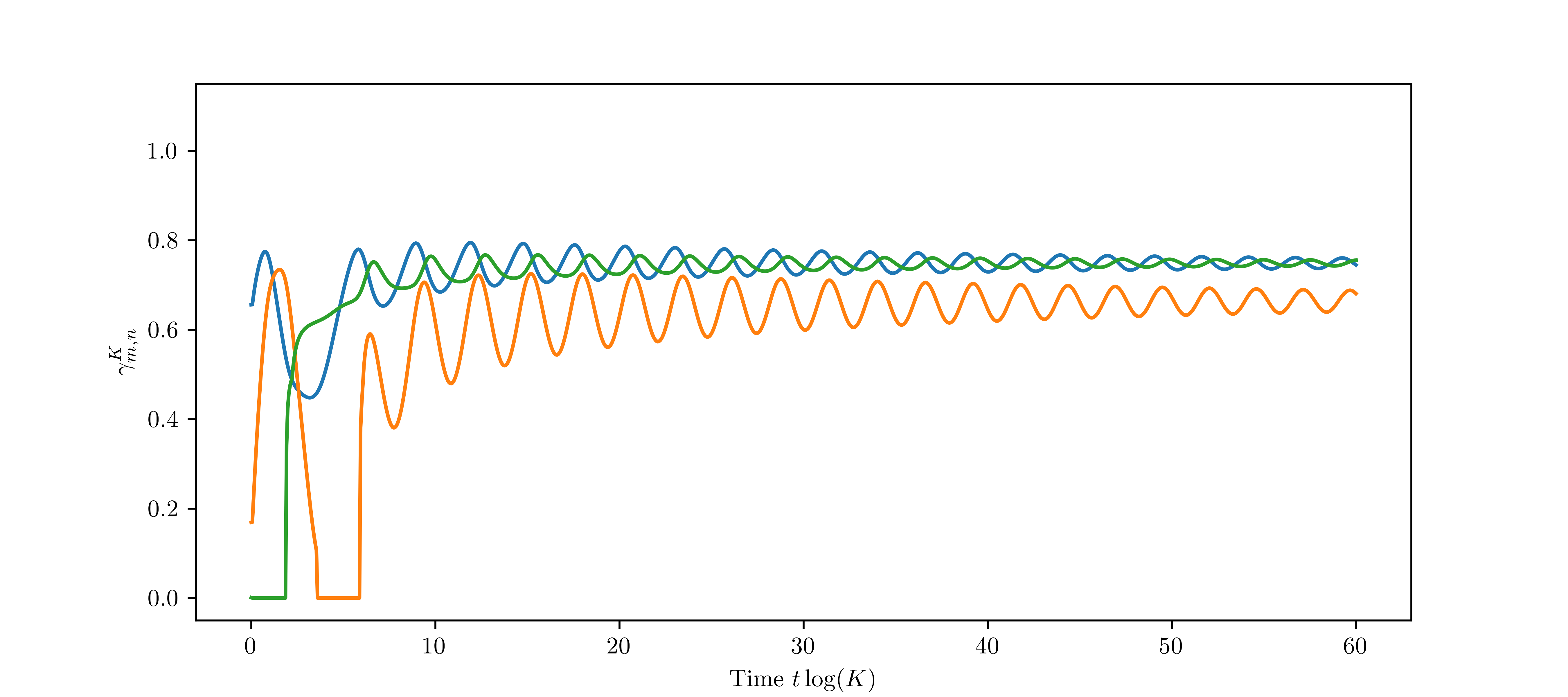}
	\end{minipage}
	\begin{minipage}[b]{0.49\linewidth}
		\includegraphics[width=1\textwidth]{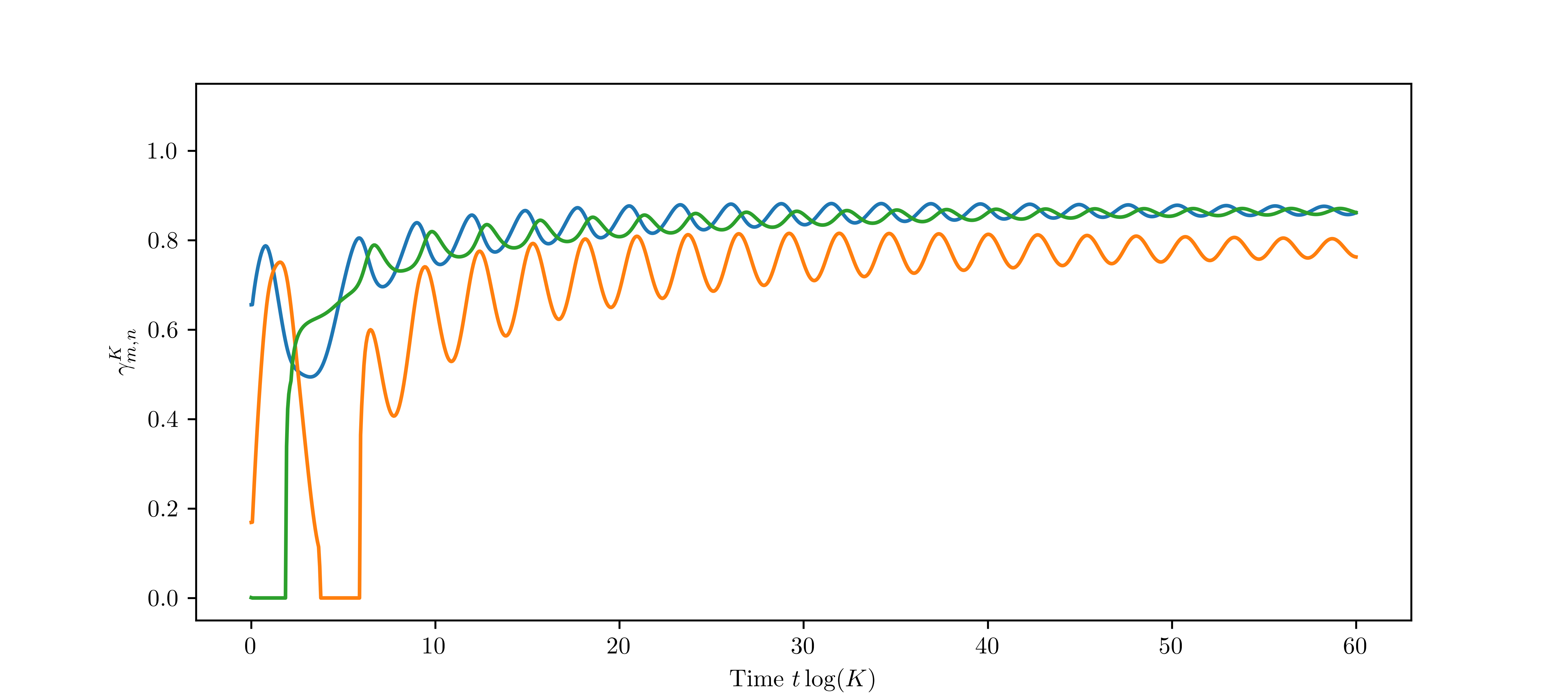}
	\end{minipage}
	\caption{Left: Solving the ODE with Euler scheme, $K=10^5$ and $\Delta t=T\log(K)K^{-1}$ with parameters as in Example \ref{Example: p=0.21} and $C=1$. Right: Solving the ODE with Euler scheme, $K=10^5$ and $\Delta t=T\log(K)K^{-1}$ with parameters as in Example \ref{Example: Coexistence} and $C=1$. From top to bottom we have the usual arrangement of the Exponents $\gamma_{m,n}^K$. We have omitted the plots $\gamma_{2,n}^K$ due to the lack of incoming mutations.}
	\label{Fig: 10^5}
\end{figure}

Comparing Figure \ref{Fig: 10^5} with the stochastic simulations, the ODE approach gives us a similar behaviour. Hence, we are confident that the solution to the differential equation will be similar to the stochastic system if we increase $K$.
Obviously, these plots (stochastic simulation and ODE solution) have very little in common with the limits which we have discussed in the corresponding examples. However, when thinking of bacterial populations, $K=10^5$ is still very small.

\begin{figure}[htbp]
	\centering
	\begin{minipage}[b]{0.49\linewidth}
		\includegraphics[width=1\textwidth]{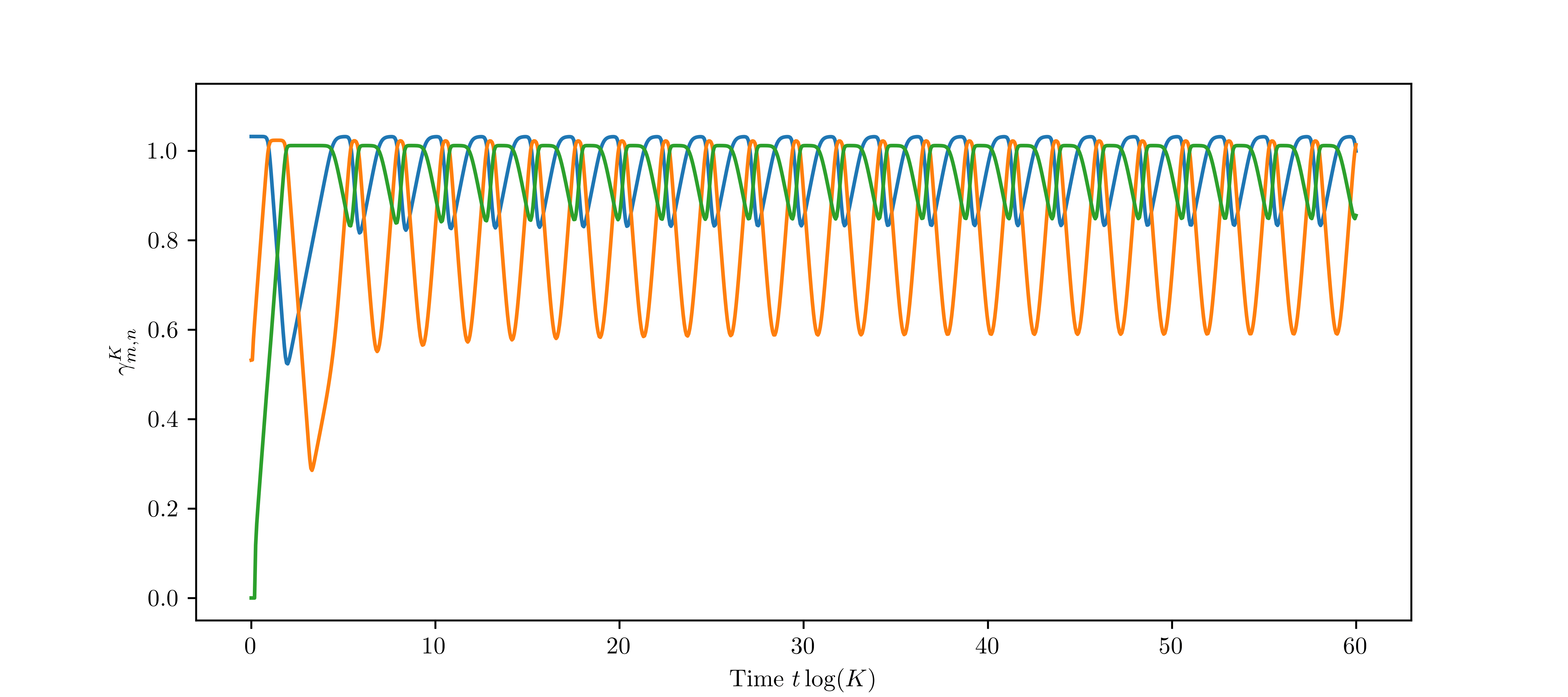}
	\end{minipage}
	\begin{minipage}[b]{0.49\linewidth}
		\includegraphics[width=1\textwidth]{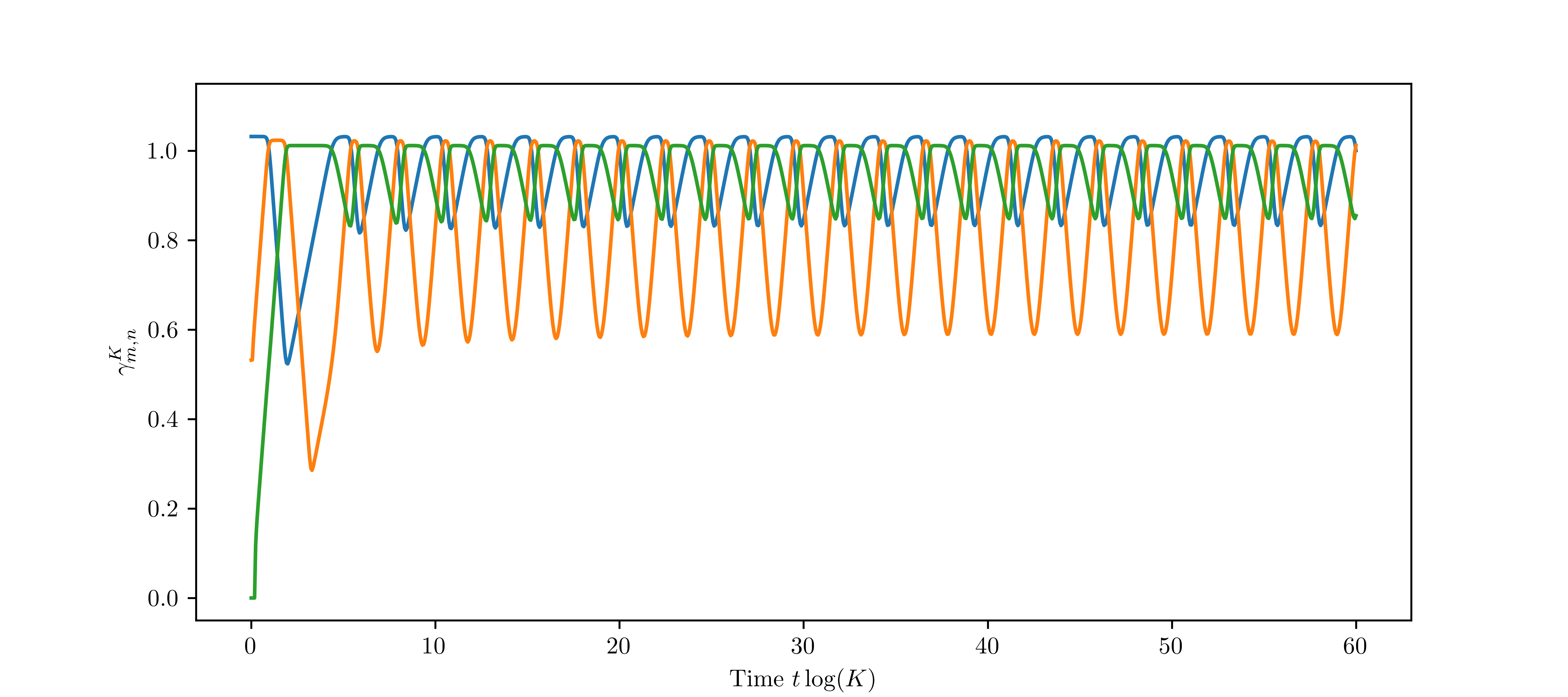}
	\end{minipage}
	\begin{minipage}[b]{0.49\linewidth}
		\includegraphics[width=1\textwidth]{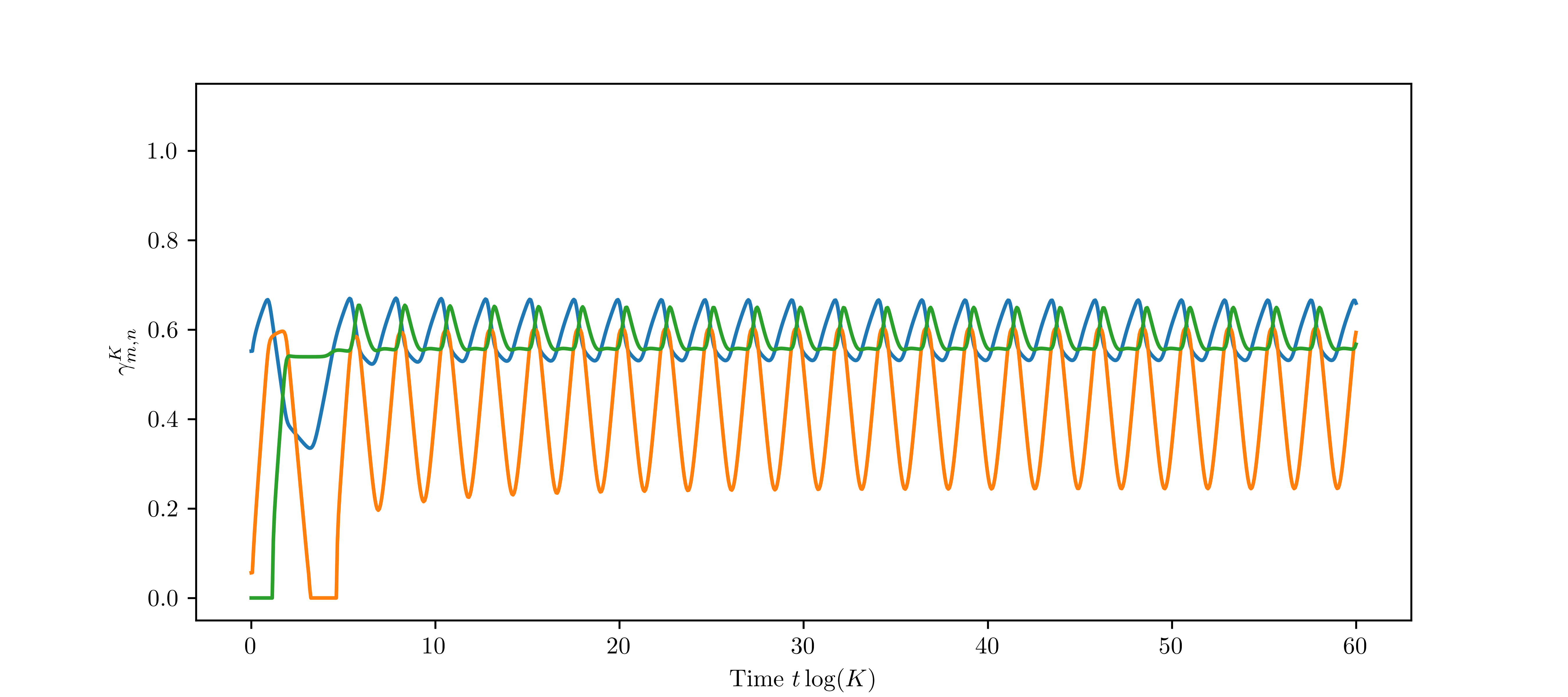}
	\end{minipage}
	\begin{minipage}[b]{0.49\linewidth}
		\includegraphics[width=1\textwidth]{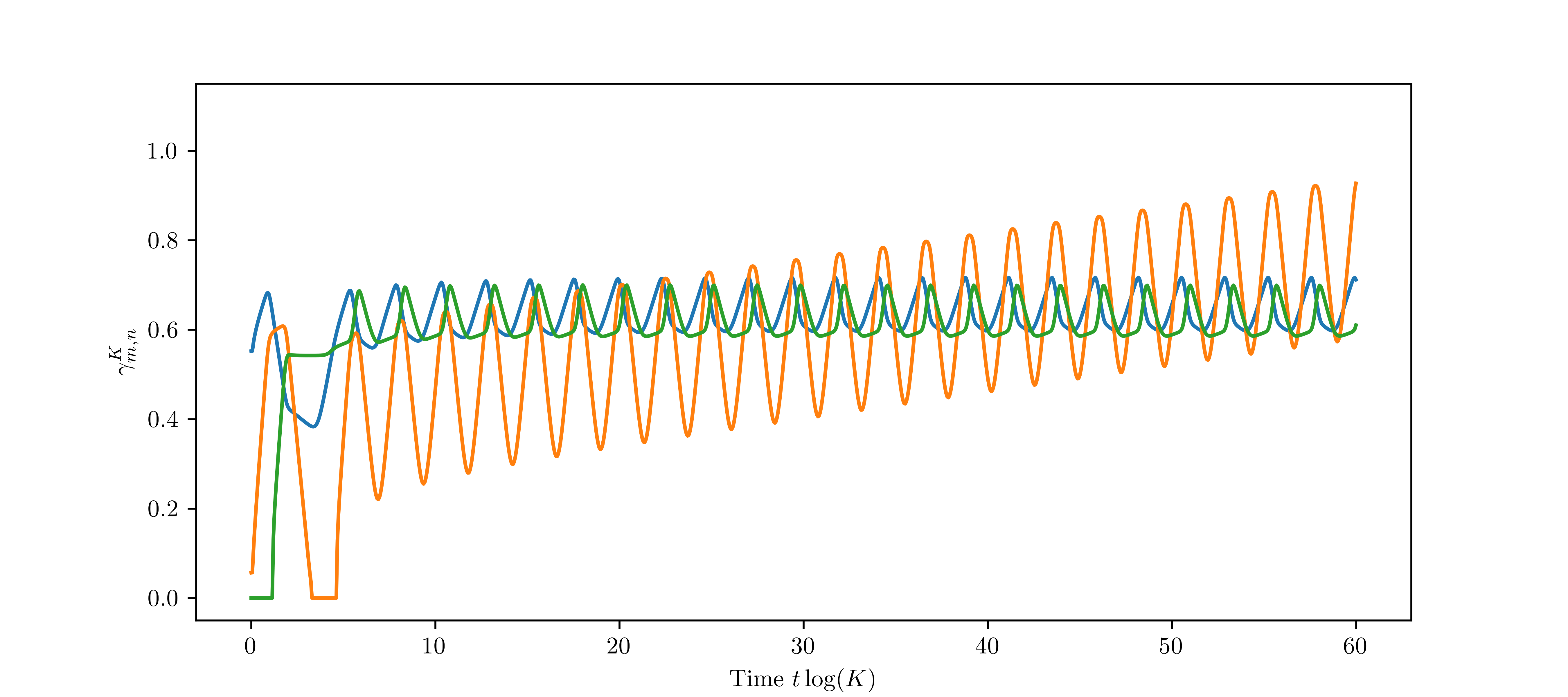}
	\end{minipage}
\begin{minipage}[b]{0.49\linewidth}
	\includegraphics[width=1\textwidth]{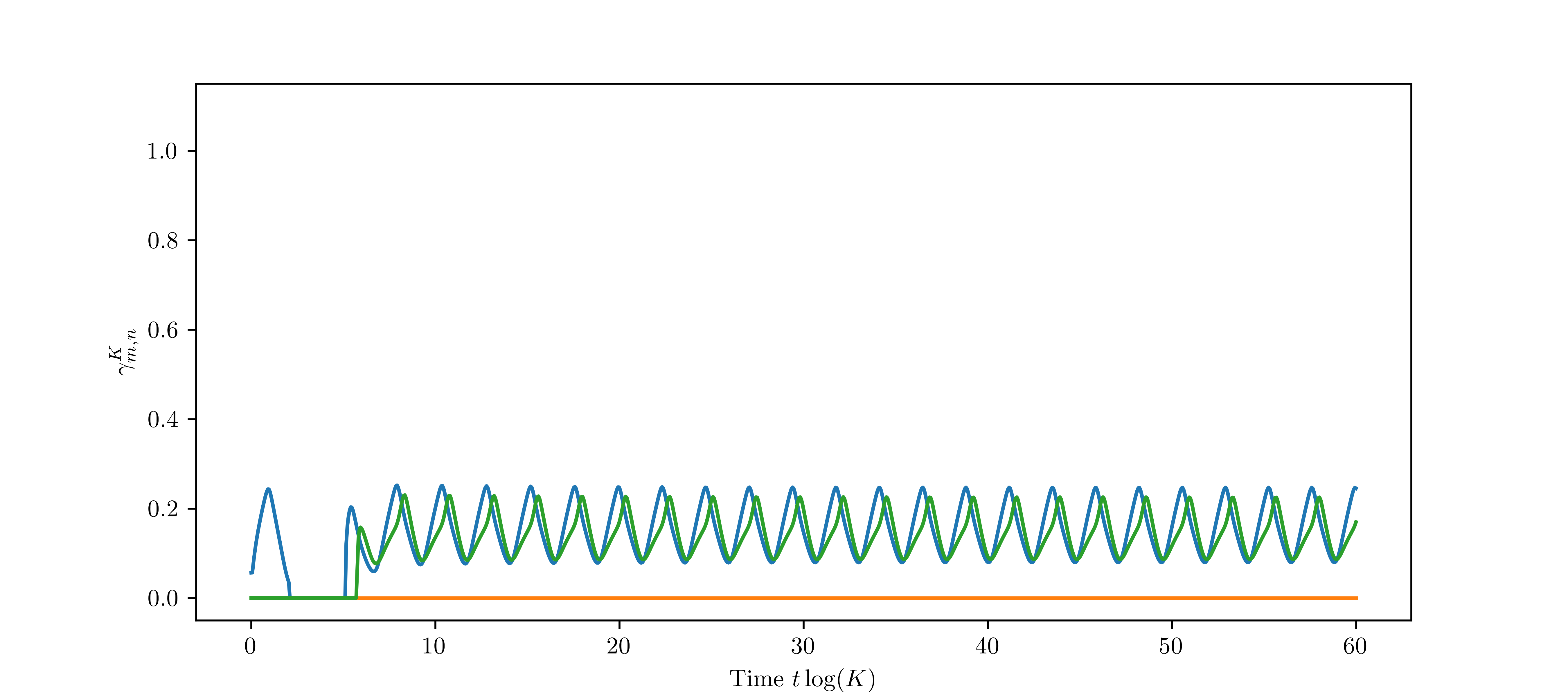}
\end{minipage}
\begin{minipage}[b]{0.49\linewidth}
	\includegraphics[width=1\textwidth]{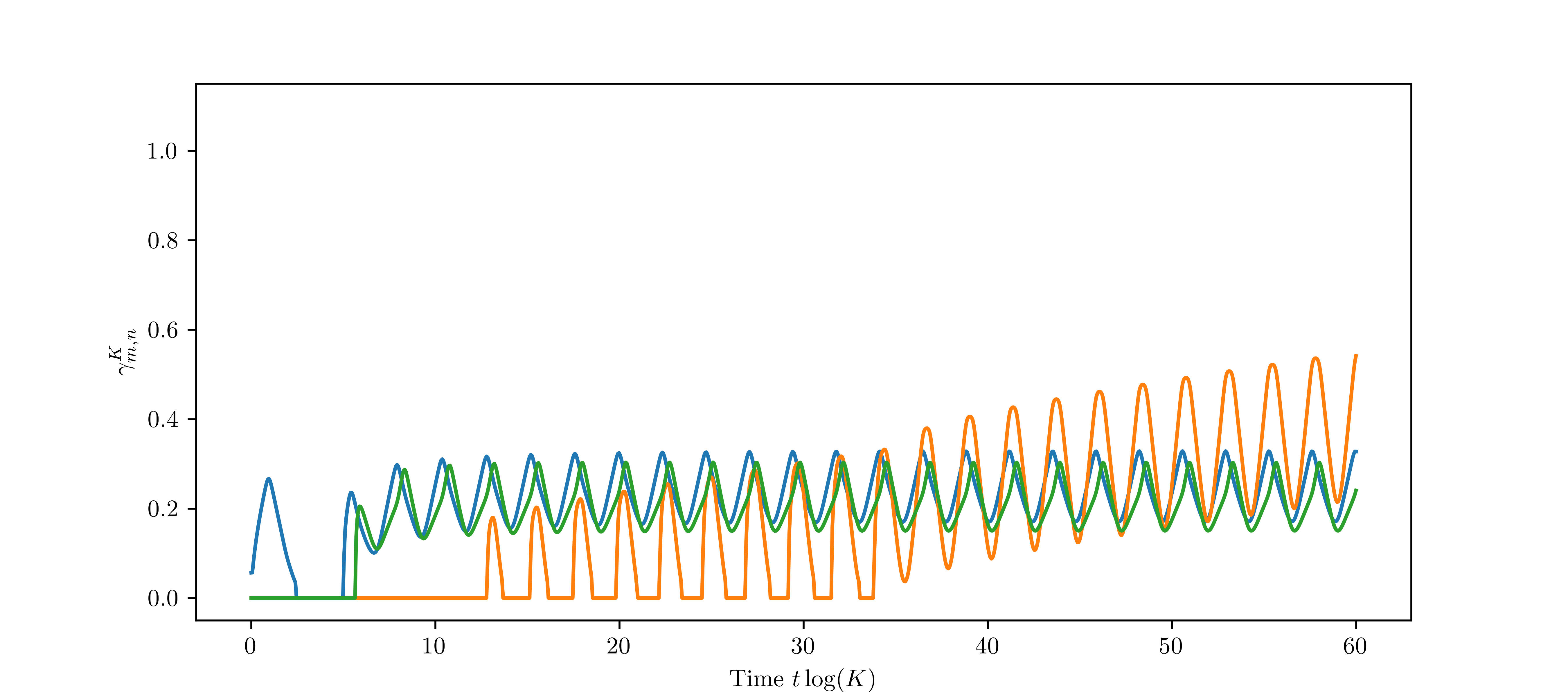}
\end{minipage}
	\caption{Left: Solving the ODE with Euler scheme, $K=10^{15}$ and $\Delta t=T\log(K)10^{-5}$ with parameters as in Example \ref{Example: p=0.21} and $C=1$. Right: Solving the ODE with Euler scheme, $K=10^{15}$ and $\Delta t=T\log(K)10^{-5}$ with parameters as in Example \ref{Example: Coexistence} and $C=1$. From top to bottom we have the usual arrangement of the Exponents $\gamma_{m,n}^K$.}
	\label{Figure: Fig 10^15}
\end{figure}

In Figure \ref{Figure: Fig 10^15} with $K=10^{15}$, the limiting functions are a much better approximation of the exponents, although $\Delta t$ is still too small in relation to $K$ for us to see any mutations arriving in $\gamma_{2,n}^K$ when $p=0.21$. Also, note that the coexistence, which we observed for $p=0.22$ in the limit, is in fact a normal cycle of residency between $(0,0)$, $(\delta,\delta)$ and $(0,2\delta)$. Only when letting $K\to\infty$, these cycles become ever shorter and lead to coexistence. Another interesting effect of finite populations is the prolonged duration it takes for the trait $(\delta,\delta)$ to become resident in the population. As $K$ increases, this duration becomes shorter on the $\log K$ time scale. Although we cannot be certain about the reason for this mechanism, we think that it may be due to the competition phases which vanish on the $\log K$ time scale as $K\to\infty$ but take up a non-negligible amount of time for fixed $K$. In particular, the competition against traits with dormancy takes longer due to the dormancy component and hence the convergence is slower in $K$ compared with systems with only HGT and no dormancy.
\section{Proof of Theorem \ref{Theorem: Main Theorem}}\label{Section: Proof}

We will give a short sketch of the proof, which is very similar to \cite[Theorem 2.1]{champagnat2019stochastic}. The idea is to decompose the time scale into two different kinds of phases: First there are long phases $[\sigma_k^K\log K, \theta_k^K\log K]$ which then are followed by short intermediate phases $[\theta_k^K\log K,\sigma_{k+1}^K\log K]$. During the long phases, there is exactly one trait, whose population size is close to its equilibrium and all other traits are of size $o(K)$. During the short phases, another trait emerges and becomes significant for competitive events and due to competition the initially resident trait is replaced by the emerging trait. We will show that \[\lim\limits_{K\to\infty}\sigma_{k+1}^K=\lim\limits_{K\to\infty}\theta_k^K=s_k\] with probability converging to $1$ and hence on the $\log K$ timescale the intermediate phases vanish.

Since we only want to show this theorem in the case where only fit individuals (with a positive active equilibrium size) can become resident, we do not need to distinguish these cases, so our proof is simplified in this aspect compared to \cite{champagnat2019stochastic}. However, during the intermediate phases we need to observe whether none, one or both of the involved traits can become dormant and in which way the horizontal transfer is acting, if at all.

Thus the proof will be performed by induction on $k$. During the long phases, we will make heavy use of coupling arguments to show the convergence $\beta_{m,n}^{K}\to \beta_{m,n}$. This will again be done by induction on the traits, where we need a nested induction, since the horizontal transfer can be exerted onto all traits with a lower second component. For these phases, we will make extensive use of Theorem \ref{Theorem: Main Convergence} and Theorem \ref{Theorem: Onedimensionalconvergence}, so we refer to Appendix \ref{Section: Appendix B}. During the intermediate phases, we need the corresponding competition results, which can be found in Appendix \ref{Section: Appendix C}.

To make the structure of the induction more obvious, we give the general idea here: The trait space $\calX$ can be visualized as the $\delta$-grid on $[0,4]^2$ and we first show the convergence on a time interval for the trait $(0,0)$ as the base case. Then we advance our induction in the direction of dormancy to the trait $(\delta,0)$, where we make another base case in order to highlight the differences in the bi-type case. This is then followed by the induction step for traits $(m\delta,0)$. In this fashion, we can then assume the result to hold for all traits $(\tilde{m}\delta,\tilde{n}\delta)$ with $\tilde{n}\leq n$ and $\tilde{m}\in\{0,\ldots,L\}$ for some fixed $n\in\{0,\ldots,L-1\}$. Then we can show the result for traits $(m\delta, (n+1)\delta)$ via an induction on $m$ as for the case of $(m\delta,0)$.
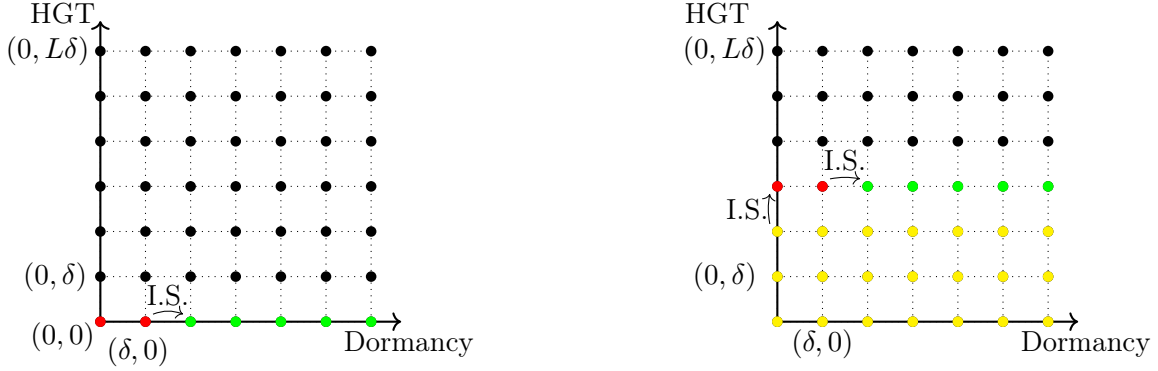
\begin{figure}[htbp]
	\begin{tikzpicture}
	\draw[->,thick](0,0)--(4,0);
	\draw[->,thick](0,0)--(0,4);
	\draw[dotted, step=.6cm](0,0) grid (3.6,3.6);
	\node at (-0.5,-0.2) {$(0,0)$};
	\node at (0.5,-0.4) {$(\delta,0)$};
	\node at (4.1,-0.3) {Dormancy};
	\node at (-0.6,0.6) {$(0,\delta)$};
	\node at (-0.5,4.1) {HGT};
	\node at (-0.7,3.6) {$(0,L\delta)$};
	\foreach \x in {0,0.6,1.2,1.8,2.4,3,3.6}{
		\foreach \y in {0,0.6,1.2,1.8,2.4,3,3.6}
		{\node[circle, inner sep=0pt, minimum size=4pt, fill=black] at (\x,\y) {};
	}};
	\node[circle, inner sep=0pt, minimum size=4pt, fill=red] at (0,0) {};
	\node[circle, inner sep=0pt, minimum size=4pt, fill=red] at (0.6,0) {};
	
	\node[circle, inner sep=0pt, minimum size=4pt, fill=green] at (2.4,0) {};
	\node[circle, inner sep=0pt, minimum size=4pt, fill=green] at (1.2,0) {};
	\node[circle, inner sep=0pt, minimum size=4pt, fill=green] at (1.8,0) {};
	\node[circle, inner sep=0pt, minimum size=4pt, fill=green] at (3,0) {};
	\node[circle, inner sep=0pt, minimum size=4pt, fill=green] at (3.6,0) {};
	\draw[->] (0.7,0.1) to [bend left=20] (1.1,0.1);
	\node at (0.9,0.35) {I.S.};

	\draw[->,thick](9,0)--(13,0);
	\draw[->,thick](9,0)--(9,4);
	\draw[dotted, step=0.6cm](9,0) grid (12.6,3.6);

	\node at (9.6,-0.3) {$(\delta,0)$};
	\node at (13.1,-0.3) {Dormancy};
	\node at (8.3,0.6) {$(0,\delta)$};
	\node at (8.2,4.1) {HGT};
	\node at (8.3,3.6) {$(0,L\delta)$};
	\foreach \x in {9,9.6,10.2,10.8,11.4,12,12.6}{
		\foreach \y in {0,0.6,1.2,1.8,2.4,3,3.6}
		{\node[circle, inner sep=0pt, minimum size=4pt, fill=black] at (\x,\y) {};
	}}
\foreach \y in {0, 0.6, 1.2}{
	\foreach \x in {9,9.6,10.2,10.8,11.4,12,12.6}
	{\node[circle, inner sep=0pt, minimum size=4pt, fill=yellow] at (\x,\y) {};
}}

	\node[circle, inner sep=0pt, minimum size=4pt, fill=red] at (9,1.8) {};
\node[circle, inner sep=0pt, minimum size=4pt, fill=red] at (9.6,1.8) {};

\node[circle, inner sep=0pt, minimum size=4pt, fill=green] at (10.2,1.8) {};
\node[circle, inner sep=0pt, minimum size=4pt, fill=green] at (10.8,1.8) {};
\node[circle, inner sep=0pt, minimum size=4pt, fill=green] at (11.4,1.8) {};
\node[circle, inner sep=0pt, minimum size=4pt, fill=green] at (12,1.8) {};
\node[circle, inner sep=0pt, minimum size=4pt, fill=green] at (12.6,1.8) {};

\draw[->] (9.7,1.9) to [bend left=10] (10.1,1.9);
\draw[->] (8.9,1.3) to [bend left=10] (8.9,1.7);
\node at (9.9, 2.15) {I.S.};
\node at (8.6, 1.5) {I.S.};

	\end{tikzpicture}
	\caption{The schematic induction: In the left picture, red are base cases, and the convergence for the remaining green traits are proven via the induction step. This is Step 1a) in the proof. Then in Step 1b) we make the induction step in the direction of HGT which is shown in the second image. For this we assume the result to be shown for all yellow traits. Then we have the base cases in red and subsequently another induction step again in green.}
\end{figure}

Throughout the proof we will use various kinds of branching processes. We denote by $BP_K(b,d,\beta)$ a one-dimensional branching process with birth rate $b$, death rate $d$ and initial condition $\lfloor K^{\beta}-1\rfloor$. Also, we denote by $BPI_K(b,d,a,c,\beta)$ a one-dimensional branching process with birth rate $b$, death rate $d$, immigration at rate $K^ce^{at}$ and initial condition $\lfloor K^{\beta}-1\rfloor $. We refer to \cite[Appendix A,B]{champagnat2019stochastic} for results concerning these processes. With $BBPI_K(b_1,b_2,d_1,d_2,\sigma_1,\sigma_2,a,c,\beta,\gamma)$ we denote a two-dimensional branching process with birth rates $b_1,b_2$, death rates $d_1,d_2$, switching rates $\sigma_1,\sigma_2$, immigration into the first coordinate at rate $K^ce^{at}$ and initial condition $\lfloor(K^{\beta}-1,K^\gamma-1)\rfloor$. We refer to Appendix \ref{Section: Appendix B}.

Further, we denote by $LBDI_K(b,d,C,\gamma)$ a one-dimensional logistic birth and death process with birth rate $b$, death rate $d+\tfrac{CN}{K}$, where $N$ denotes the population size, and immigration at a predictable rate $\gamma(t)$ at time $t\geq 0$. We refer to \cite[Appendix C]{champagnat2019stochastic}. Also $LBBI_K(b_1,d_1,d_2,\sigma_2,p,C,\gamma_1)$ denotes the distribution of a two-dimensional logistic birth and death process with birth rates $b_1, 0$, death rates $d_1+\tfrac{(1-p)CN}{K}, d_2$, where $N$ denotes the population size of the first component, switching rates $\tfrac{pCN}{K}, \sigma_2$ and immigration into the first component at a predictable rate $\gamma_1(t)$ at time $t\geq 0$. We refer to Appendix \ref{Section: Appendix C}.

\begin{proof}[Proof of Theorem \ref{Theorem: Main Theorem}]
	We distinguish two cases: Either there is only one phase (that is, all traits are unfit against the initially resident trait $(0,0)$) or there are at least two phases. In either case we now consider a fixed time $T>0$. 
	
	\subsection{Proof of Theorem \ref{Theorem: Main Theorem}, Case 1}
	\begin{description}
		\item[Case 1: $S((x,y),(0,0))\leq0$ for all $(x,y)\in\calX$] 
		As in \cite{champagnat2019stochastic}, we define a time $\theta_1^K$, during which trait $(0,0)$ is resident. Let $\veps_1>0$ and $\rho>0$ and define the time \begin{align*}
		\theta_1^K\coloneqq\inf\Bigg\{ t\geq 0\ \Bigg\vert N_{0,0}^K(t\log K)\notin\Bigg[\lr{\frac{3}{C}-3\veps_1}&K,\lr{\frac{3}{C}+3\veps_1}K\Bigg]\\
		&\text{or}\sum_{(m,n)\neq(0,0)}N_{m,n}^K(t\log K)\geq \rho\veps_1K \Bigg\}.
		\end{align*}
	\end{description}
Then we easily calculate that $\beta_{m,n}(t)=(1-(n+m)\alpha)\vee 0$ for $t\leq \theta_1^K\wedge T$.
\subsubsection*{Step 0: Deriving bounds on the rates}
Since all of the following couplings in Step 1 will need some bounds on the rates, we derive them here for our process up to time $\theta_1^K$.  In particular, the arrivals in the active component of trait $(m,n)$ due to reproduction or transfer at time $t\leq\theta_1^K\wedge T$ occur at rate \begin{align*}
N_{m,n}^{K,a}(t\log K)\lr{\lr{4-\frac{(m+n)\delta}{2}}\lr{1-K^{-\alpha}}+\tau\frac{\sum_{m'=0, n'<n}^{L}N_{m',n'}^{K,a}(t\log K)}{\sum_{m',n'=0}^LN_{m',n'}^{K,a}(t\log K)}},
\end{align*}
which satisfies for $K$ large enough and $n>0$ the inequality \begin{align*}
&N_{m,n}^{K,a}(t\log K)\lr{4-\frac{(m+n)\delta}{2}-\veps_1+\tau\frac{3-3C\veps_1}{3+C(3+\rho)\veps_1}}\\
\leq& N_{m,n}^{K,a}(t\log K)\lr{\lr{4-\frac{(m+n)\delta}{2}}\lr{1-K^{-\alpha}}+\tau\frac{\sum_{m'=0, n'<n}^{L}N_{m',n'}^{K,a}(t\log K)}{\sum_{m',n'=0}^LN_{m',n'}^{K,a}(t\log K)}}\\
\leq & N_{m,n}^{K,a}(t\log K)\lr{4-\frac{(m+n)\delta}{2}+\tau}.
\end{align*}
These bounds are true for $n=0$ when we remove the terms involving $\tau$.
The arrivals due to incoming mutations occur at rate \[
(N_{m-1,n}^{K,a}(t\log K)+N_{m,n-1}^{K,a}(t\log K))\lr{4-\frac{(m+n-1)\delta}{2}}\frac{K^{-\alpha}}{2}.
\]
Further, the departures from the active population due to transfer or death occur at rate \[
N_{m,n}^{K,a}(t\log K)\lr{1+\frac{C(1-pm\delta)}{K}\sum_{m',n'=0}^{L}N_{m',n'}^{K,a}(t\log K)+\tau\frac{\sum_{m'=0,n'>n}^{L}N_{m',n'}^{K,a}(t\log K)}{\sum_{m',n'=0}^L N_{m',n'}^{K,a}(t\log K)}},
\]
which can be bounded for $\rho$ small enough by \begin{align*}
&N_{m,n}^{K,a}(t\log K)\lr{4-3pm\delta-3C(1-pm\delta)\veps_1}\\
\leq& N_{m,n}^{K,a}(t\log K)\lr{1+\frac{C(1-pm\delta)}{K}\sum_{m',n'=0}^{L}N_{m',n'}^{K,a}(t\log K)+\tau\frac{\sum_{m'=0,n'>n}^{L}N_{m',n'}^{K,a}(t\log K)}{\sum_{m',n'=0}^L N_{m',n'}^{K,a}(t\log K)}}\\
\leq & N_{m,n}^{K,a}(t\log K)\lr{4-3pm\delta+(3C+\rho)(1-pm\delta)\veps_1+\tau\frac{C\rho\veps_1}{3-3\veps_1}}.
\end{align*}
Similarly, the active to dormant transfer rate is given by \[
N_{m,n}^{K,a}(t\log K)\frac{Cpm\delta}{K}\sum_{m',n'=0}^{L}N_{m',n'}^{K,a}(t\log K),
\]
which satisfies the bounds \begin{align*}
&N_{m,n}^{K,a}(t\log K)\lr{3pm\delta-3Cpm\delta\veps_1}\\
\leq& N_{m,n}^{K,a}(t\log K)\frac{Cpm\delta}{K}\sum_{m',n'=0}^{L}N_{m',n'}^{K,a}(t\log K)\\
\leq & N_{m,n}^{K,a}(t\log K)\lr{3pm\delta+Cpm\delta(3+\rho)\veps_1}.
\end{align*}
\subsubsection*{Step 1: Induction on the traits}
We will now show by induction on $m$ and $n$ that the bounds \begin{align}
K^{\beta_{m,n}(t)-(m+n+1)\veps_1}-1\leq N_{m,n}^K(t\log K)\leq K^{\beta_{m,n}(t)+(m+n+1)\veps_1}-1\label{eq:claim1}
\end{align}
hold true for $t\leq\theta_1^K\wedge T$. In this situation, the condition reads as \[
K^{((1-(m+n)\alpha)\vee 0)-(m+n+1)\veps_1}-1\leq N_{m,n}^K(t\log K)\leq K^{((1-(m+n)\alpha)\vee 0)+(m+n+1)\veps_1}-1
\]
\subsubsection*{Step 1a): Traits $(0,y)$}
 For $m=n=0$ this is obviously satisfied by definition of $\theta_1^K$.\\
 \textbf{Base case: $n=0$.}\\
 \textbf{Base case: $m=1$.} For $m=1$ and $n=0$ we couple the process $N_{1,0}^K$ with processes $\tilde{Z}_{1,0}^K$ and $\hat{Z}_{1,0}^K$, such that component-wise \[
\tilde{Z}_{1,0}^K(t\log K)\leq N_{1,0}^K(t\log K)\leq \hat{Z}_{1,0}^K(t\log K),
\]
where $\tilde{Z}_{1,0}^K$ is a $BBPI_K(4-\tfrac{\delta}{2}-\bar{C}\veps_1,0,4-3p\delta+2\bar{C}\veps_1,\kappa,3p\delta-\bar{C}\veps_1,\sigma,0,1-\alpha-\veps_1,1-\alpha-\veps_1)$ and $\hat{Z}_{1,0}^K$ is a $BBPI_K(4-\tfrac{\delta}{2}+\bar{C}\veps_1,0,4-3p\delta-2\bar{C}\veps_1,\kappa,3p\delta+\bar{C}\veps_1,\sigma,0,1-\alpha+\veps_1,1-\alpha+\veps_1)$, where \[
\bar{C}\coloneqq 1+(1\vee\tau)(1+C)(6+\rho).
\]
Indeed, this coupling is justified by the bounds on the rates derived in Step 0  of this proof. Hence, applying Theorem \ref{Theorem: Main Convergence} (i), we see that \begin{align*}
&\lim\limits_{K\to\infty}\frac{\log(1+\tilde{Z}_{1,0}^K(t\log K))}{\log K}\\
=&\ (1-\alpha-\veps_1)+\lr{0\vee\frac{-\frac{\delta}{2}-2\bar{C}\veps_1-(\kappa+\sigma)+\sqrt{(-\frac{\delta}{2}-2\bar{C}\veps_1+\kappa+\sigma)^2+4(3p\delta-\bar{C}\veps_1)\sigma}}{2}\cdot t}\\
\geq&\ 1-\alpha-\veps_1\\
\end{align*}
and similarly, since $S((\delta,0),(0,0))<0$, we have for $\veps_1$ sufficiently small that 
\begin{align*}
&\lim\limits_{K\to\infty}\frac{\log(1+\hat{Z}_{1,0}^K(t\log K))}{\log K}\\
=&\ (1-\alpha+\veps_1)+\lr{0\vee\frac{-\frac{\delta}{2}+2\bar{C}\veps_1-(\kappa+\sigma)+\sqrt{(-\frac{\delta}{2}+2\bar{C}\veps_1+\kappa+\sigma)^2+4(3p\delta+\bar{C}\veps_1)\sigma}}{2}\cdot t}\\
\leq&\ 1-\alpha+\veps_1.
\end{align*}
Thus, the claim is shown for $N_{1,0}^K$. For the remainder of the proof, we will use the shorthand notation $S((\tilde{x},\tilde{y}),(x,y))\pm C_*\veps_1$ to indicate the rate of growth of a bi-type branching process whose birth, death, switching and transfer rates are modified by some factor of $\veps_1$ and otherwise coincide with those of a bi-type branching process whose growth rate is given by $S((\tilde{x},\tilde{y}),(x,y))$.

\textbf{Induction step for $m-1\to m$, $n=0$:} Now assume that it has been shown that \begin{align}
K^{((1-(m-1)\alpha)\vee 0)-m\veps_1}-1\leq  N_{m-1,0}^K(t\log K)\leq K^{((1-(m-1)\alpha)\vee 0)+m\veps_1}-1.\label{assumption: (m-1,0)}
\end{align}

Then, we can couple the process $N_{m,0}^K(t\log K)$ with different processes $\tilde{Z}_{m,0}^K$ and $\hat{Z}_{m,0}^K$ such that  \[
\tilde{Z}_{m,0}^K(t\log K)\leq N_{m,0}^K(t\log K)\leq \hat{Z}_{m,0}^K(t\log K),
\]
where the distribution of $\tilde{Z}_{m,0}^K$ is determined by  $BBPI_K(4-\tfrac{m\delta}{2}-\bar{C}\veps_1,0,4-3pm\delta+2\bar{C}\veps_1,\kappa,\linebreak3pm\delta-\bar{C}\veps_1,\sigma,0,(1-(m-1)\alpha)_+-\alpha-m\veps_1,(1-m\alpha-m\veps_1)_+)$ and the distribution of $\hat{Z}_{m,0}^K$ is $BBPI_K(4-\tfrac{m\delta}{2}+\bar{C}\veps_1,0,4-3pm\delta-2\bar{C}\veps_1,\kappa,3pm\delta+\bar{C}\veps_1,\sigma,0,(1-(m-1)\alpha)_+-\alpha+m\veps_1,\linebreak(1-m\alpha+m\veps_1)_+)$.\\

Then, we see from Theorem \ref{Theorem: Main Convergence} (i) or (iii) applied accordingly that for $\veps_1>0$ small enough \begin{align*}
&\lim\limits_{K\to\infty}\frac{\log(1+\tilde{Z}_{m,0}^K(t\log K))}{\log K}\\
=&\begin{cases}
((1-(m-1)\alpha)_+-\alpha-m\veps_1)\vee (1-m\alpha-m\veps_1)_++(S((m\delta,0),(0,0))-C_*\veps_1)t),&\hspace{-0.2cm}\text{if } m\alpha<1\\
0,&\hspace{-0.2cm}\text{otherwise}
\end{cases}\\
\geq&\ \beta_{m,0}(t)-m\veps_1
\end{align*}
and similarly
\begin{align*}
&\lim\limits_{K\to\infty}\frac{\log(1+\hat{Z}_{m,0}^K(t\log K))}{\log K}\\
=&\begin{cases}
((1-(m-1)\alpha)_+-\alpha+m\veps_1)\vee ((1-m\alpha+m\veps_1)_++(S((m\delta,0),(0,0))+C_*\veps_1)t),&\hspace{-0.2cm}\text{if } m\alpha<1\\
0,&\hspace{-0.2cm}\text{otherwise},
\end{cases}\\
\leq&\ \beta_{m,0}(t)+m\veps_1.
\end{align*}

\subsubsection*{Step 1b): Traits $(m\delta,n\delta)$}
\textbf{Induction step for $n-1\to n$:} Assume now that for all $m\in\{0,\ldots,L\}$ and all $n'\leq n-1$ the bounds \[
K^{((1-(m+n')\alpha)\vee 0)-(m+n'+1)\veps_1}-1\leq  N_{m,n'}^K(t\log K)\leq K^{((1-(m+n')\alpha)\vee 0)+(m+n'+1)\veps_1}-1
\]
hold. \\

\textbf{Base case $m=0$:} For the process $N_{0,n}^K$ we only have incoming migration from $N_{0,n-1}^K$. Hence, we can couple \[
\tilde{Z}_{0,n}^K(t\log K)\leq N_{0,n}^K(t\log K)\leq \hat{Z}_{0,n}^K(t\log K),
\]
where  $\tilde{Z}_{0,n}^K$ is a $BPI_K(4-\tfrac{n\delta}{2}+\tau-\bar{C}\veps_1,4+\bar{C}\veps_1,0,(1-(n-1)\alpha)_+-\alpha-n\veps_1,(1-n\alpha-n\veps_1)_+)$ and $\hat{Z}_{0,n}^K$ is a $BPI_K(4-\tfrac{n\delta}{2}+\tau+\bar{C}\veps_1,4-\bar{C}\veps_1,0,(1-(n-1)\alpha)_+-\alpha+\veps_1,(1-n\alpha+n\veps_1)_+)$, which as before gives the sought bounds from (\ref{eq:claim1}) by applying Theorem \ref{Theorem: Onedimensionalconvergence} (i) or (iii).\\

\textbf{Base case $m=1$:} Here, we have incoming mutations as mentioned in the beginning of the proof from two different populations, for which we already have suitable bounds. Thus, we can couple the process $N_{1,n}^K$ as usual with \[
\tilde{Z}_{1,n}^K(t\log K)\leq N_{1,n}^K(t\log K)\leq \hat{Z}_{1,n}^K(t\log K),
\]
where the distribution of $\tilde{Z}_{1,n}^K$ is  $BBPI_K(4-\tfrac{(n+1)\delta}{2}+\tau-\bar{C}\veps_1,0,4-3p\delta+2\bar{C}\veps_1,\kappa,3p\delta-\bar{C}\veps_1,\sigma,0,\linebreak(1-n\alpha)_+-\alpha-(n+1)\veps_1,(1-(n+1)\alpha-(n+1)\veps_1)_+)$ and the law of $\hat{Z}_{1,n}^K$ is determined by a $BBPI_K(4-\tfrac{(n+1)\delta}{2}+\tau+\bar{C}\veps_1,0,4-3p\delta-2\bar{C}\veps_1,\kappa, 3p\delta+\bar{C}\veps_1,\sigma,0, (1-n\alpha)_+-\alpha+(n+1)\veps_1,\linebreak(1-(n+1)\alpha+(n+1)\veps_1)_+)$. Applying Theorem \ref{Theorem: Main Convergence} (i) or (iii) as in the case $m=1$, $n=0$ yields the claim (\ref{eq:claim1}).\\

\textbf{Induction step $m-1\to m$:} Assume that we have shown the bounds \[
K^{((1-(m'+n)\alpha)\vee 0)-(m'+n+1)\veps_1}-1\leq  N_{m',n}^K(t\log K)\leq K^{((1-(m'+n)\alpha)\vee 0)+(m'+n+1)\veps_1}-1
\]
for all $m'\leq m-1$. Then, we couple the process $N_{m,n}^K$ as usual with \[
\tilde{Z}_{m,n}^K(t\log K)\leq N_{m,n}^K(t\log K)\leq \hat{Z}_{m,n}^K(t\log K),
\]
where the distribution of $\tilde{Z}_{m,n}^K$ is  $BBPI_K(4-\tfrac{(m+n)\delta}{2}+\tau-\bar{C}\veps_1,0,4-3pm\delta+2\bar{C}\veps_1,\kappa,\linebreak3pm\delta-\bar{C}\veps_1,\sigma,0,(1-(m+n-1)\alpha)_+-\alpha-(m+n)\veps_1,(1-(m+n)\alpha-(m+n)\veps_1)_+)$ and the law of $\hat{Z}_{1,n}^K$ is  $BBPI_K(4-\tfrac{(m+n)\delta}{2}+\tau+\bar{C}\veps_1,0,4-3pm\delta-2\bar{C}\veps_1,\kappa, 3pm\delta+\bar{C}\veps_1,\sigma,0,\linebreak (1-(m+n-1)\alpha)_+-\alpha+(m+n)\veps_1,(1-(m+n)\alpha+(m+n)\veps_1)_+)$. Again, applying Theorem \ref{Theorem: Main Convergence} (i) or (iii) as before gives the claim (\ref{eq:claim1}).\\

\subsubsection*{Step 2: Showing $\theta_1^K\geq T$}

In Step 1 we have shown that the process $\tfrac{\log(1+N_{m,n}^K(t\log K))}{\log K}$ converges in probability in the space $L^\infty([0,\theta_1^K\wedge T])$ towards $\beta_{n,m}(t)=(1-(n+m)\alpha)_+$. It now suffices to show that $\theta_1^K\geq T$, which can be done in the same manner as in \cite{champagnat2019stochastic}. As we have computed above, for all $t\leq\theta_1^K\wedge T$ we have with high probability \[
\sum_{(m,n)\neq (0,0)}N_{m,n}^K(t\log K)\leq K^{\max_{(m,n)\neq (0,0)}\beta_{m,n}(t)+\tfrac{\alpha}{2}}=K^{1-\tfrac{\alpha}{2}}.
\]
In particular at time $t=\theta_1^K\wedge T$, we have $\textstyle\sum_{(m,n)\neq(0,0)}N_{m,n}^K(t\log K)<\rho\veps_1K$ with high probability. Hence, for $K$ large enough, we see that up to time $\theta_1^K\wedge 2T$ we can couple \[
Z_{0,0,1}^K(t\log K)\leq N_{0,0}^K(t\log K)\leq Z_{0,0,2}^K(t\log K),
\]
where $Z_{0,0,1}^K$ is a $LBDI_K(4(1-\veps),1+C\veps,C,0)$ and $Z_{0,0,2}^K$ is a $LBDI_K(4,1,C,0)$. Applying \cite[Lemma C.1 (i)]{champagnat2019stochastic} to both processes shows that at time $t=\theta_1^K\wedge T$ the process $N_{0,0}^K(t\log K)$ is still close to its equilibrium size with high probability. Thus $\theta_1^K>T$ with probability converging to $1$ as $K\to\infty$ and the proof is completed in this case.

\subsection{Proof of Theorem \ref{Theorem: Main Theorem}, Case 2}
\begin{description}
	\item[Case 2] In the second case, we consider $S((x,y),(0,0))>0$ for some $(x,y)\in\calX$.
\end{description}
\subsubsection{Phase 1}
Note that the bounds on the arrival, departure and migration rates derived in Step 0 of Case 1 remain true. Unfortunately, we are not able to give a closed form for the limiting function $\beta$ as in \cite[Section 4.2.2]{champagnat2019stochastic} without a significant number of cases to be distinguished. However, in this case there exists some time $s_1\leq T_0$, at which for the first time for some $m_2^*,n_2^*\in \{0,\ldots, L\}$ we have $\beta_{m_2^*,n_2^*}(s_1)=\beta_{0,0}(s_1)=1$. Due to our assumptions in the Theorem, $m_2^*,n_2^*$ are unique. Now, we split the time interval $[0,s_1]$ into subintervals, on which all $\beta_{m,n}$ are affine functions. That is, there exists a finite number of times $0=t_0<t_1<\ldots< t_\ell\leq s_1$ such that on the interval $[t_{i-1},t_{i}]$ all functions $\beta_{m,n}$ are of the form \[
\beta_{m,n}(t)=\beta_{m,n}(t_{i-1})+a_{m,n}(t-t_{i-1}),\quad t\in[t_{i-1},t_{i}],
\]
for some constants $a_{m,n}$ which may depend on the time interval. This representation as an affine linear function can be seen from Theorem \ref{Theorem: Main Theorem} (iii). We will now show by induction, first on $n$ and then on $m$, that $\beta_{m,n}^K\to \beta_{m,n}$ as $K\to\infty$ on the interval $[0,t_1\wedge \theta_1^K\wedge T]$. Showing the convergence on the other intervals $[t_{i-1},t_{i}\wedge\theta_1^K\wedge T]$ can be done in the same way.

\subsubsection*{Step 1: The induction on the traits}
Recalling the time $t_{(m,n),1}$ from Theorem \ref{Theorem: Main Theorem} (iii), we want to show that for all $t\in[0,t_1\wedge \theta_1^K\wedge T]$ the bounds \begin{align}
\beta_{m,n}(t)-C_*\veps_1\leq\frac{\log\lr{1+N_{m,n}^K(t\log K)}}{\log K}\label{eq: inequality}\leq \beta_{m,n}(t)+C_*\veps_1
\end{align}
hold with probability converging to $1$ as $K\to\infty$ for some constant $C_*$, which may depend on $m$ and $n$. In particular, the notation $C_*$ does not necessarily refer to any particular constant, but more to a suitable constant, which is sufficiently large. For $m=n=0$, the bounds hold trivially by definition of $\theta_1^K$.\\
In the following, we use the notation \[(\beta_{m,n}(t)+ C_*\veps_1)_\times\coloneqq\begin{cases}
	\beta_{m,n}(t)+ C_*\veps_1,&\quad\text{ if } \beta_{m,n}(t)>0\\
	0,&\quad\text{ otherwise}.
	\end{cases}	
	\]
	
\subsubsection*{Step 1a): Traits $(0,y)$}	
 \textbf{Base case: $n=0$.}\\
\textbf{Base case: $m=1$.} We can couple as in Case 1 the process $N_{1,0}^K$ with processes $\tilde{Z}_{1,0}^K$ and $\hat{Z}_{1,0}^K$, such that component-wise \[
\tilde{Z}_{1,0}^K(t\log K)\leq N_{1,0}^K(t\log K)\leq \hat{Z}_{1,0}^K(t\log K),
\]
where the distribution of $\tilde{Z}_{1,0}^K$ is determined by $BBPI_K(4-\tfrac{\delta}{2}-\bar{C}\veps_1,0,4-3p\delta+2\bar{C}\veps_1,\kappa,\linebreak3p\delta-\bar{C}\veps_1,\sigma,0,\beta_{0,0}(0)-\alpha-C_*\veps_1,\beta_{1,0}(0)-C_*\veps_1)$ and $\hat{Z}_{1,0}^K$ is a $BBPI_K(4-\tfrac{\delta}{2}+\bar{C}\veps_1,0,\linebreak4-3p\delta-2\bar{C}\veps_1,\kappa,3p\delta+\bar{C}\veps_1,\sigma,0,\beta_{0,0}(0)-\alpha+C_*\veps_1,\beta_{1,0}(0)+C_*\veps_1)$, where again \[
\bar{C}\coloneqq 1+(1\vee\tau)(1+C)(6+\rho).
\]
Obviously, we obtain the same convergence as before, but the inequalities derived may not apply anymore. By Theorem \ref{Theorem: Main Convergence} (i), we have the convergence \begin{align*}
\lim\limits_{K\to\infty}\frac{\log(1+\tilde{Z}_{1,0}^K(t\log K))}{\log K}&= (\beta_{0,0}(0)-\alpha-C_*\veps_1)\vee\lr{\beta_{1,0}(0)-C_*\veps_1+S((\delta,0),(0,0))t}\\
&=(\beta_{1,0}(0)+S((\delta,0),(0,0))t)\vee (\beta_{0,0}(t)-\alpha)-C_*\veps_1
\end{align*}
and \begin{align*}
\lim\limits_{K\to\infty}\frac{\log(1+\hat{Z}_{1,0}^K(t\log K))}{\log K}&= (\beta_{0,0}(0)-\alpha+C_*\veps_1)\vee\lr{\beta_{1,0}(0)+C_*\veps_1+S((\delta,0),(0,0))t}\\
&= (\beta_{1,0}(0)+S((\delta,0),(0,0))t)\vee (\beta_{0,0}(t)-\alpha)+C_*\veps_1,
\end{align*}
by using $\beta_{1,0}(0)=\beta_{0,0}(0)-\alpha$.\\

\textbf{Induction step for }$m-1\sra m$\textbf{,} $n=0$\textbf{:} Now, assume that (\ref{eq: inequality}) has been shown for all $m'\leq m-1$ and $n=0$. Our goal is to show that (\ref{eq: inequality}) also holds for $m$ and $n=0$. For this purpose, we couple with $\tilde{Z}_{m,0}^K$ and $\hat{Z}_{m,0}^K$ such that  \[
\tilde{Z}_{m,0}^K(t\log K)\leq N_{m,0}^K(t\log K)\leq \hat{Z}_{m,0}^K(t\log K),
\]
where the distribution of $\tilde{Z}_{m,0}^K$ is determined by  $BBPI_K(4-\tfrac{m\delta}{2}-\bar{C}\veps_1,0,4-3pm\delta+2\bar{C}\veps_1,\kappa,\linebreak3pm\delta-\bar{C}\veps_1,\sigma,a_{m-1,0},\beta_{m-1,0}(0)-\alpha-C_*\veps_1,(\beta_{m,0}(0)-C_*\veps_1)_+)$ and the law of $\hat{Z}_{m,0}^K$ is $BBPI_K\linebreak (4-\tfrac{m\delta}{2}+\bar{C}\veps_1,0,4-3pm\delta-2\bar{C}\veps_1,\kappa,3pm\delta+\bar{C}\veps_1,\sigma,a_{m-1,0},\beta_{m-1,0}(0)-\alpha+C_*\veps_1,(\beta_{m,0}(0)+C_*\veps_1)_\times)$.\\

We distinguish the cases where $\beta_{m,0}(0)-C_*\veps_1$ is strictly positive (to apply Theorem \ref{Theorem: Main Convergence} (i)) or $(\beta_{m,0}(0)-C_*\veps_1)_+=0$ to apply Theorem \ref{Theorem: Main Convergence} (ii) or (iii), depending on $a_{m-1,0}$ being strictly positive or non-positive, which yields the convergence \begin{align*}
&\lim\limits_{K\to\infty}\frac{\log(1+\tilde{Z}_{m,0}^K(t\log K))}{\log K}\\
=& \begin{cases}
(\beta_{m-1,0}(0)-\alpha-C_*\veps_1+a_{m-1,0}t)&\\
\quad\vee (\beta_{m,0}(0)-C_*\veps_1+S((m\delta,0),(0,0))t)\vee 0, &\text{ if } \beta_{m,0}(0)-C_*\veps_1>0\\[5pt]
(S((m\delta,0),(0,0))\vee a_{m-1,0})\lr{t-\frac{\abs{\beta_{m-1,0}(0)-\alpha-C_*\veps_1}}{a_{m-1,0}}}&\\
\quad\vee 0, &\text{ if }\beta_{m,0}(0)-C_*\veps_1\leq 0< a_{m-1,0}\\[5pt]
0,&\text{ if }\beta_{m,0}(0)-C_*\veps_1, a_{m-1,0}\leq 0.
\end{cases}
\end{align*}
Note that in the second case it holds $\tfrac{\abs{\beta_{m-1,0}(0)-\alpha-C_*\veps_1}}{a_{m-1,0}}=t_{(m,0),1}+C_*\veps_1$. Even though the functions $\beta_{m,n}$ do not have a change in slope by our choice of $t_1$, this limit of the coupled process may. Also, if the maximum in the second case is attained by $a_{m-1,0}$, then the second case reads as \[
a_{m-1,0}t+\beta_{m-1,0}(0)-\alpha-C_*\veps_1
\] by using that $\beta_{m-1,0}(0)-\alpha-C_*\veps_1\leq 0$. In particular, using $\beta_{m-1,0}(t)=\beta_{m-1,0}(0)+a_{m-1,0}t$, we obtain in each case \begin{align*}
&\lim\limits_{K\to\infty}\frac{\log(1+\tilde{Z}_{m,0}^K(t\log K))}{\log K}\\
\geq&\left[(\beta_{m,0}(0)+S((m\delta,0),(0,0))(t-(t\wedge t_{(m,0),1})))\vee (\beta_{m-1,0}(t)-\alpha)\vee 0\right]-C_*\veps_1.
\end{align*}
A similar application of Theorem \ref{Theorem: Main Convergence} for $\hat{Z}_{m,0}^K$ entails \begin{align*}
&\lim\limits_{K\to\infty}\frac{\log(1+\hat{Z}_{m,0}^K(t\log K))}{\log K}\\
\leq&\left[(\beta_{m,0}(0)+S((m\delta,0),(0,0))(t-(t\wedge t_{(m,0),1})))\vee (\beta_{m-1,0}(t)-\alpha)\vee 0\right]+C_*\veps_1,
\end{align*}
which finishes the induction for $n=0$.\\
\subsubsection*{Step 1b): Traits $(m\delta,n\delta)$}
\textbf{Induction step: $n-1\to n$.} We assume the bounds in (\ref{eq: inequality}) have been shown for all $m'\in\{0,\ldots,L\}$ and $n'\leq n-1$.\\

\textbf{Base case: $m=0$.} Here, the immigration is only coming from $N_{0,n-1}^K$ and hence we can couple with processes $\tilde{Z}_{0,n}^K$ and $\hat{Z}_{0,n}^K$ such that \[
\tilde{Z}_{0,n}^K(t\log K)\leq N_{0,n}^K(t\log K)\leq \hat{Z}_{0,n}^K(t\log K),
\]
where $\tilde{Z}_{0,n}^K$ is a  $BPI_K(4-\tfrac{n\delta}{2}+\tau-\bar{C}\veps_1,4+2\bar{C}\veps_1,a_{0,n-1},\beta_{0,n-1}(0)-\alpha-C_*\veps_1,(\beta_{0,n}(0)-C_*\veps_1)_+)$ and the law of $\hat{Z}_{0,n}^K$ is  $BPI_K(4-\tfrac{n\delta}{2}+\tau+\bar{C}\veps_1,4-2\bar{C}\veps_1,a_{0,n-1},\beta_{0,n-1}(0)-\alpha+C_*\veps_1,(\beta_{0,n}(0)+C_*\veps_1)_\times)$. Now applying Theorem \ref{Theorem: Onedimensionalconvergence} shows (\ref{eq: inequality}) in this case.\\

\textbf{Base case: $m=1$.} This case can be treated as the induction step below. \\

\textbf{Induction step: $m-1\to m$.} Now, we assume that for all $m'\leq m-1$ we have shown the inequality (\ref{eq: inequality}). Then, it also holds for $m$ since we can again distinguish the immigration from outside to be dominated either from $N_{m-1,n}^K$ or from $N_{m,n-1}^K$ and then we can couple as usual (in the case that the immigration is dominated by $N_{m-1,n}^K$) with processes $\tilde{Z}_{m,n}^K$ and $\hat{Z}_{m,n}^K$ such that \[
\tilde{Z}_{m,n}^K(t\log K)\leq N_{m,n}^K(t\log K)\leq \hat{Z}_{m,n}^K(t\log K),
\]
where $\tilde{Z}_{m,n}^K$ is given by a  $BBPI_K(4-\tfrac{(m+n)\delta}{2}+\tau-\bar{C}\veps_1,0,4-3pm\delta+2\bar{C}\veps_1,\kappa,3pm\delta-\bar{C}\veps_1,\linebreak\sigma,a_{m-1,n},\beta_{m-1,n}(0)-\alpha-C_*\veps_1,(\beta_{m,n}(0)-C_*\veps_1)_+)$ and the law of $\hat{Z}_{m,n}^K$ is determined by $BBPI_K\linebreak(4-\tfrac{(m+n)\delta}{2}+\tau+\bar{C}\veps_1,0,4-3pm\delta-2\bar{C}\veps_1,\kappa,3pm\delta+\bar{C}\veps_1,\sigma,a_{m-1,n},\beta_{m-1,n}(0)-\alpha+C_*\veps_1,\linebreak(\beta_{m,n}(0)+C_*\veps_1)_\times)$. As before, applying Theorem \ref{Theorem: Main Convergence} in each case shows the claimed inequality (\ref{eq: inequality}) with probability converging to $1$ as $K\to\infty$. This finishes the induction for the first phase.\\

Performing the induction in $n$ and $m$ as above also for the remaining intervals $[t_{i-1},t_i\wedge\theta_1^K\wedge T]$ shows the bounds (\ref{eq: inequality}) on the entire interval $[0,s_1\wedge\theta_1^K\wedge T]$ with probability converging to $1$ due to the Markov property. The only changes that need to be made are in the starting conditions of the coupled processes, where we replace $\beta_{m,n}(0)$ by $\beta_{m,n}(t_{i-1})$.
\subsubsection*{Step 2: Deriving a lower bound for $\theta_1^K$ }
Next, we will show that $(s_1-\eta)\wedge T<\theta_1^K$ for any $\eta>0$ with high probability. Assume for now $T>s_1-\eta$. By definition of $s_1$, all functions $\beta_{m,n}$ are bounded away from $1$ on the interval $[0,s_1-\eta]$ except for $\beta_{0,0}$. Hence for all $t\leq \theta_1^K\wedge (s_1-\eta)$ we have \[
\sum_{(m,n)\neq(0,0)}N_{m,n}^K(t\log K)\leq K^{\max_{(m,n)\neq (0,0)}\beta_{m,n}(t)+\tilde{\veps}}\leq K^{1-\tilde{\veps}}
\]
for $\tilde{\veps}>0$ sufficiently small with probability converging to $1$. Hence, to show $s_1-\eta<\theta_1^K$, we also need to exclude the possibility of $N_{0,0}^K$ exiting a neighbourhood of its equilibrium size. Indeed, we can couple the process $N_{0,0}^K$ with processes \[
Z_{0,0,1}^K(t\log K)\leq N_{0,0}^K(t\log K)\leq Z_{0,0,2}^K(t\log K),
\]
where $Z_{0,0,1}^K$ is a $LBDI_K(4(1-\veps),1+C\veps,C,0)$ and $Z_{0,0,2}^K$ is a $LBDI_K(4,1,C,0)$ and $\epsi$. As in case 1, applying \cite[Lemma C.1 (i)]{champagnat2019stochastic} to both processes shows that at time $t=\theta_1^K\wedge (s_1-\eta)$ the process $N_{0,0}^K(t\log K)$ is close to its equilibrium size with high probability. Thus, recalling $T>s_1-\eta$,  $\theta_1^K>(s_1-\eta)\wedge T$ with probability converging to $1$ as $K\to\infty$. In particular, for $T<s_1-\eta$ it holds $\theta_1^K>(s_1-\eta)\wedge T$ with high probability.

Therefore, we can conclude by letting $\veps_1\downto 0$ that the convergence $\beta_{m,n}^K\to \beta_{m,n}$ in probability as $K\to\infty$ on the interval $[0,(s_1-\eta)\wedge T]$ holds true. 

\subsubsection{Intermediate Phase 1}\label{SubsubSection: Intermediate Phase 1}
In this intermediate phase, we will show that the resident trait $(0,0)$ experiences competition with an invasive trait $(m^*_2\delta,n^*_2\delta)$, which we will show to be of order $K$ at the end of the first phase $[0,\theta_1^K\log K]$. Hence our goal is twofold: Firstly we want to show that $\theta_1^K\to s_1$ as $K\to\infty$ in probability. Secondly, we want to show that at some time $\sigma_2^K\log K=\theta_1^K\log K+ T(\veps_1,\rho)$ the competition leads to the invasive trait becoming resident and its size being close to its equilibrium size. At the same time $N_{0,0}^K$ becomes smaller than $\rho\veps_1 K$.
\subsubsection*{Step 1: Convergence of $\theta_1^K\to s_1$}
We know from the end of the previous section where we proved $\beta_{m,n}^K\to\beta_{m,n}$ on the interval $[0,(s_1-\eta)\wedge T]$ that $s_1-\eta<\theta_1^K$ with high probability. Thus, to show $\theta_1^K\to s_1$, it suffices to show $\theta_1^K< s_1+\eta$ for any $\eta>0$ with probability converging to $1$ as $K\to\infty$.

Towards a contradiction, assume that $\theta_1^K\geq s_1+\eta$. Then, the couplings on the interval $[t_{\ell-1},t_\ell\wedge\theta_1^K]$ with $t_\ell=s_1$ can be extended until time $t_\ell^*=s_1+\eta$ since the couplings are valid as long as the time $t$ satisfies $t\leq \theta_1^K$. In particular, for the coupling of $N_{m^*_2,n^*_2}^K$ we obtain the lower bound \begin{align*}
&\lim\limits_{K\to\infty}\frac{\log(1+N_{m^*_2,n^*_2}^K(t\log K))}{\log K}\\
\geq &\left[(\beta_{m^*_2,n^*_2}(t_{\ell-1})+S((m^*_2\delta,n^*_2\delta),(0,0))((t-t_{\ell-1})-((t-t_{\ell-1})\wedge t_{(m^*_2,n^*_2),1})))\right.\\
&\ \vee (\beta_{m^*_2-1,n^*_2}(t_{\ell-1})+a_{m^*_2-1,n^*_2}(t-t_{\ell-1})-\alpha)\\
&\ \vee(\beta_{m^*_2,n^*_2-1}(t_{\ell-1})+a_{m^*_2,n^*_2-1}(t-t_{\ell-1})-\alpha)\vee 0\Big]-C_*\veps_1.
\end{align*}
We know however that at time $s_1$ the last expression converges to $1$ as $\veps_1\downto 0$ and by definition of $s_1$ the lower bound is strictly increasing on some interval $[s_1-\eta,s_1]$. Since the lower bound is the maximum of different affine functions, it remains strictly increasing on the interval $[s_1,s_1+\eta]$. In particular, for $\veps_1$ small enough, at time $s_1+\eta$ the lower bound becomes larger than $1$, which is a contradiction since \[
\lim\limits_{K\to\infty}\frac{\log(1+N_{m^*_2,n^*_2}^K(t\log K))}{\log K}\\
\leq 1
\]
for all $t\geq 0$.
Hence with probability converging to $1$ we have $\theta_1^K< s_1+\eta$. We conclude $\theta_1^K\to s_1$ in probability.

\subsubsection*{Step 2: Emergence of a new population}

For our second goal, we need to show that $N_{0,0}^K$ does not exit a neighbourhood of its equilibrium, so that at time $\theta_1^K$ the population  of trait $(m_2^*\delta,n_2^*\delta)$ emerges. This part of the proof is identical to \cite[Section 4.2.3]{champagnat2019stochastic}, but is repeated here for the reader's convenience. Unfortunately, we cannot use the coupling from the previous phase anymore since $\theta_1^K\to s_1$ and therefore $\beta_{m,n}(\theta_1^K)$ are not bounded away from $1$. However, we do know that for $K$ sufficiently large, the emigration from trait $(0,0)$, which occurs at rate $4K^{-\alpha}$, can be bounded by $C\rho\veps_1$. Then, on the time interval $[0,\theta_1^K\wedge T]$, we can couple \[
Z_{0,0,1}^K(t\log K)\leq N_{0,0}^K(t\log K)\leq Z_{0,0,2}^K(t\log K),
\]
where $Z_{0,0,1}^K$ is a $LBDI_K(4-C\rho\veps_1,1+\tfrac{\tau\rho\veps_1}{3/C-3\veps_1}+C\rho\veps_1,C,0)$ and $Z_{0,0,2}^K$ is a $LBDI_K(4,1,C,0)$. We easily identify the equilibria \[
\bar{z}_{0,0,1}=\frac{3}{C}-\veps_1\lr{2\rho+\frac{\tau\rho}{3-3C\veps_1}}\quad\text{ and }\quad \bar{z}_{0,0,2}=\frac{3}{C}.
\]
Now, we choose $\rho$ sufficiently small such that $\bar{z}_{0,0,1}$ is contained in the chosen domain around the equilibrium of $N_{0,0}^K$, that is $\bar{z}_{0,0,1}\in[\tfrac{3}{C}-3\veps_1,\tfrac{3}{C}+3\veps_1]$, which holds as soon as \[
2\rho+\frac{\tau\rho}{3-3C\veps_1}<3.
\]
Applying \cite[Lemma C.1.]{champagnat2019stochastic} to $Z_{0,0,1}^K$ and $Z_{0,0,2}^K$ shows that \[
\lim\limits_{K\to\infty}\P\lr{\forall t\in[0,s_1+\eta]\colon\frac{Z_{0,0,1}^K(t\log K)}{K}\geq \frac{3}{C}-3\veps_1}=1\]
and similarly
\[\lim\limits_{K\to\infty}\P\lr{\forall t\in[0,s_1+\eta]\colon\frac{Z_{0,0,2}^K(t\log K)}{K}\leq \frac{3}{C}+3\veps_1}=1.
\]
Note that the coupling above is only true until time $\theta_1^K<s_1+\eta$, but the bounds for the processes $Z_{0,0,*}^K$ with $*\in\{1,2\}$ are still true for any later times. In particular, we obtain for the time $\theta_1^K$ that \[
\lim\limits_{K\to\infty}\P\lr{\frac{Z_{0,0,1}^K(\theta_1^K\log K)}{\log K}\geq \frac{3}{C}-3\veps_1}=1=\lim\limits_{K\to\infty}\P\lr{\frac{Z_{0,0,2}^K(\theta_1^K\log K)}{\log K}\leq \frac{3}{C}+3\veps_1}.
\]
Since at time $\theta_1^K$ the coupling still holds, we see \[
\lim\limits_{K\to\infty}\P\lr{\frac{N_{0,0}^K(\theta_1^K\log K)}{\log K}\in\left[\frac{3}{C}-3\veps_1,\frac{3}{C}+3\veps_1\right]}=1.
\]
Hence, by definition of $\theta_1^K$ we must have \[
\sum_{(m,n)\neq (0,0)}N_{m,n}^K(\theta_1^K\log K)\geq \rho\veps_1 K
\]
with probability converging to $1$ as $K\to\infty$. Since we have assumed that at any given time at most two of the limiting exponents $\beta_{m,n}$ may be $1$ and we already know from above that $\beta_{m_2^*,n_2^*}(s_1)=1$, it must hold for some $\tilde{\veps}>0$ that \[
\max_{(m,n)\notin \{(0,0),(m_2^*,n_2^*)\}}\beta_{m,n}(s_1)\leq 1-\tilde{\veps}.
\]
Since we have shown the convergences $\beta_{m,n}^K\to \beta_{m,n}$ on $[0,s_1-\eta]$, by the continuity of the exponents (see Lemma \ref{lemmacontinuity}) and the convergence $\theta_1^K\to s_1$ we conclude \[
\sum_{(m,n)\notin \{(0,0),(m_2^*,n_2^*)\}}N_{m,n}^K(\theta_1^K\log K)\leq K^{1-\tfrac{\tilde{\veps}}{2}}
\]
with high probability for $\tilde{\veps}>0$ sufficiently small. Hence, it must hold $N_{m_2^*,n_2^*}^K(\theta_1^K\log K)\geq \rho\veps_1 K/2$ with probability converging to $1$. It is important to note that by definition of $\theta_1^K$, this is equivalent to demanding \[
N_{m_2^*,n_2^*}^K(\theta_1^K\log K)\in\left[\frac{\rho\veps_1 K}{2}, \rho\veps_1 K\right]
\]
which enables us to apply the Propositions from Appendix \ref{Section: Appendix C} in combination with Remark \ref{Remark: Important Proposition Remark}.

\subsubsection*{Step 3: Competition}

Now that we have established the emergence of the invasive trait $(m_2^*\delta,n_2^*\delta)$, we need to distinguish the two cases $m_2^*=0$ and $m_2^*>0$.\\

\textbf{Case(a): $m_2^*=0$.} In this case we can proceed as in \cite{champagnat2019stochastic}, as the invading trait is a one-dimensional process which necessarily performs horizontal transfer. Firstly, we note again due to continuity of the exponent that \[
\sum_{(m,n)\notin \{(0,0),(m_2^*,n_2^*)\}}N_{m,n}^K(t\log K)\leq K^{1-\tfrac{\tilde{\veps}}{4}}
\]
for all $t\in[\theta_1^K,\theta_1^K+s]$ for $s>0$ sufficiently small with probability converging to $1$. Being consistent with the notation in \cite[Section C.2.2]{champagnat2019stochastic}, we define for any time $t$ the functions \[
b_1^K(t)=4(1-K^{-\alpha}), \quad b_2^K(t)=\lr{4-\frac{(m_2^*+n_2^*)\delta}{2}}(1-K^{-\alpha}),
\]
\[
d_1^K(t)=d_2^K(t)=1+\lr{\frac{C}{K}+\frac{\tau}{\sum_{m=0, n>0}^{L}N_{m,n}^{K,a}(t)}}\sum_{(m,n)\notin\{(0,0),(m_2^*,n_2^*)\}}N_{m,n}^{K,a}(t),
\]
\[
\tau^K(t)=\tau\cdot\frac{N_{0,0}^{K,a}(t)+N_{m_2^*,n_2^*}^{K,a}(t)}{\sum_{m,n=0}^L N_{m,n}^{K,a}(t)},\quad \gamma_1^K(t)=0,\quad \gamma_2^K(t)\leq 4K^{-\alpha}N_{0,0}^{K,a}(t).
\]
Note that for the immigration rate $\gamma_2^K$ we would need to consider the incoming immigration from the neighbouring traits. However, if $(0,0)$ is not one of them, those traits are of size of order strictly less than $K$, so the upper bound for $\gamma_2^K(t)$ is justified. The above functions except for $\gamma_2^K$ converge on the interval $[\theta_1^K\log K, (\theta_1^K+s)\log K]$ to $b_1=4$, $b_2=4-\tfrac{(m_2^*+n_2^*)\delta}{2}$, $d_1=d_2=1$, $\tau$ and $0$ respectively in order of appearance. For $\gamma_2^K(t)$ we obtain the convergence $\tfrac{\gamma_2^K(t)}{K}\to 0$ as $K\to\infty$.

Now, we can apply the Markov property at time $\theta_1^K$ and subsequently Lemma C.3 from \cite{champagnat2019stochastic}, which gives us the existence of a finite time $T(\rho,\veps_1)$ such that with probability converging to $1$ we have \[
N_{0,0}^K(\theta_1^K\log K+T(\rho,\veps_1))\leq \rho\veps_1 K\]
and\[
\frac{N_{m_2^*,n_2^*}^{K,a}(\theta_1^K\log K+T(\rho,\veps_1))}{K}\in\left[\bar{z}_{m_2^*,n_2^*}^a-\veps_2,\bar{z}_{m_2^*,n_2^*}^a+\veps_2 \right],
\]
where $\bar{z}_{m_2^*,n_2^*}^a$ denotes the active equilibrium population size (which in this case coincides with the total equilibrium population size) of the rescaled process $\tfrac{N_{m_2^*,n_2^*}^K}{K}$.

Hence, we can define the end of the first intermediate phase as \[
\sigma_2^K\log K=\theta_1^K\log K+ T(\rho,\veps_1).
\]
In particular, we have $\sigma_2^K\to s_1$ in probability as $K\to\infty$. At time $\sigma_2^K\log K$ we can use the continuity of the exponent and are left with the following bounds on our populations \[
N_{0,0}^K(\sigma_2^K\log K)\in[K^{1-\veps_1}, \rho\veps_1 K],\quad \frac{N_{m_2^*,n_2^*}^K(\theta_1^K\log K+T(\rho,\veps_1))}{K}\in\left[\bar{z}_{m_2^*,n_2^*}^a-\veps_2,\bar{z}_{m_2^*,n_2^*}^a+\veps_2 \right]
\]
and for all $(m,n)\notin \{(0,0),(m_2^*,n_2^*)\}$ we have, again using the continuity argument from Lemma \ref{lemmacontinuity},
\[
\frac{\log(1+N_{m,n}^K(\sigma_2^K\log K))}{\log K}\in[\beta_{m,n}(s_1)-\veps_2,\beta_{m,n}(s_1)+\veps_2].
\]
Note that populations for which $\beta_{m,n}(s_1)=0$ are actually extinct at time $\sigma_2^K$. This is due to our assumption that in this case we must have $\beta_{m,n}(t)=0$ on an interval $[s_1-\veps,s_1]$, which due to our starting condition implies negative fitness and weak immigration and hence by Lemma \ref{lemmaextinction} extinction of the population. Then, applying Lemma \ref{lemmanomigration} shows for $K$ sufficiently large that there is no immediate resurrection of the population after time $s_1$.\\

\textbf{Case(b): $m_2^*>0$.} Now, the individuals of the invading trait are able to become dormant. Hence, we have competition between a resident one-dimensional process and an invading two-dimensional process. Note that we may or may not have horizontal transfer exhibited from the invading trait. As in Case(a) we define a number of functions and apply the corresponding result on competition. The functions to be defined are \[
a_1^K(t)=4(1-K^{-\alpha}),\quad b_1^K(t)=\lr{4-\frac{(m_2^*+n_2^*)\delta}{2}}(1-K^{-\alpha}),
\]
\[
d_1^K(t)=1+\lr{\frac{C}{K}+\frac{\tau}{\sum_{m=0, n>0}^{L}N_{m,n}^{K,a}(t)}}\sum_{(m,n)\notin\{(0,0),(m_2^*,n_2^*)\}}N_{m,n}^{K,a}(t),\quad d_2^K(t)\equiv \kappa,
\]
\[
\gamma_1^K(t)=0,\quad \gamma_2^K(t)\leq 4K^{-\alpha}N_{0,0}^{K,a}(t).
\]
If there is no horizontal transfer (that is $n_2^*=0$), we set $\tau^K(t)\equiv 0$. Otherwise we set \[
\tau^K(t)=\tau\cdot\frac{N_{0,0}^{K,a}(t)+N_{m_2^*,n_2^*}^{K,a}(t)}{\sum_{m,n=0}^L N_{m,n}^{K,a}(t)}.
\]
Since we have non-negative horizontal transfer exerted from the invading trait onto $(0,0)$ and we have dormancy for the invading but not for the initially resident trait, we can apply Proposition \ref{Proposition: 3Dpositive} together with Remark \ref{Remark: Important Proposition Remark} due to the same convergence arguments made in Case(a). Hence, there exists some finite time $T(\rho,\veps_1)$ such that with probability larger than $1-o_{\veps_1}(1)$ we have \[
N_{0,0}^K(\sigma_2^K\log K)\in[K^{1-\veps_1}, \rho\veps_1 K]\]
 \[\frac{N_{m_2^*,n_2^*}^{K,a}(\theta_1^K\log K+T(\rho,\veps_1))}{K}\in\left[\bar{z}_{m_2^*,n_2^*}^a-\veps_2,\bar{z}_{m_2^*,n_2^*}^a+\veps_2 \right],\]
 and \[\frac{N_{m_2^*,n_2^*}^{K,d}(\theta_1^K\log K+T(\rho,\veps_1))}{K}\in\left[\bar{z}_{m_2^*,n_2^*}^d-\veps_2,\bar{z}_{m_2^*,n_2^*}^d+\veps_2 \right]
\]
as $K\to\infty$, where $\bar{z}_{m_2^*,n_2^*}^d$ is the equilibrium size of the dormant component of the rescaled process. Then, at time \[
\sigma_2^K\log K=\theta_1^K\log K+ T(\rho,\veps_1)
\]
we have the same bounds as in Case(a) with the only difference in the equilibrium size of the process $N_{m_2^*,n_2^*}^K$. 

\subsubsection{Phase $k$}
We will now consider a time interval $[\sigma_k^K\log K, \theta_k^K\log K]$, where $\sigma_k^K\to s_{k-1}$ and $\theta_k^K\to s_k$ in probability. Thus, we consider $k\geq 2$ and assume that we have already completed step $k-1$. In particular, we assume that we have defined a stopping time $\sigma_k^K$ with the convergence property mentioned above such that for the resident population of trait $(m_k^*\delta,n_k^*\delta)$ the bounds \[
\frac{N_{m_k^*,n_k^*}^{K,a}(\sigma_k^K\log K)}{K}\in\left[\bar{z}_{m_k^*,n_k^*}^a-\veps_k,\bar{z}_{m_k^*,n_k^*}^a+\veps_k \right]
\]
and \[
\frac{N_{m_k^*,n_k^*}^{K,d}(\sigma_k^K\log K)}{K}\in\left[\bar{z}_{m_k^*,n_k^*}^d-\veps_k,\bar{z}_{m_k^*,n_k^*}^d+\veps_k \right]
\]
hold. Furthermore, we assume that for the previously resident trait we have \[K^{1-\veps_k}\leq N_{m_{k-1}^*,n_{k-1}^*}^K(\sigma_k^K\log K)\leq \rho\veps_k K.\] For all remaining traits $(m\delta,n\delta)\notin\lrset{(m_{k-1}^*\delta,n_{k-1}^*\delta),(m_{k}^*\delta,n_{k}^*\delta)}$, we assume $N_{m,n}^K(\sigma_k^K\log K)=0$ if $\beta_{m,n}(s_{k-1})=0$ and otherwise we assume \[
\frac{\log(1+N_{m,n}^K(\sigma_k^K\log K))}{\log K}\in[\beta_{m,n}(s_{k-1})-\veps_k,\beta_{m,n}(s_{k-1})+\veps_k].
\]

As in the base case, we introduce the time $\theta_k^K$, which is the time until the active part of the resident trait leaves a neighbourhood of its equilibrium or a new trait emerges, that is \begin{align*}
\theta_k^K\coloneqq\inf\Bigg\{ t\geq \sigma_k^K\ \Bigg\vert\  N_{m_k^*,n_k^*}^{K,a}(t\log K)\notin\Bigg[\lr{\bar{z}_{m_k^*,n_k^*}^a-3\veps_k}&K,\lr{\bar{z}_{m_k^*,n_k^*}^a+3\veps_k}K\Bigg]\\
&\text{or}\sum_{(m,n)\neq(m_k^*,n_k^*)}N_{m,n}^K(t\log K)\geq \rho\veps_kK \Bigg\}.
\end{align*}

\subsubsection*{Step 0: Deriving bounds on the rates}

Similarly to Step 0 in Case 1 of the proof, we can derive similar bounds on the birth, death and migration rates on the time interval $[\sigma_k^K,\theta_k^K]$. The bounds for the birth and arrival due to horizontal transfer rates are \[
4-\frac{(m+n)\delta}{2}+\tau\1_{n>n_{k}^*}\pm C_*\veps_k,
\]
and for the death and emigration due to horizontal transfer we obtain the bounds \[
1+\bar{z}_{m_k^*,n_k^*}^a(1-pm\delta)+\tau\1_{n_{k}^*>n}\pm C_*\veps_k.
\]
The immigration rates stay the same as in the base case, since they do not depend on the resident trait population size. The active to dormant switching rate then satisfy the bounds \[
pm\delta\bar{z}_{m_k^*,n_k^*}^a\pm C_*\veps_k.
\]
\subsubsection*{Step 1: Induction on the traits}
As in the base case, we want to use the bounds given above to couple our processes accordingly and show by induction on the traits the upper and lower bounds on $\beta_{m,n}^K$. For this, we may again decompose the time interval $[s_{k-1},s_k]$ into sections on which all $\beta_{m,n}$ are affine. On the first such subinterval which is of the form $t\in[s_{k-1}, t_1\wedge \theta_k^K\wedge T]$, we can write \[
\beta_{m,n}=\beta_{m,n}(s_{k-1})+a_{m,n}(t-s_{k-1}).
\]
for some constants $a_{m,n}\in\R$.
We will not fully carry out the induction, but give a broad idea, since it is very similar to the base case. If $(m_k^*,n_k^*)=(0,0)$, we are in the same situation as in the base case, so we can use the Markov property at time $\sigma_k^K\log K$ and obtain the same results where in the couplings we need to replace $\beta_{m,n}(0)$ with $\beta_{m,n}(s_{k-1})$.\\

In the case where $(m_k^*,n_k^*)\neq(0,0)$ and $\beta_{0,0}(s_{k-1})>0$, there is no incoming immigration into the trait $(0,0)$ and hence we can use the coupling \[
\tilde{Z}_{0,0}^K(t\log K)\leq N_{0,0}^K(t\log K)\leq \hat{Z}_{0,0}^K(t\log K),
\]
where $\tilde{Z}_{0,0}^K$ is a $BP_K(4-C_*\veps_k,1+\bar{z}_{m_k^*,n_k^*}^a+\tau\1_{n_k^*>0}+C_*\veps_k,\beta_{0,0}(s_{k-1})-C_*\veps_k)$ and $\hat{Z}_{0,0}^K$ is given as $BP_K(4+C_*\veps_k,1+\bar{z}_{m_k^*,n_k^*}^a+\tau\1_{n_k^*>0}-C_*\veps_k,\beta_{0,0}(s_{k-1})+C_*\veps_k)$. For our coupled processes, the convergence theorem \cite[Lemma A.1]{champagnat2019stochastic} implies the bounds \begin{align*}
&\beta_{0,0}(s_{k-1})+S((0,0),(m_k^*\delta,n_k^*\delta))(t-s_{k-1})-C_*\veps_k\\
\leq&\ \frac{\log(1+N_{0,0}^K(t\log K))}{\log K}\\
\leq&\ \beta_{0,0}(s_{k-1})+S((0,0),(m_k^*\delta,n_k^*\delta))(t-s_{k-1})+C_*\veps_k.
\end{align*}
If $\beta_{0,0}(s_{k-1})=0$, then due to the lack of immigration and our observation that populations with $\beta_{m,n}(s_{k-1})=0$ are actually extinct we have $N_{0,0}^K(t\log K)=0$ for all $t\geq\sigma_k^K$.\\

As mentioned, we abbreviate the induction and assume that the bounds 
\begin{align}
\beta_{m,n}(t)-C_*\veps_k\leq\frac{\log\lr{1+N_{m,n}^K(t\log K)}}{\log K}\label{eq: inequality2}
\leq\beta_{m,n}(t)+C_*\veps_k
\end{align}
have been shown up to the neighbouring traits of $(m\delta,n\delta)$ for all $t\in [s_{k-1}, t_1\wedge \theta_k^K\wedge T]$. Then, we need to distinguish the cases where $m=0$ and $m>0$ as well as $n\geq n_k^*$ and $n<n_k^*$. The first distinction corresponds to the question of the ability to become dormant, whereas the second one dictates the way that horizontal transfer influences the dynamics. Furthermore, we need to distinguish whether $N_{m-1,n}^K$ or $N_{m,n-1}^K$ is larger (in terms of orders of powers of $K$) to determine which population is responsible for the immigration rate. Also, we need to separate the cases where $\beta_{m,n}(s_{k-1})=0$ or strictly larger than $0$. In the first case, we need to couple with processes whose initial population size is also $0$. Without loss of generality we assume $N_{m,n-1}^K$ to be of larger order than $N_{m-1,n}^K$ - the other case can be done by switching the corresponding indices. Then, we can couple \[
\tilde{Z}_{m,n}^K(t\log K)\leq N_{m,n}^K(t\log K)\leq \hat{Z}_{m,n}^K(t\log K)
\]
where $\tilde{Z}_{m,n}^K$ and $\hat{Z}_{m,n}^K$ are $BPI_K(4-\tfrac{(m+n)\delta}{2}+\tau\1_{n>n_k^*}\mp C_*\veps_k, 1+\bar{z}_{m_k^*,n_k^*}^a+\tau\1_{n<n_k^*}\pm 2 C_*\veps_k, a_{m,n-1},\linebreak \beta_{m,n-1}(s_{k-1})-\alpha\mp C_*\veps_k, (\beta_{m,n}(s_{k-1})\mp C_*\veps_k)_\times)$ in the case where $m=0$ and otherwise they are determined by $BBPI_K(4-\tfrac{(m+n)\delta}{2}+\tau\1_{n>n_k^*}\mp C_*\veps_k,0, 1+(1-pm\delta)\bar{z}_{m_k^*,n_k^*}^a+\tau\1_{n<n_k^*}\pm 2 C_*\veps_k,\kappa,\linebreak pm\delta\mp C_*\veps_k,\sigma, a_{m,n-1}, \beta_{m,n-1}(s_{k-1})-\alpha\mp C_*\veps_k, (\beta_{m,n}(s_{k-1})\mp C_*\veps_k)_\times)$.

Applying Theorem \ref{Theorem: Onedimensionalconvergence} or \ref{Theorem: Main Convergence} accordingly shows the bounds (\ref{eq: inequality2}) by definition of our fitness function. Continuing this process for all time intervals on which all $\beta_{m,n}$ are affine shows the bounds (\ref{eq: inequality2}) on the entire interval $[s_{k-1},s_k\wedge\theta_k^K\wedge T]$ with probability converging to $1$.

\subsubsection*{Step 2: Deriving a lower bound for $\theta_k^K$}

As in Step 2 of the base case, we want to show that $(s_k-\eta)\wedge T<\theta_k^K$ with probability converging to $1$. Again due to our assumption, we know that all functions $\beta_{m,n}$ except for $\beta_{m_k^*,n_k^*}$ are bounded away from $1$ on the interval $[s_{k-1}+\eta,s_k-\eta]$. Therefore, it again suffices for showing $s_1-\eta<\theta_k^K$ that $N_{m_k^*,n_k^*}^{K,a}$ does not exit a neighbourhood of its equilibrium until time $s_k-\eta$. For this purpose, we can couple with processes \[
Z_{m_k^*,n_k^*,1}^K(t\log K)\leq N_{m_k^*,n_k^*}^K(t\log K)\leq Z_{m_k^*,n_k^*,2}^K(t\log K)
\]
up to time $\theta_k^K$.
Again we need to distinguish between the possibility of becoming dormant or not. If $m_k^*=0$, we can choose $Z_{m_k^*,n_k^*,1}$ as a $LBDI_K((4-\tfrac{(m_k^*+n_k^*)\delta}{2})(1-\veps),1+C\veps,C,K^{-\alpha}N_{m_k^*,n_k^*-1})$ and $Z_{m_k^*,n_k^*,1}$ as a $LBDI_K((4-\tfrac{(m_k^*+n_k^*)\delta}{2}),1,C,K^{-\alpha}N_{m_k^*,n_k^*-1})$. If, on the other hand, we have $m_k^*>0$, we need to distinguish where the immigration is coming from and can choose the process $Z_{m_k^*,n_k^*,1}$ to be determined by a $LBBI_K((4-\tfrac{(m_k^*+n_k^*)\delta}{2})(1-\veps),1+C\veps,\kappa,\sigma,p,C,K^{-\alpha}N_{m_k^*,n_k^*-1})$ if we assume the immigration to be dominated by $N_{m_k^*,n_k^*-1}$. Then we can choose $Z_{m_k^*,n_k^*,2}$ as a $LBBI_K((4-\tfrac{(m_k^*+n_k^*)\delta}{2}),1,\kappa,\sigma,p,C,K^{-\alpha}N_{m_k^*,n_k^*-1})$. Now, applying \cite[Lemma C.1]{champagnat2019stochastic} to the first case and Corollary \ref{Corollary: Equilibriumcloseness} in the case of bi-type processes, we see that at time $s_k-\eta$ the process $N_{m_k^*,n_k^*}^K$ has not exited a neighbourhood of its equilibrium size with probability converging to $1$. In particular, we must have $s_k-\eta<\theta_k^K$ with high probability.

\subsubsection{Intermediate Phase $k$}

The structure of this intermediate phase remains the same as in Section \ref{SubsubSection: Intermediate Phase 1}. 

\subsubsection*{Step 1: Convergence of $\theta_k^K\to s_k$}
This part of the proof can be taken from Step 1 in Intermediate Phase 1 with minor changes in the times and the resident trait and is not repeated here.

\subsubsection*{Step 2: Emergence of a new population}

This part is also very similar. However, we may need to couple with logistic bi-type branching processes instead of single type. Since this is a straightforward adaptation similar to Step 2 of Phase $k$, we do not carry it out here. We do obtain however that \[
N_{m_{k+1}^*,n_{k+1}^*}^K(\theta_k^K\log K)\in\left[\frac{\rho\veps_k K}{2}, \rho\veps_k K\right]
\]
and \[
\sum_{(m,n)\notin \{(m_k^*,n_k^*),(m_{k+1}^*,n_{k+1}^*)\}}N_{m,n}^K(\theta_k^K\log K)\leq K^{1-\tfrac{\tilde{\veps}}{2}}.
\]

\subsubsection*{Step 3: Competition}
By assumption of the theorem, there is competition between the resident and the emerging trait. Distinguishing the cases, we can proceed as in Intermediate Phase 1 and define the corresponding birth, death, migration, switching and horizontal transfer rates which then allow us to apply one of the Propositions from \ref{Proposition: 4Dpositive}, \ref{Proposition: 4Dnegative}, \ref{Proposition: 3Dpositive}, \ref{Proposition: 3Dnegative}, \ref{Proposition: 3Dpositive2} and \ref{Proposition: 3Dnegative2} in conjunction with Remark \ref{Remark: Important Proposition Remark} or \cite[Lemma C.3]{champagnat2019stochastic}, which in each case give us a finite time $T(\rho,\veps_k)$ such that with probability larger than $1-o_{\veps_k}(1)$ we have, as $K\to\infty$, the bounds \[
N_{m_k^*,n_k^*}^K(\theta_k^K\log K+T(\rho,\veps_k))\in[K^{1-\veps_k}, \rho\veps_k K],\]
\[\frac{N_{m_{k+1}^*,n_{k+1}^*}^{K,a}(\theta_k^K\log K+T(\rho,\veps_k))}{K}\in\left[\bar{z}_{m_{k+1}^*,n_{k+1}^*}^a-\veps_k,\bar{z}_{m_{k+1}^*,n_{k+1}^*}^a+\veps_k \right],\]
and \[\frac{N_{m_{k+1}^*,n_{k+1}^*}^{K,d}(\theta_k^K\log K+T(\rho,\veps_k))}{K}\in\left[\bar{z}_{m_{k+1}^*,n_{k+1}^*}^d-\veps_k,\bar{z}_{m_{k+1}^*,n_{k+1}^*}^d+\veps_k \right].
\]
Thus, we can define the time $\sigma_{k+1}^K\log K\coloneqq\theta_k^K\log K+T(\rho,\veps_k)$, at which time the stated properties in the beginning of Step $k$ are satisfied with high probability. That is, for $(m,n)\notin \{(0,0),(m_2^*,n_2^*)\}$ we have again using the continuity argument from Lemma \ref{lemmacontinuity}
\[
\frac{\log(1+N_{m,n}^K(\sigma_{k+1}^K\log K))}{\log K}\in[\beta_{m,n}(s_k)-\veps_{k+1},\beta_{m,n}(s_k)+\veps_{k+1}],
\]
if $\beta_{m,n}(s_k)>0$ and $N_{m,n}^K(\sigma_{k+1}^K\log K)=0$ otherwise. To see the latter part, the argument from the end of Case(a) in Step 3 of Section \ref{SubsubSection: Intermediate Phase 1} still applies. Thus, we have proven Theorem \ref{Theorem: Main Theorem}.
\end{proof}
\newpage
\appendix
\section{Results on Bi-Type Branching Processes with Immigration}\label{Section: Appendix B}

In this section, we derive a general convergence result for special bi-type branching processes. More specifically, we want to generalize the following theorem from \cite{champagnat2019stochastic}.

We denote the law of a one-dimensional branching process $(Z^K)_{t\geq 0}$ with birth rate $b\geq 0$, death rate $d\geq 0$ and time dependent immigration at rate $K^ce^{at}$ at time $t\geq 0$ with $a,c\in\R$ by $BPI_K(b,d,a,c,\beta)$, where $Z_0^K=\lfloor K^{\beta}-1\rfloor$.
\begin{theorem}\label{Theorem: Onedimensionalconvergence}
	Let $Z^K$ be a $BPI_K(b,d,a,c,\beta)$ with $c\leq\beta$ and assume either $\beta>0$ or $c\neq 0$. Then the process $\tfrac{\log(1+Z_{t\log K}^K)}{\log K}$ converges when $K$ tends to infinity in probability in $L^\infty([0,T])$ for all $T>0$ to the continuous, deterministic function $\bar{\beta}$ given by \begin{enumerate}[label=\emph{(\roman*)}]
		\item if $\beta>0$, $\bar{\beta}\colon t\mapsto (\beta+rt)\vee(c+at)\vee 0$;
		\item if $\beta=0$, $c<0$ and $a>0$, $\bar{\beta}\colon t\mapsto ((r\vee a)(t-\tfrac{\abs{c}}{a}))\vee 0$;
		\item if $\beta=0$, $c<0$ and $a\leq 0$, $\bar{\beta}\colon t\mapsto 0$;
	\end{enumerate}
	where $r=b-d$.
\end{theorem}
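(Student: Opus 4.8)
The statement is (close to) a result of \cite{champagnat2019stochastic}, and the natural route is the moment–method argument used there. Write $r=b-d$ and $n_0:=\lfloor K^{\beta}-1\rfloor$, and decompose $Z^K_t=A^K_t+B^K_t$ into the (independent) contributions of the descendants of the $n_0$ initial individuals and of all immigrants. For $A^K$ — a pure linear birth-and-death process from $n_0$ particles, and $\equiv 0$ when $\beta=0$ — I would use the classical facts that $e^{-rt}A^K_t$ is a nonnegative martingale with mean $n_0$ and a.s.\ limit $W^K$; that $A^K$ dies out when $r\le 0$; and that $W^K/n_0\to1$ in probability as $n_0\to\infty$ when $r>0$ (law of large numbers over the $n_0$ i.i.d.\ lineages, whose martingale limits have finite variance). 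Applying Doob's $L^2$-inequality to $e^{-rt}A^K_t-n_0$ (whose variance is $O(n_0)$, hence $o(n_0^2)$) turns these into the uniform-in-time statement that $\tfrac{\log(1+A^K_{t\log K})}{\log K}$ converges, in probability in $L^\infty([0,T])$, to $(\beta+rt)\vee 0$; on the subinterval where $\beta+rt<0$ the process is already extinct and stays $0$, which accounts for the absorbing $\vee\,0$.

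For $B^K$ I would first reduce to constant immigration rate: partitioning $[0,T]$ into subintervals of length $\le\eta$, on $[u_{i-1}\log K,u_i\log K]$ the rate $K^c e^{at}$ is sandwiched between the constant rates $K^{c+au_{i-1}}$ and $K^{c+au_i}$, which differ only by the factor $K^{a\eta}$, i.e.\ by $a\eta$ in the exponent — negligible once $\eta\downarrow0$. For a branching process with constant immigration rate $K^{c'}$ from of order $K^{\beta'}$ particles, the mean solves $\dot m=rm+K^{c'}$, so $\tfrac{\log(1+\E[\,\cdot\,])}{\log K}$ is an explicit maximum of affine functions, and Markov's inequality bounds $\tfrac{\log(1+\cdot)}{\log K}$ above by it plus $\veps$ with high probability; the constraint $c\le\beta$ is preserved along the phase-by-phase iteration, so this upper bound never exceeds the target. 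For the matching lower bound I would apply Chebyshev's inequality to the sub-population generated by the immigrants arriving in a suitably chosen window $[s^*\log K,(s^*+\eta)\log K]$: there are of order $K^{c'+as^*}\log K$ such immigrants (and $c'+as^*>0$ on the relevant range, so this diverges), enough of their descendant trees are still alive at time $t\log K$, each then of size $\asymp K^{(r(t-s^*))\vee 0}$, and the bounded coefficient of variation of a single tree makes the sum concentrate. Optimising over the endpoint $s^*$, patching the subintervals via the Markov property, and sending $\eta\downarrow0$ identifies the limit of $\tfrac{\log(1+B^K_{t\log K})}{\log K}$, hence of $Z^K=A^K+B^K$, as the claimed maximum of affine functions.

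The step I expect to be delicate — and where the hypothesis $c<0$ in cases (ii)--(iii) is used — is the ``valley'' where the limit is $0$ because immigration has not yet switched on. There the first moment $\E[Z^K_{t\log K}]$ is misleadingly large: it is dominated by the vanishingly rare event that an immigrant appears near time $0$ and its tree grows for the full horizon. Instead one argues directly that the expected number of immigrants arrived by time $u\log K$ is $\int_0^{u\log K}K^c e^{as}\,ds$, which tends to $0$ exactly for $u<|c|/a$ (if $a>0$) and for all $u$ (if $a\le0$), because $c<0$; by monotonicity of the arrival counter, $Z^K_{t\log K}=0$ with high probability, uniformly, on $[0,(|c|/a-\eta)\log K]$ in case (ii) and on all of $[0,T\log K]$ in case (iii), which is $\bar\beta\equiv0$ there. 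For $u>|c|/a$ the immigration rate exceeds $1$ and the second-moment argument above runs; a one-line computation gives $\max_{s\in[|c|/a,\,u]}\big(c+as+r(u-s)\big)=(r\vee a)\big(u-|c|/a\big)$, matching the stated $\bar\beta$. It remains to check the boundary cases $r=0$ and $a=r$ (where the relevant integrals acquire an extra $\log K$ or linear factor that does not change the exponent); throughout, Doob's maximal inequality applied to the compensated martingale $e^{-rt}Z^K_t-\int_0^t e^{(a-r)s}K^c\,ds$ is what upgrades the pointwise-in-$t$ bounds to the uniform $L^\infty([0,T])$ statement, and the continuity of $\bar\beta$ together with the Markov property glues the phases; letting $\veps\downarrow0$ concludes.
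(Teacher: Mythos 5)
The paper itself offers no proof of this statement---it simply cites \cite{champagnat2019stochastic} (Theorem~B.5 there)---but the bi-type generalization proved in Appendix~\ref{Section: Appendix B} makes the underlying strategy visible: compensate $Z_t^K$ by its mean, normalize by $e^{-rt}$, control the resulting process uniformly via Doob's $L^2$ inequality together with explicit first and second moment bounds, then dispatch the degenerate regimes ($\beta=0$, or the exponent touching $0$) with dedicated extinction and re-emergence lemmas. Your proposal lands in the same place via a somewhat different organization: you split $Z^K=A^K+B^K$ along ancestry (initial particles vs.\ immigrants and their offspring), run a lineage-wise law of large numbers plus Doob for $A^K$, and reduce $B^K$ to constant immigration on short subintervals before controlling it from above by Markov and from below by a Chebyshev ``window'' argument, patching via the Markov property. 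That is a legitimate alternative, and it collapses onto the reference argument at the technical core---the martingale you invoke at the end, $e^{-rt}Z_t^K-\int_0^t K^ce^{(a-r)s}\,\mathrm{d}s$, equals $e^{-rt}\bigl(Z_t^K-\E[Z_t^K]\bigr)$ up to the constant $Z_0^K$---while the ancestry split makes the $\max$-of-affine-branches shape of $\bar\beta$ more transparent, at the cost of more gluing. One inaccuracy to be aware of: the variance of $e^{-rt}A_t^K-n_0$ is $O(n_0)$ only when $r\geq0$; for $r<0$ it grows like $n_0 e^{|r|t}$, i.e.\ of order $K^{\beta+|r|T}$ at $t=T\log K$, so the Doob estimate controls $A^K$ uniformly only on $[0,\beta/|r|)$. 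Past the extinction point the $L^\infty$ convergence must instead rest on $\P(A^K_{t\log K}\geq1)\leq\E[A^K_{t\log K}]=n_0K^{rt}\to0$ together with monotonicity of the extinction event---an argument you do gesture at separately, so the overall strategy is sound, but the ``$O(n_0)$ variance'' parenthetical as written does not carry the full interval when $r<0$.
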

\begin{proof}
	This is Theorem B.5 from \cite{champagnat2019stochastic}.
\end{proof}

 In the spirit of the above theorem, we consider the process $Z_t^K=(X_t^K,Y_t^K)$ with initial  population $(X_0^K,Y_0^K)=(\lfloor K^{\beta}-1\rfloor , \lfloor K^{\gamma}-1\rfloor)$ and transition rates \begin{align*}
(n,m)\mapsto \begin{cases}
(n+1,m),&\quad\text{at rate }b_1n+K^ce^{at}\\
(n,m+1),&\quad\text{at rate }b_2m\\
(n-1,m+1),&\quad\text{at rate }\sigma_1n\\
(n+1,m-1),&\quad\text{at rate }\sigma_2m\\
(n-1,m),&\quad\text{at rate }d_1n\\
(n,m-1),&\quad\text{at rate }d_2m
\end{cases}.
\end{align*}
We refer to the rates $b_1, b_2\geq 0$ as birth rates of $X_t^K$ and $Y_t^K$ respectively, $d_1,d_2\geq 0$ as their respective death rates and $\sigma_1,\sigma_2>0$ are the switching rates. The additional $K^ce^{at}$ represents the immigration into the population from the outside, where $a,c\in\R$.

\begin{notation}\label{notationbitypebranching}
	We denote the distribution of a bi-type branching process as introduced above by $BBPI_K(b_1,b_2,d_1,d_2,\sigma_1,\sigma_2,a,c,\beta,\gamma)$. If the initial condition satisfies $\beta=\gamma$, we use the shorthand notation $BBPI_K(b_1,b_2,d_1,d_2,\sigma_1,\sigma_2,a,c,\beta)$.
\end{notation}

We are now interested in finding some convergence results for the total population size $X_t^K+Y_t^K$ similar to those from Appendix B in \cite{champagnat2019stochastic}. We will show the following theorem.

\begin{theorem}\label{Theorem: Main Convergence}
	Let $Z_t^K=(X_t^K,Y_t^K)$ be a $BBPI_K(b_1,b_2,d_1,d_2,\sigma_1,\sigma_2,a,c,\beta,\gamma)$ as introduced in Notation \ref{notationbitypebranching}. Further assume that $c\leq\beta\vee\gamma$ and $\beta\vee\gamma>0$ or $c\neq 0$ and let $\la$ as in (\ref{Eigenvalue}). Then for all $T\geq 0$ the process \[
	s\mapsto\frac{\log(1+X_{s\log K}^K+Y_{s\log K}^K)}{\log K}
	\]
	converges in probability in $L^\infty([0,T])$ as $K\to\infty$ towards a deterministic function $\bar{\beta}\colon[0,T]\to\R$, which we describe in each case:\\
	\begin{enumerate}[label=\emph{(\roman*)}]
		\item If $\beta\vee\gamma >0$, then $\bar{\beta}(t)=((\beta\vee\gamma)+\la t)\vee(c+at)\vee 0$.
		\item If $\beta\vee\gamma=0$ and $c<0$ and $a>0$, then $\bar{\beta}(t)=(\la\vee a)(t-\tfrac{\abs{c}}{a})\vee 0$.
		\item If $\beta\vee\gamma=0$ and $c<0$ and $a\leq 0$, then $\bar{\beta}(t)=0$.\\
	\end{enumerate}
\end{theorem}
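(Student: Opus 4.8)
The proof closely follows the scheme of the proof of \cite[Theorem~B.5]{champagnat2019stochastic}, i.e.\ of Theorem~\ref{Theorem: Onedimensionalconvergence} above, with the scalar net rate $r=b-d$ replaced everywhere by the principal eigenvalue $\la$ (cf.\ \eqref{Eigenvalue}) of the mean matrix
\[
M=\begin{pmatrix} b_1-d_1-\sigma_1 & \sigma_2\\ \sigma_1 & b_2-d_2-\sigma_2\end{pmatrix}.
\]
The starting point is to record the spectral facts we use: since $\sigma_1,\sigma_2>0$, for $\mu$ large the matrix $M+\mu I$ is nonnegative, irreducible and primitive, so by Perron--Frobenius $\la$ is a simple eigenvalue of $M$, it strictly dominates the real part of the second eigenvalue, and left and right eigenvectors $u=(u_1,u_2)$, $w=(w_1,w_2)$ of $M$ for $\la$ may be chosen with strictly positive entries. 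Hence there are constants $0<c_-\le c_+$ with $c_-\mathrm e^{\la s}\le \|\mathrm e^{sM}\|\le c_+\mathrm e^{\la s}$ and $(\mathrm e^{sM})_{ij}\ge c_-\mathrm e^{\la s}$ for all large $s$. We write $M_s^K:=X_s^K+Y_s^K$; since $u_1,u_2>0$ we have $c_-M_s^K\le u_1X_s^K+u_2Y_s^K\le c_+M_s^K$, so working with the total $M_s^K$ is equivalent, up to an additive $O(1)$ inside the logarithm, to working with any positive combination of the coordinates.

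\emph{Upper bound.} The mean vector $v(s):=\E[(X_s^K,Y_s^K)^{\mathsf T}]$ solves $v'(s)=Mv(s)+K^c\mathrm e^{as}e_1$ with $v(0)=(\lfloor K^{\beta}-1\rfloor,\lfloor K^{\gamma}-1\rfloor)$, so $v(s)=\mathrm e^{sM}v(0)+\int_0^s\mathrm e^{(s-r)M}e_1 K^c\mathrm e^{ar}\,\d r$. Using $\|\mathrm e^{sM}\|\le c_+\mathrm e^{\la s}$ and $c\le\beta\vee\gamma$ one obtains $\E[M_{t\log K}^K]\le C\,K^{((\beta\vee\gamma)+\la t)\vee(c+at)}$ (with a harmless extra $\log K$ factor when $\la=a$). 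In case~(i) this exponent is at most $\bar\beta(t)$, so Markov's inequality yields $\P(M_{t\log K}^K\ge K^{\bar\beta(t)+\veps})\to0$ for every $\veps>0$. In cases~(ii) and~(iii) the first moment is not sharp when $\la>a$ (it is inflated by the atypically early arrival of the first immigrant), and one proceeds as in \cite{champagnat2019stochastic}: let $\tau_1^K$ be the time of the first immigration; the intensity $K^c\mathrm e^{ar}$ forces $\tau_1^K/\log K\to|c|/a$ in probability if $a>0$, while if $a\le0$ there is, with probability tending to $1$, no immigration on any $[0,t\log K]$ (in case~(iii) this already gives $M^K\equiv0$, hence $\bar\beta\equiv0$). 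For $a>0$ one restarts the process at $\tau_1^K$ from a single particle and applies the first-moment estimate on $[\tau_1^K,t\log K]$, producing the bound $((\la\vee a)(t-|c|/a))\vee0$. The pointwise-in-$t$ estimates are upgraded to a bound uniform on $[0,T]$ via a fine grid in $t$, the Markov property, and the Lipschitz continuity of $\bar\beta$ (constant $\le|\la|\vee|a|$), exactly as in \cite{champagnat2019stochastic}.

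\emph{Lower bound.} We bound $M_{t\log K}^K$ below by the maximum of two coupled contributions and of $0$. \emph{(a) Descendants of the initial population.} Removing all immigration only decreases the process, and by superposition we may additionally keep only the particles of the initially more abundant type; this leaves $n\asymp K^{\beta\vee\gamma}$ i.i.d.\ bi-type families. By the branching property $\E[M_s^K]=n\,m(s)$ and $\mathrm{Var}(M_s^K)=n\,\sigma^2(s)$ with $m(s)\asymp\mathrm e^{\la s}$ and $\sigma^2(s)\lesssim \mathrm e^{\la s}\vee\mathrm e^{2\la s}$ (solve the first- and second-moment matrix ODEs), so $\mathrm{Var}(M_{t\log K}^K)/\E[M_{t\log K}^K]^2\lesssim K^{(-\la t)^+-(\beta\vee\gamma)}\to0$ whenever $(\beta\vee\gamma)+\la t>0$; Chebyshev's inequality then gives $M_{t\log K}^K\ge\tfrac12\E[M_{t\log K}^K]\ge K^{(\beta\vee\gamma)+\la t-o(1)}$ with high probability. \emph{(b) Descendants of recent immigrants.} Fix a small constant $\Delta>0$ and keep only immigrants arriving during $[t\log K-\Delta,t\log K]$: their number is Poisson with parameter $\asymp\Delta K^{c+at}$, each leaves at least one descendant alive at time $t\log K$ with probability bounded below by some $p_\Delta>0$ (a bi-type branching process run for at most $\Delta$ units of time), and these events are independent, so a Poisson-thinning (Chernoff) estimate gives $M_{t\log K}^K\ge K^{c+at-o(1)}$ with high probability whenever $c+at>0$. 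Combining (a), (b) and $\log(1+M^K)\ge0$ yields $\tfrac{\log(1+M_{t\log K}^K)}{\log K}\ge\bar\beta(t)-\veps$ with high probability; in cases~(ii) and~(iii), where the initial population is empty, one first localises $\tau_1^K$ as above and then applies the case-(i) analysis to the process restarted at $\tau_1^K$ from a single particle (the subsequent immigrants, arriving at rate $\ge1$, immediately supply the swarm needed for the concentration in (a)). Uniformity on $[0,T]$ is obtained by the same grid argument as for the upper bound.

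\emph{Main obstacle.} Conceptually the statement is a faithful two-type analogue of Theorem~\ref{Theorem: Onedimensionalconvergence}: the only genuinely new input is the Perron root $\la$ together with the spectral estimates for $\mathrm e^{sM}$. The work lies in the bookkeeping. First, the first- and second-moment estimates must be made sharp and uniform on the relevant ranges of $s$, including near the times at which the maximum defining $\bar\beta$ changes its active branch. More delicate is the regime $\beta\vee\gamma=0$ (and, within case~(i), $\la>a$ with weak immigration), where the first moment overcounts and one must localise the first immigration time and restart the process; this is the part closest to, and most dependent on, \cite[Appendix~B]{champagnat2019stochastic}. Finally, the passage from pointwise to $L^\infty([0,T])$ convergence needs the Markov property and the Lipschitz bound on $\bar\beta$ to be handled with some care because of the truncation at $0$ (the population can die out and, if immigration is too weak, stay extinct).
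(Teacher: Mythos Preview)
Your approach differs substantially from the paper's. You work with pointwise first- and second-moment estimates (Markov upstairs, Chebyshev on i.i.d.\ families and Poisson thinning downstairs) and propose to upgrade these to $L^\infty([0,T])$ via a grid together with the Markov property and the Lipschitz constant of $\bar\beta$. The paper instead follows the semimartingale strategy of \cite{champagnat2019stochastic} directly: it studies the centred and exponentially damped process $e^{-\la t}\bigl(X_t^K+Y_t^K-(x_t^K+y_t^K)\bigr)$, decomposes it as a stochastic-integral martingale $\tilde M^K_t=\int_0^t e^{-\la s}\,\d(M_s^K+N_s^K)$ plus a finite-variation remainder, and applies Doob's $L^2$-inequality to obtain uniform control in one stroke (Theorem~\ref{firstconvergencetheorem}). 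The point emphasised there is that in the bi-type case this damped centred process is \emph{not} a martingale---unlike \cite[Lemma~B.3]{champagnat2019stochastic}---so the remainder $\int_0^t e^{-\la s}\bigl((r_1+\sigma_1-\la)(X_s^K-x_s^K)+(r_2+\sigma_2-\la)(Y_s^K-y_s^K)\bigr)\,\d s$ has to be controlled separately via the variance estimates of Lemma~\ref{varianceestimate}. That is the paper's ``main obstacle'', and your route sidesteps it entirely by never attempting a semimartingale argument; consequently your assessment that ``the only genuinely new input is the Perron root'' undercounts the work in the paper's proof, though not in yours.

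Two caveats on your route. First, the assertion that the pointwise-to-uniform step is done ``exactly as in \cite{champagnat2019stochastic}'' is inaccurate: that reference also uses the Doob route, not a grid. Second, your grid argument is underspecified. Between grid points of spacing $\delta\log K$ you still need to control the supremum (for the upper bound) and the infimum (for the lower bound) of $M^K$; neither Markov at the grid points nor the Lipschitz bound on $\bar\beta$ does this by itself. The clean fix is an extra coupling on each short window: dominate $M^K$ above by a one-type branching process with birth rate $b_1\vee b_2$ (plus the immigration) and below by a pure death process with rate $d_1\vee d_2$, so that the exponent of $K$ moves by at most $O(\delta)$ over the window and uniform control on the one-type processes is already available. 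With this added, your argument is correct and more elementary than the paper's; the paper's approach buys $L^\infty$ directly at the cost of Lemmata~\ref{quadraticvariation}--\ref{varianceestimate}, while yours trades those for extra couplings at the grid level. The handling of cases~(ii)--(iii) via localisation of the first immigrant and restart is essentially the same in both approaches and in \cite{champagnat2019stochastic}.
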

The proof of the theorem will rely partly on Markov's, Chebyshev's and Doob's inequalities, so we first need to derive some bounds for the expected value and variance of our process.
\subsection{Bounds on the Expectation and Variance}
 Our first step is to find the semimartingale decomposition of $X_t^K$ and $Y_t^K$. In order to do so, we introduce some notation.
\begin{notation}\label{Notation: Growth}
	In the following we write $r_1\coloneqq b_1-d_1-\sigma_1$ and $r_2\coloneqq b_2-d_2-\sigma_2$.
\end{notation}

\begin{lemma}\label{Semimartingaledecomposition}
	Consider the process $Z_t^K=(X_t^K,Y_t^K)$ as introduced above. Then there exist càdlàg martingales $M_t^K,N_t^K$ starting at $0$, such that \[
	\begin{pmatrix}
	X_t^K\\
	Y_t^K\\
	\end{pmatrix}=\begin{pmatrix}
	X_0^K\\ Y_0^K
	\end{pmatrix}+\begin{pmatrix}
	M_t^K\\N_t^K
	\end{pmatrix}+\int_{0}^{t}\begin{pmatrix}
	r_1X_s^K+\sigma_2Y_s^K+K^ce^{as}\\
	r_2Y_s^K+\sigma_1 X_s^K
	\end{pmatrix}\ \mathrm{d}s.
	\]
\end{lemma}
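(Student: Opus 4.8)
\emph{Proof strategy.} The plan is to read off the decomposition from Dynkin's formula applied to the two coordinate projections of the pure-jump Markov process $Z^K$; the only genuine work is to check that $Z^K$ does not explode and that the resulting compensated processes are true (not merely local) martingales, both of which follow from an a priori bound on $\E[X^K_t+Y^K_t]$.

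First I would record the time-inhomogeneous generator of $Z^K_t=(X^K_t,Y^K_t)$ read off from the transition rates: for $f$ on $\N_0^2$,
\begin{align*}
\mathcal L_t f(n,m) &= (b_1 n + K^c e^{at})\big(f(n+1,m)-f(n,m)\big) + b_2 m\big(f(n,m+1)-f(n,m)\big)\\
&\quad + \sigma_1 n\big(f(n-1,m+1)-f(n,m)\big) + \sigma_2 m\big(f(n+1,m-1)-f(n,m)\big)\\
&\quad + d_1 n\big(f(n-1,m)-f(n,m)\big) + d_2 m\big(f(n,m-1)-f(n,m)\big).
\end{align*}
Applied to the (unbounded, hence to be handled by localisation) coordinate functions $f_1(n,m)=n$ and $f_2(n,m)=m$, this gives, with $r_1=b_1-d_1-\sigma_1$ and $r_2=b_2-d_2-\sigma_2$,
\begin{align*}
\mathcal L_t f_1(n,m) = r_1 n + \sigma_2 m + K^c e^{at}, \qquad \mathcal L_t f_2(n,m) = r_2 m + \sigma_1 n,
\end{align*}
which is precisely the integrand in the claimed identity. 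Setting
\begin{align*}
M^K_t := X^K_t - X^K_0 - \int_0^t \big(r_1 X^K_s + \sigma_2 Y^K_s + K^c e^{as}\big)\,\mathrm{d}s, \quad N^K_t := Y^K_t - Y^K_0 - \int_0^t \big(r_2 Y^K_s + \sigma_1 X^K_s\big)\,\mathrm{d}s,
\end{align*}
these are càdlàg (differences of the càdlàg processes $X^K,Y^K$ and continuous integrals) and, by the standard theory of pure-jump Markov processes, local martingales null at $0$.

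It remains to promote ``local martingale'' to ``martingale'', for which I would control the total mass $S^K_t := X^K_t + Y^K_t$. Introduce $\tau_N := \inf\{t\ge 0 : S^K_t \ge N\}$; stopping the identity above at $t\wedge\tau_N$, taking expectations (legitimate since $S^K_{\cdot\wedge\tau_N}\le N$) and adding the two coordinates, the switching contributions recombine with the $r_i$ into net birth--death terms, giving
\begin{align*}
\E\big[S^K_{t\wedge\tau_N}\big] &= S^K_0 + \E\int_0^{t\wedge\tau_N}\!\!\big((b_1-d_1)X^K_s + (b_2-d_2)Y^K_s + K^c e^{as}\big)\,\mathrm{d}s\\
&\le S^K_0 + \int_0^t\!\big((b_1\vee b_2)\,\E[S^K_{s\wedge\tau_N}] + K^c e^{as}\big)\,\mathrm{d}s.
\end{align*}
Grönwall's inequality then bounds $\E[S^K_{t\wedge\tau_N}]$ by a finite constant $C(t)$ independent of $N$, whence $\tau_N\uparrow\infty$ almost surely (no explosion, as $S^K_{\tau_N}=N$ on $\{\tau_N<\infty\}$) and, by Fatou, $\E[S^K_t]\le C(t)<\infty$. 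Moreover $S^K$ is stochastically dominated by a Yule-type pure-birth process with immigration, which is monotone with finite expectation, yielding $\E[\sup_{s\le t}S^K_s]<\infty$; this provides an $L^1$ majorant for the families $\{M^K_{t\wedge\tau_N}\}_N$ and $\{N^K_{t\wedge\tau_N}\}_N$, so the stopped martingale identities pass to the limit $N\to\infty$ by dominated convergence and $M^K, N^K$ are genuine martingales.

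I expect this Grönwall/localisation step — which simultaneously delivers non-explosion, integrability of the drift integrals and the true martingale property — to be the only point requiring care; the generator computation and the identification of the drift are purely mechanical.
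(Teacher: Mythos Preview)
Your proof is correct and takes the same approach as the paper, which simply states that the decomposition follows from Dynkin's formula. You have supplied the details the paper omits --- in particular the localisation/Grönwall argument ensuring non-explosion and that $M^K,N^K$ are true rather than merely local martingales --- but the underlying idea is identical.
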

\begin{proof}
	This decomposition follows from Dynkin's formula.
	\end{proof}

Our next goal is to identify the rate of growth of our population, which is directly linked to determining the expected value of the population size. In order to do so, we calculate the expected value for our process up to some constants.

\begin{lemma}\label{expectedvalue}
	The expected value $(x_t^K,y_t^K)$ of $(X_t^K,Y_t^K)$ solves the  ordinary differential equation \begin{align}
	\begin{pmatrix}
	\dot{x}_t^K\\\dot{y}_t^K
	\end{pmatrix}=\begin{pmatrix}
	r_1 & \sigma_2\\
	\sigma_1 & r_2
	\end{pmatrix}\begin{pmatrix}
	x_t^K\\y_t^K
	\end{pmatrix}+\begin{pmatrix}
	K^c e^{at}\\0
	\end{pmatrix}\quad\text{and}\quad\begin{pmatrix}
	x_0^K\\y_0^K
	\end{pmatrix}=\begin{pmatrix}
	K^\beta-1\\ K^\gamma-1
	\end{pmatrix}.\label{expectedode}
	\end{align}
\end{lemma}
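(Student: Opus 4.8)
The plan is to derive the ODE directly from the semimartingale decomposition in Lemma~\ref{Semimartingaledecomposition} by taking expectations. First I would note that $x_t^K \coloneqq \E[X_t^K]$ and $y_t^K \coloneqq \E[Y_t^K]$ are well-defined and finite for each fixed $t$: this follows from a standard domination argument, since the total jump rate is linear in the current population size plus the deterministic immigration term $K^c e^{at}$, so the process is stochastically dominated by a (super-)branching process with immigration whose expectation is finite on compact time intervals; alternatively one can invoke a localization/Gronwall argument to justify that the expectation does not blow up in finite time. Having this integrability in hand, I would take expectations on both sides of the identity
\[
\begin{pmatrix} X_t^K \\ Y_t^K \end{pmatrix}
= \begin{pmatrix} X_0^K \\ Y_0^K \end{pmatrix}
+ \begin{pmatrix} M_t^K \\ N_t^K \end{pmatrix}
+ \int_0^t \begin{pmatrix} r_1 X_s^K + \sigma_2 Y_s^K + K^c e^{as} \\ r_2 Y_s^K + \sigma_1 X_s^K \end{pmatrix}\, \mathrm{d}s.
\]

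Since $M_t^K$ and $N_t^K$ are martingales starting at $0$, their expectations vanish, so $\E[M_t^K] = \E[N_t^K] = 0$. Applying Fubini's theorem to interchange expectation and the time integral — which is justified by the integrability of $s \mapsto \E[|X_s^K|] + \E[|Y_s^K|]$ on $[0,t]$, itself a consequence of the domination/Gronwall bound above — yields
\[
\begin{pmatrix} x_t^K \\ y_t^K \end{pmatrix}
= \begin{pmatrix} K^\beta - 1 \\ K^\gamma - 1 \end{pmatrix}
+ \int_0^t \begin{pmatrix} r_1 x_s^K + \sigma_2 y_s^K + K^c e^{as} \\ r_2 y_s^K + \sigma_1 x_s^K \end{pmatrix}\, \mathrm{d}s,
\]
using the initial conditions $X_0^K = \lfloor K^\beta - 1 \rfloor$ and $Y_0^K = \lfloor K^\gamma - 1 \rfloor$ from Notation~\ref{notationbitypebranching}. (If one prefers to keep the floor functions explicit they can be carried through verbatim; they do not affect the ODE, only the precise value of the constant initial data.) The integrand is continuous in $s$ because $x^K$ and $y^K$ are continuous — indeed they are differentiable, as the right-hand side of the integral equation is continuous in $s$ — so the fundamental theorem of calculus lets me differentiate in $t$ to obtain exactly the system~(\ref{expectedode}) with the stated matrix $\begin{psmallmatrix} r_1 & \sigma_2 \\ \sigma_1 & r_2 \end{psmallmatrix}$ and inhomogeneity $\begin{psmallmatrix} K^c e^{at} \\ 0 \end{psmallmatrix}$, and the initial condition is read off from the integral equation at $t = 0$.

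The only genuinely delicate point — and hence the main obstacle — is the a priori finiteness and local integrability of the first moments, since without it neither the vanishing of the martingale expectations nor the Fubini interchange is legitimate. I would handle this by a truncation argument: let $T_n = \inf\{t : X_t^K + Y_t^K \geq n\}$, apply the optional stopping theorem to the stopped martingales $M^K_{t \wedge T_n}, N^K_{t \wedge T_n}$, take expectations in the stopped version of the decomposition, bound the resulting integrand using $r_1 x_s^K + \sigma_2 y_s^K + K^c e^{as} \leq (|r_1| + \sigma_2 + \sigma_1 + |r_2|)(x_s^K + y_s^K) + K^c e^{aT}$ on $[0,T]$, apply Gronwall's inequality to $\E[X^K_{t\wedge T_n} + Y^K_{t\wedge T_n}]$ to get a bound uniform in $n$, and then let $n \to \infty$ using monotone convergence to conclude both that $\E[X_t^K] + \E[Y_t^K] < \infty$ and that $T_n \to \infty$ a.s. Everything else is routine. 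Alternatively, the paper may simply cite that this expectation computation is a direct consequence of Dynkin's formula applied to the linear test function $(n,m) \mapsto n$ and $(n,m) \mapsto m$, which is how Lemma~\ref{Semimartingaledecomposition} was itself obtained, so the proof can be as short as ``Apply Dynkin's formula to the coordinate functions and use that the generator acts linearly,'' with the integrability subtleties either suppressed or relegated to the domination remark above.
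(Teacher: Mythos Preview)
Your proof is correct and follows exactly the paper's approach: take expectations in the semimartingale decomposition of Lemma~\ref{Semimartingaledecomposition}, use Fubini to swap expectation and integral, and differentiate. The paper's proof is a terse three-line version of what you wrote, suppressing the integrability justification you spelled out.
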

\begin{proof}
	This is a direct consequence of Lemma \ref{Semimartingaledecomposition}, where we can apply the expected value on both sides. Interchanging the expected value and integral on the right hand side by Fubini shows that $(x_t^K,y_t^K)$ is absolutely continuous. Differentiating both sides gives the differential equation (\ref{expectedode}).
\end{proof}
Note that this differential equation can be solved easily: The matrix \begin{align}
\begin{pmatrix}
r_1 & \sigma_2\\
\sigma_1 & r_2
\end{pmatrix}=SDS^{-1}\label{diagonalization}
\end{align}
can be diagonalised, because its eigenvalues $\la$ and $\tilde{\la}$ can be written as \begin{align}
\la=\frac{r_1+r_2+\Delta}{2}\quad\text{and}\quad \tilde{\la}=\frac{r_1+r_2-\Delta}{2},\label{Eigenvalue}
\end{align}
where $\Delta=\sqrt{(r_1-r_2)^2+4\sigma_1\sigma_2}\neq 0$ and hence $\la>\tilde{\la}$. 
In particular we are now able to give a characterization of the expected values for $X_t^K$ and $Y_t^K$.
\begin{lemma}\label{explicitexpectation}
	The expected values $(x_t^K,y_t^K)$ of $(X_t^K,Y_t^K)$ satisfy for $t>0$ the asymptotic relation \begin{align*}
	x_t^K,y_t^K=\begin{cases}
	\Theta(K^ce^{at}),&\quad\text{if } a>\la\\
	\Theta((x_0^K+y_0^K+K^c)e^{\la t}),&\quad\text{if }\la>a\\
	\Theta((x_0^K+y_0^K+(1+t)K^c)e^{\la t}),&\quad\text{if }\la=a,
	\end{cases}
	\end{align*}
	where we use the notation $f^K=\Theta(g^K)$ for two families of functions $f^K,g^K\colon[0,\infty)\to\R$ if there exists some finite constant $C>0$ such that for all $t\geq 0$ we have \[
	\lim\limits_{K\to\infty}\frac{f^K(t)}{g^K(t)}= C.
	\]
	In fact, there exists a constant $\tilde{C}>0$ sufficiently large such that for all $K\geq 0$ and all $t\geq 0$ we have \[
	x_t^K,y_t^K\leq \tilde{C}\lr{[(x_0^K+y_0^K+(1+t)K^c)e^{\la t}]\vee [K^ce^{at}]}.
	\]
\end{lemma}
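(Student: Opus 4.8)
The plan is to solve the linear ODE (\ref{expectedode}) explicitly via the diagonalization (\ref{diagonalization}) and then read off the asymptotics. First I would diagonalize: writing $\begin{pmatrix} x_t^K \\ y_t^K \end{pmatrix} = S \begin{pmatrix} u_t^K \\ v_t^K \end{pmatrix}$, the system decouples into two scalar inhomogeneous linear ODEs $\dot u_t^K = \la u_t^K + c_1 K^c e^{at}$ and $\dot v_t^K = \tilde\la v_t^K + c_2 K^c e^{at}$, where $c_1, c_2$ are the (fixed, $K$-independent) entries of $S^{-1}$ applied to $\binom{1}{0}$, and the initial data $(u_0^K, v_0^K) = S^{-1}(K^\beta - 1, K^\gamma - 1)^\top$ is $\Theta(K^\beta \vee K^\gamma) = \Theta(x_0^K + y_0^K)$ up to constants. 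Each scalar equation is solved by the variation-of-constants formula, e.g.
\[
u_t^K = u_0^K e^{\la t} + c_1 K^c \int_0^t e^{\la (t-s)} e^{as}\, \mathrm{d}s = u_0^K e^{\la t} + c_1 K^c e^{\la t}\int_0^t e^{(a-\la)s}\,\mathrm{d}s,
\]
and the integral evaluates to $\tfrac{e^{(a-\la)t} - 1}{a - \la}$ when $a \neq \la$ and to $t$ when $a = \la$.

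Next I would do the case analysis on the sign of $a - \la$ (and similarly $a - \tilde\la$, noting $\tilde\la < \la$ so $a > \la \Rightarrow a > \tilde\la$). When $a > \la$: the term $K^c e^{at}$ dominates both $u_0^K e^{\la t}$ (since $e^{at}$ beats $e^{\la t}$ for large $t$, but we want a bound uniform in $t\geq 0$ — here one uses $e^{(a-\la)s} \le e^{(a-\la)t}$ to get $\int_0^t e^{(a-\la)s}\mathrm ds \le t e^{(a-\la)t}$, or more simply bounds the integral by $\tfrac{e^{(a-\la)t}}{a-\la}$) so $u_t^K, v_t^K = \Theta(K^c e^{at})$, hence $x_t^K, y_t^K = \Theta(K^c e^{at})$. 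When $\la > a$: the integral $\int_0^t e^{(a-\la)s}\mathrm ds$ is bounded by the constant $\tfrac{1}{\la - a}$, so $u_t^K = \Theta((u_0^K + K^c) e^{\la t})$ and $v_t^K = \Theta((v_0^K + K^c)e^{\tilde\la t}) = O((v_0^K + K^c)e^{\la t})$; recombining via $S$ gives $x_t^K, y_t^K = \Theta((x_0^K + y_0^K + K^c)e^{\la t})$. When $\la = a$: the $u$-equation contributes the factor $t$, giving $(u_0^K + (1+t)K^c)e^{\la t}$, while the $v$-component is still $O((v_0^K + K^c)e^{\tilde\la t})$ which is absorbed; so $x_t^K, y_t^K = \Theta((x_0^K + y_0^K + (1+t)K^c)e^{\la t})$. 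Finally, the last displayed inequality — the uniform-in-$K$, uniform-in-$t$ upper bound by $\tilde C\big([(x_0^K + y_0^K + (1+t)K^c)e^{\la t}] \vee [K^c e^{at}]\big)$ — follows by collecting the three cases: in every case each of $u_t^K$, $v_t^K$ is bounded by a constant times the displayed right-hand side (using $e^{\tilde\la t} \le e^{\la t}$ and $|S|, |S^{-1}| = O(1)$).

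The one genuine subtlety — the main obstacle, though a mild one — is the sign/positivity bookkeeping needed to make the $\Theta$ claims honest rather than just $O$ claims. The coefficients $c_1, c_2$ and the entries of $S$ need not all be positive, and cancellation between the $u$ and $v$ contributions could in principle make $x_t^K$ or $y_t^K$ much smaller than the claimed order. I would handle this by noting that $X_t^K, Y_t^K \ge 0$ (so $x_t^K, y_t^K \ge 0$) together with a lower bound: since all off-diagonal rates are nonnegative and the immigration is into the first coordinate, a comparison argument gives $x_t^K \ge (K^\beta - 1)e^{r_1 t}$ trivially and, more usefully, one can lower-bound $x_t^K + y_t^K$ by the dominant exponential by keeping only the leading eigendirection and using that $S$ has nonnegative leading eigenvector (Perron–Frobenius, applicable since the matrix $\binom{r_1\ \sigma_2}{\sigma_1\ r_2}$ has nonnegative off-diagonal entries, $\sigma_1,\sigma_2 > 0$, hence is irreducible). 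This pins down the $\Theta$ from below and matches the $O$ from above. Everything else is the routine variation-of-constants computation sketched above.
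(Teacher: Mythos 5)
Your proposal is correct and takes essentially the same route as the paper: the paper's proof is literally the two sentences ``The solution to the differential equation (\ref{expectedode}) is known to be [variation-of-constants formula]. An explicit computation shows the claim.'' You have supplied the explicit computation — diagonalize, solve the two decoupled scalar inhomogeneous ODEs, case-split on $\mathrm{sign}(a-\la)$, and recombine — and you correctly identified the one point the paper glosses over, namely that the $\Theta$ lower bound is not automatic from the $O$ upper bound because $S$ and $S^{-1}$ have mixed-sign entries. Your resolution (the matrix has strictly positive off-diagonal entries, so it is an irreducible Metzler matrix whose Perron eigendirection and co-eigendirection are componentwise positive, hence the projections of both the nonnegative initial data and of the immigration vector $\binom{1}{0}$ onto the leading mode are nonnegative, and the subleading $e^{\tilde\la t}$ contribution is dominated) is the right way to make the $\Theta$ honest, and it is exactly what an ``explicit computation'' would have to invoke.
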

\begin{proof}
	
	The solution to the differential equation (\ref{expectedode}) is known to be \[
	\begin{pmatrix}
	x_t^K\\ y_t^K
	\end{pmatrix}=Se^{Dt}S^{-1} \begin{pmatrix}
	x_0^K\\y_0^K
	\end{pmatrix}+\int_{0}^{t}Se^{D(t-s)}S^{-1}\begin{pmatrix}
	K^ce^{as}\\ 0
	\end{pmatrix}\ \mathrm{d}s.
	\]
	An explicit computation shows the claim.
\end{proof}

In the following we will also need some bounds on the variation of $X_t^K$ and $Y_t^K$. In order to derive them, we need some more preparation. In particular, we need to compute the quadratic variation.
The purpose here is twofold: We need these variation terms once for finding an upper bound of the variance of $X^K$ and $Y^K$. Secondly, we will later, in the proof of our convergence result, make use of Doob's inequality and hence need to calculate the expected value of some quadratic variation.

\begin{lemma}\label{quadraticvariation}
	The quadratic variation of the martingales $M^K$, $N^K$ and $M^K+N^K$ as well as the quadratic covariation $[M^K,N^K]$ of $M^K$ and $N^K$ are given by \begin{align*}
	[M^K]_t&=\int_0^t(b_1+d_1+\sigma_1)X_s^K+\sigma_2Y_s^K+K^ce^{as}\ \mathrm{d}s,\\
	[N^K]_t&=\int_0^t (b_2+d_2+\sigma_2)Y_s^K+\sigma_1X_s^K\ \mathrm{d}s,\\
	[M^K+N^K]_t&=3\int_{0}^{t}(b_1+d_1)X_s^K+(b_2+d_2)Y_s^K+K^ce^{as}\ \mathrm{d}s,\\
	[M^K,N^K]_t&=\int_{0}^{t}(r_1+2d_1)X_s^K+(r_2+2d_2)Y_s^K+K^ce^{as}\ \mathrm{d}s.
	\end{align*}
\end{lemma}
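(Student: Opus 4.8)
The plan is to read off the quadratic variations directly from the jump structure of the pure-jump Markov process $Z^K_t = (X^K_t, Y^K_t)$, using the fact that for a semimartingale $M$ which is a compensated sum of jumps, $[M]_t = \sum_{s\le t} (\Delta M_s)^2$, and that the compensator (predictable quadratic variation $\langle M\rangle$) of $[M]$ is obtained by integrating the squared jump sizes against the jump intensities; since $[M]-\langle M\rangle$ is itself a martingale starting at $0$ and we only need an identity for $[M^K]$ up to a martingale (indeed the statement as written gives $[M^K]_t$ as the explicit integral, which is really $\langle M^K\rangle_t$, the predictable version — I would either phrase it with $\langle\cdot\rangle$ or note that the displayed formula is the compensator). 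Concretely, from Lemma~\ref{Semimartingaledecomposition}, $M^K_t = X^K_t - X^K_0 - \int_0^t (r_1 X^K_s + \sigma_2 Y^K_s + K^c e^{as})\,\mathrm d s$, so $\Delta M^K_s = \Delta X^K_s$ at each jump time.

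First I would list, for the $X$-coordinate, which transitions change $X^K$ and by how much: births of $X$ (rate $b_1 X^K_s$) and immigration (rate $K^c e^{as}$) give $\Delta X = +1$; deaths of $X$ (rate $d_1 X^K_s$) and switches $X\to Y$ (rate $\sigma_1 X^K_s$) give $\Delta X = -1$; switches $Y\to X$ (rate $\sigma_2 Y^K_s$) give $\Delta X = +1$; births of $Y$ and deaths of $Y$ leave $X$ unchanged. Each such jump contributes $(\Delta X)^2 = 1$. Summing the intensities of all transitions with $\Delta X \ne 0$ gives total rate $(b_1 + d_1 + \sigma_1) X^K_s + \sigma_2 Y^K_s + K^c e^{as}$, which integrated in $s$ yields the claimed formula for $[M^K]_t$. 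The same bookkeeping for $Y^K$: births of $Y$ (rate $b_2 Y^K_s$), deaths of $Y$ (rate $d_2 Y^K_s$), switches $X\to Y$ (rate $\sigma_1 X^K_s$), switches $Y\to X$ (rate $\sigma_2 Y^K_s$) all have $|\Delta Y| = 1$; this gives total rate $(b_2 + d_2 + \sigma_2) Y^K_s + \sigma_1 X^K_s$, hence the formula for $[N^K]_t$.

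For $[M^K, N^K]_t$ I would use $\Delta(M^K) \Delta(N^K) = \Delta X^K_s \,\Delta Y^K_s$, which is nonzero only on the two switching transitions: the $X\to Y$ switch has $(\Delta X, \Delta Y) = (-1,+1)$, contributing $-1$ at rate $\sigma_1 X^K_s$; the $Y\to X$ switch has $(\Delta X,\Delta Y)=(+1,-1)$, contributing $-1$ at rate $\sigma_2 Y^K_s$. So naively the covariation would be $-\int_0^t (\sigma_1 X^K_s + \sigma_2 Y^K_s)\,\mathrm d s$; to match the stated expression one uses $r_1 + 2d_1 = b_1 + d_1 - \sigma_1$ and $r_2 + 2d_2 = b_2 + d_2 - \sigma_2$, and checks the bilinear identity $[M^K+N^K] = [M^K] + 2[M^K,N^K] + [N^K]$: adding the first two displayed integrands and twice $-(\sigma_1 X + \sigma_2 Y)$ gives $(b_1+d_1-\sigma_1)X + (b_2+d_2-\sigma_2)Y + K^c e^{as}$ — wait, this does not obviously equal $3[(b_1+d_1)X + (b_2+d_2)Y + K^c e^{as}]$, so the $[M^K+N^K]$ formula with its factor $3$ must come from a \emph{different} accounting: the jump of $M^K+N^K$ equals $\Delta(X^K + Y^K)$, and $X^K + Y^K$ changes by $+1$ on $X$-birth, $Y$-birth, immigration, by $-1$ on $X$-death, $Y$-death, and by $0$ on both switches; squared jumps are $1$ on the five non-switch transitions, giving rate $(b_1+d_1)X + (b_2+d_2)Y + K^c e^{as}$ — i.e.\ a factor $1$, not $3$.

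The discrepancy (my count gives factor $1$, the Lemma claims factor $3$) signals that the paper is using a convention where these ``quadratic variations'' are really the \emph{optional} quadratic variations of martingales whose jump sizes I have mis-tallied, or — more likely — that the displayed objects are obtained after a different normalization; the honest thing is that I would recompute carefully and, if the factor $3$ is genuinely as stated, conclude that the process must have additional simultaneous jumps (e.g.\ competition/mutation events bundled in) or that ``$3$'' is simply the constant appearing because $(b_1+d_1)+(b_2+d_2)+(\text{imm})$ got triple-counted through $[M]+2[M,N]+[N]$ after absorbing the switching terms differently. Thus \textbf{the main obstacle} I anticipate is reconciling the factor-$3$ in the $[M^K+N^K]$ formula and the $r_i + 2d_i$ form in $[M^K,N^K]$ with the naive jump-counting: I would resolve it by first rewriting $M^K, N^K$ explicitly via Dynkin/Lemma~\ref{Semimartingaledecomposition}, computing $\langle M^K\rangle, \langle N^K\rangle$ by the intensity-weighted-squared-jumps recipe, then computing $\langle M^K+N^K\rangle$ and $\langle M^K,N^K\rangle$ both directly \emph{and} via bilinearity, and checking the two agree — the bilinear identity $\langle M+N\rangle = \langle M\rangle + 2\langle M,N\rangle + \langle N\rangle$ is the consistency check that pins down the correct constants, and once the arithmetic $b_i + d_i + \sigma_i$ versus $b_i + d_i - \sigma_i$ is tracked through that identity, the stated formulas should drop out. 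The rest (that $[M^K]-\langle M^K\rangle$ etc.\ are true martingales, so the displayed integrals are the ones needed for the later Doob-inequality arguments) is routine and follows since all intensities are locally integrable on the event that the population stays bounded, which holds up to the stopping times used in the main proof.
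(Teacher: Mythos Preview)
Your jump-counting approach is valid and more direct than the paper's route, which instead applies It\^o's formula to $(X^K_t)^2$ together with Dynkin's formula for $f(x,y)=x^2$ and then reads off $[M^K]_t$ from uniqueness of the Doob--Meyer decomposition; the paper says $[N^K]$ and $[M^K+N^K]$ are obtained analogously, and $[M^K,N^K]$ by polarization. Both methods agree on $[M^K]$ and $[N^K]$, and your bookkeeping for those is correct.

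The discrepancy you flag is not a gap in your argument but an error in the paper's statement. Your count for $[M^K+N^K]_t$ is right: only the five non-switching transitions move $X^K+Y^K$, each by $\pm 1$, so the integrand is $(b_1+d_1)X^K_s+(b_2+d_2)Y^K_s+K^c e^{as}$ with coefficient $1$, not $3$. Carrying out the paper's own It\^o/Dynkin computation with $f(x,y)=(x+y)^2$ also yields coefficient $1$. Since the paper then obtains $[M^K,N^K]$ by polarization from the erroneous $[M^K+N^K]$, that formula is wrong as well; the correct covariation is $[M^K,N^K]_t=-\int_0^t(\sigma_1 X^K_s+\sigma_2 Y^K_s)\,\mathrm ds$, exactly what your $\Delta X\,\Delta Y$ accounting gives, and this is consistent with bilinearity. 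So stop looking for hidden simultaneous jumps or alternative normalizations --- there are none; trust your computation. The errors are harmless downstream: in Lemma~\ref{varianceestimate} and in the proof of Theorem~\ref{firstconvergencetheorem} these quadratic variations enter only through bounds of the form $C_*(x^K_s+y^K_s+K^c e^{as})$, so the incorrect constants are absorbed into the unspecified $C_*$.
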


\begin{proof}
	We only carry out the calculations for $M^K$. In an analogous fashion we can calculate the quadratic variation of $N^K$ and of $M^K+N^K$. For the covariation $[M^K,N^K]$ we can use the polarization identity \[
	[M^K,N^K]_t=\reci{2}\lr{[M^K+N^K]_t-[M^K]_t-[N^K]_t}.
	\]
	Applying Itô's formula to $(X_t^K)^2$ and Dynkin's formula with $f(x,y)=x^2$ shows that \begin{align*}
	\hat{M}^K_t&=(X_t^K)^2-(X_0^K)^2-\int_{0}^{t}(2X_s^K+1)(b_1X_s^K+\sigma_2Y_s^K+K^ce^{as})+(\sigma_1+d_1)(-2X_s^K+1)X_s^K\ \mathrm{d}s,\\
	\tilde{M}^K_t&=(X_t^K)^2-(X_0^K)^2-\int_{0}^{t}2X_s^K((b_1-d_1-\sigma_1)X_s^K+\sigma_2Y_s^K+K^ce^{as})\ \mathrm{d}s-[M^K]_t,
	\end{align*}
	for some martingales $\hat{M}^K$ and $\tilde{M}^K$ starting at $0$.
	By the uniqueness of the Doob-Meyer decomposition of $(X_t^K)^2-(X_0^K)^2$ we see that $\hat{M}^K_t=\tilde{M}^K_t$ and hence \[
	[M^K]_t=\int_{0}^{t}(b_1+d_1+\sigma_1)X_s^K+\sigma_2Y_s^K+K^ce^{as}\ \mathrm{d}s.
	\]
\end{proof}
Now, we can make use of the quadratic variations to derive our bounds for the variance.

\begin{lemma}\label{varianceestimate}
	There exists a constant $C_*\geq 0$ independent of $K$ such that \[
	\V(X_t^K),\V(Y_t^K)\leq C_*(1+t^2)\lr{(e^{2\la t}+e^{\la t})(x_0^K+y_0^K+K^c)+K^ce^{at}}\quad\text{ for all }t\geq 0
	\]
\end{lemma}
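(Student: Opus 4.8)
The plan is to obtain a second-moment estimate from the quadratic variation formulas in Lemma \ref{quadraticvariation} together with the semimartingale decomposition of Lemma \ref{Semimartingaledecomposition}, and then bound the resulting integrals using the explicit asymptotics for the expectations from Lemma \ref{explicitexpectation}. First I would write, using Lemma \ref{Semimartingaledecomposition}, that $X_t^K = X_0^K + M_t^K + \int_0^t (r_1 X_s^K + \sigma_2 Y_s^K + K^c e^{as})\,\mathrm{d}s$, and observe that the integral term is deterministic only after taking expectations; the correct route is instead to compute $\E[(X_t^K)^2]$ directly. By Dynkin's formula applied to $f(x,y)=x^2$ (which was already carried out inside the proof of Lemma \ref{quadraticvariation}), one gets that $\E[(X_t^K)^2] - (x_t^K)^2$ satisfies a linear ODE whose inhomogeneity is a linear combination of $x_s^K$, $y_s^K$ and $K^c e^{as}$, i.e. $\V(X_t^K)$ solves $\frac{\mathrm d}{\mathrm d t}\V(X_t^K) = 2 r_1 \V(X_t^K) + 2\sigma_2 \operatorname{Cov}(X_t^K,Y_t^K) + (\text{terms from } [M^K]_t)$. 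Rather than solving the full coupled covariance system, I would bound things more crudely: set $v_t^K \coloneqq \V(X_t^K) + \V(Y_t^K)$ and, using the covariance terms $\operatorname{Cov}(X_t^K,Y_t^K) \le \tfrac12 v_t^K$, derive a scalar differential inequality $\dot v_t^K \le \Lambda v_t^K + h_t^K$ for a constant $\Lambda$ depending only on $b_i,d_i,\sigma_i$ and an inhomogeneity $h_t^K$ that is a linear combination (with nonnegative coefficients) of $x_t^K$, $y_t^K$ and $K^c e^{at}$, coming precisely from the quadratic-variation densities in Lemma \ref{quadraticvariation}.

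Next I would apply Grönwall's inequality to $\dot v_t^K \le \Lambda v_t^K + h_t^K$ with $v_0^K = 0$, obtaining $v_t^K \le \int_0^t e^{\Lambda(t-s)} h_s^K\,\mathrm{d}s$. It then remains to bound this convolution integral. Using the last display of Lemma \ref{explicitexpectation}, namely $x_s^K, y_s^K \le \tilde C\big( (x_0^K+y_0^K+(1+s)K^c)e^{\la s} \vee K^c e^{as}\big)$, the inhomogeneity obeys $h_s^K \le C'\big((x_0^K+y_0^K+(1+s)K^c)e^{\la s} + K^c e^{as}\big)$. Substituting and computing $\int_0^t e^{\Lambda(t-s)}e^{\la s}\,\mathrm ds$ and $\int_0^t e^{\Lambda(t-s)}e^{as}\,\mathrm ds$ (and the variant with an extra factor $(1+s)$), one checks that in every regime — $\Lambda$ larger, equal to, or smaller than $\la$ respectively $a$ — the integral is dominated by a constant times $(1+t^2)\big((e^{2\la t}+e^{\la t})(x_0^K+y_0^K+K^c) + K^c e^{at}\big)$. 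Here one uses that $2\la = r_1 + r_2 + \Delta \ge \Lambda$ for an appropriate choice of $\Lambda$ (since $\Lambda$ can be taken as roughly $2\max(b_1+d_1+\sigma_1+\sigma_2, \dots)$, which however need not be below $2\la$; if $\Lambda > 2\la$ the bound $e^{\Lambda t}$ would be too large, so care is needed — see below), and that the polynomial prefactor $(1+t^2)$ absorbs the extra powers of $t$ generated both by the $\la = a$ resonance in Lemma \ref{explicitexpectation} and by a possible resonance $\Lambda = 2\la$ or $\Lambda = \la$.

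The main obstacle I anticipate is getting the exponential rate right: a naive Grönwall bound produces $e^{\Lambda t}$ with $\Lambda = 2(b_1+d_1+\sigma_1) \vee 2(b_2+d_2+\sigma_2) \vee \dots$, which can exceed $2\la$, whereas the claimed bound only allows $e^{2\la t}$. The resolution is that one must \emph{not} bound the covariance terms as errors but keep the full linear structure: the variance-covariance vector $(\V(X_t^K), \operatorname{Cov}(X_t^K,Y_t^K), \V(Y_t^K))$ satisfies an \emph{inhomogeneous linear ODE whose homogeneous matrix has spectral abscissa exactly $2\la$} (its eigenvalues being $2\la$, $\la+\tilde\la$, $2\tilde\la$, the pairwise sums of eigenvalues of the mean matrix), since the ``birth/death/switch'' contributions to the drift of the second moments are exactly twice the mean matrix plus a bounded perturbation that is itself of lower order — actually the second-moment drift matrix is precisely $A^{\otimes}$, the matrix acting on symmetric $2\times2$ tensors induced by the mean matrix $A$, which has top eigenvalue $2\la$. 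Thus I would set up this $3\times 3$ linear system explicitly, diagonalize it (its eigenvalues are $2\la, \la+\tilde\la, 2\tilde\la$ with $\tilde\la < \la$ from \eqref{Eigenvalue}), solve via variation of constants, and read off that every component is $\Theta$ of a constant times $(1+t)^2$ (the square accommodating the double resonance when $a$ equals $\la$ and when $\la = \tilde\la$, the latter excluded since $\Delta \neq 0$, so really only $(1+t)$ from $a=\la$, but $(1+t^2)$ is a safe over-estimate) times $(e^{2\la t}+e^{\la t})(x_0^K+y_0^K+K^c) + K^c e^{at}$, with the constant uniform in $K$ because all dependence on $K$ enters only through the affine data $x_0^K, y_0^K, K^c$. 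This yields the stated inequality with a single constant $C_*$.
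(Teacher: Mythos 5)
Your proposal is correct and, after discarding the preliminary scalar Gr\"onwall idea, lands on precisely the paper's method: write the closed $3\times3$ linear ODE for $(\V(X_t^K),\V(Y_t^K),\operatorname{cov}(X_t^K,Y_t^K))$, note its coefficient matrix has eigenvalues $2\tilde\la < r_1+r_2=\la+\tilde\la < 2\la$, solve by variation of constants, and bound the inhomogeneity via Lemma~\ref{explicitexpectation}. Your observation that the drift matrix is the symmetric tensor square of the mean matrix (hence top eigenvalue $2\la$) is exactly why the exponent $2\la$, rather than a crude Lipschitz constant, appears in the bound.
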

\begin{proof}
	We denote $u_t^K\coloneqq \V(X_t^K)$, $v_t^K\coloneqq \V(Y_t^K)$ and $w_t^K\coloneqq \operatorname{cov}(X_t^K,Y_t^K)$.
	Applying Itô's formula and Lemma \ref{quadraticvariation} to $(X_t^K-x_t^K)^2$ and $(Y_t^K-y_t^K)^2$ as well as using Integration by Parts for $(X_t^K-x_t^K)(Y_t^K-y_t^K)$ gives the differential equation \begin{align}
		\begin{pmatrix}
		\dot u_t^K\\ \dot v_t^K\\ \dot{w}_t^K
		\end{pmatrix}=\begin{pmatrix}
			2r_1 & 0 & 2\sigma_2\\
			0 & 2r_2 & 2\sigma_1\\
			\sigma_1 & \sigma_2 & r_1+r_2\\
		\end{pmatrix}\begin{pmatrix}
			u_t^K\\ v_t^K\\w_t^K
		\end{pmatrix}+\begin{pmatrix}
			(b_1+d_1+\sigma_1)x_t^K+\sigma_2y_t^K+K^ce^{at}\\
			(b_2+d_2+\sigma_2)y_t^K+\sigma_1 x_t^K\\
			(r_1+2d_1)x_t^K+(r_2+2d_2)y_t^K+K^ce^{at}
		\end{pmatrix}\label{dominatingode}
\end{align}
with initial condition $(u_0^K,v_0^K,w_0^K)=(0,0,0)$. Now we can proceed as in Lemma \ref{explicitexpectation}. The eigenvalues of the coefficient matrix are $2\tilde{\lambda}<r_1+r_2<2\lambda$, so it is diagonalisable with matrices $S,S^{-1}$ such that \[
\begin{pmatrix}
2r_1 &0 & 2\sigma_2\\
0  & 2r_2& 2\sigma_1\\
\sigma_1 & \sigma_2 & r_1+r_2
\end{pmatrix}=SDS^{-1},
\]
where $D=\operatorname{diag}(2\tilde{\la}, r_1+r_2, 2\la)$. The solution to the differential equation (\ref{dominatingode}) is given by \begin{align*}
\begin{pmatrix}
u_t^K\\ v_t^K\\ w_t^K
\end{pmatrix}&=\int_{0}^{t}Se^{D(t-s)}S^{-1}\begin{pmatrix}
(b_1+d_1+\sigma_1)x_s^K+\sigma_2 y_s^K+K^ce^{as}\\
(b_2+d_2+\sigma_2)y_s^K+\sigma_1x_s^K\\
(r_1+2d_1)x_s^K+(r_2+2d_2)y_s^K+K^ce^{as}
\end{pmatrix}\ \mathrm{d}s.\\
&\leq C_* \int_{0}^{t} e^{2\la(t-s)}\begin{pmatrix}
x_s^K+y_s^K+K^ce^{as}\\
y_s^K+x_s^K\\
x_s^K+y_s^K+K^ce^{as}
\end{pmatrix}\ \mathrm{d}s,
\end{align*}
where $C_*\geq0$ is a suitable constant independent of $K$ and the inequality holds for each component.  By Lemma \ref{explicitexpectation} we can further estimate the expected values $x_s^K$, $y_s^K$ with the constant $C_*$ changing from line to line by \begin{align*}
u_t^K,v_t^K,w_t^K&\leq C_*e^{2\la t}\int_{0}^{t}e^{-\la s}(x_0^K+y_0^K+(1+s)K^c)+K^ce^{(a-2\la)s}\ \mathrm{d}s\\
&\leq C_*e^{2\la t}(1+t+t^2)\left[e^{-\la t}(x_0^K+y_0^K+K^c)+K^ce^{(a-2\la)t}+(x_0^K+y_0^K+K^c)\right]\\
&\leq C_*(1+t^2)\left[(e^{2\la t}+e^{\la t})(x_0^K+y_0^K+K^c)+K^ce^{at}\right],	\end{align*}
which we have claimed.
\end{proof}

\subsection{A Special Case of Theorem \ref{Theorem: Main Convergence}}

With these preparations we are well situated to show a first convergence result for general bi-type branching processes, which is easily seen to be a special case of Theorem \ref{Theorem: Main Convergence} (i).

\begin{theorem}\label{firstconvergencetheorem}
	Let $Z_t^K=(X_t^K,Y_t^K)$ be a bi-type branching process whose distribution is given by $BBPI_K(b_1,b_2,d_1,d_2,\sigma_1,\sigma_2,a,c,\beta,\gamma)$ with $\beta\vee\gamma>0$. Assume that $c\leq\beta\vee\gamma$ and that $T>0$ is such that \begin{align}
	\inf_{t\in[0,T]}((\beta\vee\gamma)+\la t)\vee(c+at)>0,\label{positivitycondition}
	\end{align}
	where we recall $\la$ from \emph{(\ref{Eigenvalue})}.
	Then the following convergence in probability holds in $L^\infty([0,T])$: \[
	\lrset{s\mapsto\frac{\log(1+X_{s\log K}^K+Y_{s\log K}^K)}{\log K}}\xrightarrow{K\to\infty}\lrset{s\mapsto((\beta\vee\gamma)+\la s)\vee(c+as)}.
	\]
\end{theorem}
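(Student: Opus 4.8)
The plan is to mimic the strategy of \cite[Appendix B]{champagnat2019stochastic} for one-type branching processes, using the expectation and variance estimates from Lemmas \ref{explicitexpectation} and \ref{varianceestimate} together with a Doob/Chebyshev argument, and a comparison argument to handle the lower bound. Write $W^K_t \coloneqq X^K_t + Y^K_t$ and $r \coloneqq b_1+b_2$ (or rather work directly with the exponents). The target function is $g(s) \coloneqq ((\beta\vee\gamma)+\la s)\vee(c+as)$, which by the positivity assumption \eqref{positivitycondition} is bounded away from $0$ on $[0,T]$. First I would establish the \textbf{upper bound}. By Lemma \ref{explicitexpectation}, $\E[W^K_{s\log K}] \leq \tilde C\big((x_0^K+y_0^K+(1+s\log K)K^c)K^{\la s} \vee K^c K^{as}\big)$, and since $x_0^K+y_0^K = \Theta(K^{\beta\vee\gamma})$ and polynomial-in-$\log K$ factors are absorbed by any $K^\veps$, this gives $\E[W^K_{s\log K}] \leq K^{g(s)+\veps}$ for $K$ large, uniformly in $s \in [0,T]$. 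A union bound over a fine deterministic grid of times in $[0,T]$, combined with Markov's inequality and the monotonicity of $W^K$ between consecutive mutation/immigration-free stretches (or a Doob maximal inequality applied to the submartingale obtained after subtracting the drift), then yields $\P(\sup_{s\le T} \log(1+W^K_{s\log K})/\log K > g(s)+2\veps) \to 0$.

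The \textbf{lower bound} is the more delicate part and will be the main obstacle. There are two mechanisms that force $W^K$ up to size $K^{g(s)}$: the intrinsic growth at rate $\la$ starting from the initial $K^{\beta\vee\gamma}$ individuals, and the immigration stream $K^c e^{at}$ which, once $a>0$ or once it has accumulated, also grows like $K^{c+as}$. For the growth-from-initial-condition part, I would use the second-moment / Chebyshev bound from Lemma \ref{varianceestimate}: $\V(X^K_{t}), \V(Y^K_{t}) \leq C_*(1+t^2)\big((e^{2\la t}+e^{\la t})(x_0^K+y_0^K+K^c)+K^c e^{at}\big)$, so that at $t = s\log K$ the ratio $\V(W^K_t)/\E[W^K_t]^2$ is at most $C_*(1+t^2)/\E[W^K_t]$, which tends to $0$ as long as $\E[W^K_{s\log K}] \to \infty$ polynomially — guaranteed precisely by \eqref{positivitycondition}. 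Chebyshev then gives $W^K_{s\log K} \geq \tfrac12 \E[W^K_{s\log K}] \geq K^{g(s)-\veps}$ with high probability for each fixed $s$; upgrading to uniformity over $[0,T]$ again uses a grid plus the fact that on short time intervals $W^K$ cannot drop too fast (its death rate is linear, so over a time window of length $o(\log K)$ it decreases by at most a constant factor with high probability — this is where the càdlàg martingale decomposition of Lemma \ref{Semimartingaledecomposition} and a Gronwall/Doob estimate on the negative part are used).

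Two technical points deserve care. First, when the immigration term dominates (the regime $c+as > (\beta\vee\gamma)+\la s$), one must check that the immigrated particles genuinely survive and proliferate rather than being killed immediately; here I would condition on the immigration times (a deterministic intensity $K^c e^{at}$) and apply the same second-moment estimate to the sub-population descending from immigrants in a window $[s\log K - 1, s\log K]$, whose expected contribution is already $\Theta(K^{c+as})$. Second, the interplay between $X^K$ and $Y^K$: since $\sigma_1,\sigma_2 > 0$, the dominant eigenvalue $\la$ governs \emph{both} components (the eigenvector associated with $\la$ has strictly positive entries because the matrix $\left(\begin{smallmatrix} r_1 & \sigma_2\\ \sigma_1 & r_2\end{smallmatrix}\right)$ is irreducible with nonnegative off-diagonal), so the lower bound on $W^K = X^K+Y^K$ follows even if one starts with mass in only one coordinate — this is already encoded in Lemma \ref{explicitexpectation}, where both $x^K_t$ and $y^K_t$ are $\Theta(\cdot)$ with the same exponent. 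Assembling the matching upper and lower bounds on the deterministic grid, then letting the mesh go to $0$ and $\veps \downarrow 0$, and finally invoking the continuity of $g$ to pass from pointwise to $L^\infty([0,T])$ convergence in probability, completes the proof.
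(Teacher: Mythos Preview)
Your high-level strategy---use the moment estimates from Lemmas \ref{explicitexpectation} and \ref{varianceestimate} together with Markov/Chebyshev/Doob---matches the paper's, but the execution diverges in a way that leaves a gap. The paper does \emph{not} work pointwise on a grid and then interpolate; instead it controls the supremum directly by applying Doob's maximal inequality to the rescaled deviation $e^{-\la t}\big(W^K_t - \E[W^K_t]\big)$ (and, in a separate step, with $e^{-a t}$ when immigration dominates). The key technical twist, specific to the bi-type case and absent from your sketch, is that this rescaled process is \emph{not} a martingale: the switching between components produces an extra drift term
\[
\int_0^t e^{-\la s}\big((r_1+\sigma_1-\la)(X_s^K-x_s^K)+(r_2+\sigma_2-\la)(Y_s^K-y_s^K)\big)\,\mathrm{d}s,
\]
so Doob is applied only to the genuine martingale part $\tilde M^K_t=\int_0^t e^{-\la s}\,\mathrm{d}(M^K_s+N^K_s)$ and the drift is bounded separately via H\"older, Fubini, and the variance estimate. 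This yields a single condition (\ref{assumptioneta}) on an auxiliary exponent $\eta$, whose feasibility then drives a case analysis (Cases 1(a)--(e)), a separate Step~2 for the regime $c=\beta\vee\gamma$, $a>\la$ using the $e^{-at}$ rescaling, and a Step~3 that pushes past the threshold $T^*=\tfrac{2(\beta\vee\gamma)-c}{a-2\la}$ via coupling and the Markov property.

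Your grid-plus-local-control route is not obviously unsalvageable, but the justification you give for interpolating between grid points fails as written: a mesh of size $\delta$ in the $s$-variable is $\delta\log K$ in real time, which is $\Theta(\log K)$, not $o(\log K)$, and over such an interval $W^K$ can change by a polynomial-in-$K$ factor. You would need either to let $\delta=\delta(K)\to 0$ and balance against the union bound, or to invoke a genuine maximal inequality---at which point the paper's direct rescaled-Doob argument is both shorter and sidesteps the bookkeeping. Likewise, the ``condition on immigration times'' device you propose for the immigration-dominated lower bound is unnecessary: the paper handles that regime uniformly by simply replacing the $e^{-\la t}$ rescaling with $e^{-a t}$ and rerunning the same argument.
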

\begin{remark}\label{firstconvergenceremark}
	Note that due to the strictly positive switching rates, the same convergence also holds for the processes \[
	\lrset{s\mapsto\frac{\log (1+X_{s\log K}^K)}{\log K}}\quad\text{and}\quad\lrset{s\mapsto\frac{\log (1+Y_{s\log K}^K)}{\log K}}
	\]
	on the interval $(0,T]$ if $\beta\neq\gamma$ and on $[0,T]$ if $\beta=\gamma$. Intuitively, if these processes were of different sizes, the switching would immediately fill the difference. For a formal proof, a straightforward adaptation of the proof of Theorem \ref{firstconvergencetheorem} is possible.
\end{remark}
\begin{proof}
	For the proof we make extensive use of ideas from \cite[Theorem B.1]{champagnat2019stochastic}. We define $\bar{\beta}_t\coloneqq ((\beta\vee\gamma)+\la t)\vee(c+at)$.
	\subsubsection*{Step 1: Semimartingale Arguments}
	 For $\eta>0$ to be determined, we define the set \[
		\Omega_1^K\coloneqq\lrset{\sup_{t\in[0,T\log K]}\abs{e^{-\la t}(X_t^K+Y_t^K-(x_t^K+y_t^K))}\leq K^\eta}.
		\]
		Our first goal is to identify a set of parameters $\eta$ such that $\P(\Omega_1^K)\to 1$ as $K\to\infty$. For this, in \cite[Lemma B.3]{champagnat2019stochastic} it is shown, that the process of which the absolute value is taken in $\Omega_1^K$ is a martingale. Here however, due to the switching between $X_t^K$ and $Y_t^K$ we do not have a martingale. Instead we use Integration by Parts as well as Lemmata \ref{Semimartingaledecomposition} and \ref{expectedvalue} to get the decomposition \begin{align*}
		&e^{-\la t}(X_t^K+Y_t^K-(x_t^K+y_t^K))\\=&\int_{0}^{t}-\la e^{-\la s}(X_s^K+Y_s^K-(x_s^K+y_s^K))\ \mathrm{d}s+\int_{0}^{t}e^{-\la s}\ \mathrm{d}(X_s^K+Y_s^K)-\int_{0}^{t}e^{-\la s}\ \mathrm{d}(x_s^K+y_s^K)\\
		=&\int_{0}^{t}e^{-\la s}\ \mathrm{d}(M_s^K+N_s^K) +\int_{0}^{t} e^{-\la s}((r_1+\sigma_1-\la)(X_s^K-x_s^K)+(r_2+\sigma_2-\la)(Y_s^K-y_s^K))\ \mathrm{d}s.
		\end{align*}
		 We denote the martingale $\textstyle\int_{0}^{t}e^{-\la s}\ \mathrm{d}(M_s^K+N_s^K)$ by $\tilde{M}_t^K$. Hence, by Doob's inequality we have \begin{align*}
		&\P\lr{\sup_{t\leq T\log K}\abs{e^{-\la t}(X_t^K+Y_t^K-(x_t^K+y_t^K))}\geq K^\eta}\\
		\leq&\P\lr{\sup_{t\leq T\log K} \abs{\tilde{M}_t^K}+\int_{0}^{t}\abs{r_1+\sigma_1-\la}e^{-\la s}\abs{X_s^K-x_s^K}+\abs{r_2+\sigma_2-\la}e^{-\la s}\abs{Y_s^K-y_s^K}\ \mathrm{d}s\geq K^\eta}\\
		\leq&C_*K^{-2\eta}\E\left[\lr{\abs{\tilde{M}_{T\log K}^K}+\int_{0}^{T\log K}\hspace{-0.5cm}e^{-\la s}\abs{X_s^K-x_s^K}+e^{-\la s}\abs{Y_s^K-y_s^K}\ \mathrm{d}s}^2\right],
		\end{align*}
		where $C_*>0$ is a suitable constant, which may change in the following from line to line. Now, successively using $(a+b+c)^2\leq 3(a^2+b^2+c^2)$, Hölder's Inequality and Fubini's Theorem, we see that 
		\begin{align}
		&\P\lr{\sup_{t\leq T\log K}\abs{e^{-\la t}(X_t^K+Y_t^K-(x_t^K+y_t^K))}\geq K^\eta}\nonumber\\
		\leq
		&C_*K^{-2\eta}\E\left[\lr{\abs{\tilde{M}_{T\log K}^K}+\int_{0}^{T\log K}\hspace{-0.5cm}e^{-\la s}\abs{X_s^K-x_s^K}+e^{-\la s}\abs{Y_s^K-y_s^K}\ \mathrm{d}s}^2\right]\nonumber\\
		\leq& C_*K^{-2\eta}\lr{\E[\tilde{M}_{T\log K}^2]+\E\left[\lr{\int_{0}^{T\log K}\hspace{-0.5cm}e^{-\la s}\abs{X_s^K-x_s^K}\ \mathrm{d}s}^2+\lr{\int_{0}^{T\log K}\hspace{-0.5cm}e^{-\la s}\abs{Y_s^K-y_s^K}\ \mathrm{d}s}^2\right]}\nonumber\\
		\leq& C_*K^{-2\eta}\E[(\tilde{M}_{T\log K}^K)^2]\nonumber\\
		&\quad +C_*K^{-2\eta}T\log K\E\left[\int_{0}^{T\log K}\hspace{-0.5cm}e^{-2\la s}\abs{X_s^K-x_s^K}^2\ \mathrm{d}s+\int_{0}^{T\log K}\hspace{-0.5cm}e^{-2\la s}\abs{Y_s^K-y_s^K}^2\ \mathrm{d}s\right]\nonumber\\
		\leq & C_*K^{-2\eta}\lr{\E[(\tilde{M}_{T\log K}^K)^2]+T\log K\int_{0}^{T\log K} e^{-2\la s}(\V(X_s^K)+\V(Y_s^K))\ \mathrm{d}s}.\label{estimateprobability}
		\end{align}
		
		We will now estimate the expectation and the integral separately. Firstly, using the definition of $\tilde{M}^K$, we easily see using  Itô's Isometry and Lemma \ref{quadraticvariation} that \begin{align*}
		\E[(\tilde{M}_{T\log K}^K)^2]&=\E\left[\lr{\int_{0}^{T\log K}e^{-\la s}\ \mathrm{d}(M_s^K+N_s^K)}^2\right]=\E\left[\int_{0}^{T\log K}e^{-2\la s}\ \mathrm{d}[M^K+N^K]_s\right]\\
		&=3\int_{0}^{T\log K}e^{-2\la s}((b_1+d_1)x_s^K+(b_2+d_2)y_s^K+K^ce^{as})\ds.
		\end{align*}
		Furthermore, using the calculation of the expected values $x_s^K$ and $y_s^K$ up to constants from Lemma \ref{explicitexpectation}, we can estimate \begin{align*}
		\E[(\tilde{M}_{T\log K}^K)^2]
		\leq C_*(1+T^2)\log^2(K)\cdot (K^\beta+K^\gamma+K^c+K^{\beta-\la T}+K^{\gamma-\la T}+K^{c-\la T}+K^{c+(a-2\la)T})
		\end{align*}
		again for some suitable constant $C_*>0$, which may change from line to line and can without loss of generality be chosen sufficiently large such that (\ref{estimateprobability}) holds as well. Since we assume $c\leq\beta\vee\gamma$, we obtain the estimate \begin{align}
		\E[(\tilde{M}_{T\log K}^K)^2]\leq C_*(1+T^2)\log^2(K)\cdot K^{(\beta\vee\gamma)\vee((\beta\vee\gamma)-\la T)\vee (c+(a-2\la)T)}.\label{martingaleestimate}
		\end{align}
		We now turn to the integral. Using Lemma \ref{varianceestimate} and \ref{explicitexpectation} again, as well as $c\leq\beta\vee\gamma$, we see that 
		\begin{align}
		&\int_{0}^{T\log K} e^{-2\la s}(\V(X_s^K)+\V(Y_s^K))\ \mathrm{d}s\nonumber\\
		\leq & C_*K^{(\beta\vee\gamma)\vee((\beta\vee\gamma)-\la T)\vee (c+(a-2\la)T)}(1+T^3)\log^3(K)\label{integralvarianceestimate}
		\end{align}
	    for some constant $C_*\geq0$ sufficiently large.
		Hence, plugging the estimates (\ref{martingaleestimate}) and (\ref{integralvarianceestimate}) into (\ref{estimateprobability}), we see that \begin{align*}
		1-\P(\Omega_1^K)&\leq\P\lr{\sup_{t\leq T\log K}\abs{e^{-\la t}(X_t^K+Y_t^K-(x_t^K+y_t^K))}\geq K^\eta}\\
		&\leq C_*K^{-2\eta}K^{(\beta\vee\gamma)\vee((\beta\vee\gamma)-\la T)\vee (c+(a-2\la)T)}(1+T^3)\log^3(K).
		\end{align*}
		From now on, we will consider $\eta$ such that \begin{align}
		\frac{(\beta\vee\gamma)\vee((\beta\vee\gamma)-\la T)\vee (c+(a-2\la)T)}{2}<\eta<\beta\vee\gamma.\label{assumptioneta}
		\end{align}
		This condition ensures as shown above that $\textstyle\lim_{K\to\infty}\P(\Omega_1^K)=1$. On the set $\Omega_1^K$, we can obtain \begin{align}
		&\sup_{t\leq T}\abs{\frac{\log(1+X_{t\log K}^K+Y_{t\log K}^K)}{\log K}-\bar{\beta}_t}\nonumber\\
		=&\sup_{t\leq T}\reci{\log K}\abs{\log\lr{\frac{1+X_{t\log K}^K+Y_{t\log K}^K}{1+x_{t\log K}^K+y_{t\log K}^K}}+\log\lr{\frac{1+x_{t\log K}^K+y_{t\log K}^K}{ K^{\bar{\beta}_t}}}}\nonumber\\
		\leq &\sup_{t\leq T}\reci{\log K}\cdot \frac{K^{-\la t}\abs{X_{t\log K}^K+Y_{t\log K}^K-(x_{t\log K}^K+y_{t\log K}^K)}}{K^{-\la t}(x_{t\log K}^K+y_{t\log K}^K)\wedge K^{-\la t}(X_{t\log K}^K+Y_{t\log K}^K)}+\frac{C_*}{\log K}\nonumber\\
		\leq & \reci{\log K}\sup_{t\leq T}\frac{K^{\eta+\la t}}{x_{t\log K}^K+y_{t\log K}^K-K^{\eta+\la t}}+\frac{C_*}{\log K}\label{denominator}\\
		\leq&\frac{C_*}{\log K}\lr{K^{\eta-(\beta\vee\gamma)}+1}\xrightarrow{K\to\infty}0,\label{betaconvergence}
		\end{align}
		where again $C_*$ is a sufficiently large constant, which may change from line to line. Note that the denominator in (\ref{denominator}) is well defined for $K$ large enough since $\eta<\beta\vee\gamma$. Also, the first inequality holds due to our choice of $\bar{\beta}_t$, which gives in combination with Lemma \ref{explicitexpectation} that \[
		K^{-\bar{\beta}_t}(1+x_{t\log K}^K+y_{t\log K}^K)\leq C_*(K^{-\bar{\beta}_t}+1)\leq 2C_*
		\]
		for $K$ large enough since by assumption $\bar{\beta}_t>0$ for all $t\in[0,T]$. Thus, we are in the same situation as in Step 1 of the proof of Theorem B.1 in \cite{champagnat2019stochastic} with $\beta\vee\gamma$ instead of $\beta$ and $\la$ instead of $r$. For completeness we distinguish the same cases:
		\begin{description}
			\item[Case 1(a): $\la\geq 0$ and $a\leq 2\la$] In this case the condition (\ref{assumptioneta}) reduces to $\tfrac{\beta\vee\gamma}{2}<\eta<\beta\vee\gamma$, so choosing $\eta=\tfrac{3(\beta\vee\gamma)}{4}$ shows the claim from (\ref{assumptioneta}) and (\ref{betaconvergence}).
			\item[Case 1(b): $\la<0$ and $a\leq \la$] Here the assumption (\ref{positivitycondition}) becomes $\textstyle\inf_{t\in[0,T]}(\beta\vee\gamma)+\la t>0$, which gives $T<\tfrac{\beta\vee\gamma}{\abs{\la}}$. Now condition (\ref{assumptioneta}) becomes \begin{align*}
			\frac{((\beta\vee\gamma)-\la T)\vee(c+(a-2\la)T)}{2}=\frac{(\beta\vee\gamma)-\la T}{2}<\eta<\beta\vee\gamma.
			\end{align*}
			By the condition on $T$, such $\eta$ exists and we can again conclude.
			\item[Case 1(c): $\la\geq 0$ and $a>2\la$] In this case the restriction (\ref{assumptioneta}) can be satisfied as long as $T$ is such that $c+(a-2\la)T<2(\beta\vee\gamma)$, that is $T<T^*\coloneqq\tfrac{2(\beta\vee\gamma)}{a-2\la}$. Hence we obtain the convergence on all intervals $[0,T]$ such that $T<T^*$. In Step 3 we will consider $T>T^*$ which still may satisfy (\ref{positivitycondition}).
			\item[Case 1(d): $\la< 0$, $a>\la$ and $c+\tfrac{a(\beta\vee\gamma)}{\abs{\la}}\leq 0$] Here we easily see that $(\beta\vee\gamma)+\la t\geq c+at$ for all $t\leq\tfrac{\beta\vee\gamma}{\abs{\la}}$. Thus assumption (\ref{positivitycondition}) is satisfied if and only if $T<\tfrac{\beta\vee\gamma}{\abs{\la}}$. For these times $T$ the condition (\ref{assumptioneta}) can be satisfied for suitable $\eta$ since \[
			((\beta\vee\gamma)-\la T)<(\beta\vee\gamma-\la\frac{\beta\vee\gamma}{\abs{\la}})=2(\beta\vee\gamma)
			\]
			and \[
			(c+(a-2\la)T)<c+(a-2\la)\frac{\beta\vee\gamma}{\abs{\la}}=2(\beta\vee\gamma)+c+a\frac{\beta\vee\gamma}{\abs{\la}}\leq 2(\beta\vee\gamma).
			\]
			\item[Case 1(e): $\la< 0$, $a>\la$ and $c+\tfrac{a(\beta\vee\gamma)}{\abs{\la}}\geq 0$] For these parameters, the condition (\ref{assumptioneta}) can be satisfied as long as $T<T^*=\tfrac{\beta\vee\gamma}{\abs{\la}}\wedge \tfrac{2(\beta\vee\gamma)-c}{a-2\la}$. Note that \begin{align*}
			\frac{\beta\vee\gamma}{\abs{\la}}>\frac{2(\beta\vee\gamma)-c}{a-2\la}&\quad\Llra\quad (a-2\la)(\beta\vee\gamma)>c\la-2(\beta\vee\gamma)\la\\
			&\quad\Llra\quad a(\beta\vee\gamma)>c\la,
			\end{align*}
			which is true since $a>\la$ and $(\beta\vee\gamma)>c$. Hence $T^*=\tfrac{2(\beta\vee\gamma)-c}{a-2\la}$ as in Case 1(c). As before we get the convergence for all $T<T^*$ and we will show in Step 3 how to obtain convergence for $T\geq T^*$ which satisfy (\ref{positivitycondition}).
		\end{description}
	\subsubsection*{Step 2: Strong Immigration}
	 Here, we will consider only the case $\beta\vee\gamma=c$ and $a>\la$. Similarly to Step 1, for $\eta>0$ to be determined later, we consider the set \[
		\Omega_2^K\coloneqq\lrset{\sup_{t\in[0,T\log K]}\abs{e^{-a t}(X_t^K+Y_t^K-(x_t^K+y_t^K))}\leq K^\eta}.
		\]
		We experience the same difficulties as in Step 1: We are not able to use a supermartingale inequality, since the switching between $X^K$ and $Y^K$ complicates our process. However, proceeding in the same manner as in Step 1, we get the inequality \begin{align*}
		&\P\lr{\sup_{t\leq T\log K}\abs{e^{-a t}(X_t^K+Y_t^K-(x_t^K+y_t^K))}\geq K^\eta}\\
		\leq& C_*K^{-2\eta}\lr{\E[(\hat{M}^K_{T\log K})^2]+T\log K\int_{0}^{T\log K} e^{-2 a s}(\V(X_s^K)+\V(Y_s^K))\ \mathrm{d}s},
		\end{align*}
		where $\textstyle\hat{M}_t^K\coloneqq\int_{0}^{t}e^{-as}\ \mathrm{d}(M_s^K+N_s^K)$ is a martingale. Applying our estimates and the Itô Isometry from above gives with another calculation similar to the corresponding part in step 1 that \begin{align*}
		1-\P(\Omega_2^K)&\leq\P\lr{\sup_{t\leq T\log K}\abs{e^{-a t}(X_t^K+Y_t^K-(x_t^K+y_t^K))}\geq K^\eta}\\
		&\leq C_*K^{-2\eta}K^{\beta\vee\gamma\vee((\beta\vee\gamma)+(2(\la-a)\vee (\la-2a))T)\vee(c-aT) }(1+T^3)\log^3(K)\\
		&=C_*K^{-2\eta}K^{\beta\vee\gamma\vee((\beta\vee\gamma)-aT)}(1+T^3)\log^3(K),
		\end{align*}
		where we used $a>\la$ and $c=\beta\vee\gamma$ in the last equality. Indeed the exponent $2(\la-a)T$ is always negative and can therefore be omitted. On the other hand $\la-2a<-a$ and thus $(\la-2a)T$ may be replaced by $-aT$, which is accounted for in the last equality. Therefore, we now consider $\eta$ such that \begin{align}
		\frac{(\beta\vee\gamma)\vee((\beta\vee\gamma)-aT)}{2}<\eta<\beta\vee\gamma,\label{assumptioneta2}
		\end{align}
		which ensures that $\P(\Omega_2^K)\to 1$ as $K\to\infty$.
		Again a calculation similar to step 1 shows that for this choice of $\eta$ we have \begin{align}
		&\sup_{t\leq T}\abs{\frac{\log(1+X_{t\log K}^K+Y_{t\log K}^K)}{\log K}-\bar{\beta}_t}\nonumber\\
		\leq &\sup_{t\leq T}\reci{\log K}\cdot \frac{K^{-a t}\abs{X_{t\log K}^K+Y_{t\log K}^K-(x_{t\log K}^K+y_{t\log K}^K)}}{K^{-a t}(x_{t\log K}^K+y_{t\log K}^K)\wedge K^{-\la t}(X_{t\log K}^K+Y_{t\log K}^K)}+\frac{C_*}{\log K}\nonumber\\
		\leq & \reci{\log K}\sup_{t\leq T}\frac{K^{\eta+\la t}}{x_{t\log K}^K+y_{t\log K}^K-K^{\eta+a t}}+\frac{C_*}{\log K}\nonumber\\
		\leq&\frac{C_*}{\log K}\lr{K^{\eta-(\beta\vee\gamma)}+1}\xrightarrow{K\to\infty}0.\nonumber
		\end{align}
		This computation allows us to show our convergence result for two more possible cases.
		\begin{description}
			\item[Case 2(a): $c=\beta\vee\gamma$, $a>\la$ and $a\geq 0$.] As in Case 1(a) we may choose $\eta=\tfrac{3(\beta\vee\gamma)}{4}$ and have shown convergence for this case.
			\item[Case 2(b): $c=\beta\vee\gamma$, $a>\la$ and $a<0$.] Here, condition (\ref{positivitycondition}) on the final time $T$ is satisfied if and only if $T<\tfrac{\beta\vee\gamma}{\abs{a}}$. Hence \[
			\frac{(\beta\vee\gamma)-aT}{2}<\frac{(\beta\vee\gamma)-a\frac{\beta\vee\gamma}{\abs{a}}}{2}=\beta\vee\gamma
			\]
			and thus we can find $\eta$ such that (\ref{assumptioneta2}) is satisfied.
		\end{description}
		
	\subsubsection*{Step 3: Completion of Step 1}
		It remains to extend the following two cases to $T>T^*=\tfrac{2(\beta\vee\gamma)-c}{a-2\la}$:

	\begin{itemize}
		\item $\la\geq 0$, $a>2\la$ and $c<\beta\vee\gamma$,
		\item $\la<0$, $a>\la$, $c<\beta\vee\gamma$ and $c+\tfrac{a(\beta\vee\gamma)}{\abs{\la}}\geq 0$.
	\end{itemize}	
	This can be done exactly as in \cite{champagnat2019stochastic} in Step 3 of the proof of Theorem B.1. In order to do so, we note that at time $t^*\coloneqq \tfrac{(\beta\vee\gamma)-c}{a-\la}$ the lines $(\beta\vee\gamma)+\la t$ and $c+at$ intersect. Furthermore, we see that in both of the above cases $t^*<T^*$ since in the first case we may assume without loss of generality that $a>\la$ (otherwise $t^*$ is negative) and therefore \begin{align*}
	\frac{2(\beta\vee\gamma)-c}{a-2\la}>\frac{(\beta\vee\gamma)-c}{a-\la}&\quad\Llra\quad 2(\beta\vee\gamma)(a-\la)-ac+\la c>(\beta\vee\gamma)(a-2\la)-ac+2\la c\\
	&\quad\Llra\quad(\beta\vee\gamma)a>\la c, \end{align*}
	which is true. In the second case we can perform a similar computation. Therefore we may apply Case 1(c) or Case 1(e) to our process in each case up to a time $T_1\in(t^*,T^*)$. Note that at this time the limiting function satisfies $\bar{\beta}_{T_1}=c+aT_1$. Hence for all $\epsi$ on a set $\Omega_3^K$ with $\P(\Omega_3^K)\to 1$ as $K\to\infty$ we have \[
	K^{c+aT_1-\veps}\leq X_{T_1\log K}^K+Y_{T_1\log K}^K\leq K^{c+aT_1+\veps}.
	\] 
	We now couple the process $Z_{T_1\log K+t}^K=(X_{T_1\log K+t}^K,Y_{T_1\log K+t}^K)$ in the following manner: Let $\hat{Z}_t^K=(\hat{X}_t^K,\hat{Y}_t^K)$ be a $BBPI_K(b_1,b_2,d_1,d_2,\sigma_1,\sigma_2,a,c+aT_1-\veps,c+aT_1-\veps,c+aT_1-\veps)$ and let $\bar{Z}_t^K=(\bar{X}_t^K,\bar{Y}_t^K)$ be a $BBPI_K(b_1,b_2,d_1,d_2,\sigma_1,\sigma_2,a,c+aT_1+\veps,c+aT_1+\veps,c+aT_1+\veps)$ such that \[
	\hat{X}_t^K+\hat{Y}_t^K\leq X_{T_1\log K+t}^K+Y_{T_1\log K+t}^K\leq\bar{X}_t^K+\bar{Y}_t^K.
	\]
	Indeed, the starting conditions of the bounding processes are justified by Remark \ref{firstconvergenceremark}. Then, we can apply the convergence from Step 2 to $\hat{Z}_t^K$ and $\bar{Z}_t^K$ to show that \[
	\frac{\log(1+\hat{X}_{t\log K}^K+\hat{Y}_{t\log K}^K)}{\log K}\xrightarrow{K\to\infty}c+aT_1-\veps+((\la\vee a)t)=c-\veps+a(T_1+t)
	\]
	and 
	\[
	\frac{\log(1+\bar{X}_{t\log K}^K+\bar{Y}_{t\log K}^K)}{\log K}\xrightarrow{K\to\infty}c+aT_1+\veps+((\la\vee a)t)=c+\veps+a(T_1+t)
	\]
	where $t\in[0,T-T_1]$. Note that for the second case, $a<0$ and therefore the condition (\ref{positivitycondition}) is satisfied only for $T<\tfrac{c}{\abs{a}}$. In particular, for $\veps$ small enough, we even have $T<\tfrac{c-\veps}{\abs{a}}$ and therefore $T-T_1<\tfrac{c+aT_1-\veps}{\abs{a}}$, so we can indeed apply case 2(b). Using the Markov property at time $T_1$ and letting $\veps\to 0$ finishes the proof.
	\end{proof}

As in \cite{champagnat2019stochastic}, we want to extend Theorem \ref{firstconvergencetheorem} to further cases without needing the assumption $c\leq\beta\vee\gamma$ or the positivity condition (\ref{positivitycondition}). To obtain the convergence result for $c>\beta\vee\gamma$, we will use the next lemma. This, combined with using the Markov property and our previous Theorem \ref{firstconvergencetheorem}, already extends the convergence to all processes such that $c\in\R$, $\beta\vee\gamma>0$ and the terminal time $T$ satisfies condition (\ref{positivitycondition}) \[
\inf_{t\in[0,T]}((\beta\vee\gamma)+\la t)\vee (c+at)>0.
\]

\begin{lemma}\label{Lemma: Strong mutation}
	Let $0\leq\beta\vee\gamma<c$. Then for all $0<\veps<\tfrac{c}{4(\abs{\la}\vee\abs{a})}$ and all $\bar{a}>\abs{\la}\vee\abs{a}$, the convergence \[
	\lim\limits_{K\to\infty}\P(X_{\veps\log K}^K+Y_{\veps\log K}^K\in[K^{c-\bar{a}\veps},K^{c+\bar{a}\veps}])=1
	\]
	holds true.
\end{lemma}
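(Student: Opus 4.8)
The plan is to establish two-sided bounds on $X^K_{\veps\log K}+Y^K_{\veps\log K}$ separately. For the upper bound, I would control the expectation: by Lemma \ref{explicitexpectation}, since $c > \beta\vee\gamma$, we have $x^K_t+y^K_t = \Theta(K^c e^{at})$ whenever $a > \lambda$, and more generally $x^K_t, y^K_t \le \tilde C\bigl(K^c(1+t)e^{\lambda t}\vee K^c e^{at}\bigr)$, which at time $t = \veps\log K$ is at most $K^{c + (\abs\lambda\vee\abs a)\veps}\cdot\mathrm{poly}(\log K) \le K^{c+\bar a\veps}$ for $K$ large, using $\bar a > \abs\lambda\vee\abs a$ to absorb the polylogarithmic factor. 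Then Markov's inequality gives $\P(X^K_{\veps\log K}+Y^K_{\veps\log K} > K^{c+\bar a\veps}) \to 0$.

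For the lower bound, the immigration at rate $K^c e^{at}$ into the first coordinate already produces, on average, order $K^c$ individuals arriving by time $\veps\log K$ (the cumulative immigration mass is $\int_0^{\veps\log K} K^c e^{as}\,\d s$, which is $\Theta(K^c)$ if $a \le 0$ and $\Theta(K^{c+a\veps})$ if $a > 0$ — in either case at least $K^{c - \abs a\veps}$ up to constants). Each such immigrant initiates a subcritical-or-supercritical bi-type branching subtree; over the short time $\veps\log K$ a typical immigrant's line of descent does not decay below a $K^{-\bar a\veps}$-fraction in expectation, so the expected population is $\gtrsim K^{c-\bar a\veps}$. To upgrade the expectation lower bound to a high-probability one, I would use a second-moment (Paley–Zygmund) argument: by Lemma \ref{varianceestimate}, $\V(X^K_t) + \V(Y^K_t) \le C_*(1+t^2)\bigl((e^{2\lambda t}+e^{\lambda t})(x^K_0+y^K_0+K^c) + K^c e^{at}\bigr)$, and at $t = \veps\log K$ with $\beta\vee\gamma < c$ this is $\le K^{c + 2(\abs\lambda\vee\abs a)\veps}\cdot\mathrm{poly}(\log K)$, i.e.\ the variance is no larger than roughly the square of $K^{c/2}$ times a controlled exponential correction. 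Comparing with $(\E[X^K_t+Y^K_t])^2 \gtrsim K^{2c - 2\bar a\veps}\cdot$(something), the condition $\veps < \tfrac{c}{4(\abs\lambda\vee\abs a)}$ is exactly what makes the variance negligible compared to the squared mean — this is why the restriction on $\veps$ appears. Chebyshev then yields $\P(X^K_{\veps\log K}+Y^K_{\veps\log K} < K^{c-\bar a\veps}) \to 0$.

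The main obstacle is bookkeeping the exponents carefully in the second-moment step: one must check that with $\veps < \tfrac{c}{4(\abs\lambda\vee\abs a)}$ the exponent of the variance bound, namely $c + 2(\abs\lambda\vee\abs a)\veps$, stays strictly below $2(c - \bar a\veps)$ for $\bar a$ close enough to $\abs\lambda\vee\abs a$, so that $\V/(\E)^2 = K^{-\Theta(1)}\mathrm{poly}(\log K) \to 0$; this needs $c + 2(\abs\lambda\vee\abs a)\veps < 2c - 2\bar a\veps$, i.e.\ $(\abs\lambda\vee\abs a + \bar a)\veps < c/2$, which holds for $\veps < \tfrac{c}{4(\abs\lambda\vee\abs a)}$ once $\bar a$ is taken sufficiently close to $\abs\lambda\vee\abs a$ (and one handles general $\bar a > \abs\lambda\vee\abs a$ by noting the statement is weaker for larger $\bar a$). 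A secondary subtlety is ensuring the lower bound on $\E[X^K_t + Y^K_t]$ genuinely has exponent $\ge c - \bar a\veps$; this follows from writing the explicit solution of \eqref{expectedode} via the diagonalisation \eqref{diagonalization} and keeping the immigration term, whose contribution is $\int_0^t S e^{D(t-s)} S^{-1}(K^c e^{as}, 0)^\top\,\d s$, a quantity bounded below (componentwise, after noting all entries of $S e^{D\cdot}S^{-1}$ are eventually positive thanks to $\sigma_1,\sigma_2 > 0$) by a constant times $K^c e^{\tilde\lambda t}$, hence $\gtrsim K^{c - \abs{\tilde\lambda}\veps} \ge K^{c-\bar a\veps}$ at $t = \veps\log K$ since $\abs{\tilde\lambda} \le \abs{r_1}+\abs{r_2} $ is controlled — more cleanly, $\abs{\tilde\lambda}\le\abs{\lambda}\vee\abs{\tilde\lambda}$ and one simply enlarges $\bar a$ if needed, or observes $\tilde\lambda \ge -(\abs\lambda\vee\abs a)$ is not automatic but the immigration-only lower bound can instead be obtained by coupling with a pure-immigration process (discarding all deaths and switches out), which deterministically contributes $\ge \tfrac12\int_0^{\veps\log K}K^c e^{as}\d s$ survivors in expectation minus what dies, handled by the same variance estimate).

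Once both tail bounds are in place, a union bound gives $\P\bigl(X^K_{\veps\log K}+Y^K_{\veps\log K}\in[K^{c-\bar a\veps},K^{c+\bar a\veps}]\bigr)\to 1$, completing the proof. I would then remark that this lemma is precisely the tool that lets one start the branching-process comparisons in Theorem \ref{firstconvergencetheorem} (and ultimately in Theorem \ref{Theorem: Main Convergence}) from a window around exponent $c$ even when $c > \beta\vee\gamma$, i.e.\ when the immigration rather than the initial condition dictates the population size after an arbitrarily short macroscopic time, mirroring the role of the analogous one-type statement in \cite{champagnat2019stochastic}.
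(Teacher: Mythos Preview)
Your approach is essentially the paper's: Markov's inequality for the upper tail, Chebyshev for the lower tail, both fed by the mean and variance estimates of Lemmas~\ref{explicitexpectation} and~\ref{varianceestimate}. The paper's proof is exactly this, in about eight lines.

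Where you overcomplicate is the lower bound on the mean. You argue heuristically via immigration counts, worry about $\tilde\lambda$, and contemplate a coupling with a pure-immigration process. None of this is needed: Lemma~\ref{explicitexpectation} already gives a \emph{two-sided} $\Theta$-asymptotic, so $\E[X^K_{\veps\log K}+Y^K_{\veps\log K}] \ge C_*^{-1} K^{c-(\abs\lambda\vee\abs a)\veps}$ directly. Using this sharper exponent (rather than your $c-\bar a\veps$) in the Chebyshev denominator gives the ratio exponent $-c + 4(\abs\lambda\vee\abs a)\veps$, which is negative precisely when $\veps < \tfrac{c}{4(\abs\lambda\vee\abs a)}$, for \emph{every} $\bar a > \abs\lambda\vee\abs a$ simultaneously. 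This removes your detour through ``$\bar a$ close to $\abs\lambda\vee\abs a$, then monotonicity in $\bar a$''. Your concern about $\tilde\lambda$ is a red herring: the $\Theta$-statement in Lemma~\ref{explicitexpectation} already identifies the dominant term as $e^{\lambda t}$ or $e^{at}$ with a positive coefficient (the eigenvector for the larger eigenvalue $\lambda$ has positive entries since $\sigma_1,\sigma_2>0$).
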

\begin{proof}
	Let $\epsi$. Then we see from Lemma \ref{explicitexpectation} that for $K$ sufficiently large \[
	C_*^{-1}K^{c-(\abs{\la}\vee\abs{a})\veps}\leq\E(X_{\veps\log K}^K+Y_{\veps\log K}^K)\leq C_*\log (K)K^{c+(\abs{\la}\vee\abs{a})\veps}
	\]
	and from Lemma \ref{varianceestimate} \[
	\V(X_{\veps\log K}^K+Y_{\veps\log K}^K)\leq 2(\V(X_{\veps\log K}^K)+\V(Y_{\veps\log K}^K))\leq C_*\log^2(K)K^{c+(2\abs{\la}\vee\abs{a})\veps}
	\]
	for a suitable constant $C_*>0$. Hence, using Markov's inequality for $K$ large enough, we see that \begin{align*}
	\P(X_{\veps\log K}^K+Y_{\veps\log K}^K\leq K^{c+\bar{a}\veps})&=1-\P(X_{\veps\log K}^K+Y_{\veps\log K}^K> K^{c+\bar{a}\veps})\\
	&\geq 1-\frac{\E(X_{\veps\log K}^K+Y_{\veps\log K}^K)}{K^{c+\bar{a}\veps}}\xrightarrow{K\to\infty}1.
	\end{align*}
	For the lower bound we can use Chebyshev's inequality to obtain \begin{align*}
	&\P(X_{\veps\log K}^K+Y_{\veps\log K}^K\leq K^{c-\bar{a}\veps})\\
	\leq&\P(X_{\veps\log K}^K+Y_{\veps\log K}^K-(x_{\veps\log K}^K+y_{\veps\log K}^K)\leq K^{c-\bar{a}\veps}-C_*^{-1}K^{c-(\abs{\la}\vee\abs{a})\veps})\\
	\leq&\P(\abs{X_{\veps\log K}^K+Y_{\veps\log K}^K-(x_{\veps\log K}^K+y_{\veps\log K}^K)}\geq C_*'K^{c-(\abs{\la}\vee \abs{a})\veps})\\
	\leq& C_*' \frac{\V(X_{\veps\log K}^K+Y_{\veps\log K}^K)}{K^{2c-2(\abs{\la}\vee\abs{a})\veps}}\\
	\leq& C_*'\log^2(K)K^{-c+4(\abs{\la}\vee\abs{a})\veps}\xrightarrow{K\to\infty}0
	\end{align*}
	for some suitable changing constant $C_*'>0$.
\end{proof}

\subsection{Proof of Theorem \ref{Theorem: Main Convergence}}
Henceforth, thanks to the previous Lemma, we are able to only consider the case $c\leq\beta\vee\gamma$ for the remainder of this section. In order to extend the convergence result Theorem \ref{firstconvergencetheorem} to times $T$ that do not satisfy the condition (\ref{positivitycondition}), we need another series of Lemmata.

\begin{lemma}\label{lemmanomigration}
	Let $\beta\vee\gamma=0$ (that is, initially there are no individuals) and $c<0$. Then, for all $T>0$ with $c+aT<0$ we have \[
	\lim\limits_{K\to\infty}\P(X_{t}^K+Y_t^K=0 \text{ for all }t\leq T\log K)=1.
	\]
\end{lemma}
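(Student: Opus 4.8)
The plan is to show that with probability tending to $1$, no individual is ever born in the process $(X^K_t, Y^K_t)$ on the time horizon $[0, T\log K]$. Since $\beta\vee\gamma = 0$ means $X_0^K = Y_0^K = 0$, the only way the process can become nonzero is through an immigration event into the first coordinate, which occurs at rate $K^c e^{at}$. So the key observation is: on the event that there are \emph{no} immigration events in $[0,T\log K]$, we have $X^K_t + Y^K_t = 0$ for all $t\le T\log K$ deterministically (there are no individuals to give birth, die, or switch). Thus it suffices to bound the probability of at least one immigration event.

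First I would compute the expected number of immigration events on $[0, T\log K]$, which is exactly
\[
\int_0^{T\log K} K^c e^{as}\,\mathrm{d}s =
\begin{cases}
\dfrac{K^c}{a}\bigl(e^{aT\log K} - 1\bigr) = \dfrac{1}{a}\bigl(K^{c+aT} - K^c\bigr), & a\neq 0,\\[2mm]
K^c\, T\log K, & a = 0.
\end{cases}
\]
In either case, using the hypotheses $c < 0$ and $c + aT < 0$, every exponent of $K$ appearing is strictly negative, so this expectation tends to $0$ as $K\to\infty$ (the polynomial factor $T\log K$ in the $a=0$ case is absorbed since $c<0$). Since the number of immigration events is a Poisson random variable (the immigration clock is an inhomogeneous Poisson process independent of everything else, and the first arrival before any individual is present is unaffected by the rest of the dynamics), Markov's inequality gives
\[
\P\bigl(\text{at least one immigration event in } [0,T\log K]\bigr) \le \int_0^{T\log K} K^c e^{as}\,\mathrm{d}s \xrightarrow{K\to\infty} 0.
\]
On the complementary event, $X^K_t + Y^K_t = 0$ for all $t\le T\log K$, which is exactly the claim.

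I do not expect any real obstacle here; the only mild point to be careful about is the bookkeeping of the two cases $a \neq 0$ and $a = 0$ (and within $a\neq 0$, the sign of $a$), but in all cases the relevant exponents $c$, $c+aT$, and $0$ (for the lower endpoint) are all $\le c < 0$ or equal to $c+aT<0$, so the integral is $O(K^{-\epsilon})$ for some $\epsilon>0$ and the conclusion follows. One could alternatively phrase it without computing the integral exactly, simply bounding $K^c e^{as} \le K^c e^{(a\vee 0)T\log K} = K^{c + (a\vee 0)T}$ on $[0,T\log K]$ and noting $c + (a\vee 0)T = (c)\vee(c+aT) < 0$; multiplying by the interval length $T\log K$ still yields something vanishing. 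Either route finishes the proof.
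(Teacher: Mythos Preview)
Your proof is correct and follows essentially the same approach as the paper, which simply cites \cite[Lemma B.7]{champagnat2019stochastic}; indeed, the paper later uses exactly your crude bound $K^{c\vee(c+aT)}T\log K\to 0$ when invoking this lemma inside the proof of Lemma~\ref{lemmaextinction}.
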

\begin{proof}
	This proof is identical to the one of \cite[Lemma B.7]{champagnat2019stochastic}.
\end{proof}

\begin{lemma}\label{lemmareemergence}
	Let $\beta\vee\gamma=0$ and $c=-\veps$ for some $\epsi$ and let $a>0$. Then for all $\eta>(1\vee \tfrac{2\la}{a})\veps$, the convergence \[
	\lim\limits_{K\to\infty}\P(K^{\frac{\veps}{2}}-1\leq X_{\frac{2\veps}{a}\log K}^K+Y_{\frac{2\veps}{a}\log K}^K\leq K^{\eta}-1)=1
	\]
	holds.
\end{lemma}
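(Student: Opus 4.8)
The plan is to mirror the argument used for the re-emergence lemma in the one-type setting (\cite[Lemma B.8]{champagnat2019stochastic}), using the total-population estimates for the expectation and variance that we have already established in Lemma~\ref{explicitexpectation} and Lemma~\ref{varianceestimate}. Write $S_t^K \coloneqq X_t^K + Y_t^K$ for the total population size and set $t_0 \coloneqq \tfrac{2\veps}{a}\log K$. Since $\beta\vee\gamma = 0$ we have $x_0^K + y_0^K = 0$, so Lemma~\ref{explicitexpectation} gives, in the relevant regime $a > \la$ (which holds since $a>0$ and, if $\la \le 0$, trivially; if $0<\la<a$ this is the $a>\la$ case, and if $\la \ge a$ one instead uses the $\la \ge a$ branch — in every case $e^{\la t_0} \le e^{(\la\vee a)t_0}$ and $K^c e^{a t_0}$ is the dominant or co-dominant term up to the factor $(1+t_0)$),
\[
    x_{t_0}^K + y_{t_0}^K = \Theta\bigl( K^c (1+t_0) e^{a t_0}\bigr) \;=\; \Theta\bigl( (\log K)\, K^{-\veps}\, K^{2\veps}\bigr) \;=\; \Theta\bigl((\log K)\,K^{\veps}\bigr),
\]
and similarly $\V(S_{t_0}^K) \le C_*(1+t_0^2)\bigl[(e^{2\la t_0}+e^{\la t_0})K^c + K^c e^{a t_0}\bigr] \le C_* (\log K)^2 K^{(2\la/a \vee 1)\veps}$ after plugging in $c=-\veps$ and $t_0 = \tfrac{2\veps}{a}\log K$ and comparing the exponents $\tfrac{2\la}{a}\veps - \veps$, $\tfrac{\la}{a}\veps-\veps$ and $\veps$.

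From here the two bounds follow by a standard first/second moment argument, exactly as in Step~2 of the proof of Theorem~\ref{firstconvergencetheorem} or in \cite[Lemma B.8]{champagnat2019stochastic}. For the upper bound: by Markov's inequality,
\[
    \P\bigl(S_{t_0}^K > K^{\eta}-1\bigr) \le \frac{\E S_{t_0}^K}{K^{\eta}-1} \le \frac{C_*(\log K) K^{\veps}}{K^{\eta}-1} \xrightarrow{K\to\infty} 0,
\]
since $\eta > \veps$. For the lower bound one applies Chebyshev's inequality: because $\eta > \tfrac{2\la}{a}\veps$ (and trivially $\tfrac{\veps}{2} < \veps$), the threshold $K^{\veps/2}$ is far below the mean $\Theta((\log K)K^{\veps})$, so
\[
    \P\bigl(S_{t_0}^K < K^{\veps/2}-1\bigr) \le \P\bigl(\lvert S_{t_0}^K - (x_{t_0}^K+y_{t_0}^K)\rvert \ge \tfrac12 (x_{t_0}^K+y_{t_0}^K)\bigr) \le \frac{4\,\V(S_{t_0}^K)}{(x_{t_0}^K+y_{t_0}^K)^2} \le \frac{C_*(\log K)^2 K^{(2\la/a\vee 1)\veps}}{(\log K)^2 K^{2\veps}},
\]
which tends to $0$ precisely because $(\tfrac{2\la}{a}\vee 1)\veps < 2\veps$. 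Combining the two estimates and taking complements yields the claimed convergence.

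The only genuine point requiring care — and the place I would spend the most effort in the write-up — is the bookkeeping on the exponents: one must verify that with $c = -\veps$ and $t_0 = \tfrac{2\veps}{a}\log K$, the mean sits at $K^{\veps}$ up to polylogarithmic factors, that the variance exponent is $(\tfrac{2\la}{a}\vee 1)\veps$, and that the hypotheses $\eta > (1\vee\tfrac{2\la}{a})\veps$ and the target exponent $\tfrac{\veps}{2}$ are exactly what is needed to make both probabilities vanish, with enough room to absorb the $\log K$ and $\log^2 K$ factors (which they do, since all inequalities on exponents are strict). The switching between $X^K$ and $Y^K$ plays no role here beyond the fact that $S^K = X^K + Y^K$ is the natural quantity, and all estimates used are already stated for $S^K$; so, unlike in Theorem~\ref{firstconvergencetheorem}, no martingale/Doob argument is needed for this lemma.
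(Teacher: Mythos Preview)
Your upper bound via Markov's inequality is essentially the paper's argument and is correct in spirit, though you should record the case distinction for the mean more carefully: when $\la > a$, Lemma~\ref{explicitexpectation} gives $x_{t_0}^K + y_{t_0}^K = \Theta(K^c e^{\la t_0}) = \Theta(K^{(2\la/a - 1)\veps})$, not $\Theta((\log K)K^\veps)$. This does not spoil the Markov step, since the hypothesis $\eta > (1 \vee \tfrac{2\la}{a})\veps$ is exactly what absorbs either exponent.

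The lower bound via Chebyshev, however, has a genuine gap. First, the variance exponent you quote is off: the dominant term in Lemma~\ref{varianceestimate} when $\la > a/2$ comes from $K^c e^{2\la t_0} = K^{(4\la/a - 1)\veps}$, not $K^{(2\la/a)\veps}$. More importantly, with the correct bounds one finds, for $\la \ge a$ say, that $\E S_{t_0}^K \asymp K^{(2\la/a - 1)\veps}$ while $\V(S_{t_0}^K)$ is of order $(\log K)^2 K^{(4\la/a - 1)\veps}$, so
\[
\frac{\V(S_{t_0}^K)}{(\E S_{t_0}^K)^2} \;\asymp\; K^{(4\la/a - 1)\veps - (4\la/a - 2)\veps} \;=\; K^{\veps} \longrightarrow \infty,
\]
and Chebyshev yields nothing. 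Your own closing condition ``$(\tfrac{2\la}{a} \vee 1)\veps < 2\veps$'' already fails once $\la \ge a$, and the lemma does not exclude that regime. This is not a bookkeeping slip: when the branching part is sufficiently supercritical, variance and mean-squared are of the same order, so a second-moment argument on $S^K$ cannot produce the lower bound.

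The paper sidesteps this by a coupling: it dominates $X^K + Y^K$ from below by the one-type process $\tilde{X}^K$ obtained by treating every active-to-dormant switch as a death (birth rate $b_1$, death rate $d_1 + \sigma_1$, same immigration, initial value $0$), so that $\tilde{X}_t^K \le X_t^K + Y_t^K$ for all $t$, and then invokes the one-type re-emergence result \cite[Lemma~B.8]{champagnat2019stochastic} directly to get $\P(\tilde{X}_{t_0}^K \ge K^{\veps/2} - 1) \to 1$. That coupling-plus-reduction step is the idea your lower bound is missing.
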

\begin{proof}
	We consider the one-dimensional branching process $\tilde{X}_t^K$ with birth rate $b_1$, death rate $d_1+\sigma_1$, immigration at rate $K^ce^{at}$ and starting condition $X_0^K=K^{\beta}-1=0$. Then, from the proof of Lemma B.8 from \cite{champagnat2019stochastic}, we have the convergence \[
	\lim\limits_{K\to\infty}\P(K^{\frac{\veps}{2}}-1\leq\tilde{X}_{\frac{2\veps}{a}\log K}^K)=1.
	\]
	Using a suitable coupling, we also have that $\tilde{X}_t^K\leq X_t^K+Y_t^K$ for all $t\geq 0$. Hence, it holds \[
	\lim\limits_{K\to\infty}\P(K^{\frac{\veps}{2}}-1\leq X_{\frac{2\veps}{a}\log K}^K+Y_{\frac{2\veps}{a}\log K}^K)=1.
	\]
	For the upper bound, we know from Lemma \ref{explicitexpectation} with $x_0=y_0=0$, that \[
	\E(X_{\frac{2\veps}{a}\log K}^K+Y_{\frac{2\veps}{a}\log K}^K)\leq \begin{cases}
	C_*K^{\veps},&\quad\text{if }a>\la\\
	C_*K^{\veps}\log K, &\quad\text{if } a=\la\\
	C_*K^{\frac{2\la}{a}\veps}, &\quad\text{if } a<\la.
	\end{cases}
	\]
	With our choice of $\eta>(1\vee\tfrac{2\la}{a})\veps$ and Markov's inequality, we have \[
	\lim\limits_{K\to\infty}\P(X_{\frac{2\veps}{a}\log K}+Y_{\frac{2\veps}{a}\log K}\geq K^\eta)\leq \lim\limits_{K\to\infty}K^{-\eta}\E(X_{\frac{2\veps}{a}\log K}+Y_{\frac{2\veps}{a}\log K})=0.
	\]
	Thus the lemma is proven.
\end{proof}

\begin{lemma}\label{lemmacontinuity}
	There exists a constant $\bar{c}=\bar{c}(b_1,b_2,d_1,d_2,\sigma_1,\sigma_2,a)$, such that for all $\epsi$ we have the convergence \[
	\lim\limits_{K\to\infty}\P(K^{(\beta\vee\gamma)-\bar{c}\veps}-1\leq X_t^K+Y_t^K\leq K^{(\beta\vee\gamma)+\bar{c}\veps} \text{ for all } t\in[0,\veps\log K])=1.
	\]
\end{lemma}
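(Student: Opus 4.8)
The plan is to prove a short-time continuity estimate for the total population $X^K_t + Y^K_t$ of the bi-type branching process, using the same moment bounds already established in this section together with a simple elementary-event / coupling argument for the lower bound.

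\textbf{Upper bound.} First I would control the supremum $\sup_{t \le \veps\log K}(X^K_t + Y^K_t)$ from above. By Lemma~\ref{explicitexpectation}, on the interval $[0,\veps\log K]$ the expectation satisfies $x^K_t + y^K_t \le \tilde C\bigl((x^K_0+y^K_0+(1+t)K^c)e^{\la t}\bigr) \vee K^c e^{at}$; since $t \le \veps\log K$ this is bounded by $C_* \log(K)\, K^{(\beta\vee\gamma) + \bar c_1 \veps}$ for a suitable constant $\bar c_1 = \bar c_1(\la,a)$ (using $c \le \beta\vee\gamma$, which by the reduction via Lemma~\ref{Lemma: Strong mutation} we may assume). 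To pass from the expectation at the terminal time to the supremum over $[0,\veps\log K]$, I would use that $X^K$ and $Y^K$ are non-negative and apply Lemma~\ref{Semimartingaledecomposition}: write $X^K_t + Y^K_t = X^K_0 + Y^K_0 + M^K_t + N^K_t + \int_0^t (\cdots)\,ds$, bound the drift integrand by a constant times $X^K_s + Y^K_s$, and combine Doob's inequality for $M^K+N^K$ (whose bracket is computed in Lemma~\ref{quadraticvariation} and whose expectation is bounded as in \eqref{martingaleestimate}) with Gronwall. This yields $\P\bigl(\sup_{t\le\veps\log K}(X^K_t+Y^K_t) \le K^{(\beta\vee\gamma)+\bar c\veps}\bigr) \to 1$, which is the right-hand inequality in the claim (with the $-1$ irrelevant on the upper side).

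\textbf{Lower bound.} Here I would argue that over a time window of length $\veps\log K$ the population cannot shrink below $K^{(\beta\vee\gamma)-\bar c\veps}$. If $\beta\vee\gamma = 0$ the statement is trivial (the lower bound is $\le 0$), so assume $\beta\vee\gamma > 0$, i.e. $X^K_0 + Y^K_0 \ge \tfrac12 K^{\beta\vee\gamma}$ for $K$ large. By the switching rates being strictly positive (Remark~\ref{firstconvergenceremark}), at least one of $X^K_0,Y^K_0$ is $\ge \tfrac14 K^{\beta\vee\gamma}$; without loss of generality say $X^K_0$. I would couple $X^K_t$ from below by a pure death-and-emigration process $\tilde X^K_t$ in which every particle, independently, is removed at the maximal total rate $d_1 + \sigma_1 + (\text{any additional emigration})$, i.e. $\tilde X^K_t$ is a pure-death chain started from $\lfloor \tfrac14 K^{\beta\vee\gamma}\rfloor$ with per-particle rate some constant $\mu$. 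Then $\tilde X^K_t$ is binomial with parameters $(\tilde X^K_0, e^{-\mu t})$, so $\tilde X^K_{\veps\log K}$ has mean $\tfrac14 K^{\beta\vee\gamma} e^{-\mu\veps\log K} = \tfrac14 K^{(\beta\vee\gamma)-\mu\veps}$, and a Chebyshev (or binomial concentration) estimate gives $\P(\tilde X^K_{\veps\log K} \ge K^{(\beta\vee\gamma)-\bar c\veps}) \to 1$ once $\bar c > \mu$. Since $X^K_t + Y^K_t \ge X^K_t \ge \tilde X^K_t$, this gives the left-hand inequality. Enlarging $\bar c$ to accommodate both bounds finishes the proof.

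\textbf{Main obstacle.} The upper-bound step is the delicate one: one needs the supremum of the process over the whole window $[0,\veps\log K]$, not just its value at the endpoint, and the process is not a (super)martingale because of the immigration and the switching. The resolution is exactly the Doob-plus-Gronwall combination already used in the proof of Theorem~\ref{firstconvergencetheorem} (Steps 1--2), applied now on the short interval $[0,\veps\log K]$ rather than $[0,T\log K]$; the error exponent $\bar c\veps$ absorbs the $\log K$ factors and the constants $C_*$ harmlessly since $\veps\log K \cdot (\text{polynomial in }\log K)$ is still sub-polynomial in $K$. I expect the bookkeeping of which rate constants enter $\bar c$ (the paper only claims $\bar c = \bar c(b_1,b_2,d_1,d_2,\sigma_1,\sigma_2,a)$, so any explicit value suffices) to be routine.
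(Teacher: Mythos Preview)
Your proposal is correct, and both halves work as written (modulo routine bookkeeping). The paper itself gives no details here and simply refers to the one-dimensional analogue \cite[Lemma B.9]{champagnat2019stochastic}, so let me point out the shortcut that this reference is almost certainly signalling.

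The key observation you are not exploiting is that the switching transitions $(n,m)\to(n\mp1,m\pm1)$ leave the \emph{sum} $X^K_t+Y^K_t$ unchanged. Hence the total population increases by~$1$ at rate $b_1X^K_t+b_2Y^K_t+K^ce^{at}$ and decreases by~$1$ at rate $d_1X^K_t+d_2Y^K_t$, so one can couple
\[
\underline Z^K_t \;\le\; X^K_t+Y^K_t \;\le\; \overline Z^K_t,
\]
where $\underline Z^K$ is a one-dimensional $BPI_K(b_1\wedge b_2,\,d_1\vee d_2,\,a,c,\beta\vee\gamma)$ and $\overline Z^K$ is a one-dimensional $BPI_K(b_1\vee b_2,\,d_1\wedge d_2,\,a,c,\beta\vee\gamma)$. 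Applying \cite[Lemma B.9]{champagnat2019stochastic} verbatim to both one-dimensional processes gives the claim with $\bar c$ the maximum of the two one-dimensional constants. This is presumably what ``proven analogously'' means, and it avoids both the Doob--Gronwall step for the upper bound and the case split on which component is initially large for the lower bound.

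Your route is genuinely different but not wrong: the Doob-plus-Gronwall argument you sketch for the upper bound is essentially the machinery from Step~1 of Theorem~\ref{firstconvergencetheorem} restricted to a short window, and your pure-death coupling on a single component for the lower bound is a valid (if slightly wasteful) way to get the infimum bound. The coupling-of-the-sum approach simply buys brevity; your approach reuses already-established estimates, which is also a reasonable choice.
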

\begin{proof}
	This result is very similar to \cite[Lemma B.9]{champagnat2019stochastic} and can be proven analogously. 
\end{proof}

\begin{lemma}\label{lemmaextinction}
	Suppose $\la<0$, where $\la$ is taken from (\ref{Eigenvalue}). \begin{enumerate}[label=\emph{(\roman*)}]
		\item In addition, let $c<0$ and $c+a\tfrac{\beta\vee\gamma}{\abs{\la}}<0$. Then, for all sufficiently small $\eta>0$ it holds \[
		\lim\limits_{K\to\infty}\P\lr{\forall t\in\left[\lr{\frac{\beta\vee\gamma}{\abs{\la}}+\eta}\log K,\lr{\frac{\beta\vee\gamma}{\abs{\la}}+2\eta}\log K\right]\colon X_t^K+Y_t^K=0}=1.
		\]
		\item If in addition (independent of (i)) $a<0$ and $c+a\tfrac{\beta\vee\gamma}{\abs{\la}}>0$, then for all $\eta>0$ and all $T>\eta$, we have \[
		\lim\limits_{K\to\infty}\P\lr{\forall t\in\left[\lr{\frac{c}{\abs{a}}+\eta}\log K,\lr{\frac{c}{\abs{a}}+T}\log K\right]\colon X_t^K+Y_t^K=0}=1.
		\]
	\end{enumerate}
\end{lemma}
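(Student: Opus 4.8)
The plan is to follow the scalar argument of \cite[Lemma B.10]{champagnat2019stochastic}: in each part I would single out a deterministic time $t_\star=s_\star\log K$ past which \emph{both} the ``branching exponent'' $(\beta\vee\gamma)+\la s$ and the ``immigration exponent'' $c+as$ governing $\E[X_{s\log K}^K+Y_{s\log K}^K]$ are strictly negative, deduce from Lemma \ref{explicitexpectation} and Markov's inequality that $X_{t_\star}^K+Y_{t_\star}^K=0$ with probability tending to $1$, and then use the Markov property at $t_\star$ to restart from the empty configuration and invoke Lemma \ref{lemmanomigration} on the remaining (macroscopic) time interval to rule out resurrection. Throughout I would use the reduction $c\le\beta\vee\gamma$ already in force in this subsection (via Lemma \ref{Lemma: Strong mutation}), so that Lemma \ref{explicitexpectation} gives $\E[X_{s\log K}^K+Y_{s\log K}^K]\le C_*(1+s\log K)\lr{K^{(\beta\vee\gamma)+\la s}\vee K^{c+as}}$.

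For part (i) I would take $s_1\coloneqq\tfrac{\beta\vee\gamma}{\abs{\la}}+\eta$. Then $(\beta\vee\gamma)+\la s_1=\la\eta<0$, while for $c+as_1=c+a\tfrac{\beta\vee\gamma}{\abs{\la}}+a\eta$ one distinguishes $a\le 0$ (where it is $\le c+a\tfrac{\beta\vee\gamma}{\abs{\la}}<0$) from $a>0$ (where it is $<0$ once $\eta$ is small, since $c+a\tfrac{\beta\vee\gamma}{\abs{\la}}<0$ \emph{strictly}). Hence $\E[X_{s_1\log K}^K+Y_{s_1\log K}^K]\to 0$, so $X_{s_1\log K}^K+Y_{s_1\log K}^K=0$ with high probability. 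Restarting at $s_1\log K$, the shifted process is a $BBPI_K$ with zero initial condition and immigration parameter $c'\coloneqq c+as_1<0$, and Lemma \ref{lemmanomigration} with terminal time $\eta$ — whose hypothesis $c'+a\eta=c+a\lr{\tfrac{\beta\vee\gamma}{\abs{\la}}+2\eta}<0$ again holds for small $\eta$ — keeps it at $0$ on $[0,\eta\log K]$, which is the claimed interval.

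For part (ii) the extra hypotheses $a<0$ and $c+a\tfrac{\beta\vee\gamma}{\abs{\la}}>0$ force $\tfrac{c}{\abs{a}}>\tfrac{\beta\vee\gamma}{\abs{\la}}$, and I would take $s_0\coloneqq\tfrac{c}{\abs{a}}+\eta$. Since $\la<0$ and $s_0>\tfrac{\beta\vee\gamma}{\abs{\la}}$, the branching exponent satisfies $(\beta\vee\gamma)+\la s_0<\la\eta<0$, while $c+as_0=a\eta<0$. As before this gives $X_{s_0\log K}^K+Y_{s_0\log K}^K=0$ with high probability; restarting yields a $BBPI_K$ with immigration parameter $c''\coloneqq a\eta<0$, and Lemma \ref{lemmanomigration} with terminal time $T-\eta$ (its hypothesis $c''+a(T-\eta)=aT<0$ holds) keeps it at $0$ on $[0,(T-\eta)\log K]$, i.e.\ on $[(\tfrac{c}{\abs{a}}+\eta)\log K,(\tfrac{c}{\abs{a}}+T)\log K]$ after the shift.

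I do not expect a serious obstacle here: the only genuinely two-type ingredient is the sharp expectation bound of Lemma \ref{explicitexpectation}, after which everything reduces to scalar exponent arithmetic, one application of Markov's inequality, and one application of Lemma \ref{lemmanomigration}. The one place to be careful is the case distinction on the sign of $a$ in part (i), where one must choose $\eta$ small (depending on $a,c,\la,\beta,\gamma$) to ensure that $c+as_1$ and $c+a(\tfrac{\beta\vee\gamma}{\abs{\la}}+2\eta)$ stay strictly negative; in part (ii) no such smallness condition is needed since $a<0$ makes $s\mapsto c+as$ decreasing.
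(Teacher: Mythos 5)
Your argument is correct, and it is a genuinely different (and in part (ii), simpler) route than the paper's. In both parts you replace the paper's conditioning on "no migrant arrives" and (in part (ii)) the family-counting plus WLOG reduction $c\le\beta\vee\gamma<\tfrac{\eta|\la|}{4}$ by a single first-moment estimate on the \emph{full} process with immigration: by Lemma \ref{explicitexpectation} and the reduction $c\le\beta\vee\gamma$ in force in this subsection, $\E[X_{s\log K}^K+Y_{s\log K}^K]\le C_*(1+s\log K)\bigl(K^{(\beta\vee\gamma)+\la s}\vee K^{c+as}\bigr)$, so the moment you can find a time $s_\star$ at which both exponents are strictly negative, Markov's inequality gives $\P(X_{s_\star\log K}^K+Y_{s_\star\log K}^K\ge 1)\to 0$, and you restart from the empty state via the Markov property and Lemma \ref{lemmanomigration}. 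Your exponent arithmetic in part (i) (checking $c+a s_1<0$ separately for $a\le 0$ and $a>0$ by shrinking $\eta$) and in part (ii) (using $c+a\tfrac{\beta\vee\gamma}{|\la|}>0\Rightarrow \tfrac{c}{|a|}>\tfrac{\beta\vee\gamma}{|\la|}$, then $c+as_0=a\eta<0$, $c''+a(T-\eta)=aT<0$) is all sound. The paper's proof of (i) is close in spirit (its Markov bound is applied to the pure branching process after discarding the negligible-probability event of a migrant arriving before $T_2\log K$), but its proof of (ii) is structurally quite different: because immigration is \emph{not} negligible on the earlier window $[0,(\tfrac{c}{|a|}+\tfrac\eta2)\log K]$, the paper first reduces $\beta\vee\gamma$ to be small via Theorem \ref{firstconvergencetheorem} and then bounds the number of surviving families. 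Your observation that Lemma \ref{explicitexpectation} already absorbs this immigration — so the same one-line Markov bound works — yields a cleaner, more uniform argument. (Two minor points worth recording if you write this up formally: the restarted process at time $s_\star\log K$ has immigration rate $K^{c}e^{a(s_\star\log K+u)}=K^{c+as_\star}e^{au}$, so it is exactly a $BBPI_K$ with the shifted exponent $c'=c+as_\star$; and the constraint "$\eta$ sufficiently small" in part (i) is precisely what the lemma allows, while in part (ii) no smallness of $\eta$ is needed since $a<0$ makes $s\mapsto c+as$ decreasing.)
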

\begin{proof}
	This result is the bi-type analogue of \cite[Lemma B.10]{champagnat2019stochastic}. We start by proving part (i). Let $\eta$ be small enough such that $c+a(\tfrac{\beta\vee\gamma}{\abs{\la}}+2\eta)<0$. Define $T_1\coloneqq \tfrac{\beta\vee\gamma}{\abs{\la}}+\eta$ and $T_2\coloneqq T_1+\eta$. Then the probability of a migrant arriving during the interval $[0,T_2\log K]$ converges to $0$ as $K\to\infty$. This can be seen from the probability of immigration being bounded by $K^{c\vee (c+aT_2)}T_2\log K$, which converges to $0$ as $K\to\infty$ (cf.~Lemma \ref{lemmanomigration}). Hence, it suffices to show that, assuming no immigration occurs, the extinction time $T_{\text{ext}}\coloneqq\inf\{t\geq 0\mid X_{t}^K+Y_{t}^K=0\}$ is asymptotically almost surely less than $T_1\log K$, that is, our population itself is extinct before time $T_1\log K$, and since no migrant arrives during the time interval $[T_{\text{ext}},T_2\log K]$, which would potentially resurrect the population, the claim follows.  
	
	Denote the event that a migrant arrives in the population during the time interval $[0,T_1\log K]$ by $\Gamma$. Then, on the complement, the process $Z_t^K=(X_t^K,Y_t^K)$ behaves as a bi-type branching process $\tilde{Z}_t^K=(\tilde{X}_t^K,\tilde{Y}_t^K)$ with birth rates $b_1,b_2$, death rates $d_1,d_2$, switching rates $\sigma_1,\sigma_2$, starting condition $\tilde{Z}_0^K=Z_0^K$ and no immigration. Hence, we obtain using Markov's inequality \begin{align}
	\P(T_{\text{ext}}>T_1\log K,\Gamma^c)&=\P(\tilde{X}_{T_1\log K}^K+\tilde{Y}_{T_1\log K}^K\geq 1)\nonumber\\
	&\leq\E(\tilde{X}_{T_1\log K}^K+\tilde{Y}_{T_1\log K}^K)\nonumber\\
	&\leq C_*e^{\la T_1\log K}K^{\beta\vee\gamma}\nonumber\\
	&= C_*K^{(\beta\vee\gamma)+\la T_1},\label{extinctiontimebound}
	\end{align}
	where we can find the bound on the expected value for some constant $C_*>0$ from the homogeneous solution to (\ref{expectedode}). Therefore, by our choice of $T_1$, we obtain \[
	\P(T_{\text{ext}}>T_1\log K)\leq \P(\Gamma)+C_*K^{(\beta\vee\gamma)+\la T_1}\xrightarrow{K\to\infty} 0.
	\]
	For part (ii) we may assume without loss of generality, that $c\leq\beta\vee\gamma<\tfrac{\eta\abs{\la}}{4}$. To justify this, we apply Theorem \ref{firstconvergencetheorem} until time $T\geq \tfrac{\beta\vee\gamma}{\abs{\la}}$ which satisfies $r\coloneqq ((\beta\vee\gamma)+\la T)\vee(c+aT)<\tfrac{\eta\abs{\la}}{4}$. This allows us to consider the process $Z^K\sim BBPI_K(b_1,b_2,d_1,d_2,\sigma_1,\sigma_2,a,c+aT,r,r)$ instead by using the Markov property at time $T$, which satisfies our assumption. Indeed, by definition of $T$ and $r$, we have $c+aT=r$.\\
	Under the assumption $c\leq\beta\vee\gamma<\tfrac{\eta\abs{\la}}{4}$, the condition $c+a\tfrac{\beta\vee\gamma}{\abs{\la}}>0$ implies $a>\la$. Now, let $\eta>0$ and $T>\eta$ be arbitrary. As in case (i), we can prove that as $K\to\infty$ there is almost surely no immigrant arriving in the population on the time interval $[(\tfrac{c}{\abs{\la}}+\tfrac{\eta}{2})\log K, (\tfrac{c}{\abs{\la}}+T)\log K]$. Denote the event of a migrant arriving during this time by $\Gamma$. From now on, we only consider the event $\Gamma^c$. Then we only need to show that on $\Gamma^c$, the process becomes extinct before time $(\tfrac{c}{\abs{\la}}+\eta)\log K$. Note that the number of total families initially present in the population and those families started due to an immigration event up to time $(\tfrac{c}{\abs{a}}+\tfrac{\eta}{2})\log K$ is given by $K^{\beta\vee\gamma}$ (representing the families initially present) plus a Poisson random variable whose parameter is bounded from above by \[
	\int_{0}^{\tfrac{c}{\abs{a}}+\tfrac{\eta}{2}\log K}K^ce^{as}\ds\leq \frac{K^c}{\abs{a}},
	\]
	which represents the number of families coming from immigration.
	In particular, the total number of families is less than $K^{\eta\la/3}$ with probability converging to $1$. The size of such a family at time $t$ is bounded from above by the size of a bi-type branching process $\tilde{Z}=(\tilde{X},\tilde{Y})$ with birth rates $b_1,b_2$, death rates $d_1,d_2$, switching rates $\sigma_1,\sigma_2$, no immigration and starting population $\tilde{Z}_0=(1,1)$. As in part (i), we see that the probability of such a process surviving for a time longer than $\tfrac{\eta}{2}\log K$ is dominated by \[
	\P(\tilde{X}_{\frac{\eta}{2}\log K}+\tilde{Y}_{\frac{\eta}{2}\log K}\geq 1)\leq \E(\tilde{X}_{\frac{\eta}{2}\log K}+\tilde{Y}_{\frac{\eta}{2}\log K})\leq C_*e^{\la \frac{\eta}{2}\log K }=C_*K^\frac{\la\eta}{2},
	\]
	where we get the bound on the expectation from solving the homogeneous equation of (\ref{expectedode}) with initial condition $(\tilde{X}_0,\tilde{Y}_0)=(1,1)$. Therefore, the probability of having one family alive at time $(\tfrac{c}{\abs{a}}+\eta)\log K$ is given by the probability of at least one family alive at time $(\tfrac{c}{\abs{a}}+\tfrac{\eta}{2})\log K$ surviving for longer than $\tfrac{\eta}{2}\log K$, which is dominated by \[
	(1-C_*K^{\frac{\la\eta}{2}})^{K^{\frac{\eta\la}{3}}}\xrightarrow{K\to\infty}0.
	\]
	Therefore, the overall probability of having an individual alive at time $(\tfrac{c}{\abs{a}}+\eta)\log K$ is dominated by \[
	\P(\Gamma)+(1-C_*K^{\frac{\la\eta}{2}})^{K^{\frac{\eta\la}{3}}}\xrightarrow{K\to\infty}0.
	\]
	Since the process is extinct at time $(\tfrac{c}{\abs{a}}+\eta)\log K$ with probability converging to $1$ and also with high probability there is no migrant arriving in the population after this time, the lemma is proven.
\end{proof}

To end this section, we can now prove the general convergence from Theorem \ref{Theorem: Main Convergence} which we were looking for.

\begin{proof}[Proof of Theorem \ref{Theorem: Main Convergence}]This proof is taken from \cite[Theorem B.5]{champagnat2019stochastic} and adapted to our case.
	\begin{enumerate}
		\item[(iii)] This is a direct consequence of Lemma \ref{lemmanomigration}.
		\item[(ii)] Let $\veps>0$. We can apply Lemma \ref{lemmanomigration} up to time $T_1=\tfrac{\abs{c}}{a}-\veps$. This shows \[
		\lim\limits_{K\to\infty}\frac{\log(1+X_{t\log K}^K+Y_{t\log K}^K)}{\log K}=0\ \quad\text{ for all }t\in[0,T_1]\ \text{almost surely}.
		\]
		Applying the Markov property at time $T_1\log K$, we can apply Lemma \ref{lemmareemergence} and Lemma \ref{lemmacontinuity} to see that \[
		\frac{\log(1+X_{(T_1+\delta\veps)\log K}^K+Y_{(T_1+\delta\veps)\log K}^K)}{\log K}\in (\underline{c}\veps,\bar{c}\veps)
		\]
		and 
		\[
		\limsup\limits_{K\to\infty}\sup_{t\in[T_1,T_1+\delta\veps]}\frac{\log(1+X_{t\log K}^K+Y_{t\log K}^K)}{\log K}\leq c'\veps,
		\]
		where $\delta,\underline{c}>0$ and $\bar{c}<c'<\infty$. Now, applying Theorem \ref{firstconvergencetheorem} to a coupling from time $(T_1+\delta\veps)\log K$ with a lower bounding process $BBPI_K(b_1,b_2,d_1,d_2,\sigma_1,\sigma_2,a,\underline{c}\veps,\underline{c}\veps,\underline{c}\veps)$ and an upper bounding process $BBPI_K(b_1,b_2,d_1,d_2,\sigma_1,\sigma_2,a,\bar{c}\veps,\bar{c}\veps,\bar{c}\veps)$ shows that for times $t\in[T_1+\delta\veps,T]$ it holds \begin{align*}
		\underline{c}\veps+(\la\vee a)(t-t_1-\delta\veps)&\leq\liminf_{K\to\infty}\frac{\log(1+X_{t\log K}^K+Y_{t\log K}^K)}{\log K}\\
		&\leq \limsup_{K\to\infty}\frac{\log(1+X_{t\log K}^K+Y_{t\log K}^K)}{\log K}\\
		&\leq \bar{c}\veps+(\la\vee a)(t-t_1-\delta\veps).
		\end{align*}
		Now letting $\veps\to 0$ proves the claim.
		\item[(i)] Note that this part follows immediately from Theorem \ref{firstconvergencetheorem} in the case where $\la\geq 0$ or if $\la<0$, $a\geq 0$ and $c+a\tfrac{\beta\vee\gamma}{\abs{\la}}>0$. The remaining cases are as follows.
		\begin{description}
			\item[Case(a): $\la<0$ and $a<0$.] Assume for now that $c+a\tfrac{\beta\vee\gamma}{\abs{\la}}<0$. Then, we can apply Theorem \ref{firstconvergencetheorem} on the interval $[0,\tfrac{\beta\vee\gamma}{\abs{\la}}-\veps]$ for $\epsi$. Using Lemma \ref{lemmaextinction} (i), we also see that the process converges on the interval $[\tfrac{\beta\vee\gamma}{\abs{\la}}+\veps,\tfrac{\beta\vee\gamma}{\abs{\la}}+2\veps]$. Using Lemma \ref{lemmacontinuity} together with the coupling argument from (ii) and letting $\veps\to 0$ shows the convergence of the process on the interval $[0,T_1]$ for $T_1>\tfrac{\beta\vee\gamma}{\abs{\la}}$ sufficiently small against $\bar{\beta}$. Using the Markov property at time $T_1$ we can apply Lemma \ref{lemmanomigration} to obtain convergence on the entire interval $[0,T]$ towards $\bar{\beta}$.\\
			
			If $c+a\tfrac{\beta\vee\gamma}{\abs{\la}}>0$, we obtain convergence on the interval $[0,\tfrac{c}{\abs{a}}-\veps]$ from Theorem \ref{firstconvergencetheorem} and use Lemma \ref{lemmaextinction} (ii) instead. The remainder of the argument is still valid. In the case where $c+a\tfrac{\beta\vee\gamma}{\abs{\la}}=0$, we can use a coupling  argument where $\tilde{Z}^K=(\tilde{X}^K,\tilde{Y}^K)$ is a $BBPI_K(b_1,b_2,d_1,d_2,\sigma_1,\sigma_2,a,c-\veps,\beta,\gamma)$ and $\hat{Z}^K=(\hat{X}^K,\hat{Y}^K)$ is a $BBPI_K(b_1,b_2,d_1,d_2,\sigma_1,\sigma_2,a,c+\veps,\beta,\gamma)$ such that \[
			\tilde{X}_t^K+\tilde{Y}_t^K\leq X_t^K+Y_t^K\leq \hat{X}_t^K+\hat{Y}_t^K,
			\] and let $\veps\to 0$.
			
			\item[Case(b): $\la<0$, $a=0$ and $c<0$.] This case can be argued as in the first paragraph of the proof of Case (a).
			
			\item[Case(c): $\la<0$, $a>0$, $c<0$ and $\tfrac{\beta\vee\gamma}{\abs{\la}}<\tfrac{\abs{c}}{a}$.]  In this case, the population becomes extinct at first and is then revived due to immigration. Hence, we apply Theorem \ref{firstconvergencetheorem} up to time $\tfrac{\beta\vee\gamma}{\abs{\la}}-\veps$ and Lemma \ref{lemmaextinction} (i) on the interval $[\tfrac{\beta\vee\gamma}{\abs{\la}}+\veps,\tfrac{\beta\vee\gamma}{\abs{\la}}+2\veps]$ for $\epsi$. As in Case (a), using Lemma \ref{lemmacontinuity} and letting $\veps\to 0$ gives again convergence on the interval $[0,T_1]$ for $T_1>\tfrac{\beta\vee\gamma}{\abs{\la}}$ sufficiently small. Then, using the Markov property allows the application of Lemma \ref{lemmanomigration} until time $T_2=\tfrac{\abs{c}}{a}$. Again, we can use the Markov property and Lemma \ref{lemmacontinuity} to apply Lemma \ref{lemmareemergence} on a sufficiently small interval $[T_2,T_3]$. Then, using the Markov property again and using another coupling argument as in Case (a), letting $T_3\to T_2$, we can apply Theorem \ref{firstconvergencetheorem} to obtain convergence towards $\bar{\beta}$ on the entire interval $[0,T]$.
			\item[Case(d): $\la<0$, $a>0$, $c<0$ and $\tfrac{\beta\vee\gamma}{\abs{\la}}=\tfrac{\abs{c}}{a}$.] Here, we can couple as before with a lower bound process $BBPI_K(b_1,b_2,d_1,d_2,\sigma_1,\sigma_2,a,c-\veps,\beta,\gamma)$, for which we can apply Case (c), and the upper bound process $BBPI_K(b_1,b_2,d_1,d_2,\sigma_1,\sigma_2,a,c+\veps,\beta+\veps,\gamma+\veps)$, which satisfies $c+\veps+a\tfrac{(\beta\vee\gamma)+\veps}{\abs{\la}}>0$ and thus has already been treated as one of the trivial cases above. Letting $\veps\to 0$ yields the claim.
			\item[Case(e): $\la<0$, $c=a=0$.] Here, we use a similar coupling argument as in Case (d). The lower bound process has distribution $BBPI_K(b_1,b_2,d_1,d_2,\sigma_1,\sigma_2,0,-\veps,\beta,\gamma)$, which satisfies the assumption of Case (b), and the upper bound process has distribution $BBPI_K(b_1,b_2,d_1,d_2,\sigma_1,\sigma_2,0,\veps,\beta\vee\veps,\gamma\vee\veps)$, which satisfies $\veps+0\cdot \tfrac{\beta\vee\gamma\vee\veps}{\abs{\la}}>0$, so this case is treated as the second trivial case above. The claim follows by letting $\veps\to 0$.
		\end{description}
	\end{enumerate}
\end{proof}

\section{Results on Logistic Processes}\label{Section: Appendix C}

In this section, we consider the bi-type logistic birth and death process $Z_t^K=(X_t^K,Y_t^K)$ where the transitions are given through \[
(n,m)\mapsto\begin{cases}
(n+1,m)&\text{ at rate }nb_1^K(\omega,t)+\gamma_1^K(\omega,t)\\
(n-1,m)&\text{ at rate }n(d_1^K(\omega,t)+\tfrac{(1-p)C}{K} n) \\
(n,m-1)&\text{ at rate }md_2^K(\omega,t)\\
(n-1,m+1)&\text{ at rate }\tfrac{pC}{K}n^2\\
(n+1,m-1)&\text{ at rate }m\sigma_2\\
\end{cases}
\]
with predictable, non-negative functions $b_1^K,d_1^K,d_2^K,\gamma_1^K\colon\Omega\times[0,\infty)\to \R$ and constants $C,\sigma_2>0$, $p\in(0,1)$.

\begin{lemma}\label{Lemma: EthierKurtz}
	Suppose that there are constants $b_1,d_1,d_2\geq 0$ such that \begin{align}
	\sup_{0\leq t\leq s\log K}\norm{b_1^K(t)-b_1}+\norm{d_1^K(t)-d_1}+\norm{d_2^K(t)-d_2}+\norm{\tfrac{\gamma_1^K(t)}{K}}\xrightarrow{K\to\infty}0\label{Convergence1}
	\end{align}
	in probability for some $s>0$.
	If we have $\tfrac{Z_0^K}{K}\to(\veps_1,\veps_2)$ as $K\to\infty$ for fixed $\veps_1,\veps_2>0$, then the process $\tfrac{Z^K_t}{K}$ converges uniformly on compact intervals in probability towards the solution $(x(t),y(t))$ of the ordinary differential equation \begin{align}
	\begin{aligned}
	\dot{x}(t)&=(b_1-d_1)x(t)-Cx^2(t)+\sigma_2y(t)\\
	\dot{y}(t)&=-(d_2+\sigma_2)y(t)+pCx^2(t)
	\end{aligned}\label{ODEAppendixC}
	\end{align}
	with initial condition $(x(0),y(0))=(\veps_1,\veps_2)$ as $K\to\infty$.
\end{lemma}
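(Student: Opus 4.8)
The plan is to follow the classical law-of-large-numbers argument for density-dependent population processes (Kurtz; see \cite[Theorem 11.2.1]{ethier2009markov}, which is quoted elsewhere in this paper), modified in two respects: the rates $b_1^K,d_1^K,d_2^K,\gamma_1^K$ here are random and time-dependent rather than deterministic constants, and the limiting vector field on the right-hand side of (\ref{ODEAppendixC}) is only \emph{locally} Lipschitz because of the quadratic competition and switching terms. Note also that although (\ref{Convergence1}) is assumed uniformly on $[0,s\log K]$, we only ever use it on a fixed compact interval $[0,T]$: since $s\log K\to\infty$, one has $[0,T]\subset[0,s\log K]$ for $K$ large, so $\sup_{t\le T}(\|b_1^K(t)-b_1\|+\|d_1^K(t)-d_1\|+\|d_2^K(t)-d_2\|+\|\gamma_1^K(t)/K\|)\to 0$ in probability.

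First I would write down the semimartingale decomposition of the rescaled process. Collecting the jump rates (reproduction and immigration push $X^K$ up at rate $nb_1^K+\gamma_1^K$; ageing death at rate $nd_1^K$; competition death and dormancy switching at combined rate $\tfrac{C}{K}n^2$; resuscitation at rate $m\sigma_2$; dormant death at rate $md_2^K$; the dormant component collects $\tfrac{pC}{K}n^2$ and loses $m(d_2^K+\sigma_2)$) and applying Dynkin's formula (legitimate because the rates are predictable), one obtains
\[
\frac{Z_t^K}{K}=\frac{Z_0^K}{K}+\int_0^t b^K\!\left(s,\tfrac{Z_s^K}{K}\right)\ds+\frac{M_t^K}{K},
\]
with $M^K$ a càdlàg martingale and $b^K(s,(x,y))=\big((b_1^K(s)-d_1^K(s))x-Cx^2+\sigma_2 y+\tfrac{\gamma_1^K(s)}{K},\ pCx^2-(d_2^K(s)+\sigma_2)y\big)$. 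Comparing with (\ref{ODEAppendixC}) and using (\ref{Convergence1}), for every $T>0$ and $R>0$ the field $b^K(s,\cdot)$ converges to the deterministic, time-independent field $F$ defining the ODE, uniformly on $[0,T]\times \overline{B_R}$ and in probability.

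Second, to handle the lack of global Lipschitz continuity I would localise: let $R>1+\sup_{t\in[0,T]}\norm{(x(t),y(t))}$ (finite because the field $F$ is inward-pointing on the boundary of the nonnegative quadrant — $\dot x=\sigma_2 y\ge0$ on $\{x=0\}$ and $\dot y=pCx^2\ge0$ on $\{y=0\}$ — and the $-Cx^2$ term prevents blow-up) and set $\tau_R^K:=\inf\{t\ge0:\norm{Z_t^K/K}>R\}$. On $\{t<\tau_R^K\}$ every transition moves $Z^K/K$ by $O(1/K)$ and the total jump rate is $O(K)$, so $\E\big[[M^K/K]_{t\wedge\tau_R^K}\big]=O(t/K)$ componentwise, whence $\sup_{t\le T}\norm{M_{t\wedge\tau_R^K}^K/K}\to0$ in $L^2$, hence in probability, by Doob's inequality. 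Then I would run a Gronwall estimate: on the high-probability event on which the rates are within $\eta$ of their limits on $[0,T]$ (plus $\gamma_1^K/K\le\eta$), for $t\le T\wedge\tau_R^K$ subtract the integral equation for $(x,y)$ from that for $Z^K/K$ and use the Lipschitz constant $L=L_R$ of $F$ on $\overline{B_R}$:
\[
\Big\|\tfrac{Z_t^K}{K}-(x(t),y(t))\Big\|\le \Big\|\tfrac{Z_0^K}{K}-(x(0),y(0))\Big\|+\sup_{u\le T}\Big\|\tfrac{M_u^K}{K}\Big\|+C_* T\eta+L\int_0^t\Big\|\tfrac{Z_s^K}{K}-(x(s),y(s))\Big\|\ds ,
\]
so that $\sup_{t\le T\wedge\tau_R^K}\norm{Z_t^K/K-(x(t),y(t))}\le (\text{error})\,e^{LT}$, and the error tends to $0$ in probability by the previous steps together with $Z_0^K/K\to(\veps_1,\veps_2)$. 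Finally, because $R$ exceeds $\sup_{[0,T]}\norm{(x(t),y(t))}+1$, the event that this supremum is $<1$ forces $\tau_R^K>T$; hence $\P(\tau_R^K>T)\to1$, the stopping time disappears, and one gets uniform convergence on $[0,T]$. Since $T$ is arbitrary, this is the desired convergence uniformly on compacts.

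The main obstacle I anticipate is not any individual estimate — each of the martingale bound, the drift bound and the Gronwall step is routine — but the bookkeeping needed to intertwine the randomness and time-dependence of the rates with the localisation: one must organise the ``good event'' on which (\ref{Convergence1}) is quantitatively in force, the event on which $M^K/K$ is uniformly small, and the event $\{\tau_R^K>T\}$ in the correct order, so that the Gronwall comparison is ultimately carried out pathwise on an event of probability tending to $1$. A minor preliminary that still needs to be checked carefully is the elementary ODE input (forward invariance of the nonnegative quadrant and non-explosion of $(x,y)$ on $[0,T]$) that makes the choice of $R$ — and hence the whole localisation — legitimate.
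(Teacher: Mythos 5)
Your argument is correct and is exactly the Kurtz fluid-limit machinery that the paper's one-line proof invokes by citing \cite[Theorem~11.2.1]{ethier2009markov}: Dynkin semimartingale decomposition, a vanishing martingale term of order $K^{-1/2}$, and a Gronwall comparison with the limiting ODE. The adaptations you supply --- working on the high-probability event on which the random, time-dependent rates are uniformly close to their limits (which is legitimate on any fixed $[0,T]$ since $[0,T]\subset[0,s\log K]$ eventually), and localising with $\tau_R^K$ to tame the non-globally-Lipschitz quadratic drift --- are precisely the bookkeeping needed to go from the literal hypotheses of that theorem to the statement at hand, but they do not change the underlying approach, so this is the argument the paper has in mind.
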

\begin{proof}
	This is similar to \cite[Theorem 11.2.1]{ethier2009markov}.
\end{proof}
\begin{notation}
	We denote processes $Z^K$ as introduced above by $LBBI_K(b_1^K,d_1^K,d_2^K,\sigma_2,p,C,\gamma_1^K)$. In the case where the functions $b_1^K,d_1^K$ and $d_2^K$ are all constant and $\gamma_1^K\equiv 0$, we may refer to the process as a $LBBI_K(b_1,d_1,d_2,\sigma_2,p,C)$.
\end{notation}
We are interested in calculating a coordinatewise positive equilibrium of the system (\ref{ODEAppendixC}). Assume that the equilibrium $(\bar{x},\bar{y})$ is positive, such that we can divide both sides of (\ref{ODEAppendixC}) by $x$ to obtain \[
\frac{\bar{y}}{\bar{x}}=-\frac{b_1-d_1-C\bar{x}}{\sigma_2}=\frac{pC\bar{x}}{d_2+\sigma_2}
\]
Hence, we see that \begin{align}
\bar{x}=\frac{(b_1-d_1)(d_2+\sigma_2)}{C(d_2+(1-p)\sigma_2)}
\quad\text{ and }\quad
\bar{y}=\frac{p(b_1-d_1)^2(d_2+\sigma_2)}{C(d_2+(1-p)\sigma_2)^2}.\label{eq:equilibrium}
\end{align}
Thus, the assumption $b_1>d_1$ is sufficient for obtaining a coordinatewise positive equilibrium. As is shown in \cite[Section 2.2]{blath2020invasion}, this equilibrium is the only stable equilibrium and in fact the system converges towards this equilibrium for any initial condition $(\veps_1,\veps_2)\in (0,\infty)^2$. This can be seen from Lemma 4.6 in \cite{blath2020invasion}.\\

\subsection{Problem of Exit and Entry of a Domain}

We will now concern ourselves with estimating the length of time until a logistic bi-type branching process exits a neighbourhood of its equilibrium population size; and with the existence of a time such that the process enters for the first time a neighbourhood of its equilibrium. The important results in this section are Lemma \ref{Lemma: convergence to equilibrium} for the entry into a domain around the equilibrium and Corollary \ref{Corollary: Equilibriumcloseness} for the exit of such a domain. Our first step is to generalize Lemma \ref{Lemma: EthierKurtz}.

\begin{lemma}\label{Lemma: Generalized EthierKurz}
	Let $T>0$ and $b_1,b_2,d_1,d_2\geq 0$, $\sigma_2>0$. Further let $C>0$, $p\in(0,1)$ and let $\tilde{C}$ be a compact subset of $(0,\infty)^2$. Denote the solution of the differential equation \begin{align}\begin{aligned}
	\dot{\vphi}_1&=(b_1-d_1-C\vphi_1)\vphi_1+\sigma_2\vphi_2\\
	\dot{\vphi}_2&=(b_2-d_2-\sigma_2)\vphi_2 +Cp(\vphi_1)^2
	\end{aligned}\label{ODELemmaC}
	\end{align}
	with initial condition $(z_1,z_2)\in \tilde{C}$ by $\vphi_{z_1,z_2}$. Then for any $T>0$, \[
	r\coloneqq \inf_{z\in \tilde{C}}\inf_{t\in[0,T]}\norm{\vphi_{z_1,z_2}(t)}>0\quad\text{ and }\quad R\coloneqq\sup_{z\in \tilde{C}}\sup_{t\in[0,T]}\norm{\vphi_{z_1,z_2}(t)}<\infty.
	\]
	Denote the distribution of the Markov process with transition rates \[
	(\tfrac{n}{K},\tfrac{m}{K})\mapsto\begin{cases}
	(\tfrac{n+1}{K},\tfrac{m}{K})&\text{ at rate }nb_1\\
	(\tfrac{n}{K},\tfrac{m+1}{K})&\text{ at rate }mb_2\\
	(\tfrac{n-1}{K},\tfrac{m}{K})&\text{ at rate }n(d_1+\tfrac{(1-p)C}{K} n) \\
	(\tfrac{n}{K},\tfrac{m-1}{K})&\text{ at rate }md_2\\
	(\tfrac{n-1}{K},\tfrac{m+1}{K})&\text{ at rate }\tfrac{pC}{K}n^2\\
	(\tfrac{n+1}{K},\tfrac{m-1}{K})&\text{ at rate }m\sigma_2\\
	\end{cases}
	\]
	by $\P_z^K$. Then for any $0<\delta<r$, we have \[
	\lim\limits_{K\to\infty}\sup_{z\in \tilde{C}}\P_z^K\lr{\sup_{t\in[0,T]}\norm{w_t-\vphi_{z_1,z_2}(t)}_{}\geq\delta}=0,
	\]
	where $w_t$ is the canonical process on $D([0,\infty),\R^2)$, the space of càdlàg paths from $[0,\infty)$ to $\R^2$.
\end{lemma}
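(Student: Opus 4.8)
The plan is to prove this as a uniform-in-initial-condition fluid-limit statement, following the classical argument of Kurtz for density-dependent Markov chains (see \cite[Chapter 11]{ethier2009markov}, which underlies Lemma \ref{Lemma: EthierKurtz}), but keeping careful track of the dependence of every constant on the starting point, so that it is in fact uniform over $z\in\tilde{C}$. The key point is that all estimates needed depend on the initial condition only through a fixed compact region of state space in which the relevant trajectories of (\ref{ODELemmaC}) live, and that this region can be chosen uniformly over $z\in\tilde{C}$.

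First I would set up the localisation. The set $\Phi\coloneqq\{\vphi_{z_1,z_2}(t)\colon(z_1,z_2)\in\tilde{C},\,t\in[0,T]\}$ is the image of the compact set $\tilde{C}\times[0,T]$ under the flow of (\ref{ODELemmaC}), which is jointly continuous since the right-hand side $F=(F_1,F_2)$ of (\ref{ODELemmaC}) is locally Lipschitz; hence $\Phi$ is a compact subset of $(0,\infty)^2$. Fix $\delta_0\in(0,\tfrac{\delta}{2}]$ small enough that the closed $2\delta_0$-neighbourhood $\mathcal{K}$ of $\Phi$ is still contained in $(0,\infty)^2$, so $\mathcal{K}$ is compact. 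Then $F$ is Lipschitz on $\mathcal{K}$ with some constant $L=L(\mathcal{K})$, and on $\mathcal{K}$ the rescaled jump rates (rates divided by $K$) together with the $O(1/K)$ jump sizes yield a per-unit-time second-moment bound of order $\Lambda/K$ with $\Lambda=\Lambda(\mathcal{K})$. Let $\tau_K\coloneqq\inf\{t\ge0\colon w_t\notin\mathcal{K}\}$; for $K$ large the initial point $w_0=(\lfloor Kz_1\rfloor/K,\lfloor Kz_2\rfloor/K)$ lies in $\mathcal{K}$ and satisfies $\norm{w_0-(z_1,z_2)}\le 2/K$.

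By Dynkin's formula one has the semimartingale decomposition
\[
w_{t\wedge\tau_K}=w_0+M^K_{t\wedge\tau_K}+\int_0^{t\wedge\tau_K}F(w_s)\,\d s,
\]
with $M^K$ an $\R^2$-valued martingale whose predictable quadratic variation at time $t$ is bounded by $\Lambda(t\wedge\tau_K)/K$; Doob's $L^2$-inequality gives $\E[\sup_{t\le T}\norm{M^K_{t\wedge\tau_K}}^2]\le 4\Lambda T/K$. Subtracting $\vphi_z(t)=(z_1,z_2)+\int_0^t F(\vphi_z(s))\,\d s$, using that $w_{s\wedge\tau_K}\in\mathcal{K}$ and that $F$ is Lipschitz there, and applying Gronwall's inequality, one obtains
\[
\sup_{t\le T}\norm{w_{t\wedge\tau_K}-\vphi_{z}(t\wedge\tau_K)}\le\Big(\norm{w_0-(z_1,z_2)}+\sup_{t\le T}\norm{M^K_{t\wedge\tau_K}}\Big)e^{LT}.
\]
By Markov's inequality the right-hand side is $<\delta_0$ with probability at least $1-C(\mathcal{K},T)/K$, and $C(\mathcal{K},T)$ does not depend on $z\in\tilde{C}$. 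On that event $w_{t\wedge\tau_K}$ stays within $\delta_0<2\delta_0$ of $\Phi$, hence in the interior of $\mathcal{K}$, so $\tau_K>T$; the stopping may then be removed, giving $\sup_{t\le T}\norm{w_t-\vphi_{z}(t)}<\delta_0\le\delta$ on the same event. Taking $\sup_{z\in\tilde{C}}$ yields the claim.

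The hard part is the localisation: since the jump rates are \emph{quadratic} in the density, the process is not a priori confined to a compact set, so neither Doob's inequality nor the Lipschitz estimate can be used globally. The standard remedy is the bootstrap carried out above — establish the Gronwall bound only up to the exit time $\tau_K$, and then use that bound to show $\tau_K>T$ with high probability — together with the observation that the uniform bounds $r,R$ and the smoothness of $F$ force the constants $L$, $\Lambda$ and the resulting probability estimate to depend only on $\mathcal{K}$ and $T$, not on the particular $z\in\tilde{C}$. The bi-type and logistic features add nothing new here: the computation is the same as for the one-type logistic process treated in \cite[Appendix C]{champagnat2019stochastic} and in the proof of \cite[Theorem 11.2.1]{ethier2009markov}.
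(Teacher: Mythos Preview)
Your argument is correct but follows a genuinely different route from the paper. The paper proves the uniform convergence via a large deviation upper bound: it truncates the jump rates to bounded Lipschitz functions on $[r-\delta,R+\delta]^2$ via a projection, invokes the Laplace/large deviation upper bound of \cite[Theorem 10.2.6]{dupuis1997weak} for the truncated process, identifies the rate function $I_T$, shows $I_T(\psi)=0$ forces $\psi$ to solve (\ref{ODELemmaC}), and concludes that the infimum of $I_T$ over the closed set of deviating paths is strictly positive. Your Kurtz--Gronwall bootstrap (Dynkin decomposition, Doob $L^2$-bound on the martingale, Gronwall up to the exit time $\tau_K$, then $\tau_K>T$) is more elementary and yields the stated limit equally well; the paper's route buys an exponential rate $\sup_{z\in\tilde C}\P_z^K(\cdot)\le e^{-cK}$, whereas yours gives only $O(1/K)$. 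This is harmless for Lemma~\ref{Lemma: Generalized EthierKurz} itself, but the paper reuses the same large deviation framework immediately afterwards in Lemma~\ref{Lemma:exponentialcloseness} to get the $e^{KV}$ exit-time bound, so the heavier machinery is not wasted there.

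One minor gap: you assert that $\Phi$ is a compact subset of $(0,\infty)^2$ ``since $F$ is locally Lipschitz''. Local Lipschitz continuity gives only local existence and continuous dependence; it does not by itself rule out blow-up before time $T$ or escape to the boundary of $(0,\infty)^2$, so the bounds $r>0$ and $R<\infty$ still need an argument. The paper supplies this explicitly by differential inequalities: $\dot\vphi_2\ge(b_2-d_2-\sigma_2)\vphi_2$ and a comparison of $\vphi_1$ with a one-dimensional logistic equation give coordinatewise positivity, while adding the two equations and applying Gronwall bounds $\vphi_1+\vphi_2$ from above. You should add two lines to that effect before invoking compactness of $\Phi$.
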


\begin{remark}
	Note that this result holds also for processes where $b_2>0$. The rescaled process $Z^K$ mentioned in the beginning of this section is a special case with $b_2=0$.
\end{remark}
\begin{proof}
	We use the techniques from \cite[Theorem 3]{champagnat2005microscopic}. Let $T>0$. To show the boundedness properties, we first show that $\vphi_2$ is strictly larger than $0$ for any positive initial condition. Indeed, we easily see that \[
	\dot{\vphi}_2\geq (b_2-d_2-\sigma_2)\vphi_2
	\]
	which we can integrate directly to obtain \[
	\vphi_2(t)\geq \vphi_2(0)\exp\lr{\int_{0}^{t}(b_2-d_2-\sigma_2))\ds}=\vphi_2(0)\exp\lr{(b_2-d_2-\sigma_2)t},
	\]
	which is positive for all times $t$ as soon as $\vphi_2(0)>0$. In particular, the solution $(\vphi_{z_1,z_2})_1$ is bounded from below by the solution $\tilde{\vphi}$ of the differential equation \[
	\dot{\tilde{\vphi}}=(b_1-d_1-C\tilde{\vphi})\tilde{\vphi}.
	\] The function $\tilde{\vphi}$ is bounded from above by some constant $\tilde{R}$ for any positive initial condition, since $\dot{\tilde{\vphi}}<0$ as soon as $\tilde{\vphi}>(b_1-d_1)/C$. Hence, by integrating the differential equation, we obtain for any positive initial condition $\tilde{\vphi}(0)>0$ the inequality \[
	\tilde{\vphi}(t)=\tilde{\vphi}(0)\exp\lr{\int_{0}^{t}(b_1-d_1-C\tilde{\vphi}(s))\ds}\geq \tilde{\vphi}(0)\exp\lr{(b_1-d_1-C\tilde{R})t}>0.
	\]
	
	Thus far we have shown $(\vphi_{z_1,z_2})_1(t),(\vphi_{z_1,z_2})_2(t)>0$ for any time $t\geq 0$ and any initial condition $(z_1,z_2)\in\tilde{C}$. In particular, due to the continuity of $(z_1,z_2)\mapsto \vphi_{z_1,z_2}$ we have \[
	\inf_{z\in \tilde{C}}\inf_{t\in[0,T]}\norm{\vphi_{z_1,z_2}(t)}>0.
	\] For the upper bound, adding the two equations in (\ref{ODELemmaC}) yields \[
	\dot{\vphi}_1+\dot{\vphi}_2=(b_1-d_1)\vphi_1+(b_2-d_2)\vphi_2-C(1-p)(\vphi_1)^2\leq 2\max(b_1-d_1,b_2-d_2)(\vphi_1+\vphi_2),
	\]
	where we used $\vphi_1,\vphi_2\geq 0$ for any initial condition $(z_1,z_2)\in\tilde{C}$.
	Thus, Gronwall's inequality implies\begin{align*}
	\sup_{z\in \tilde{C}}\sup_{t\in[0,T]}\norm{\vphi_{z_1,z_2}(t)}_1&=\sup_{z\in \tilde{C}}\sup_{t\in[0,T]}(\vphi_{z_1,z_2})_1(t)+(\vphi_{z_1,z_2})_2(t)\\
	&\leq \sup_{z\in \tilde{C}} (z_1+z_2)\exp\lr{\int_{0}^{T}2\max(b_1-d_1,b_2-d_2)\dt}<\infty.
	\end{align*}
	By equivalence of norms on $\R^2$, for any given norm, these bounds on the infimum and supremum can be chosen to hold.
	
	Now, we can define a family of Markov processes with transitions \begin{align*}
	(\tfrac{i}{K},\tfrac{j}{K})\to\begin{cases}
	(\tfrac{i+1}{K},\tfrac{j}{K}),&\quad\text{ at rate }Kp_1(\tfrac{i}{K},\tfrac{j}{K})\\
	(\tfrac{i}{K},\tfrac{j+1}{K})&\quad\text{ at rate }Kp_2(\tfrac{i}{K},\tfrac{j}{K})\\
	(\tfrac{i-1}{K},\tfrac{j}{K})&\quad\text{ at rate }Kq_1(\tfrac{i}{K},\tfrac{j}{K})\\
	(\tfrac{i}{K},\tfrac{j-1}{K})&\quad\text{ at rate }Kq_2(\tfrac{i}{K},\tfrac{j}{K})\\
	(\tfrac{i-1}{K},\tfrac{j+1}{K})&\quad\text{ at rate }Kr_1(\tfrac{i}{K},\tfrac{j}{K})\\
	(\tfrac{i+1}{K},\tfrac{j-1}{K})&\quad\text{ at rate }Kr_2(\tfrac{i}{K},\tfrac{j}{K}),\\
	\end{cases}
	\end{align*} 
	where $p_1,p_2,q_1,q_2,r_1,r_2\colon\R^2\to\R$ are positive, bounded and Lipschitz functions. We denote the law of such a process by $\Q_z^K$, when the initial condition is given by $z\in\tfrac{1}{K}\N_0^2$. This choice of transition rates corresponds (using the notation from \cite{dupuis1997weak} in equation (10.1)) to the choice of $\veps=\tfrac{1}{K}$ and the measure $\nu_x$ being given by \begin{align*}
	\nu_{(\frac{i}{K},\frac{j}{K})}(\{(1,0)\})=p_1(\tfrac{i}{K}),&\quad  \nu_{(\frac{i}{K},\frac{j}{K})}(\{(0,1)\})=p_2(\tfrac{j}{K}), & \nu_{(\frac{i}{K},\frac{j}{K})}(\{(-1,0)\})=q_1(\tfrac{i}{K}),\\
	\nu_{(\frac{i}{K},\frac{j}{K})}(\{(0,-1)\})=q_2(\tfrac{j}{K}), &\quad  \nu_{(\frac{i}{K},\frac{j}{K})}(\{(-1,1)\})=r_1(\tfrac{i}{K}), &\nu_{(\frac{i}{K},\frac{j}{K})}(\{(1,-1)\})=r_2(\tfrac{j}{K}).
	\end{align*}
	The extension of $\nu_{x,y}$ for any vector $(x,y)\in\R^2$ is straightforward by replacing either $\tfrac{i}{K}$ by $x$ or $\tfrac{j}{K}$ by $y$ respectively. In addition, we choose the functions $b_1=p_1-q_1-r_1+r_2$, $b_2=p_2-q_2-r_2+r_1$ and $a\equiv 0$.
	
	Since the functions involved are all bounded and continuous, Condition 10.2.2 from \cite{dupuis1997weak} is satisfied. In order to apply Theorem 10.2.6 of \cite{dupuis1997weak}, we do not need any additional conditions. However, we only obtain the upper bound of the Laplace principle as can be seen from the remark preceding the Theorem. The good rate function $I_T$ appearing in the Laplace principle writes for functions $\vphi\colon[0,T]\to\R^2$ as \[
	I_T(\vphi)=\begin{cases}
	\int_{0}^{T}L(\vphi(t),\dot{\vphi}(t))\dt &\quad\text{if }\vphi \text{ is absolutely continuous}\\
	\infty&\quad\text{ otherwise},
	\end{cases}
	\]
	where for $y,z\in\R^2$ we define the function $L(y,z)=\sup_{\alpha\in\R^2}\lr{\SP{\alpha}{z}-H(y,\alpha)}$ with \[
	H(y,\alpha)=\int_{\R^2}\lr{\exp(\SP{\alpha}{x})-1}\nu_{y}(\mathrm{d}x).
	\]
	Hence, calculating the gradient of the function in the supremum with respect to $\alpha$ shows that $L(y,z)=0$ if and only if $z_1=p_1(y)-q_1(y)-r_1(y)+r_2(y)$ and $z_2=p_2(y)-q_2(y)-r_2(y)+r_1(y)$. Therefore, our rate function $I_T$ satisfies \begin{align}
	I_T(\vphi)=0\quad\Llra\quad \begin{pmatrix}
	\dot{\vphi}_1\\
	\dot{\vphi}_2
	\end{pmatrix}=\begin{pmatrix}
	p_1(\vphi)-q_1(\vphi)-r_1(\vphi)+r_2(\vphi)\\
	p_2(\vphi)-q_2(\vphi)-r_2(\vphi)+r_1(\vphi)
	\end{pmatrix}.\label{ODERateFunction}
	\end{align}
	Since the upper bound of the Laplace principle is by \cite[Corollary 1.2.5]{dupuis1997weak} equivalent to the upper bound in the large deviation principle, we obtain \[
	\limsup_{K\to\infty}\reci{K}\log\lr{\sup_{z\in \tilde{C}}\Q_z^K(F)}\leq -\inf_{\psi\in F,\psi(0)\in \tilde{C}}I_T(\psi)
	\]
	for any compact set $\tilde{C}\sse\R^2$ and any closed set $F\sse D([0,T],\R^2)$, that is the set of cádlág functions on $[0,T]$ into $\R^2$.\\
	
	We define the cut-off function $\chi$ as the orthogonal projection from $\R^2$ onto $[r-\delta,R+\delta]^2$ with respect to the Euclidean norm. Then, we can define our functions \begin{align*}
	&p_1(z)=b_1\chi_1(z),\quad p_2(z)=b_2\chi_2(z),\quad q_1(z)=d_1\chi_1(z)+C(1-p)\chi_1^2(z),\\ &q_2(z)=d_2\chi_2(z),\quad
	r_1(z)=pC\chi_1^2(z),\quad r_2(z)=\sigma_2\chi_2(z).
	\end{align*}
	Now, the laws $\P_z^K$ and $\Q_z^K$ coincide as long as $w_t^1$ and $w_t^2$ are still inside the interval $[r-\delta,R+\delta]$. Thus, we have \begin{align*}
	&\limsup_{K\to\infty}\reci{K}\log\sup_{z\in \tilde{C}}\P_z^K\lr{\sup_{t\in[0,T]}\norm{w_t-\vphi_{z_1,z_2}(t)}\geq\delta}\\
	=&\limsup_{K\to\infty}\reci{K}\log\sup_{z\in \tilde{C}}\Q_z^K\lr{\sup_{t\in[0,T]}\norm{w_t-\vphi_{z_1,z_2}(t)}\geq\delta}\leq -\inf_{\psi\in F}I_T(\psi),
	\end{align*}
	where \[
	F\coloneqq \lrset{\psi\in D([0,T],\R^2)\mid \psi(0)\in \tilde{C}\text{ and } \exists t\in[0,T]: \norm{\psi(t)-\vphi_{\psi(0)}}\geq\delta}.
	\]
	Note that due to the continuity of $\vphi_z$ the set $F$ is closed with respect to the supremum norm, which allows us to use the large deviation principle. Furthermore, it is easy to see that absolutely continuous functions $\psi\in F$ cannot satisfy (\ref{ODERateFunction}), as otherwise the distance between $\vphi$ and $\psi$ cannot become large. Since $I_T$ is a good rate function, the infimum is attained for some function $\psi\in F$ and thus by (\ref{ODERateFunction}) is non-zero.
\end{proof}

This lemma has put us in a good position to show that the considered logistic bi-type branching process converges in finite time into a neighbourhood of its equilibrium.
\begin{lemma}\label{Lemma: convergence to equilibrium}
	Let $Z^K$ be a $LBBI_K(b_1^K,d_1^K,d_2^K,\sigma_2,p,C,\gamma_1^K)$ and assume that the convergence (\ref{Convergence1}) holds with $b_1>d_1$. Then, for all $\veps_1,\veps_2,\veps_1',\veps_2'>0$ there exists a finite time $T(\veps_1,\veps_2,\veps_1',\veps_2')$ such that for all initial starting conditions $(2\bar{x},2\bar{y})\geq\tfrac{Z_0^K}{K}=\tfrac{(X_0^K,Y_0^K)}{K}\geq (\veps_1,\veps_2)$, we have \[
	\lim\limits_{K\to\infty}\P\lr{\norm{\frac{Z_{T(\veps_1,\veps_2,\veps_1',\veps_2')}^K}{K}-(\bar{x},\bar{y})}\leq \veps_1'+\veps_2'}=1,
	\]
	where $(\bar{x},\bar{y})$ is given in (\ref{eq:equilibrium}).
\end{lemma}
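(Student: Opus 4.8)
The plan is to deduce the statement from the convergence of the limiting ODE to its equilibrium, fed into the fluid–limit estimates established earlier in this appendix.

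\textbf{Step 1 (deterministic input).} Putting $b_2=0$ in (\ref{ODELemmaC}) gives exactly the system (\ref{ODEAppendixC}), and by \cite[Lemma 4.6]{blath2020invasion} its unique coordinatewise positive equilibrium $(\bar x,\bar y)$ from (\ref{eq:equilibrium}) is globally asymptotically stable on $(0,\infty)^2$. Set $\mathcal K\coloneqq[\veps_1,2\bar x]\times[\veps_2,2\bar y]$, a compact subset of $(0,\infty)^2$ containing every admissible rescaled starting value $Z_0^K/K$. Combining Lyapunov stability of $(\bar x,\bar y)$ (a forward–invariant neighbourhood on which every trajectory stays $\tfrac12(\veps_1'+\veps_2')$-close to $(\bar x,\bar y)$) with the attraction property (each trajectory started in $\mathcal K$ enters that neighbourhood in finite time), continuity of the flow $z\mapsto\vphi_z$ in the initial condition, and compactness of $\mathcal K$, one extracts a \emph{single} finite time $T=T(\veps_1,\veps_2,\veps_1',\veps_2')$ with
\[
\sup_{z\in\mathcal K}\ \sup_{t\ge T}\norm{\vphi_z(t)-(\bar x,\bar y)}\ \le\ \tfrac12(\veps_1'+\veps_2').
\]
This is the time claimed in the lemma.

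\textbf{Step 2 (stochastic input).} I would then transfer this to $Z^K$ on the fixed interval $[0,T]$. If $\gamma_1^K\equiv 0$ and $(b_1^K,d_1^K,d_2^K)\equiv(b_1,d_1,d_2)$ this is immediate from Lemma \ref{Lemma: Generalized EthierKurz} applied with $\tilde C=\mathcal K$ and $b_2=0$: for every $\delta>0$,
\[
\lim_{K\to\infty}\ \sup_{z\in\mathcal K}\ \P_z^K\lr{\sup_{t\in[0,T]}\norm{w_t-\vphi_z(t)}\ge\delta}=0 .
\]
For the general rates and the immigration I would invoke (\ref{Convergence1}): for any $\delta'>0$ the event $E_K$ on which $\sup_{t\le T}\bigl(\abs{b_1^K(t)-b_1}+\abs{d_1^K(t)-d_1}+\abs{d_2^K(t)-d_2}+\gamma_1^K(t)/K\bigr)\le\delta'$ has probability tending to $1$. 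On $E_K$ one sandwiches $Z^K$ between two $LBBP$'s with \emph{constant} linear rates $b_1\mp\delta'$, $d_1\pm\delta'$, $d_2\pm\delta'$ and constant immigration rate $\delta' K$ (upper bound) resp.\ $0$ (lower bound), using the monotone comparison arguments for logistic processes already employed in \cite{blath2020invasion,blath2020interplay}; the large–deviation proof of Lemma \ref{Lemma: Generalized EthierKurz} applies verbatim to these, since a constant immigration rate only changes the drift term ($a\equiv 0$ becomes $a\equiv\delta'$) in the \cite{dupuis1997weak} setup. Hence their rescaled versions track the solutions of the $\delta'$-perturbed ODEs uniformly on $[0,T]$ and uniformly over $z\in\mathcal K$, and by continuous dependence of the ODE solution on its coefficients these perturbed solutions converge to $\vphi_z$ uniformly on $[0,T]$, uniformly in $z\in\mathcal K$, as $\delta'\to 0$. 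Sending first $K\to\infty$ and then $\delta'\to 0$ gives the displayed bound for the true process $Z^K$.

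\textbf{Conclusion and main obstacle.} Intersecting the high–probability event from Step 2 with the deterministic bound from Step 1, evaluated at $t=T$, yields $\norm{Z_T^K/K-(\bar x,\bar y)}\le\veps_1'+\veps_2'$ with probability tending to $1$, which is the claim. The delicate point — and the reason Lemma \ref{Lemma: Generalized EthierKurz} (uniform in the initial condition) rather than Lemma \ref{Lemma: EthierKurtz} is required — is that $Z_0^K/K$ is only assumed to lie in the box $\mathcal K$, not to converge, so one must verify in Step 1 that one time $T$ works simultaneously for the whole family of ODE trajectories; this is exactly where global asymptotic stability on $(0,\infty)^2$ from \cite{blath2020invasion} enters. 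The immigration term $\gamma_1^K$, being $o(K)$, is a comparatively mild perturbation handled by the sandwiching above.
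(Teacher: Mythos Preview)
Your proof is correct and follows essentially the same route as the paper: sandwich $Z^K$ between two logistic bi-type processes with constant perturbed rates, apply Lemma \ref{Lemma: Generalized EthierKurz} to track the ODE solutions uniformly over the compact initial box, and use the global asymptotic stability from \cite{blath2020invasion} to get a single $T$ that works for all starting points in $\mathcal K$.

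The one genuine difference is in the treatment of the immigration $\gamma_1^K$. You keep a constant immigration rate $\delta'K$ in the upper bounding process and argue that Lemma \ref{Lemma: Generalized EthierKurz} extends verbatim (the extra $+\delta'$ in the first drift coordinate). The paper instead \emph{absorbs immigration into the birth rate}: it first applies Lemma \ref{Lemma: Generalized EthierKurz} to the lower bound $\tilde Z^K$ (which has no immigration) to show that $\inf_{t\le T}(\tilde Z_t^K)_1/K>\veps$ with high probability, and then observes that $\gamma_1^K(t)\le\tfrac{\veps}{2}(\tilde Z_t^K)_1\le\tfrac{\veps}{2}(Z_t^K)_1$ on this event, so the upper bounding $\hat Z^K$ can be taken with \emph{no} immigration and birth rate $b_1+\veps$. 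This keeps both bounding processes strictly within the class covered by Lemma \ref{Lemma: Generalized EthierKurz} as stated, at the cost of a short extra argument for the a priori lower bound on the first coordinate. Your variant is slightly more direct but requires the (harmless) observation that the large-deviation machinery accommodates an additional constant drift.
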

\begin{proof}
	For the proof of this claim, we use for $K$ large enough the following coordinatewise coupling \[
	\tilde{Z}^K\leq Z^K\leq \hat{Z}^K,
	\]
	where $\tilde{Z}^K$ is a $LBBI_K(b_1-\veps,d_1+\veps,d_2+\veps,\sigma_2,p,C)$ and the distribution of $\hat{Z}^K$ is given by $LBBI_K(b_1+\veps,(d_1-\veps)_+,(d_2-\veps)_+,\sigma_2,p,C)$, and the initial conditions are $\tilde{Z}^K_0=Z^K_0=\hat{Z}^K_0$ with $\epsi$. We need to justify the coupling, specifically why we can increase the birth rate and at this cost neglect the immigration from outside in the upper bounding process. For this purpose, we need to show that ${Z}^K$ is bounded component-wise from below in probability by $K\veps$ for some $\epsi$. Indeed, this can already be seen from the lower bounding process: Firstly there exists $\delta>0$ and a time $T>0$ such that the corresponding solutions $\tilde\vphi$ and $\hat{\vphi}$ to the differential equation for the processes $\tilde{Z}^K$ and $\hat{Z}^K$ satisfy \[
	\norm{\tilde{\vphi}(T)-(x,y)_{\tilde{\vphi}}}<\delta\quad\text{ and }\quad \norm{\hat{\vphi}(T)-(x,y)_{\hat{\vphi}}}<\delta,
	\]
	where $(x,y)_{\vphi}$ denotes the unique coordinatewise positive stable equilibrium of the differential equation corresponding to $\vphi$. Note that we can use the same times, as we have convergence towards the equilibria as $T\to\infty$ from any starting condition and hence we can choose $T$ sufficiently large so that both systems are close to their equilibrium. For this, we again refer the reader to \cite[Lemma 4.6]{blath2020invasion}. Now, Lemma \ref{Lemma: Generalized EthierKurz} implies that with $\tilde{C}\coloneqq [\veps_1,2\bar{x}]\times[\veps_2,2\bar{y}]$ and $\tilde{Z}_0^K=z\in\tilde{C}$, we have for any $\delta>0$ small enough \[
	\sup_{z\in \tilde{C}}\P\lr{\sup_{t\in[0,T]}\norm{\frac{\tilde{Z}_t^K}{K}-\tilde{\vphi}(t)}\geq\delta}\xrightarrow{K\to\infty}0.
	\]
	Note, that \[\inf_{z\in\tilde{C}}\inf_{t\in[0,T]}\norm{\tilde{\vphi}(t)}>0\quad\text{ and }\quad\inf_{z\in\tilde{C}}\inf_{t\in[0,T]}\norm{\tilde{\vphi}_1(t)}>0,\] where the second inequality is due to the positive switching between components which can be seen from the proof of Lemma \ref{Lemma: Generalized EthierKurz}. In particular, for $\epsi$ small enough it holds $\textstyle\inf_{t\in[0,T]}\abs{\tfrac{(\tilde{Z}_t^K)_1}{K}}>\veps$ with probability converging to $1$ as $K\to\infty$. Thus for $K$ large enough, we have \[\inf_{t\in[0,T]}\frac{\veps}{2}\cdot(\tilde{Z}_t^K)_1>\sup_{t\in[0,T]}\gamma_1^K(\w,t)\] with probability converging to $1$. Hence, we may dismiss the immigration component in the coupling and replace it by an increase in the birth rate as done above. 
	Now, choosing $\veps$ small enough, we can achieve for any given $\delta>0$ that \[
	\norm{(x,y)_{\tilde{\vphi}}-(x,y)_{\hat{\vphi}}}<\delta,\quad\norm{(x,y)_{\tilde{\vphi}}-(\bar{x},\bar{y})}<\delta\quad\text{ and }\quad \norm{(x,y)_{\hat{\vphi}}-(\bar{x},\bar{y})}<\delta,
	\]
	which can be seen from computing the equilibria similarly to (\ref{equilibrium}). Hence, by Lemma \ref{Lemma: Generalized EthierKurz} we have with high probability, that \begin{align*}
	\norm{\frac{\hat{Z}^K_T-\tilde{Z}^K_T}{K}}&\leq \norm{\frac{\hat{Z}^K_T}{K}-\hat{\vphi}(T)}+\norm{\hat{\vphi}(T)-\tilde{\vphi}(T)}+\norm{\frac{\tilde{Z}^K_T}{K}-\tilde{\vphi}(T)}\\
	&\leq\norm{\hat{\vphi}(T)-\tilde{\vphi}(T)}+\delta\leq 4\delta.
	\end{align*}
	Because of Lemma \ref{Lemma: Generalized EthierKurz} for any starting condition the inequality \[
	\norm{\frac{\hat{Z}_T^K}{K}-(\bar{x},\bar{y})}\leq\norm{\frac{\hat{Z}_T^K}{K}-\hat{\vphi}(T)}+\norm{\hat{\vphi}(T)-(x,y)_{\hat{\vphi}}}+\norm{(x,y)_{\hat{\vphi}}-(\bar{x},\bar{y})}\leq 3\delta
	\]
	is satisfied with probability converging to $1$ as $K\to\infty$, and thus we obtain from the component-wise coupling that \[
	\norm{\frac{Z^K_T}{K}-(\bar{x},\bar{y})}\leq\norm{\frac{Z^K_T-\hat{Z}^K_T}{K}}+\norm{\frac{\hat{Z}^K_T}{K}-(\bar{x},\bar{y})}\leq \norm{\frac{\hat{Z}^K_T-\tilde{Z}^K_T}{K}}+3\delta\leq 7\delta
	\]
	with high probability. Letting $\delta>0$ small enough yields the claim.
\end{proof}

Thus far, we have been considering the behaviour of our process when it is initially not close to its equilibrium size. We now turn to the question, how long it takes for a logistic process to exit a neighbourhood of its equilibrium. For this we first consider a process with constant rates.

\begin{lemma}\label{Lemma:exponentialcloseness}
	In the situation of Lemma \ref{Lemma: Generalized EthierKurz} let $b_1>d_1$. Then, the unique asymptotically stable equilibrium of the system \begin{align*}
	\dot{\vphi}_1&=(b_1-d_1-C\vphi_1)\vphi_1+\sigma_2\vphi_2\\
	\dot{\vphi}_2&=(-d_2-\sigma_2)\vphi_2 +Cp(\vphi_1)^2
	\end{align*}
	is given by $(\bar{\vphi}_1,\bar{\vphi}_2)=(\bar{x},\bar{y})$ in (\ref{eq:equilibrium}). Let $\eta_1,\eta_2>0$ and set \[
	T^K\coloneqq\inf\lrset{t\geq 0\mid w_t^1\notin[\bar{x}-\eta_1,\bar{x}+\eta_1]\text{ or } w_t^2\notin[\bar{y}-\eta_2,\bar{y}+\eta_2]},
	\]
	where $w_t=(w_t^1,w_t^2)$ is the canonical process on $D([0,\infty),\R^2)$. Then there exists a constant $V>0$ such that for all compact subsets $\tilde{C}$ of $[\bar{x}-\tfrac{\eta_1}{2},\bar{x}+\tfrac{\eta_1}{2}]\times[\bar{y}-\tfrac{\eta_2}{2},\bar{y}+\tfrac{\eta_2}{2}]$ we have \[
	\lim\limits_{K\to\infty}\sup_{z\in \tilde{C}}\P_z^K(T^K<e^{KV})=0.
	\]
\end{lemma}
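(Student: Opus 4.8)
\textbf{Proof proposal for Lemma~\ref{Lemma:exponentialcloseness}.} The plan is to follow the classical large deviations argument for exit from a domain around a stable equilibrium, as carried out in \cite[Theorem~4]{champagnat2005microscopic} for one-dimensional logistic processes, adapting it to the bi-type setting. First I would truncate the rates exactly as in the proof of Lemma~\ref{Lemma: Generalized EthierKurz}: replace $(b_1,d_1,d_2,\sigma_2,p,C)$-transition rates by rates multiplied by a cut-off function $\chi$ which is the identity on, say, $[\bar x-2\eta_1,\bar x+2\eta_1]\times[\bar y-2\eta_2,\bar y+2\eta_2]$ and bounded and Lipschitz globally. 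The laws $\P_z^K$ and the truncated laws $\Q_z^K$ agree up to time $T^K$ (provided $\eta_1,\eta_2$ were chosen small enough that the exit set lies inside the region where $\chi=\id$), so it suffices to bound $\sup_{z\in\tilde C}\Q_z^K(T^K<e^{KV})$. For the truncated family we already have, from the proof of Lemma~\ref{Lemma: Generalized EthierKurz}, the large deviation upper bound with good rate function $I_T(\vphi)=\int_0^T L(\vphi(t),\dot\vphi(t))\dt$ (and $L\geq 0$, $L(y,z)=0$ iff $z$ equals the drift $F(y)$ of the ODE \eqref{ODELemmaC}).

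The heart of the argument is the standard lower bound on the quasipotential. Define, for a point $x_0$ on the boundary $\partial D$ of the domain $D\coloneqq(\bar x-\eta_1,\bar x+\eta_1)\times(\bar y-\eta_2,\bar y+\eta_2)$,
\[
V\coloneqq \inf\Big\{ I_T(\vphi)\ \Big|\ T>0,\ \vphi(0)=(\bar x,\bar y),\ \vphi(T)\in\partial D,\ \vphi(t)\in\overline D\ \forall t\in[0,T]\Big\}.
\]
I would argue $V>0$: since $(\bar x,\bar y)$ is the unique asymptotically stable equilibrium in $\overline D$ and the ODE flow stays in $D$ for all time from any initial condition in $D$ (this is exactly the global attractivity statement, \cite[Lemma~4.6]{blath2020invasion}, used earlier), no path following the drift can reach $\partial D$, hence any admissible $\vphi$ in the infimum must have $\dot\vphi\neq F(\vphi)$ on a set of positive measure, forcing $I_T(\vphi)>0$; lower semicontinuity of $I_T$ and compactness of sublevel sets (goodness of the rate function) then make the infimum strictly positive. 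With $V>0$ fixed, the Freidlin–Wentzell machinery gives: for any $V'\in(0,V)$ and any compact $\tilde C\subseteq[\bar x-\tfrac{\eta_1}{2},\bar x+\tfrac{\eta_1}{2}]\times[\bar y-\tfrac{\eta_2}{2},\bar y+\tfrac{\eta_2}{2}]$, $\sup_{z\in\tilde C}\Q_z^K(T^K<e^{KV'})\to 0$, which is the claim with $V$ replaced by $V'$ (and one may simply rename $V'$ as the constant $V$ in the statement).

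To implement the last step concretely, I would split $[0,e^{KV'}]$ into $e^{KV'}$ unit-length (or fixed-length $T_1$) time blocks and, using the strong Markov property at the block endpoints, bound the exit probability before $e^{KV'}$ by roughly $e^{KV'}$ times the probability of exiting $D$ within one block starting from a point in a slightly larger neighbourhood of $(\bar x,\bar y)$. Two ingredients control one block: (i) from any starting point in $D$, with high probability the rescaled process enters the $\tfrac14$-neighbourhood of $(\bar x,\bar y)$ within a deterministic time (Lemma~\ref{Lemma: Generalized EthierKurz} applied to the drift, whose flow contracts to $(\bar x,\bar y)$); and (ii) the probability of the rescaled process leaving $D$ on a block starting near $(\bar x,\bar y)$ is $\leq e^{-K(V-o(1))}$ by the large deviation upper bound, since every trajectory realizing such an exit has $I_{T_1}\geq V-o(1)$ for $\eta_i$ small and the neighbourhood close to $(\bar x,\bar y)$. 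Multiplying, the bound is $e^{KV'}e^{-K(V-o(1))}=e^{-K(V-V'-o(1))}\to 0$ provided $V'<V$.

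\textbf{Main obstacle.} I expect the genuinely delicate point to be establishing $V>0$ rigorously and, relatedly, getting clean uniform-in-$z$ control of the one-block exit probability — i.e.\ verifying that the large deviation upper bound from Lemma~\ref{Lemma: Generalized EthierKurz} (which was stated for fixed compact initial sets and closed target sets in path space) can be applied to the family of "exit a fixed domain $D$ within time $T_1$" events uniformly over starting points in a compact subset, and that the resulting rate is bounded below by a positive constant not depending on the starting point. This requires knowing the truncated rate function is the same for all these starting points (true, since truncation does not depend on $z$) and that $\inf_{z\in\tilde C}$ of the quasipotential-type infimum is still positive; the positivity ultimately rests on the global stability of $(\bar x,\bar y)$ for \eqref{ODELemmaC}, which is available from \cite{blath2020invasion}, so the obstacle is bookkeeping rather than a conceptual gap. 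Everything else (truncation, the block decomposition, the Borel–Cantelli-style summation) is routine and parallels \cite[Theorem~4]{champagnat2005microscopic}.
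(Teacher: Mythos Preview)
Your proposal is correct and follows the same overall strategy as the paper --- truncate the rates by a cut-off, pass to the truncated laws $\Q_z^K$, use the large deviation rate function $I_T$ from the proof of Lemma~\ref{Lemma: Generalized EthierKurz}, and show the relevant quasipotential is strictly positive --- but the implementations differ. The paper does not carry out your block decomposition; instead it invokes \cite[Theorem~4.4.2]{freidlin1998random} directly, which already gives a constant $\bar V\geq 0$ with $\inf_{z\in\tilde C}\Q_z^K(e^{K(\bar V-\delta)}<T^K<e^{K(\bar V+\delta)})\to 1$, and then argues $\bar V>0$ using \cite[Theorem~5.4.3]{freidlin1998random} to extract a minimising path $\vphi$ (possibly on $(-\infty,T]$) and observe it must violate the ODE somewhere. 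Your route re-proves the lower half of Freidlin--Wentzell's exit-time theorem by hand via the Markov property and a union bound over time blocks; this is more self-contained and makes the dependence on the LDP upper bound from Lemma~\ref{Lemma: Generalized EthierKurz} explicit, at the cost of having to handle the uniform-in-$z$ one-block estimate and the ``return to a small neighbourhood'' step yourself. The paper's approach is shorter because the bookkeeping you flag as the main obstacle (uniform control over starting points, positivity of the infimum) is precisely what is packaged inside the cited Freidlin--Wentzell theorems.
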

\begin{proof}
	As in the proof of Lemma \ref{Lemma: Generalized EthierKurz} and similarly to \cite{champagnat2005microscopic} we define the cut-off function $\chi$ as the orthogonal projection onto $C_1\coloneqq[\bar{x}-\eta_1,\bar{x}+\eta_1]\times[\bar{y}-\eta_2,\bar{y}+\eta_2]$. Also, we can similarly construct the functions $p_1,q_1,q_2,r_1,r_2$, by which we obtain a family of laws $\Q_z^K$ which coincide with $\P_z^K$ on the time interval $[0,T^K]$. Furthermore, we obtain in a similar manner the good rate function $I_T(\vphi)$, where now  \begin{align}
	I_T(\vphi)=0\quad\Llra\quad \begin{pmatrix}
	\dot{\vphi}_1\\
	\dot{\vphi}_2
	\end{pmatrix}=\begin{pmatrix}
	p_1(\vphi)-q_1(\vphi)-r_1(\vphi)+r_2(\vphi)\\
    -q_2(\vphi)-r_2(\vphi)+r_1(\vphi)
	\end{pmatrix}.\label{secondODE}
	\end{align}
	Now, \cite[p.157]{freidlin1998random} and Theorem 4.4.2 therein imply, that there exists a constant $\bar{V}\geq 0$, such that for any $\delta>0$ it holds \[
	\lim\limits_{K\to\infty}\inf_{z\in\tilde{C}}\Q_z^K(e^{K(\bar{V}-\delta)}<T^K<e^{K(\bar{V}+\delta)})=1.
	\]
	Hence, proving that $\bar{V}>0$ shows our claim. For this purpose, we note that $\bar{V}$ is defined in \cite[Theorem 4.4.1]{freidlin1998random} by \[
	\bar{V}\coloneqq\inf_{(x,y)\in \partial C_1} V((\bar{x},\bar{y}),(x,y))=\min_{(x,y)\in \partial C_1} V((\bar{x},\bar{y}),(x,y)),
	\]
	where \[
	V((x_1,x_2),(y_1,y_2))\coloneqq\inf_{t>0,\ \vphi(0)=(x_1,x_2),\ \vphi(t)=(y_1,y_2)}I_t(\vphi),
	\]
	that is $V((x_1,x_2),(y_1,y_2))$ is the infimum of our rate function over all possible paths connecting $(x_1,x_2)$ and $(y_1,y_2)$. Since the function $(x,y)\mapsto V((\bar{x},\bar{y}),(x,y))$ is continuous and the set $\partial C_1$ is compact, we can indeed replace the infimum by a minimum. In particular, there exists some $(x_0,y_0)\in\partial C_1$ where the infimum is attained. Theorem 5.4.3 from \cite{freidlin1998random} shows that there exists an absolutely continuous function $\vphi$, which attains the infimum for $V((\bar{x},\bar{y}),(x_0,y_0))$ over the rate function in the sense that for some $T>0$ the function $\vphi\colon [0,T]\to\R^2$ satisfies the conditions $\vphi(0)=(\bar{x},\bar{y})$, $\vphi(T)=(x_0,y_0)$ and $V((\bar{x},\bar{y}),(x_0,y_0))=I_T(\vphi)$ or that for some $T>-\infty$ the function $\vphi\colon(-\infty,T]\to\R^2$ satisfies $\textstyle\lim_{t\to-\infty}\vphi(t)=(\bar{x},\bar{y})$, $\vphi(T)=(x_0,y_0)$ and $\textstyle V((\bar{x},\bar{y}),(x_0,y_0))=\int_{-\infty}^{T}L(\vphi(t),\dot{\vphi}(t))\dt$. \\
	
	As long as $\vphi$ satisfies the differential equation in (\ref{secondODE}) with initial condition $\vphi(0)=(\bar{x},\bar{y})$, we have $\vphi(t)=(\bar{x},\bar{y})\neq(x_0,y_0)$. Thus, for the case where $\vphi$ is defined on $[0,T]$, we have $I_T(\vphi)\neq 0$, as the differential equation in (\ref{secondODE}) must be violated at some time in order to leave the equilibrium state. For the second case, we already know that any solution to (\ref{secondODE}) started close to $(\bar{x},\bar{y})$ stays close, since the equilibrium is asymptotically stable. Hence, choosing $0<\veps<\eta_1\wedge\eta_2$ small enough and $T_1\coloneqq \sup\{t\leq T\mid \norm{\vphi(t)-(\bar{x},\bar{y})}<\tfrac{\veps}{2}\}$, for all $t\geq T_1$ we have $\norm{\vphi(t)-(\bar{x},\bar{y})}<\veps$. Therefore in this case connecting $(\bar{x},\bar{y})$ to $(x_0,y_0)$ requires violating (\ref{secondODE}). Hence we have $\bar{V}>0$.
\end{proof}

Using our usual coupling arguments to make the leap from fixed rates to some non-fixed rates, we can formulate and prove a very similar result for logistic processes with non-constant rates.
\begin{lemma}\label{Lemma: exponentialclosenessapproximation}
	Let $Z^K$ be a $LBBI_K(b_1^K,d_1^K,d_2^K,\sigma_2,p,C,\gamma_1^K)$ and assume that the convergence (\ref{Convergence1}) holds with $b_1>d_1$. Let $\eta_1,\eta_2>0$, $Z_0^K\in[\bar{x}-\tfrac{\eta_1}{2},\bar{x}+\tfrac{\eta_1}{2}]\times[\bar{y}-\tfrac{\eta_2}{2},\bar{y}+\tfrac{\eta_2}{2}]$ and set \[
	T^K\coloneqq\inf\lrset{t\geq 0\ \Big\vert\  \frac{(Z_t^K)_1}{K}\notin[\bar{x}-\eta_1,\bar{x}+\eta_1]\text{ or } \frac{(Z_t^K)_2}{K}\notin[\bar{y}-\eta_2,\bar{y}+\eta_2]}.
	\] Then there exists a constant $V>0$ such that  \[
	\lim\limits_{K\to\infty}\P(T^K<e^{KV})=0.
	\]
\end{lemma}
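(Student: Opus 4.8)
The plan is to deduce Lemma~\ref{Lemma: exponentialclosenessapproximation} from Lemma~\ref{Lemma:exponentialcloseness} by the same sandwiching argument that has been used throughout Appendix~\ref{Section: Appendix C}, replacing the time-dependent rates of $Z^K$ by suitable constant rates. First I would fix $\veps>0$ small (to be chosen at the end, in particular $\veps<\eta_1\wedge\eta_2$) and, using the convergence \eqref{Convergence1}, define constant-rate processes
\[
\tilde{Z}^K\sim LBBP_K(b_1-\veps,d_1+\veps,d_2+\veps,\sigma_2,p,C)\quad\text{and}\quad\hat{Z}^K\sim LBBP_K(b_1+\veps,(d_1-\veps)_+,(d_2-\veps)_+,\sigma_2,p,C),
\]
both started at $Z_0^K$. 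The point is that, exactly as in the proof of Lemma~\ref{Lemma: convergence to equilibrium}, on the event that $\gamma_1^K(\w,t)$ stays below $\tfrac{\veps}{2}(\tilde Z^K_t)_1$ one obtains a coordinatewise coupling $\tilde Z^K\le Z^K\le\hat Z^K$ for all $t$ up to the first exit time; the immigration is absorbed into the slightly increased birth rate of the upper process, while the lower process needs no immigration at all. Since $b_1-\veps>d_1+\veps$ for $\veps$ small, both $\tilde Z^K$ and $\hat Z^K$ have unique coordinatewise positive stable equilibria $(\bar x_-,\bar y_-)$ and $(\bar x_+,\bar y_+)$, computed as in \eqref{eq:equilibrium}, and these converge to $(\bar x,\bar y)$ as $\veps\downto0$ by continuity of the formula.

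Next I would set up the exit times for the two bounding processes. Choosing $\veps$ small enough that $(\bar x_-,\bar y_-),(\bar x_+,\bar y_+)\in[\bar x-\tfrac{\eta_1}{4},\bar x+\tfrac{\eta_1}{4}]\times[\bar y-\tfrac{\eta_2}{4},\bar y+\tfrac{\eta_2}{4}]$, Lemma~\ref{Lemma:exponentialcloseness} applies to $\tilde Z^K$ and to $\hat Z^K$ with the box $[\bar x-\eta_1,\bar x+\eta_1]\times[\bar y-\eta_2,\bar y+\eta_2]$ (which contains a half-size box around each of the two equilibria), yielding constants $V_-,V_+>0$ with $\P(\tilde T^K<e^{KV_-})\to0$ and $\P(\hat T^K<e^{KV_+})\to0$, where $\tilde T^K,\hat T^K$ are the respective exit times of the rescaled bounding processes from that box. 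On the intersection of the complements of these two events, together with the high-probability event that the coupling $\tilde Z^K\le Z^K\le\hat Z^K$ is valid on $[0,T^K]$, the middle process $Z^K/K$ is itself trapped in the box for a time at least $e^{KV}$ with $V\coloneqq V_-\wedge V_+>0$, because $(Z_t^K)_i/K$ lies between $(\tilde Z_t^K)_i/K$ and $(\hat Z_t^K)_i/K$ for $i=1,2$. Hence $\P(T^K<e^{KV})\to0$.

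The main obstacle is justifying that the coupling remains valid all the way up to $T^K$, i.e. that the immigration $\gamma_1^K$ is genuinely dominated by the birth-rate slack of the lower process \emph{before} any of the three processes leaves the box. This requires the lower component $(\tilde Z^K_t)_1/K$ to stay bounded away from $0$ on the relevant time window; here I would invoke the argument from Lemma~\ref{Lemma: convergence to equilibrium}, namely Lemma~\ref{Lemma: Generalized EthierKurz} applied to $\tilde Z^K$ (whose deterministic limit has a strictly positive first coordinate on $[0,T]$ thanks to the positive switching rate $\sigma_2$), extended to the super-exponential time scale by the same cut-off/large-deviation reasoning already used in Lemma~\ref{Lemma:exponentialcloseness}. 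Since near the equilibrium the first coordinate is close to $\bar x>0$, the required domination $\tfrac{\veps}{2}(\tilde Z^K_t)_1>\gamma_1^K(\w,t)$ holds with probability tending to $1$ on $[0,e^{KV}]$ by \eqref{Convergence1}. Once this technical point is settled, the rest is the routine union bound over the finitely many high-probability events described above, followed by letting $\veps\downto0$ to conclude.
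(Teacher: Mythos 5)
Your proposal matches the paper's proof: the paper likewise couples $Z^K$ with constant-rate processes $\tilde Z^K\sim LBBP_K(b_1-\veps,d_1+\veps,d_2+\veps,\sigma_2,p,C)$ and $\hat Z^K\sim LBBP_K(b_1+\veps,(d_1-\veps)_+,(d_2-\veps)_+,\sigma_2,p,C)$ exactly as in Lemma~\ref{Lemma: convergence to equilibrium}, then applies Lemma~\ref{Lemma:exponentialcloseness} to both bounding processes; the technical points you raise (absorbing the immigration into the birth-rate slack of the upper process, and shrinking $\veps$ so the bounding equilibria sit well inside the target box) are real but routine and are elided in the paper's one-line argument. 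One small simplification worth noting: to keep the first coordinate bounded away from zero for the upper-bound coupling, you do not need to extend Lemma~\ref{Lemma: Generalized EthierKurz} to exponential time scales — the unconditional lower coupling $\tilde Z^K\le Z^K$ together with the conclusion of Lemma~\ref{Lemma:exponentialcloseness} applied to $\tilde Z^K$ already forces $(Z^K_t)_1/K$ to stay near $\bar x$ up to time $e^{KV}$ with high probability, which is precisely the domination you need.
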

\begin{proof}
	As in the proof of Lemma \ref{Lemma: convergence to equilibrium}, we couple our process with \[
	\tilde{Z}^K\leq Z^K\leq \hat{Z}^K,
	\]
	where $\tilde{Z}^K$ is given as a $LBBI_K(b_1-\veps,d_1+\veps,d_2+\veps,\sigma_2,p,C)$ and the law of $\hat{Z}$ is $LBBI_K(b_1+\veps,(d_1-\veps)_+,(d_2-\veps)_+,\sigma_2,p,C)$, where the initial conditions are $\tilde{Z}^K_0=Z^K_0=\hat{Z}^K_0$ and $\epsi$. Choosing $\veps$ small enough  and applying Lemma \ref{Lemma:exponentialcloseness} to $\tilde{Z}^K$ and $\hat{Z}^K$ yields the claim.
\end{proof}

A very simple consequence is now the result that our process remains at least a time of order $\log K$ in a neighbourhood of its equilibrium. 
\begin{corollary}\label{Corollary: Equilibriumcloseness}
	Under the assumptions of Lemma \ref{Lemma: exponentialclosenessapproximation} it holds  \[
	\lim\limits_{K\to\infty}\P\lr{\forall t\in[0,T\log K], \frac{(Z_t^K)_1}{K}\in[\bar{x}-\eta_1,\bar{x}+\eta_1]\text{ and } \frac{(Z_t^K)_2}{K}\in[\bar{y}-\eta_2,\bar{y}+\eta_2]}=1.
	\]
	for any $T>0$.
\end{corollary}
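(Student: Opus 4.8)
The plan is to read this off directly from Lemma~\ref{Lemma: exponentialclosenessapproximation}, which already controls the exit time $T^K$ from the box $B\coloneqq[\bar{x}-\eta_1,\bar{x}+\eta_1]\times[\bar{y}-\eta_2,\bar{y}+\eta_2]$, showing $\P(T^K<e^{KV})\to 0$ for some $V>0$. Fix $T>0$. Since $(T+1)\log K<e^{KV}$ for all sufficiently large $K$, Lemma~\ref{Lemma: exponentialclosenessapproximation} gives
\[
\P\big(T^K\le (T+1)\log K\big)\le \P\big(T^K<e^{KV}\big)\xrightarrow{K\to\infty}0,
\]
and hence $\P(T^K>(T+1)\log K)\to 1$ as $K\to\infty$.

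It then remains to note that the event $\{T^K>(T+1)\log K\}$ is contained in the event appearing in the statement of the corollary: on $\{T^K>(T+1)\log K\}$ the rescaled process $t\mapsto\tfrac1K(Z^K_t)_1,\tfrac1K(Z^K_t)_2$ has not left $B$ strictly before time $(T+1)\log K$, so in particular it lies in $B$ for every $t\in[0,T\log K]$. Therefore the probability of that event is bounded below by $\P(T^K>(T+1)\log K)\to 1$, which proves the claim. There is no genuine obstacle here; the only point requiring a little care is that $T^K$ is defined as an infimum, which is why it is convenient to compare with $(T+1)\log K$ (rather than $T\log K$ itself), guaranteeing that the whole closed interval $[0,T\log K]$ lies in the regime where the process is controlled.
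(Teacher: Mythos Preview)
Your proof is correct and follows exactly the approach the paper intends: the corollary is stated as an immediate consequence of Lemma~\ref{Lemma: exponentialclosenessapproximation}, and you deduce it by comparing $T\log K$ with the exponential bound $e^{KV}$. The only remark is that the buffer $(T+1)\log K$ is not actually needed: since $T^K=\inf\{t\ge 0:\tfrac{1}{K}Z_t^K\notin B\}$, the event $\{T^K>T\log K\}$ already forces $\tfrac{1}{K}Z_t^K\in B$ for every $t\in[0,T\log K]$, but your precaution does no harm.
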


\subsection{Competition Between two Bi-Type Processes with Transfer}\label{SubSection: Competition}

Now, we consider a four-dimensional logistic branching process, which we will interpret as competition between two bi-type logistic branching processes each with active (`$a$') individuals and dormant (`$d$') individuals. Our main results in this section will be Propositions \ref{Proposition: 4Dpositive} and \ref{Proposition: 4Dnegative}, where we show under suitable assumptions that the initially resident process declines below a small threshold, while the invading process reaches a neighbourhood of its equilibrium. The transfer rates of this process $(X_a^K,X_d^K,Y_a^K,Y_d^K)$ are \begin{align*} (i,j,k,\ell)\to\begin{cases}
(i+1,j,k,\ell) &\text{ at rate }ia_1^K(\omega,t)+\gamma_1^K(\omega,t)\\
(i,j,k+1,\ell) &\text{ at rate }kb_1^K(\omega,t)+\gamma_2^K(\omega,t)\\
(i-1,j,k,\ell) &\text{ at rate }i(d_1^K(\w,t)+\tfrac{(1-p)C}{K}(i+k))\\
(i,j,k-1,\ell) &\text{ at rate }k(d_1^K(\w,t)+\tfrac{(1-q)C}{K}(i+k))\\
(i,j-1,k,\ell) &\text{ at rate }jd_2^K(\omega,t)\\
 (i,j,k,\ell-1) &\text{ at rate }\ell d_2^K(\omega,t)\\
(i-1,j+1,k,\ell) &\text{ at rate }i\tfrac{pC}{K}(i+k) \\ (i,j,k-1,\ell+1) &\text{ at rate }k\tfrac{qC}{K}(i+k)\\
(i+1,j-1,k,\ell)&\text{ at rate }j\sigma_2\\ (i,j,k+1,\ell-1)&\text{ at rate }\ell\sigma_2\\
(i-1,j,k+1,\ell)&\text{ at rate }\tau^K(\w,t)\tfrac{ik}{i+k},
\end{cases}
\end{align*}
with predictable, non-negative functions $a_1^K,b_1^K,d_1^K,d_2^K,\tau^K,\gamma_1^K,\gamma_2^K\colon\Omega\times[0,\infty)\to\R$ and constants $C,\sigma_2>0$, $p,q\in(0,1)$. For now, we assume that the transition rates are constant with $a_1^K\equiv a_1$, $b_1^K\equiv b_1$, $d_1^K\equiv d_1$, $d_2^K\equiv d_2$, $\tau^K\equiv\tau$ and $\gamma_1^K=\gamma_2^K=0$ and that $a_1,b_1>d_1$.
Then the process $\tfrac{1}{K}(X_a^K,X_d^K,Y_a^K,Y_d^K)$ converges towards the unique solution of the differential equation
\begin{align} 
\begin{aligned}
\dot{x}_a&=x_a(a_1-d_1-C(x_a+y_a))+x_d\sigma_2-\tau\frac{x_ay_a}{x_a+y_a}\\
\dot{x}_d&=pCx_a(x_a+y_a)-(d_2+\sigma_2)x_d\\
\dot{y}_a&=y_a(b_1-d_1-C(x_a+y_a))+y_d\sigma_2+\tau\frac{x_ay_a}{x_a+y_a}\\
\dot{y}_d&=qCy_a(x_a+y_a)-(d_2+\sigma_2)y_d.
\end{aligned}\label{eq: Dynamical System}
\end{align}
We are now interested in finding a suitable criterion for invasion of the process $(Y_a^K,Y_d^K)$ into the initially resident population $(X_a^K,X_d^K)$. More specifically, we assume that initially the size of $(\tfrac{X_a^K}{K},\tfrac{X_d^K}{K})$ is close to its equilibrium $(\bar{x}_a,\bar{x}_d)$ with \begin{align}
\bar{x}_a=\frac{(a_1-d_1)(d_2+\sigma_2)}{C(d_2+(1-p)\sigma_2)}
\quad\text{ and }\quad
\bar{x}_d=\frac{p(a_1-d_1)^2(d_2+\sigma_2)}{C(d_2+(1-p)\sigma_2)^2}\label{eq: Equilibrium}
\end{align}
and the total size of the invasive species is $Y_a^K+Y_d^K=\lfloor\veps K\rfloor$. Note that for such small population sizes we may approximate the transfer rate between $X_a^K$ and $Y_a^K$ by \[\frac{X_a^KY_a^K}{X_a^K+Y_a^K}\approx\frac{X_a^KY_a^K}{X_a^K}=Y_a^K.\] Further, we assume that the mean matrix \begin{align}
J=\begin{pmatrix}
b_1+\tau-d_1-C\bar{x}_a & qC\bar{x}_a\\
\sigma_2 & -d_2-\sigma_2
\end{pmatrix}\label{eq: meanmatrix}
\end{align} of the approximating process $(\hat{Y}_a,\hat{Y}_d)$ of $(Y_a^K,Y_d^K)$ given by the transitions \[
(n,m)\mapsto\begin{cases}
(n+1,m)&\text{ at rate }n(b_1+\tau)\\
(n-1,m)&\text{ at rate }n(d_1+(1-q)C \bar{x}_a) \\
(n,m-1)&\text{ at rate }md_2\\
(n-1,m+1)&\text{ at rate }nqC\bar{x}_a\\
(n+1,m-1)&\text{ at rate }m\sigma_2\\
\end{cases}
\]
has a positive eigenvalue $\la$, which means that the invasion fitness is positive and the process is supercritical. For one eigenvalue to be positive, the determinant must be negative, which in our case is equivalent to the inequality \begin{align}
-(b_1+\tau-d_1-C\bar{x}_a)<\frac{\sigma_2qC\bar{x}_a}{d_2+\sigma_2}.\label{eq: sufficientpositiveeigenvalue}
\end{align}
In addition, we will assume that in a population, where $(Y_a^K,Y_d^K)$ is resident and $(X_a^K,X_d^K)$ is invasive, the approximating process $(\hat{X}_a,\hat{X}_d)$ given by the transitions \[
(n,m)\mapsto\begin{cases}
(n+1,m)&\text{ at rate }na_1\\
(n-1,m)&\text{ at rate }n(d_1+\tau+(1-p)C \bar{y}_a) \\
(n,m-1)&\text{ at rate }md_2\\
(n-1,m+1)&\text{ at rate }npC\bar{y}_a\\
(n+1,m-1)&\text{ at rate }m\sigma_2\\
\end{cases}
\] is sub-critical. This is the case if and only if both eigenvalues of the mean-matrix \begin{align}
\tilde{J}=\begin{pmatrix}
a_1-\tau-d_1-C\bar{y}_a & pC\bar{y}_a\\
\sigma_2 & -d_2-\sigma_2
\end{pmatrix}\label{eq: meanmatrix2}
\end{align}
are strictly negative. In particular, we must have a positive determinant, which is equivalent to \begin{align}
-(a_1-\tau-d_1-C\bar{y}_a)>\frac{\sigma_2 p C\bar{y}_a}{d_2+\sigma_2}.\label{eq: sufficientnegativeeigenvalue}
\end{align}
Our first result is concerned with finding the equilibria of the dynamical system (\ref{eq: Dynamical System}).
\begin{lemma}\label{Lemma: OnlyEquilibria}
	Consider the system (\ref{eq: Dynamical System}) and assume the matrix $J$ has a positive eigenvalue and the matrix $\tilde{J}$ only has negative eigenvalues. Then the systems only non-negative equilibria are $(0,0,0,0)$, $(\bar{x}_a,\bar{x}_d,0,0)$ and $(0,0,\bar{y}_a,\bar{y}_d)$, the latter of which is asymptotically stable, where \[
	\bar{y}_a=\frac{(b_1-d_1)(d_2+\sigma_2)}{C(d_2+(1-q)\sigma_2)}
	\quad\text{ and }\quad
	\bar{y}_d=\frac{q(b_1-d_1)^2(d_2+\sigma_2)}{C(d_2+(1-q)\sigma_2)^2}
	\]
	and $(\bar{x}_a,\bar{x}_d)$ are as in \eqref{eq: Equilibrium}.
\end{lemma}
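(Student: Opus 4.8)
The plan is to solve the equilibrium equations of the four-dimensional system \eqref{eq: Dynamical System} directly, case-splitting on which of $x_a+y_a$ and the individual coordinates vanish. Set all four derivatives to zero. From $\dot x_d=0$ we get $x_d=\tfrac{pC}{d_2+\sigma_2}x_a(x_a+y_a)$, and from $\dot y_d=0$ we get $y_d=\tfrac{qC}{d_2+\sigma_2}y_a(x_a+y_a)$; thus the dormant coordinates are determined by the active ones, and in particular $x_d=0\Leftrightarrow x_a(x_a+y_a)=0$ and similarly for $y_d$. Substituting these back into $\dot x_a=0$ and $\dot y_a=0$ gives, after dividing through (on the region $x_a+y_a>0$) and using the identity $\tfrac{pC\sigma_2}{d_2+\sigma_2}-C=-\tfrac{C(d_2+(1-p)\sigma_2)}{d_2+\sigma_2}$, the reduced pair
\begin{align*}
x_a\Big(a_1-d_1-\tfrac{C(d_2+(1-p)\sigma_2)}{d_2+\sigma_2}(x_a+y_a)-\tau\tfrac{y_a}{x_a+y_a}\Big)&=0,\\
y_a\Big(b_1-d_1-\tfrac{C(d_2+(1-q)\sigma_2)}{d_2+\sigma_2}(x_a+y_a)+\tau\tfrac{x_a}{x_a+y_a}\Big)&=0.
\end{align*}

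First I would dispose of the degenerate branch $x_a+y_a=0$: non-negativity forces $x_a=y_a=0$, hence $x_d=y_d=0$, giving the trivial equilibrium $(0,0,0,0)$. On the region $x_a+y_a>0$, I would then split into the three subcases $(x_a=0,y_a>0)$, $(x_a>0,y_a=0)$, $(x_a>0,y_a>0)$. In the first subcase the transfer terms vanish and the second bracket yields $y_a=\tfrac{(b_1-d_1)(d_2+\sigma_2)}{C(d_2+(1-q)\sigma_2)}=\bar y_a$ (using $b_1>d_1$), and then $y_d=\bar y_d$ with the stated formula; symmetrically the second subcase gives $(\bar x_a,\bar x_d,0,0)$ using $a_1>d_1$ and \eqref{eq: Equilibrium}. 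The crux is to rule out a fully coexisting positive equilibrium $(x_a>0,y_a>0)$: there both brackets must vanish, i.e.
\begin{align*}
a_1-d_1-C^*_p(x_a+y_a)&=\tau\tfrac{y_a}{x_a+y_a},\\
b_1-d_1-C^*_q(x_a+y_a)&=-\tau\tfrac{x_a}{x_a+y_a},
\end{align*}
where $C^*_p=\tfrac{C(d_2+(1-p)\sigma_2)}{d_2+\sigma_2}$, $C^*_q=\tfrac{C(d_2+(1-q)\sigma_2)}{d_2+\sigma_2}$. Writing $s=x_a+y_a>0$ and $\theta=x_a/s\in(0,1)$, the right-hand sides are $\tau(1-\theta)$ and $-\tau\theta$; adding a $\theta$-weighted combination ($\theta$ times the second plus $(1-\theta)$ times the first) kills the transfer term and leaves one equation linking $\theta$ and $s$, while the difference of the two equations gives a second. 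I expect this $2\times2$ system to have at most one solution, and then I would show that the linearization there cannot be stable, or more directly that such a solution would contradict the sign conditions encoded in $J$ having a positive eigenvalue and $\tilde J$ having only negative ones — the existence of a positive eigenvalue of $J$ says precisely that $(0,0,\bar y_a,\bar y_d)$ is unstable "against $X$ being pushed out", i.e.\ the linearization at the $Y$-only equilibrium in the $X$-directions is negative, and symmetrically $\tilde J$'s negativity forces instability of the $X$-only equilibrium; a genuine interior equilibrium would be incompatible with this ordering of invasion fitnesses (this is exactly the competitive-exclusion structure). I would formalize this by invoking the monotone/competitive structure of \eqref{eq: Dynamical System} and the explicit algebra above rather than an abstract theorem.

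The asymptotic stability of $(0,0,\bar y_a,\bar y_d)$ I would obtain by computing the Jacobian of \eqref{eq: Dynamical System} at that point. Because $x_a=x_d=0$ there, the Jacobian is block lower-triangular: the $(y_a,y_d)$-block is the linearization of the two-dimensional logistic-dormancy system at its stable equilibrium, which is stable by \cite[Section 2.2]{blath2020invasion} (as invoked earlier in the excerpt for \eqref{equilibrium}), and the $(x_a,x_d)$-block, governing the transverse directions, is exactly the matrix $\tilde J$ from \eqref{eq: meanmatrix2} (with $\bar y_a$ plugged in and the transfer term contributing the $-\tau$), whose eigenvalues are negative by assumption \eqref{eq: sufficientnegativeeigenvalue} together with its trace being manifestly negative. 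Hence all four eigenvalues have negative real part and the equilibrium is asymptotically stable. The main obstacle is the coexistence-exclusion step; everything else is bookkeeping with the explicit formulas, and for that step the key idea is the substitution $s=x_a+y_a$, $\theta=x_a/s$ which linearizes away the awkward $\tfrac{x_ay_a}{x_a+y_a}$ terms and reduces the question to elementary algebra constrained by the eigenvalue sign hypotheses on $J$ and $\tilde J$.
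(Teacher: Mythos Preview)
Your setup and the boundary cases are correct and agree with the paper: solving $\dot x_d=\dot y_d=0$ for the dormant coordinates, substituting back, and reading off the three non-interior equilibria. The stability argument is also essentially the paper's: the Jacobian at $(0,0,\bar y_a,\bar y_d)$ is block lower-triangular, the $(x_a,x_d)$-block is (the transpose of) $\tilde J$ with negative eigenvalues by hypothesis, and the $(y_a,y_d)$-block is the linearization of the 2D logistic-dormancy system at its stable equilibrium (the paper verifies $\det B=(d_2+\sigma_2)(b_1-d_1)>0$ directly).

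The genuine gap is the exclusion of an interior equilibrium. You set up the substitution $s=x_a+y_a$, $\theta=x_a/s$ (with a small slip in which weighted combination kills the $\tau$-term), but then you do not carry the algebra to a contradiction: you write that you ``expect'' the $2\times2$ system to have at most one solution and would then rule it out via ``the monotone/competitive structure'' or an instability argument. That appeal is not a proof here --- the system \eqref{eq: Dynamical System} is four-dimensional with dormancy components and the nonlinear transfer term $\tfrac{x_ay_a}{x_a+y_a}$, so it is not a standard competitive Lotka--Volterra pair to which competitive-exclusion theorems apply off the shelf, and showing the linearization at a putative interior equilibrium is unstable would still leave its \emph{existence} unresolved.

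The paper's argument is concrete and short, and it uses the eigenvalue sign hypotheses as \emph{algebraic inequalities} rather than as dynamical information. From the reduced equations one solves for $x_a+y_a$ in two ways,
\[
x_a+y_a=\bar x_a-\tau\,\frac{y_a(d_2+\sigma_2)}{C(x_a+y_a)(d_2+(1-p)\sigma_2)}
\quad\text{and}\quad
x_a+y_a=\bar y_a+\tau\,\frac{x_a(d_2+\sigma_2)}{C(x_a+y_a)(d_2+(1-q)\sigma_2)},
\]
substitutes the first expression into the identity $\tfrac{y_d}{y_a}=\tfrac{qC(x_a+y_a)}{d_2+\sigma_2}$, and then applies the inequality \eqref{eq: sufficientpositiveeigenvalue} (positivity of the eigenvalue of $J$) to deduce $p<q$. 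Repeating with the second expression and \eqref{eq: sufficientnegativeeigenvalue} (negativity for $\tilde J$) gives $q<p$. Together these contradict each other (and handle $\tau=0$ directly), so no positive interior equilibrium exists. Your $s,\theta$ parametrization could in principle reach the same endpoint, but the missing step is exactly this use of \eqref{eq: sufficientpositiveeigenvalue} and \eqref{eq: sufficientnegativeeigenvalue} to force the incompatible pair of inequalities on $p$ and $q$.
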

\begin{proof}
	It is easy to verify that the claimed vectors are indeed equilibria of the system. Also a quick calculation for all cases shows that any non-negative equilibrium with a zero component must coincide with one of the vectors above. Hence, it remains to show that there is no coordinatewise strictly positive equilibrium in this system. Towards a contradiction, let $(x_a,x_d,y_a,y_d)$ be such an equilibrium. Then, rearranging the second line in (\ref{eq: Dynamical System}) yields \[
	x_d=\frac{pCx_a(x_a+y_a)}{d_2+\sigma_2}.
	\]
	Hence, the first line in (\ref{eq: Dynamical System}) gives \begin{align*}
	0=x_a(a_1-d_1-C(x_a+y_a))+\frac{pCx_a(x_a+y_a)\sigma_2}{d_2+\sigma_2}-\tau\frac{x_ay_a}{x_a+y_a}.
	\end{align*}
	Since we assumed $x_a>0$, we may divide by $x_a$ to give \begin{align}
	0&=a_1-d_1-C(x_a+y_a)+\frac{pC(x_a+y_a)\sigma_2}{d_2+\sigma_2}-\tau\frac{y_a}{x_a+y_a}\nonumber\\
	\Llra\ x_a+y_a&=\bar{x}_a-\tau\frac{y_a(d_2+\sigma_2)}{C(x_a+y_a)(d_2+(1-p)\sigma_2)}.\label{eq:firstx_a+y_aexpression}
	\end{align}
	Similarly, we obtain \[
	y_d=\frac{qCy_a(x_a+y_a)}{d_2+\sigma_2}\quad\text{ and }\quad 0=y_a(b_1-d_1-C(x_a+y_a))+\sigma_2y_d+\tau\frac{x_ay_a}{x_a+y_a}
	\]
	as well as \begin{align}
	x_a+y_a=\bar{y}_a+\tau\frac{x_a(d_2+\sigma_2)}{C(x_a+y_a)(d_2+(1-q)\sigma_2)}.\label{eq:secondx_a+y_aexpression}
	\end{align}
	Furthermore, this shows \[
	\frac{y_d}{y_a}=-\frac{b_1-d_1-C(x_a+y_a)+\tau\frac{x_a}{x_a+y_a}}{\sigma_2}=\frac{qC(x_a+y_a)}{d_2+\sigma_2}
	\]
	and substituting the expression (\ref{eq:firstx_a+y_aexpression}) for $x_a+y_a$ shows that \[
	-\frac{b_1-d_1-C\bar{x}_a}{\sigma_2}-\tau\lr{\frac{y_a(d_2+\sigma_2)+x_a(d_2+(1-p)\sigma_2)}{\sigma_2(x_a+y_a)(d_2+(1-p)\sigma_2)}}=\frac{qC\bar{x}_a}{d_2+\sigma_2}-\tau\frac{qy_a}{(x_a+y_a)(d_2+(1-p)\sigma_2)},
	\]
	which can be rearranged to read \begin{align*}
	&-(b_1+\tau-d_1-C\bar{x}_a)-\tau\lr{\frac{y_a(d_2+\sigma_2)+x_a(d_2+(1-p)\sigma_2)}{(x_a+y_a)(d_2+(1-p)\sigma_2)}-1}\\
	=&\ \frac{\sigma_2 qC\bar{x}_a}{d_2+\sigma_2}-\tau\frac{\sigma_2 qy_a}{(x_a+y_a)(d_2+(1-p)\sigma_2)}.
	\end{align*}
	If $\tau=0$, this violates condition (\ref{eq: sufficientpositiveeigenvalue}), and thus there is no such equilibrium. Assuming $\tau>0$ and (\ref{eq: sufficientpositiveeigenvalue}) the previous equality implies the inequality \begin{align*}
	\frac{y_a(d_2+\sigma_2)+x_a(d_2+(1-p)\sigma_2)}{(x_a+y_a)(d_2+(1-p)\sigma_2)}-1&<\frac{\sigma_2qy_a}{(x_a+y_a)(d_2+(1-p)\sigma_2)}\\
	\Llra\ \sigma_2 py_a&<\sigma_2 q y_a.
	\end{align*}
	Hence, in the case $p\geq q$ such a coordinatewise positive equilibrium cannot exist. \\
	Now, consider the case $q>p$. We find in a similar manner\[
	\frac{x_d}{x_a}=-\frac{a_1-d_1-C(x_a+y_a)-\tau\frac{y_a}{x_a+y_a}}{\sigma_2}=\frac{pC(x_a+y_a)}{d_2+\sigma_2}
	\]
	and, substituting $x_a+y_a$ with the right hand side of (\ref{eq:secondx_a+y_aexpression}), we obtain \[
	-(a_1-d_1-C\bar{y}_a)+\tau\lr{\frac{x_a(d_2+\sigma_2)+y_a(d_2+(1-q)\sigma_2)}{(x_a+y_a)(d_2+(1-q)\sigma_2)}}=\frac{\sigma_2pC\bar{y}_a}{d_2+\sigma_2}+\tau\frac{\sigma_2px_a}{(x_a+y_a)(d_2+(1-q)\sigma_2)}.
	\]
	In particular, condition (\ref{eq: sufficientnegativeeigenvalue}) implies that \begin{align*}
	\frac{x_a(d_2+\sigma_2)+y_a(d_2+(1-q)\sigma_2)}{(x_a+y_a)(d_2+(1-q)\sigma_2)}-1&<\frac{\sigma_2px_a}{(x_a+y_a)(d_2+(1-q)\sigma_2)}\\
	\Llra\ \sigma_2 q x_a&<\sigma_2 p x_a.
	\end{align*}
	This contradicts the assumption $q>p$. Hence, there cannot be a coordinatewise positive equilibrium.\\
	
	Now we turn towards the stability claim. For this we consider the Jacobian at $(0,0,\bar{y}_a,\bar{y}_d)$, which is given by \[
	A=\begin{pmatrix}
	a_1-d_1-C\bar{y}_a-\tau & \sigma_2 & 0 & 0\\
	 pC\bar{y}_a & -d_2-\sigma_2 & 0 & 0\\
	 -C\bar{y}_a+\tau & 0 & b_1-d_1-2C\bar{y}_a & \sigma_2\\
	 qC\bar{y}_a & 0 & 2qC\bar{y}_a & -d_2-\sigma_2
	\end{pmatrix}.
	\]
	It remains to show that all eigenvalues have negative real part. Notice that we recover the matrix $\tilde{J}^T$ in the upper left corner. Since the matrix $A$ is a block matrix, the eigenvalues of $\tilde{J}^T$ are also eigenvalues of $A$. In particular, it suffices to show that \[
	B=\begin{pmatrix}
	b_1-d_1-2C\bar{y}_a & \sigma_2 \\
	2qC\bar{y}_a & -d_2-\sigma_2
	\end{pmatrix}
	\]
	has only negative eigenvalues. By the same argument as for the matrix $J$ in \eqref{eq: sufficientpositiveeigenvalue} it suffices to show that the determinant is positive. Using the definition of $\bar{y}_a$ we compute the determinant to be $\det(B)=(d_2+\sigma_2)(b_1-d_1)$ (cf.~\cite[Section 2.2]{blath2020invasion}), which is strictly positive by the assumption $b_1>d_1$. Hence all eigenvalues of $A$ are negative and thus $(0,0,\bar{y}_a,\bar{y}_d)$ is an asymptotically stable equilibrium.
\end{proof}

The next goal is to find a sufficient criterion for the convergence of our dynamical system towards the equilibrium of the process $(Y_a^K,Y_d^K)$. For this purpose, we need a suitable initial condition for the system (\ref{eq: Dynamical System}). Indeed, the following lemmata give a first step towards this direction.

\begin{lemma}\label{Lemma: Convergence Lemma}
	Consider the system (\ref{eq: Dynamical System}) and assume that the matrix $J$ from (\ref{eq: meanmatrix}) has a positive eigenvalue and the matrix $\tilde{J}$ from (\ref{eq: meanmatrix2}) only has negative eigenvalues. If the initial condition $(x_a,x_d,y_a,y_d)=(x_a(0),x_d(0),y_a(0),y_d(0))$ satisfies \begin{align}
	\frac{qC(x_a+y_a)}{d_2+\sigma_2}>\frac{y_d}{y_a}>\frac{d_1-b_1+C(x_a+y_a)-\tau\frac{x_a}{x_a+y_a}}{\sigma_2},\label{eq: Growth condition}
	\end{align}
	then \[
	\lim\limits_{t\to\infty}(x_a(t),x_d(t),y_a(t),y_d(t))=(0,0,\bar{y}_a,\bar{y}_d).
	\]
	
\end{lemma}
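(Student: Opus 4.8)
The plan is to translate the growth condition (\ref{eq: Growth condition}) into a statement about the sign of the vector field and then prove that the associated region is positively invariant. Rearranging the right-hand inequality in (\ref{eq: Growth condition}) gives exactly $y_a\lr{b_1-d_1-C(x_a+y_a)}+\sigma_2 y_d+\tau\tfrac{x_ay_a}{x_a+y_a}>0$, i.e.\ $\dot{y}_a>0$, while the left-hand inequality gives $qCy_a(x_a+y_a)>(d_2+\sigma_2)y_d$, i.e.\ $\dot{y}_d>0$. Thus the hypothesis is precisely that the initial point lies in the open set
\[
D\coloneqq\lrset{(x_a,x_d,y_a,y_d)\in[0,\infty)^4\ \big\vert\ y_a>0,\ \dot{y}_a>0,\ \dot{y}_d>0}.
\]
First I would dispatch two routine preliminaries. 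Positivity of the solution is preserved, and in fact $y_a(t),y_d(t)>0$ for all $t>0$ (on $\{y_a=0\}$ one has $\dot{y}_a=\sigma_2 y_d$, and $\dot{y}_d=qCy_a(x_a+y_a)$ on $\{y_d=0\}$); moreover the solution stays in a compact set $\mathcal K\subset[0,\infty)^4$, since summing the four equations the transfer terms cancel and one is left with $\tfrac{\d}{\d t}(x_a+x_d+y_a+y_d)\le (a_1-d_1)x_a+(b_1-d_1)y_a-C(1-p)x_a^2-C(1-q)y_a^2-d_2(x_d+y_d)$, which, together with $x_d,y_d$ being controlled through $\sigma_2$ by $x_a,y_a$, forces boundedness.

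The core of the proof is to show that $\overline D$ is positively invariant. By Nagumo's criterion this amounts to checking the vector field on $\partial D$: on the face $\{\dot y_a=0,\ \dot y_d\ge 0\}$ one must verify $\tfrac{\d}{\d t}\dot y_a\ge 0$, and on the face $\{\dot y_d=0,\ \dot y_a\ge 0\}$ one must verify $\tfrac{\d}{\d t}\dot y_d\ge 0$ (the corner $\dot y_a=\dot y_d=0$ then being automatic). Differentiating $\dot y_d=qCy_a(x_a+y_a)-(d_2+\sigma_2)y_d$ and inserting $\dot y_d=0$ leaves $\ddot y_d=qC\big(\dot y_a(x_a+2y_a)+y_a\dot x_a\big)$; here $\dot y_a\ge 0$ is known, and the sign of the remaining term $y_a\dot x_a$ has to be controlled using that on $\{\dot y_d=0\}$ one has $y_d/y_a=\tfrac{qC(x_a+y_a)}{d_2+\sigma_2}$ together with the subcriticality assumption (\ref{eq: sufficientnegativeeigenvalue}); a symmetric computation (differentiating $\dot y_a$, using $\dot x_d$ and the supercriticality assumption (\ref{eq: sufficientpositiveeigenvalue})) handles the other face. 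I expect this tangency verification to be \emph{the main obstacle}: it is elementary but involves several case distinctions --- including the dichotomy $p\ge q$ versus $q>p$ already present in the proof of Lemma~\ref{Lemma: OnlyEquilibria} --- and it is exactly where the structural hypotheses on $J$ and $\tilde J$ are consumed. (In the degenerate sub-case $x_a(0)=x_d(0)=0$ the system is the decoupled bi-type logistic system and one simply invokes \cite[Lemma 4.6]{blath2020invasion}.)

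Granting invariance of $\overline D$, I would conclude as follows. For all $t\ge 0$ we have $\tfrac{\d}{\d t}(y_a+y_d)=\dot y_a+\dot y_d>0$, so $y_a(t)+y_d(t)$ increases to a finite limit $\ell\ge y_a(0)+y_d(0)>0$; in particular the trajectory stays bounded away from $\{y_a=y_d=0\}$, and its $\omega$-limit set $\Omega$ is a nonempty compact invariant subset of $\overline D\cap\mathcal K$ on which $y_a+y_d\equiv\ell>0$. Since the right-hand sides of (\ref{eq: Dynamical System}) are bounded on $\mathcal K$, $y_a+y_d$ has bounded derivative, so Barbalat's lemma gives $\dot y_a(t)+\dot y_d(t)\to 0$; as both summands are nonnegative on $\overline D$, this yields $\dot y_a(t)\to 0$ and $\dot y_d(t)\to 0$, hence $\dot y_a\equiv\dot y_d\equiv 0$ on $\Omega$. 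On $\Omega$ this forces $y_a,y_d$ constant, then (from $\dot y_d\equiv 0$ and $y_a>0$) $x_a$ constant, then (from $\dot x_a\equiv 0$) $x_d$ constant, so $\Omega$ is a single equilibrium; its $y$-components being positive, Lemma~\ref{Lemma: OnlyEquilibria} identifies it as $(0,0,\bar y_a,\bar y_d)$, which is the assertion. (Alternatively one may run LaSalle's invariance principle on $\overline D\cap\mathcal K$ with $-(y_a+y_d)$ as a weak Lyapunov function, once invariance of $\overline D$ is established.)
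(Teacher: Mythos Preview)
Your strategy is the one the paper cites from \cite[Lemma~4.7]{blath2020invasion}: read (\ref{eq: Growth condition}) as $\dot y_a>0,\ \dot y_d>0$, show this region is forward invariant, and then use monotonicity of $y_a+y_d$ together with the equilibrium classification (Lemma~\ref{Lemma: OnlyEquilibria}) to identify the $\omega$-limit set. Your Barbalat/LaSalle endgame is exactly the intended conclusion.

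One remark on where the hypotheses are consumed. You place the eigenvalue conditions on $J$ and $\tilde J$ inside the tangency verification on $\partial D$, and anticipate the $p\ge q$ vs.\ $q>p$ dichotomy there. But the paper's own comment in the proof of the mirror statement, Lemma~\ref{Lemma: Appendix 1}, says that the argument ``never explicitly use[s] the term involving $\tau$''; since (\ref{eq: sufficientpositiveeigenvalue}) and (\ref{eq: sufficientnegativeeigenvalue}) both contain $\tau$, the invariance step must be the structural computation from \cite{blath2020invasion} carried over verbatim (note for instance that on $\{\dot y_d=0\}$ one has $\ddot y_d=qC\,\tfrac{\d}{\d t}\big(y_a(x_a+y_a)\big)$, and the transfer terms cancel in $\dot x_a+\dot y_a$). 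The eigenvalue hypotheses --- and with them the $p$ vs.\ $q$ case split --- are spent only at the very end, through Lemma~\ref{Lemma: OnlyEquilibria}, to rule out a positive coexistence equilibrium in the $\omega$-limit set.
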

\begin{proof}
	The proof of this claim can be easily adapted from \cite[Lemma 4.7]{blath2020invasion}. 
\end{proof}

Now, we are interested in finding a suitable condition such that the inequality (\ref{eq: Growth condition}) is satisfied. For this purpose, observe that the approximating process $(\hat{Y}_a,\hat{Y}_d)$ is supercritical and hence by the Kesten-Stigum Theorem \cite[Theorem 2.1]{10.2307/1428501} we have \[
\lr{\frac{\hat{Y}_{a,t}}{\hat{Y}_{a,t}+\hat{Y}_{d,t}},\frac{\hat{Y}_{d,t}}{\hat{Y}_{a,t}+\hat{Y}_{d,t}}}\xrightarrow{t\to\infty}(\pi_a,\pi_d),
\]
where $(\pi_a,\pi_d)$ is the unique left eigenvector of the matrix $J$ from (\ref{eq: meanmatrix}) for the principal eigenvalue $\lambda$ with $\pi_a+\pi_d=1$.
\begin{lemma}\label{Lemma: RefLemma}
	Suppose that the initial condition $(x_a,x_d,y_a,y_d)$ of the dynamical system (\ref{eq: Dynamical System}) satisfies $x_a\in[\bar{x}_a-A\sqrt{\veps},\bar{x}_a+A\sqrt{\veps}]$, $x_d\in[\bar{x}_d-A\sqrt{\veps},\bar{x}_d+A\sqrt{\veps}]$ for some constant $A>0$ and $y_a+y_d\in(0,\sqrt{\veps})$ with $\tfrac{y_d}{y_a}=\tfrac{\pi_d}{\pi_a}$. Then for $\veps$ sufficiently small, $(x_a,x_d,y_a,y_d)$ satisfies (\ref{eq: Growth condition}).
\end{lemma}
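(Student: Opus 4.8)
The plan is to verify the two strict inequalities in \eqref{eq: Growth condition} by a direct computation, using only the identities that define the Perron left eigenvector $(\pi_a,\pi_d)$ of the matrix $J$ from \eqref{eq: meanmatrix}, together with the fact that the prescribed initial datum $(x_a,x_d,y_a,y_d)$ is an $O(\sqrt{\veps})$ perturbation of the pure resident state $(\bar{x}_a,\bar{x}_d,0,0)$, with all implied constants depending only on $A$ and the (fixed) model parameters.

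First I would unpack the hypothesis $\tfrac{y_d}{y_a}=\tfrac{\pi_d}{\pi_a}$. Adding a sufficiently large multiple of the identity renders $J$ nonnegative and irreducible, since its off-diagonal entries $qC\bar{x}_a$ and $\sigma_2$ are strictly positive; hence Perron--Frobenius gives $\pi_a,\pi_d>0$, and together with $y_a+y_d\in(0,\sqrt{\veps})$ this forces $y_a=\tfrac{1}{1+\pi_d/\pi_a}(y_a+y_d)>0$, so all ratios below are well defined. Writing $(\pi_a,\pi_d)J=\lambda(\pi_a,\pi_d)$ componentwise gives
\[
\pi_a(b_1+\tau-d_1-C\bar{x}_a)+\pi_d\sigma_2=\lambda\pi_a,\qquad
\pi_a\,qC\bar{x}_a-\pi_d(d_2+\sigma_2)=\lambda\pi_d,
\]
and solving each line for the ratio yields
\[
\frac{\pi_d}{\pi_a}=\frac{qC\bar{x}_a}{\lambda+d_2+\sigma_2}\qquad\text{and}\qquad
\frac{\pi_d}{\pi_a}=\frac{\lambda+d_1-b_1-\tau+C\bar{x}_a}{\sigma_2},
\]
the two expressions being consistent precisely because $\lambda$ solves $\det(J-\lambda I)=0$.

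Next I would compare with the two sides of \eqref{eq: Growth condition}. Since $x_a=\bar{x}_a+O(\sqrt{\veps})$ and $0<y_a\le y_a+y_d<\sqrt{\veps}$, one has $x_a+y_a=\bar{x}_a+O(\sqrt{\veps})$ with $x_a+y_a$ bounded away from $0$, hence $\tfrac{x_a}{x_a+y_a}=1+O(\sqrt{\veps})$ and $\tfrac{y_a}{x_a+y_a}=O(\sqrt{\veps})$. For the upper bound in \eqref{eq: Growth condition}, the first expression for $\pi_d/\pi_a$ together with $\lambda>0$ gives
\[
\frac{qC(x_a+y_a)}{d_2+\sigma_2}=\frac{qC\bar{x}_a}{d_2+\sigma_2}+O(\sqrt{\veps})>\frac{qC\bar{x}_a}{\lambda+d_2+\sigma_2}=\frac{\pi_d}{\pi_a},
\]
the strict inequality surviving for small $\veps$ because the gap $\tfrac{qC\bar{x}_a}{d_2+\sigma_2}-\tfrac{qC\bar{x}_a}{\lambda+d_2+\sigma_2}$ is a fixed positive number. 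For the lower bound, the second expression for $\pi_d/\pi_a$ gives
\[
\frac{\pi_d}{\pi_a}-\frac{d_1-b_1+C(x_a+y_a)-\tau\frac{x_a}{x_a+y_a}}{\sigma_2}=\frac{1}{\sigma_2}\Bigl(\lambda+C(\bar{x}_a-(x_a+y_a))-\tau\tfrac{y_a}{x_a+y_a}\Bigr)=\frac{\lambda}{\sigma_2}+O(\sqrt{\veps})>0
\]
for $\veps$ small. Taking $\veps$ below the two thresholds produced by these comparisons completes the proof.

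I do not anticipate a genuine obstacle here: all the content sits in the two eigenvector identities, which convert the positivity of the invasion fitness $\lambda$ into the required two-sided estimate, and the only care needed is the bookkeeping of the $O(\sqrt{\veps})$ error terms and the elementary observation that $x_a+y_a$ stays bounded away from $0$, so that the transfer term $\tfrac{x_a}{x_a+y_a}$ depends continuously on the state at the relevant limiting point. This mirrors the corresponding step in \cite{blath2020invasion}.
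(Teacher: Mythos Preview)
Your proof is correct and follows essentially the same approach as the paper: both arguments extract the two identities for $\pi_d/\pi_a$ from the left-eigenvector relation $(\pi_a,\pi_d)J=\lambda(\pi_a,\pi_d)$, and then use $\lambda>0$ together with the fact that $(x_a,x_d,y_a,y_d)$ is an $O(\sqrt{\veps})$ perturbation of $(\bar{x}_a,\bar{x}_d,0,0)$ to verify the two strict inequalities in \eqref{eq: Growth condition}. The only cosmetic difference is that the paper spells out explicit bounds such as $\bar{x}_a\pm 2(A+1)\sqrt{\veps}$ where you write $O(\sqrt{\veps})$, and you add the Perron--Frobenius justification for $\pi_a,\pi_d>0$, which the paper leaves implicit.
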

\begin{proof}
	The proof follows the proof of \cite[Lemma 4.8]{blath2020invasion}. Since $(\pi_a,\pi_d)$ is an eigenvector for $J$, we easily see that \[
	b_1+\tau-d_1-C\bar{x}_a+\sigma_2\frac{\pi_d}{\pi_a}=\la=qC\bar{x}_a\frac{\pi_a}{\pi_d}-(d_2+\sigma_2).
	\]
	Since $\la>0$, we obtain for $\epsi$ small enough from the first of the two equalities that \begin{align*}
	\frac{\pi_d}{\pi_a}&=\frac{\la-b_1-\tau+d_1+C\bar{x}_a}{\sigma_2}\\
	&>\frac{-b_1+d_1+C(\bar{x}_a+2(A+1)\sqrt{\veps})-\tau\frac{\bar{x}_a-A\sqrt{\veps}}{\bar{x}_a+(A+1)\sqrt{\veps}}}{\sigma_2}\\
	&> \frac{d_1-b_1+C(x_a+y_a)-\tau\frac{x_a}{x_a+y_a}}{\sigma_2}
	\end{align*}
	and similarly from the second equality we see \[
	\frac{\pi_d}{\pi_a}=\frac{qC\bar{x}_a}{\la+d_2+\sigma_2}<\frac{qC(\bar{x}_a-2(A+1)\sqrt{\veps})}{d_2+\sigma_2}<\frac{qC(x_a+y_a)}{d_2+\sigma_2}.
	\]
\end{proof}
The next series of lemmata shows that this initial condition is satisfied with high probability as $K\to\infty$ and $\veps\to 0$. In order to show this, we need a couple of preparatory results on the hitting times of a certain population size. 

\begin{lemma}\label{Lemma: veps xi growth lemma}
	Assume that the matrix $J$ from (\ref{eq: meanmatrix}) has a positive eigenvalue. Let $K\mapsto (m_1^K,m_2^K)$ be a function from $(0,\infty)$ to $[0,\infty)$ such that $(m_1^K,m_2^K)\in\tfrac{1}{K}\N_0^2$ and $\lim\limits_{K\to\infty}(m_1^K,m_2^K)=(\bar{x}_a,\bar{x}_d)$. Define the stopping times \[
	R_\veps^K\coloneqq\inf\lrset{t\geq 0\ \left\vert\ \abs{\frac{X_{a,t}^K}{K}-\bar{x}_a}>\veps\text{ or }\abs{\frac{X_{d,t}^K}{K}-\bar{x}_d}>\veps\right.}
	\]
	and
	\[
	T_x^K\coloneqq\inf\lrset{t>0\mid Y_{a,t}^K+Y_{d,t}^K=\lfloor xK\rfloor}.
	\]
	Then for any $\xi\in[\tfrac{1}{2},1]$ there exist some positive constants $A,\veps_0>0$ such that for all $0<\veps\leq\veps_0$ and any starting condition $\tfrac{1}{K}(X_{a,0}^K,X_{d,0}^K,Y_{a,0}^K,Y_{d,0}^K)=(m_1^K,m_2^K,\tfrac{n_1^K}{K},\tfrac{n_2^K}{K})$ with $n_1^K$ and $n_2^K$ such that $0<n_1^K+n_2^K<K\veps^\xi$, the convergence \[
	\lim\limits_{K\to\infty}\P(R_{2A\veps^\xi}^K\leq T_{\veps^\xi}^K\wedge T_0^K)=0
	\]
	holds.
\end{lemma}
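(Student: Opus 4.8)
The plan is a ``first–phase'' comparison argument. While the invading pair $(Y_a^K,Y_d^K)$ is confined to total sizes below $\lfloor\veps^\xi K\rfloor$, it perturbs the dynamics of the resident pair $(X_a^K,X_d^K)$ only at order $\veps^\xi$, so the resident stays trapped in a box of radius $2A\veps^\xi$ around $(\bar x_a,\bar x_d)$ for an \emph{exponentially} long time; on the other hand, since the mean matrix $J$ of \eqref{eq: meanmatrix} has a fixed positive eigenvalue, $(Y_a^K,Y_d^K)$ is ``supercritical'' in this near-equilibrium environment and therefore leaves the strip $(0,\lfloor\veps^\xi K\rfloor)$ — by reaching its top or by dying out — after only a time of order $\log K$. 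Comparing $e^{KV}$ with $O(\log K)$ yields the claim. Write $\sigma^K\coloneqq T_{\veps^\xi}^K\wedge T_0^K$ and $\theta^K\coloneqq\sigma^K\wedge R_{2A\veps^\xi}^K$; all couplings below live on $[0,\theta^K)$, where simultaneously $0<Y_{a,t}^K\le Y_{a,t}^K+Y_{d,t}^K<\veps^\xi K$ and $\lvert X_{a,t}^K/K-\bar x_a\rvert\le 2A\veps^\xi$, $\lvert X_{d,t}^K/K-\bar x_d\rvert\le 2A\veps^\xi$.

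\emph{Step 1 (the resident stays near its equilibrium).} On $[0,\theta^K)$ the competition term $\tfrac{(1-p)C}{K}(X_a^K+Y_a^K)$ and the switching term $\tfrac{pC}{K}(X_a^K+Y_a^K)$ in the dynamics of $(X_a^K,X_d^K)$ equal $\tfrac{(1-p)C'}{K}X_a^K$ and $\tfrac{pC'}{K}X_a^K$ with $C'\in[C,C(1+C_*\veps^\xi)]$ (using $X_a^K\ge(\bar x_a-2A\veps^\xi)K$), and the transfer loss $\tau\tfrac{X_a^KY_a^K}{X_a^K+Y_a^K}$ is a downward jump rate for $X_a^K$ of at most $C_*\veps^\xi$ per capita. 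Exactly as in the proof of Lemma~\ref{Lemma: exponentialclosenessapproximation} one then couples, componentwise on $[0,\theta^K)$, $\tilde Z^K\le(X_a^K,X_d^K)\le\hat Z^K$ between two bi-type logistic birth–death processes whose constant birth, death, competition and switching parameters differ from $(a_1,d_1,d_2,C,p)$ by at most $C_*\veps^\xi$ and with $\tilde Z_0^K=(X_{a,0}^K,X_{d,0}^K)=\hat Z_0^K\to(\bar x_a,\bar x_d)$. By \eqref{eq: Equilibrium} (smooth dependence of the equilibrium on the rates), the deterministic equilibria of $\tilde Z^K,\hat Z^K$ lie within $C_{**}\veps^\xi$ of $(\bar x_a,\bar x_d)$, so for $A>C_{**}$ and $\veps_0$ small both sit strictly inside the $2A\veps^\xi$-box (and the starting points do too, for $K$ large). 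Lemma~\ref{Lemma:exponentialcloseness} gives $V=V(\veps)>0$ with $\P(\tilde T^K<e^{KV})+\P(\hat T^K<e^{KV})\to 0$, where $\tilde T^K,\hat T^K$ are the exit times of $\tilde Z^K/K,\hat Z^K/K$ from the box. The box is a product set and the sandwiching holds on $[0,\theta^K)$, so on $\{\theta^K<e^{KV},\ \tilde T^K\wedge\hat T^K\ge e^{KV}\}$ the resident stays in the box up to $\theta^K$, whence $R_{2A\veps^\xi}^K>\theta^K$ and $\{R_{2A\veps^\xi}^K\le\sigma^K\}$ is avoided. Therefore
\[
\P\bigl(R_{2A\veps^\xi}^K\le\sigma^K\bigr)\le\P\bigl(\tilde T^K<e^{KV}\bigr)+\P\bigl(\hat T^K<e^{KV}\bigr)+\P\bigl(\theta^K\ge e^{KV}\bigr).
\]

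\emph{Step 2 (the invader leaves the strip in logarithmic time).} It remains to show $\P(\theta^K\ge e^{KV})\to 0$. On $[0,\theta^K)$ the per-capita rates of $(Y_a^K,Y_d^K)$ differ from those of the bi-type branching process with mean matrix $J$ by $O(\veps^\xi)$, so for $\veps\le\veps_0$ small $(Y_a^K,Y_d^K)$ dominates on $[0,\theta^K)$ a bi-type branching process $\underline Y^K$ with the same initial state whose mean matrix has principal eigenvalue $\underline\la\ge\la/2>0$. I will use a renewal estimate over windows of length $\kappa\log K$ (with $\kappa>2/\la$): from any state $(n_1,n_2)$ with $n_1+n_2\ge 1$, while the environment remains near equilibrium, the probability that $(Y_a^K,Y_d^K)$ reaches $0$ or $\lfloor\veps^\xi K\rfloor$ during the window is at least the probability that a single founding line of $\underline Y^K$ survives and, on its own, reaches size $\veps^\xi K$ within $\kappa\log K$; since death rates are positive the survival probability is bounded away from $0$, and on survival such a supercritical line reaches $\veps^\xi K$ in time $\underline\la^{-1}\log(\veps^\xi K)(1+o(1))=O(\log K)$ by Theorem~\ref{Theorem: Main Convergence}~(i) (equivalently Theorem~\ref{firstconvergencetheorem}) together with the Kesten–Stigum growth used above — so this lower bound is some $\delta_0>0$, uniform in $(n_1,n_2)$ for $K$ large. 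Applying the strong Markov property at the window endpoints (all $\le\theta^K$), the number of complete windows inside $[0,\theta^K)$ is stochastically dominated by a geometric random variable with parameter $\delta_0$, so $\P(\theta^K\ge e^{KV})\le(1-\delta_0)^{\lfloor e^{KV}/(\kappa\log K)\rfloor}\to 0$. Substituting into the display above completes the proof.

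\emph{Main obstacle.} The real difficulty is the apparent circularity — Step 1 needs $(Y_a^K,Y_d^K)$ confined and Step 2 needs $(X_a^K,X_d^K)$ near equilibrium over the \emph{same} random window — which is why everything is set up on the joint stopping time $\theta^K=\sigma^K\wedge R_{2A\veps^\xi}^K$ and the three offending events are separated as above, and why the renewal estimate of Step 2 must be made uniform over the full admissible range of invader states (including $n_1^K+n_2^K=O(1)$, where the invader may simply die out) and over the time-inhomogeneous, $\veps^\xi$-perturbed environment produced by the near-equilibrium resident. The logistic exit estimate (Lemma~\ref{Lemma:exponentialcloseness}) and the branching growth estimate (Theorem~\ref{Theorem: Main Convergence}) supply all the analytic content; the constants $A,C_*,C_{**},\delta_0,\kappa$ and $V=V(\veps)$ depend only on the fixed model parameters and on $\veps$, which suffices because $\veps\le\veps_0$ is fixed before letting $K\to\infty$.
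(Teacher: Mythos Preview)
The paper gives no argument beyond the one-line citation of \cite[Lemma~4.2]{blath2020invasion}, so strategically there is little to compare: your two-phase plan --- trap the resident near equilibrium via a Freidlin--Wentzell exponential exit bound while the invader is small, and separately force the invader out of the strip $(0,\veps^\xi K)$ in $O(\log K)$ time through a supercritical branching lower bound --- is exactly the architecture of that reference and of the Champagnat-style invasion analysis in general. The decomposition on the joint stopping time $\theta^K$ and the geometric renewal over windows are both standard and correct in outline.

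Two places where your write-up is looser than the cited proof deserve flagging. First, the \emph{componentwise} sandwich $\tilde Z^K\le(X_a^K,X_d^K)\le\hat Z^K$ for bi-type logistic processes with state-dependent switching is not automatic: at equality the resident's active-to-dormant rate $\tfrac{pC}{K}X_a^K(X_a^K+Y_a^K)$ strictly exceeds the pure-LBBP rate $\tfrac{pC}{K}(\hat Z_1^K)^2$, so $X_d^K$ is fed faster than $\hat Z_2^K$ and the inequality $\hat Z_2^K\ge X_d^K$ can fail. The referenced works deal with this either by also perturbing the switching parameter (so the coupled equilibrium moves by $O(\veps^\xi)$, which is precisely what your choice $A>C_{**}$ has to absorb) or by applying the large-deviation exit bound directly to the $\veps^\xi$-perturbed resident dynamics without an intermediate LBBP coupling; your sentence ``exactly as in Lemma~\ref{Lemma: exponentialclosenessapproximation}'' hides this step. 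Second, Theorem~\ref{Theorem: Main Convergence}(i) and Theorem~\ref{firstconvergencetheorem} are stated for initial exponents $\beta\vee\gamma>0$ and therefore do not literally apply to the ``single founding line'' in Step~2; the Kesten--Stigum growth you invoke alongside is the correct tool there (on survival, $\underline Y^K(t)\sim We^{\underline\lambda t}$ with $W>0$ a.s., so level $\veps^\xi K$ is reached by time $(1+o(1))\underline\lambda^{-1}\log K$). Neither point invalidates the argument, but both need to be made explicit to reach the rigour of the cited lemma.
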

\begin{proof}
	The proof of \cite[Lemma 4.2]{blath2020invasion} can be modified to encompass our situation in a straightforward manner.  
\end{proof}
Our next goal is to show that for any given initial population size $Y_{a,0}^K+Y_{d,0}^K=\lfloor\veps K\rfloor$ of $(Y_a^K,Y_d^K)$, we have the limit \[
\lim\limits_{K\to\infty}\P(T_{\veps^\xi}^K< T_0^K)=1.
\]

\begin{lemma}\label{Lemma: Fast growth}
	Under the assumptions of Lemma \ref{Lemma: veps xi growth lemma}, for the initial population size given by $\mathbf{X}_0^K\coloneqq(X_{a,0}^K,X_{d,0}^K,Y_{a,0}^K,Y_{d,0}^K)$, it holds that \[
	\lim_{K\to\infty}\P\lr{T_{\veps^\xi}^K<T_0^K\wedge R_{2A\veps^\xi}^K\ \Bigg\vert\ \mathbf{X}_0^K=K(m_1^K,m_2^K,\veps_1^K,\veps_2^K)}=1,
	\]
	where $\veps_1^K\to \veps_1$ and $\veps_2^K\to\veps_2$ such that $\veps_1+\veps_2=\veps$ with $\veps,\veps_1,\veps_2>0$.
\end{lemma}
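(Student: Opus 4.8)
The plan is to compare the two-dimensional invading process $(Y_a^K, Y_d^K)$ from below by a \emph{constant-rate} bi-type branching process and then invoke the classical fact that a supercritical branching process, conditioned on non-extinction, grows exponentially with high probability, so it reaches size $\lfloor \veps^\xi K\rfloor$ in time $O(\log(1/\veps))$ — well before any of the other stochastic quantities can misbehave. Concretely, on the event $\{t \le T_{\veps^\xi}^K \wedge T_0^K \wedge R_{2A\veps^\xi}^K\}$ the resident couple $(X_a^K, X_d^K)/K$ stays within $2A\veps^\xi$ of $(\bar x_a, \bar x_d)$, so the per-capita rates governing $(Y_a^K, Y_d^K)$ — in particular the transfer rate $\tau^K \frac{X_a^K Y_a^K}{X_a^K+Y_a^K}$, which is at least $\tau^K \frac{\bar x_a - 2A\veps^\xi}{\bar x_a + 2A\veps^\xi} Y_a^K$ once $Y_a^K \le Y_a^K + Y_d^K < \veps^\xi K$ — are uniformly close (up to $O(\veps^\xi)$ and $o_K(1)$ corrections coming from the convergence assumptions and from neglecting the logistic self-competition term, which is $O(\veps^\xi)$ here) to those of the mean-matrix process $(\hat Y_a, \hat Y_d)$ with matrix $J$ from \eqref{eq: meanmatrix}. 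Since $J$ has a positive eigenvalue $\la$ by hypothesis, for $\veps$ (hence $\veps^\xi$) small enough the slightly-perturbed constant-rate lower-bounding process $\check Y^K = (\check Y_a^K, \check Y_d^K)$ is still supercritical with principal eigenvalue $\la - C_*\veps^\xi > 0$.

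The key steps, in order: \textbf{(1)} Build, on the event above and for $K$ large, a coordinatewise coupling $\check Y_a^K + \check Y_d^K \le Y_a^K + Y_d^K$ where $\check Y^K$ is a constant-rate bi-type branching process (no immigration, no logistic term) whose rates are those of $(\hat Y_a, \hat Y_d)$ with each rate perturbed by at most $C_*\veps^\xi$; justify that the perturbations are harmless, i.e. $\check Y^K$ remains supercritical, using continuity of the principal eigenvalue in the entries of the mean matrix. \textbf{(2)} Recall that a supercritical (bi-type, irreducible — irreducibility here comes from the strictly positive switching rates $\sigma_2$ and $qC\bar x_a$) branching process started from $(\veps_1^K K, \veps_2^K K) \approx (\veps_1 K, \veps_2 K)$ individuals survives with probability $1 - o_K(1)$: each of the $\approx \veps K$ initial lineages survives with a fixed positive probability $q_{\mathrm{surv}} > 0$ independent of $K$, so the probability that \emph{all} die out is $\le (1-q_{\mathrm{surv}})^{\veps_1^K K + \veps_2^K K} \to 0$. \textbf{(3)} On survival, the martingale limit / Kesten--Stigum theory (or a cruder second-moment argument via Lemma~\ref{explicitexpectation} and Lemma~\ref{varianceestimate} applied to $\check Y^K$, exactly as in the proofs behind Theorem~\ref{firstconvergencetheorem}) gives that $\check Y_{a,t}^K + \check Y_{d,t}^K$ grows like $e^{(\la - C_*\veps^\xi)t}$, hence hits $\lfloor \veps^\xi K\rfloor$ by a time $t^* = \frac{\log(1/\veps^{1-\xi}) + O(1)}{\la - C_*\veps^\xi} = O(\log(1/\veps))$ with probability $1 - o_K(1) - o_\veps(1)$; in particular $T_{\veps^\xi}^K \le t^*$. \textbf{(4)} Combine with Lemma~\ref{Lemma: veps xi growth lemma}, which already says $\P(R_{2A\veps^\xi}^K \le T_{\veps^\xi}^K \wedge T_0^K) \to 0$: on the complement, the coupling of Step (1) is valid up to $T_{\veps^\xi}^K \wedge T_0^K$, and Steps (2)--(3) show that with high probability $T_{\veps^\xi}^K$ occurs (and $T_0^K$ does not occur first, since the lower bound $\check Y^K$ survives). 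Hence $\P(T_{\veps^\xi}^K < T_0^K \wedge R_{2A\veps^\xi}^K) \to 1$.

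\textbf{Main obstacle.} The delicate point is the interplay of the two small parameters $\veps$ and $K$ and the fact that $T_{\veps^\xi}^K$ itself grows (like $\log(1/\veps)$, but this is $o(\log K)$, so still fine on the relevant timescale). One must be careful that the coupling in Step (1) is genuinely valid up to the \emph{random} time $T_{\veps^\xi}^K \wedge T_0^K \wedge R_{2A\veps^\xi}^K$ and not merely up to a deterministic time — this is handled because all three stopping times are bounded above, on the good event, by the deterministic $t^* = O(\log(1/\veps))$, and the transfer-rate linearization $\frac{X_a^K Y_a^K}{X_a^K + Y_a^K} \ge (1 - C_*\veps^\xi) Y_a^K$ is uniformly controlled precisely because $Y_a^K + Y_d^K < \veps^\xi K$ on that window. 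A secondary technical nuisance is that neglecting the logistic competition term $\tfrac{(1-q)C}{K}(Y_a^K + Y_d^K)$ in the death rate of $\check Y^K$ costs $O(\veps^\xi)$, which must be absorbed into the $C_*\veps^\xi$ perturbation of the mean matrix while preserving supercriticality; since $\la > 0$ is a fixed strict inequality, this is fine for $\veps$ small. The whole argument is the bi-type analogue of \cite[Lemma 4.3]{blath2020invasion} (and of the corresponding one-type growth lemmata in \cite{champagnat2019stochastic}), with the only new ingredient being the supercriticality of the bi-type mean matrix $J$, which is guaranteed by the standing assumption \eqref{eq: sufficientpositiveeigenvalue}.
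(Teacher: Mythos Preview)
Your proposal is correct and follows essentially the same route as the paper: couple $(Y_a^K,Y_d^K)$ from below, on the event $E_\veps=\{T_0^K\wedge T_{\veps^\xi}^K<R_{2A\veps^\xi}^K\}$, by a constant-rate supercritical bi-type branching process with rates perturbed by $O(\veps^\xi)$, and then use that the extinction probability of this lower bound, started from $\approx \veps K$ individuals, is $(q^{\veps,-})^{K(\veps_1^K+\veps_2^K)}\to 0$.

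The one substantive difference is that your Step~(3) and the accompanying ``Main obstacle'' discussion are unnecessary. The paper does not estimate the time $T_{\veps^\xi}^K$ at all, nor does it need to bound the random stopping time by a deterministic one. The coupling is pathwise and therefore valid up to the \emph{random} time $t_\veps=T_0^K\wedge T_{\veps^\xi}^K\wedge R_{2A\veps^\xi}^K$ by construction; no a~priori time bound is required. On $E_\veps$ one has $t_\veps=T_0^K\wedge T_{\veps^\xi}^K$, and if the lower-bounding process survives (which happens with probability $1-o_K(1)$), then being supercritical and irreducible it grows to infinity a.s., so $T_{\veps^\xi}^{\veps,-}<T_0^{\veps,-}=\infty$; the coupling inclusion $\{T_{\veps^\xi}^{\veps,-}\le T_0^{\veps,-}\}\cap E_\veps\subset\{T_{\veps^\xi}^K\le T_0^K\}\cap E_\veps$ then finishes the argument. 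Your Kesten--Stigum/second-moment machinery gives a quantitative growth rate that is simply not needed here (it becomes relevant only later, in Lemma~\ref{Lemma: Sufficient condition}). Dropping Step~(3) also removes the spurious $o_\veps(1)$ error term you mention: the limit is exactly~$1$ as $K\to\infty$ for each fixed small~$\veps$, as stated.
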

\begin{proof}
	The proof follows the ideas of \cite[Proposition 4.1]{blath2020invasion} and \cite[Proposition 3.1]{camille2019emergence}. We will consider the process on the event that the invading population is extinct or reaches a sufficient population size before the resident population exits a neighbourhood of its equilibrium, that is \[
	E_\veps\coloneqq\lrset{T_0^K\wedge T_{\veps^\xi}^K<R_{2A\veps^\xi}^K},
	\]
	whose probability converges to $1$ for $\epsi$ small enough as $K\to\infty$ by Lemma \ref{Lemma: veps xi growth lemma}. Thus, it suffices to show $T_{\veps^\xi}^K<T_0^K$ with probability converging to $1$ on the event $E_\veps$. On this event, we can couple the process $(Y_{a}^K,Y_d^K)$ up to the time $t_\veps\coloneqq T_0^K\wedge T_{\veps^\xi}^K\wedge R_{2A\veps^\xi}^K$ such that \begin{align*}
	Y_v^{\veps,-}&\leq \hat{Y}_v\\
	Y_v^{\veps,-}&\leq Y_v^K
	\end{align*}
	where $v\in\{a,d\}$ by defining the transition rates as follows: For the process $(Y_a^{\veps,-},Y_d^{\veps,-})$ we choose the transition rates \[
	(n,m)\mapsto\begin{cases}
	(n+1,m)&\text{ at rate }n\lr{b_1+\tau\frac{\bar{x}_a-2A\veps^\xi}{\bar{x}_a+(2A+1)\veps^\xi}}\\
	(n-1,m)&\text{ at rate }n(d_1+(1-q)C (\bar{x}_a+2A\veps^\xi+\veps^\xi)+qC(4A\veps^\xi+\veps^\xi)) \\
	(n,m-1)&\text{ at rate }md_2\\
	(n-1,m+1)&\text{ at rate }nqC(\bar{x}_a-2A\veps^\xi)\\
	(n+1,m-1)&\text{ at rate }m\sigma_2.\\
	\end{cases}
	\]
	Intuitively, the coupling is correct due to switching from active to dormant being more favourable over death, but not being better than not experiencing any competition at all. The precise reasoning for this coupling is identical to the one in the proof of \cite[Proposition 4.1]{blath2020invasion}. Also, we obtain the inequality $q_a\leq q^{\veps,-}_a<1$ for the extinction probabilities of each individual family started at an active individual in a similar manner. When starting the populations from one dormant individual, we obtain the extinction probabilities $q_d\leq q^{\veps,-}_d<1$. Note that $q^{\veps,-}_{\diamond}$ is indeed strictly less than $1$ for $\diamond\in\{a,d\}$, since the process $(Y_a^{\veps,-},Y_d^{\veps,-})$ is supercritical for $\veps>0$ sufficiently small. Since the extinction probabilities are continuous in the transition rates -- see \cite[Lemma A.3]{camille2019emergence} -- we obtain \[
	0\leq \liminf\limits_{\veps\downto0}\abs{q^{\veps,-}-q}\leq\limsup\limits_{\veps\downto0}\abs{q^{\veps,-}-q}=0.
	\]
	Next, we define the corresponding hitting time for the coupled process. More specifically, for $x\geq 0$, let \[
	T_x^{\veps,-}\coloneqq\inf\{t\geq 0\mid Y_{a,t}^{\veps,-}+Y_{d,t}^{\veps,-}=\lfloor Kx\rfloor \}.
	\]
	Then, due to the coupling on $E_\veps$, we see that \[
	\P(T_{\veps^\xi}^{\veps,-}\leq T_0^{\veps,-}, E_\veps)\leq \P(T_{\veps^\xi}^K\leq T_0^K, E_\veps).
	\]
	Note that since $\P(E_\veps)\to 1$ as $K\to\infty$, for any $\veps>0$ sufficiently small, we have for $\delta$ small enough \begin{align*}
	\liminf_{K\to\infty}\P(T_{\veps^\xi}^{\veps,-}\leq T_0^{\veps,-}, E_\veps)&\geq\liminf_{K\to\infty}\P(T_{\veps^\xi}^{\veps,-}\leq T_0^{\veps,-})-\delta\geq \liminf_{K\to\infty}\P(T_0^{\veps,-}=\infty)-\delta\\
	&\geq\liminf_{K\to\infty}\lr{1-\lr{q^{\veps,-}}^{K(\veps_1^K+\veps_2^K)}}-\delta\\
	&\geq\liminf_{K\to\infty}\lr{1-\lr{q^{\veps,-}}^{K(\veps-\delta)}}-\delta\\
	&=1-\delta\xrightarrow{\delta\to 0}1,
	\end{align*}
	where $q^{\veps,-}\coloneqq q^{\veps,-}_a\vee q^{\veps,-}_d$.
	Also due to $\P(E_\veps)\to 1$ as $K\to\infty$, it therefore follows that \[
	\lim_{K\to\infty}\P(T_{\veps^\xi}^K\leq T_0^K, E_\veps)=\lim_{K\to\infty}\P(T_{\veps^\xi}^K\leq T_0^K)=1.
	\]

\end{proof}

Finally, we are able to show that with high probability as $\veps\downto 0$, the assumptions of Lemma \ref{Lemma: RefLemma} are satisfied.

\begin{lemma}\label{Lemma: Sufficient condition}
	Assume that the matrix $J$ from (\ref{eq: meanmatrix}) has a positive eigenvalue and that the initial condition of the process satisfies $\tfrac{1}{K}(X_{a,0}^K,X_{d,0}^K,Y_{a,0}^K,Y_{d,0}^K)=(m_1^K,m_2^K,\veps_1,\veps_2)$ with $\veps_1+\veps_2\leq\veps$ for some $\veps,\veps_1,\veps_2>0$ and $m_1^K,m_2^K$ as in Lemma \ref{Lemma: veps xi growth lemma}. Then for any $\delta>0$ with $\pi_a\pm\delta\in(0,1)$, it holds that \[
	\liminf_{K\to\infty}\P\lr{\exists t\in [T_\veps^K,T_{\sqrt{\veps}}^K]\colon \pi_a-\delta< \frac{Y_{a,t}^K}{Y_{a,t}^K+Y_{d,t}^K}<\pi_a+\delta}\geq 1-o_{\veps}(1),
	\]
	where $o_{\veps}(1)\to 0$ as $\veps\to 0$.
	In fact, the bounds on the frequency process $\tfrac{Y_a^K}{Y_{a}^K+Y_d^K}$ will be satisfied by the time $T_\veps^K+\log\log(1/\veps)$ with probability converging to $1$ as $K\to\infty$.
\end{lemma}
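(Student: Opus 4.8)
The plan is to compare the two-type process $(Y_a^K,Y_d^K)$, started from a population of size $\lfloor\veps_1 K\rfloor+\lfloor\veps_2 K\rfloor$, with the supercritical approximating branching process $(\hat Y_a,\hat Y_d)$ from \eqref{eq: meanmatrix}, on the event $E_\veps=\{T_0^K\wedge T_{\veps^\xi}^K< R_{2A\veps^\xi}^K\}$ whose probability tends to $1$ by Lemma \ref{Lemma: veps xi growth lemma} (with, say, $\xi=\tfrac12$). First I would invoke Lemma \ref{Lemma: Fast growth} to restrict to the event $\{T_{\sqrt\veps}^K<T_0^K\wedge R_{2A\veps^\xi}^K\}$, which has probability $1-o_\veps(1)$; on this event the total size $Y_{a,\cdot}^K+Y_{d,\cdot}^K$ grows from $\lfloor\veps K\rfloor$ to $\lfloor\sqrt\veps K\rfloor$ while the resident stays within $2A\veps^\xi$ of its equilibrium, so the transfer rate seen by $Y_a^K$ is $\tau\tfrac{X_a^K}{X_a^K+Y_a^K}=\tau(\bar x_a+O(\sqrt\veps))/(\bar x_a+O(\sqrt\veps))=\tau+O(\sqrt\veps)$ and the competitive death term is $C(\bar x_a+O(\sqrt\veps))$. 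Thus on $E_\veps$ the pair $(Y_a^K,Y_d^K)$ can be sandwiched between two \emph{honest} two-type branching processes $(Y_a^{\veps,\pm},Y_d^{\veps,\pm})$ with constant rates that are $O(\sqrt\veps)$-perturbations of the rates defining $J$, exactly as in the proof of Lemma \ref{Lemma: Fast growth}.

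The core of the argument is then the Kesten--Stigum theorem \cite[Theorem 2.1]{10.2307/1428501}: for the honest supercritical two-type process $(\hat Y_a,\hat Y_d)$ (and its perturbations $(Y_a^{\veps,\pm},Y_d^{\veps,\pm})$, which are supercritical for $\veps$ small since the principal eigenvalue is continuous in the rates), conditionally on non-extinction,
\[
\frac{Y_{a,t}^{\veps,\pm}}{Y_{a,t}^{\veps,\pm}+Y_{d,t}^{\veps,\pm}}\xrightarrow{t\to\infty}\pi_a^{\veps,\pm}
\]
almost surely, where $(\pi_a^{\veps,\pm},\pi_d^{\veps,\pm})$ is the normalized left eigenvector for the principal eigenvalue of the perturbed mean matrix, and $\pi_a^{\veps,\pm}\to\pi_a$ as $\veps\to0$. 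I would make this quantitative in the standard way: by Chebyshev together with the first- and second-moment estimates for two-type branching processes (the analogues of Lemmas \ref{explicitexpectation} and \ref{varianceestimate}, valid here since $b_2=0$ does not matter), there is a deterministic time $t_0=t_0(\delta)$, finite and independent of $K$, after which the frequency $\hat Y_{a,t}/(\hat Y_{a,t}+\hat Y_{d,t})$ lies in $(\pi_a-\delta/2,\pi_a+\delta/2)$ with probability $1-o_\delta(1)$ on $\{\text{non-extinction}\}$; one can in fact take $t_0$ of order $\log\log(1/\veps)$ because the fluctuations of the frequency decay geometrically while the population, once it reaches size of order $K\veps$, is large enough that the remaining discreteness is negligible on that time scale. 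Transferring this through the sandwich coupling and choosing $\veps$ small enough that $|\pi_a^{\veps,\pm}-\pi_a|<\delta/2$ yields, on $E_\veps\cap\{T_{\sqrt\veps}^K<T_0^K\wedge R_{2A\veps^\xi}^K\}$, the existence of a time $t\in[T_\veps^K,T_\veps^K+t_0]\subseteq[T_\veps^K,T_{\sqrt\veps}^K]$ with $\pi_a-\delta<Y_{a,t}^K/(Y_{a,t}^K+Y_{d,t}^K)<\pi_a+\delta$, where the inclusion $T_\veps^K+t_0\le T_{\sqrt\veps}^K$ holds with high probability because the process, being supercritical, reaches size $\lfloor\sqrt\veps K\rfloor$ only after the frequency has equilibrated — here I would use that the growth from $\veps K$ to $\sqrt\veps K$ takes time $\asymp\tfrac{1}{2\la}\log(1/\veps)\gg\log\log(1/\veps)$.

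The main obstacle is the last point: controlling the frequency process $Y_a^K/(Y_a^K+Y_d^K)$ \emph{uniformly in $K$} on the random time window $[T_\veps^K,T_{\sqrt\veps}^K]$, rather than for a fixed branching process at a fixed time. This requires (i) checking that the coupling of Lemma \ref{Lemma: Fast growth} simultaneously dominates $Y_a^K$ and $Y_d^K$ componentwise from below so that the frequency is squeezed, not just the total size — which may force using a two-sided version of that coupling, with an upper bounding honest process as well; (ii) arguing that conditioning on the rare-on-the-branching-scale but typical-here event $\{T_{\sqrt\veps}^K<T_0^K\}$ does not distort the Kesten--Stigum convergence, which follows because on this event the process has already survived and has size $\gg1$, so its future is close to that of a non-extinct branching process; and (iii) getting the $\log\log(1/\veps)$ rate rather than merely a constant time, which needs the geometric decay rate of $\Var(Y_{a,t}/(Y_{a,t}+Y_{d,t}))$ to be tracked explicitly. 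Everything else — the moment bounds, the strong-law-to-rate conversion, and the sandwiching — is routine and parallels \cite[Proposition 4.1]{blath2020invasion} and \cite[Proposition 3.1]{camille2019emergence}.
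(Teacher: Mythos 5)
Your route is genuinely different from the paper's. The paper's proof follows \cite[Proposition 4.4]{blath2020invasion} and \cite[Proposition 3.2]{camille2019emergence} (spelled out in the proof of the analogous Lemma~\ref{Lemma: Sufficient condition2}): after setting up the same kind of events you construct (a good event on which the resident stays near equilibrium, and events guaranteeing that $T_\veps^K+\log\log(1/\veps)\le T_{\sqrt\veps}^K\wedge T_{\veps,\veps/M}^K$), the heart of the argument is a \emph{semimartingale decomposition of the frequency process itself}: one writes $Y_{a,t}^K/(Y_{a,t}^K+Y_{d,t}^K)$ as the initial frequency plus a finite-variation drift $V^K(t)$, which is shown to push the frequency towards $\pi_a$ at a definite linear rate $\theta$, plus a martingale $M^K(t)$ whose bracket is small once the invading total size is of order $K\veps$; Doob's inequality then finishes. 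You instead propose to transfer a quantitative Kesten--Stigum statement for an honest two-type branching process through a sandwich coupling.

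The obstacle you flag in point (i) is the actual gap, and it is not closed merely by adding an upper bounding process. A componentwise sandwich $Y_v^{\veps,-}\le Y_v^K\le Y_v^{\veps,+}$ for $v\in\{a,d\}$ only yields
\[
\frac{Y_{a,t}^{\veps,-}}{Y_{a,t}^{\veps,-}+Y_{d,t}^{\veps,+}}
\ \le\ \frac{Y_{a,t}^K}{Y_{a,t}^K+Y_{d,t}^K}
\ \le\ \frac{Y_{a,t}^{\veps,+}}{Y_{a,t}^{\veps,+}+Y_{d,t}^{\veps,-}},
\]
and even if each bounding process on its own has frequency near $\pi_a$, the two outer ratios need not be close: writing $Y_v^{\veps,\pm}\approx c_\pm\pi_v$ with $c_-<c_+$, the bounds become $c_-\pi_a/(c_-\pi_a+c_+\pi_d)$ and $c_+\pi_a/(c_+\pi_a+c_-\pi_d)$, which cluster at $\pi_a$ only if $c_-/c_+\to 1$. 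Establishing that two coupled branching processes, started from the same point with rates differing by $O(\sqrt\veps)$, remain componentwise within a multiplicative factor $1+O(\sqrt\veps\,\log\log(1/\veps))$ of one another over the relevant window is an extra estimate that is not supplied by the results you invoke; \cite[Proposition 4.1]{blath2020invasion} and \cite[Proposition 3.1]{camille2019emergence} handle total-size survival and growth, while the frequency analysis is in Propositions 4.4 and 3.2 of those papers --- precisely the semimartingale arguments the present lemma adapts. A secondary gap is that your ``strong-law-to-rate'' step asks for second-moment control of the \emph{ratio} $\hat Y_a/(\hat Y_a+\hat Y_d)$, which does not follow from Lemmas~\ref{explicitexpectation} and \ref{varianceestimate} (those bound the components, not the ratio); computing the generator and bracket of the frequency is exactly what produces the decomposition the paper works with directly, so once you do that work you have already abandoned the Kesten--Stigum route in favour of the paper's.
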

\begin{proof}
	This proof again is a simple adaptation of the proofs from \cite[Proposition 4.4.]{blath2020invasion} or \cite[Proposition 3.2]{camille2019emergence}. 
\end{proof}

Now, we can show our general result on the competition with non-negative transfer. We do not assume the transition rates to be constant anymore, but instead assume that there are constants $a_1,b_1,d_1,d_2>0$, $\tau\geq 0$ with $a_1,b_1>d_1$ such that for some $s>0$ \begin{align}
\sup_{0\leq t\leq s\log K}&\abs{a_1^K(t)-a_1}+\abs{b_1^K(t)-b_1}+\abs{d_1^K(t)-d_1}\nonumber\\
&+\abs{d_2^K(t)-d_2}+\abs{\tau^K(t)-\tau}+\abs{\tfrac{\gamma_1^K(t)}{K}}+\abs{\tfrac{\gamma_2^K(t)}{K}}\xrightarrow{K\to\infty}0\label{eq: Convergence}
\end{align}
in probability. If $\tau=0$, then we assume $\tau^K\equiv 0$ for all $K$.
\begin{proposition}\label{Proposition: 4Dpositive}
	Assume that the conditions (\ref{eq: sufficientpositiveeigenvalue}), (\ref{eq: sufficientnegativeeigenvalue}) and (\ref{eq: Convergence}) are true. Consider the process $(X_a^K,X_d^K,Y_a^K,Y_d^K)$ with initial condition $\tfrac{1}{K}(X_{a,0}^K,X_{d,0}^K)\in[\bar{x}_a-\veps,\bar{x}_a+\veps]\times[\bar{x}_d-\veps,\bar{x}_d+\veps]$ and $\tfrac{1}{K}(Y_{a,0}^K+Y_{d,0}^K)=m\veps$ for some $\epsi$ and $m>0$ sufficiently small. Then, for any $\veps'>0$ there exists a finite time $T=T(m,\veps,\veps')$ such that \[
	\lim\limits_{K\to\infty}\P\lr{X_{a,T}^K+X_{d,T}^K\leq \veps'K,\ \frac{Y_{a,T}^K}{K}\in [\bar{y}_a-\veps',\bar{y}_a+\veps'],\ \frac{Y_{d,T}^K}{K}\in [\bar{y}_d-\veps',\bar{y}_d+\veps']}\geq 1-o_\veps(1).
	\]
\end{proposition}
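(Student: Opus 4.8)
The plan is to prove Proposition~\ref{Proposition: 4Dpositive} by a three-phase analysis of the four-dimensional logistic process, combining the deterministic convergence result of Lemma~\ref{Lemma: OnlyEquilibria} (and the ODE convergence in Lemma~\ref{Lemma: Generalized EthierKurz}) with the stochastic growth estimates in Lemmata~\ref{Lemma: veps xi growth lemma}--\ref{Lemma: Sufficient condition}. First I would reduce to constant rates via the usual coupling argument (as in the proof of Lemma~\ref{Lemma: convergence to equilibrium}): by~(\ref{eq: Convergence}) we can, on an event of probability tending to $1$, sandwich $(X_a^K,X_d^K,Y_a^K,Y_d^K)$ componentwise between two four-dimensional processes with constant rates $a_1\pm\veps$, $b_1\pm\veps$, $d_1\mp\veps$, $d_2\mp\veps$, $\tau\pm\veps$ and zero immigration, with the immigration absorbed into the birth rates; the sign conditions~(\ref{eq: sufficientpositiveeigenvalue}) and~(\ref{eq: sufficientnegativeeigenvalue}) are open, hence stay valid for $\veps$ small. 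This lets us work with the constant-rate process, establish the result for it, and transfer back by letting $\veps\downarrow 0$.

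\medskip\noindent\textbf{Phase 1 (emergence of the invader to frequency equilibrium).} Starting from $\tfrac1K(Y_{a,0}^K+Y_{d,0}^K)=m\veps$ while $(X_a^K,X_d^K)$ is within $\veps$ of $(\bar x_a,\bar x_d)$, I would apply Lemma~\ref{Lemma: veps xi growth lemma} with $\xi=\tfrac12$ to guarantee that the resident stays in a $2A\sqrt\veps$-neighbourhood of its equilibrium up to the hitting time $T_{\sqrt\veps}^K\wedge T_0^K$, then Lemma~\ref{Lemma: Fast growth} to conclude that $T_{\sqrt\veps}^K<T_0^K$ with probability $1-o(1)$ (the invader does not die out, it grows to size $\sqrt\veps K$), and finally Lemma~\ref{Lemma: Sufficient condition} to see that by the time $T_\sqrt\veps^K$ the frequency $\tfrac{Y_a^K}{Y_a^K+Y_d^K}$ has entered a $\delta$-neighbourhood of $\pi_a$. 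On this event the rescaled state $\tfrac1K(X_a^K,X_d^K,Y_a^K,Y_d^K)$ at time $T_\sqrt\veps^K$ lies in the region $x_a\in[\bar x_a-A'\sqrt\veps,\bar x_a+A'\sqrt\veps]$, $x_d\in[\bar x_d-A'\sqrt\veps,\bar x_d+A'\sqrt\veps]$, $y_a+y_d\in(0,\sqrt\veps)$, $\tfrac{y_d}{y_a}\approx\tfrac{\pi_d}{\pi_a}$; by Lemma~\ref{Lemma: RefLemma} (applied with this slightly enlarged constant and $\delta$ chosen small) the inequality~(\ref{eq: Growth condition}) holds.

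\medskip\noindent\textbf{Phase 2 (deterministic drive towards $(0,0,\bar y_a,\bar y_d)$).} I would apply the Markov property at time $T_\sqrt\veps^K$ and then use Lemma~\ref{Lemma: Convergence Lemma}: since the limiting ODE~(\ref{eq: Dynamical System}), started from any point satisfying~(\ref{eq: Growth condition}), converges to $(0,0,\bar y_a,\bar y_d)$, there is a \emph{fixed} time $T_1=T_1(\veps')$ such that the ODE solution at time $T_1$ is within $\veps'/2$ of $(0,0,\bar y_a,\bar y_d)$, in particular its $x$-components are below $\veps'/2$. A version of Lemma~\ref{Lemma: Generalized EthierKurz} adapted to the four-dimensional system (uniform in the starting point over the relevant compact set, and valid as long as the process stays in a fixed compact box, which the a priori bounds from Phase 1 and the boundedness argument in the proof of Lemma~\ref{Lemma: Generalized EthierKurz} guarantee) then gives $\tfrac1K(X_a^K,X_d^K,Y_a^K,Y_d^K)\to$ (ODE solution) uniformly on $[0,T_1]$ in probability. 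Hence at time $T_\sqrt\veps^K+T_1$ we have, with probability $1-o(1)$, $X_a^K+X_d^K\le\veps' K$ and $(Y_a^K,Y_d^K)/K$ within $\veps'$ of $(\bar y_a,\bar y_d)$. Since $T_\sqrt\veps^K$ is finite with high probability (bounded, say, by a suitable deterministic $T_0'$ depending only on $\veps,m$, via the growth estimate in Lemma~\ref{Lemma: Fast growth}), setting $T:=T_0'+T_1$ and shrinking the bounds appropriately yields the claim for the constant-rate process; undoing the coupling and letting $\veps\downarrow 0$ finishes the proof.

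\medskip\noindent\textbf{Main obstacle.} The delicate point is the interface between the stochastic Phase~1 and the deterministic Phase~2: Lemma~\ref{Lemma: Convergence Lemma} requires the ODE hypothesis~(\ref{eq: Growth condition}) to hold \emph{exactly} at the start, whereas Phase~1 only delivers it up to $\sqrt\veps$- and $\delta$-errors, and moreover the stochastic state is only $O(1/K)$-close to the rescaled deterministic one. One must therefore choose the order of limits carefully --- first fix $\veps$ small enough that Lemma~\ref{Lemma: RefLemma} applies and that the ODE started from the perturbed region still converges on a uniform time scale (using continuity of the ODE flow and the fact that~(\ref{eq: Growth condition}) is an open condition preserved along trajectories, as in \cite[Lemma 4.7]{blath2020invasion}), then let $K\to\infty$, then let $\veps\to0$ --- and one must ensure the four-dimensional analogue of Lemma~\ref{Lemma: Generalized EthierKurz} genuinely holds with the transfer term $\tau xy/(x+y)$, which is only Lipschitz away from the origin; this is handled by noting that on $[0,T_1]$ the $y$-components stay bounded away from $0$ (the invader is growing), so the vector field is Lipschitz on the relevant region.
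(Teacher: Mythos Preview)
Your proposal is correct and follows essentially the same route as the paper: first a coupling with constant-rate four-dimensional processes (absorbing immigration into birth rates and perturbing $a_1,b_1,d_1,d_2,\tau$ by $\pm\delta$), then use of Lemmata~\ref{Lemma: veps xi growth lemma}--\ref{Lemma: Sufficient condition} to reach a state satisfying~(\ref{eq: Growth condition}) via Lemma~\ref{Lemma: RefLemma}, then the ODE convergence (Lemma~\ref{Lemma: Convergence Lemma} plus a four-dimensional version of Lemma~\ref{Lemma: Generalized EthierKurz}) to reach a neighbourhood of $(0,0,\bar y_a,\bar y_d)$, and finally undoing the coupling. One small correction: the deterministic time bound for Phase~1 does not come from Lemma~\ref{Lemma: Fast growth} (which gives no such bound on $T_{\sqrt\veps}^K$) but from the last sentence of Lemma~\ref{Lemma: Sufficient condition}, which says the frequency condition is met by time $T_\veps^K+\log\log(1/\veps)$; since the initial invader size is already $m\veps K$ one may take $T_\veps^K=0$, and this is exactly how the paper extracts the finite deterministic time.
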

\begin{remark}\label{Remark: Important Proposition Remark}
	The choice on the initial condition for $(Y_a^K,Y_d^K)$ can be generalized to hold for an entire interval of initial conditions. With the proposition as stated, for $\veps,\veps',m>0$ sufficiently small, it holds \[
	Y_{a,T}^K+Y_{d,T}^K\geq K(\bar{y}_a+\bar{y}_d-2\veps')>2m\veps K
	\] at time $T=T(m,\veps,\veps')$ with high probability as  $\veps\to 0$ and $K\to\infty$. Thus, with high probability we have $T_{2m\veps}^K<T(m,\veps,\veps')$. Hence, with high probability, for all initial conditions from an interval $Y_{a,0}^K+Y_{d,0}^K\in[\tfrac{Km\veps}{2},Km\veps]$, the time $T_{2m\veps}^K$ is bounded, even as $K\to\infty$. In particular, we can adapt the proof of Proposition \ref{Proposition: 4Dpositive} such that the claim holds for all initial conditions $Y_{a,0}^K+Y_{d,0}^K\in[\tfrac{Km\veps}{2},Km\veps]$, where the only change is in the application of Lemma \ref{Lemma: Sufficient condition}, which now yields that with high probability the good initial condition is satisfied after a time shorter than $T_{2m\veps}^K+\log\log(1/(2m\veps))$. This time however is finite with high probability by the arguments made above.
\end{remark}

\begin{proof}
	The proof is based on a suitable coupling of the process in combination with the above lemmata. Let $\delta>0$. We want to couple in such a way that coordinatewise both bi-type branching processes are bounded. That is, we want to find a coupling such that \[
	(X_a^{\delta,1},X_d^{\delta,1},Y_{a}^{\delta,1},Y_d^{\delta,1})\leq (X_a^K,X_d^K,Y_a^K,Y_d^K)\leq (X_a^{\delta,2},X_d^{\delta,2},Y_{a}^{\delta,2},Y_d^{\delta,2})
	\]
	coordinatewise. As in Lemma \ref{Lemma: convergence to equilibrium}, this coupling can be achieved by subtracting and adding $\delta$ to the birth and death rates for $K$ large enough, which we can do by the convergence criterion above. For now, we do not alter the horizontal transfer rate $\tau^K$. Note that indeed we are allowed to neglect the immigration rate for to the same reason as in Lemma \ref{Lemma: convergence to equilibrium}. For $\delta>0$ small enough, the corresponding equilibria $(\bar{x}_a^{\delta,*},\bar{x}_d^{\delta,*})$ and $(\bar{y}_a^{\delta,*},\bar{y}_d^{\delta,*})$ with $*\in\{1,2\}$ are closer than $\tfrac{\veps\wedge\veps'}{2}$ to $(\bar{x}_a,\bar{x}_d)$ and $(\bar{y}_a,\bar{y}_d)$ respectively. If $\tau>0$, then we further couple the processes $(X_a^{\delta,*},X_d^{\delta,*},Y_{a}^{\delta,*},Y_d^{\delta,*})$ with $*\in\{1,2\}$ with processes $(X_a^{\delta,*,\diamond},X_d^{\delta,*,\diamond},Y_{a}^{\delta,*,\diamond},Y_d^{\delta,*,\diamond})$ where $\diamond\in\{+,-\}$.  In the case $\diamond=-$ we set the horizontal transfer rate to be $\tau-\delta$ and for $\diamond=+$ it is set to be $\tau+\delta$. This definition yields the inequalities as displayed in Table \ref{tab: inequalities}.  \begin{table}[htbp] \[\begin{array}{ccccc|ccccc}
		X_a^{\delta,1,-} & \geq & X_a^{\delta,1} & \geq & X_a^{\delta,1,+} & X_a^{\delta,2,-} & \geq & X_a^{\delta,2} & \geq & X_a^{\delta,2,+}\\[0.7em]
		X_d^{\delta,1,-} & \geq & X_d^{\delta,1} & \geq & X_d^{\delta,1,+} & X_d^{\delta,2,-} & \geq & X_d^{\delta,2} & \geq & X_d^{\delta,2,+}\\[0.7em]
		Y_a^{\delta,1,-} & \leq & Y_a^{\delta,1} & \leq & Y_a^{\delta,1,+} & Y_a^{\delta,2,-} & \leq & Y_a^{\delta,2} & \leq & Y_a^{\delta,2,+}\\[0.7em]
		Y_d^{\delta,1,-} & \leq & Y_d^{\delta,1} & \leq & Y_d^{\delta,1,+} & Y_d^{\delta,2,-} & \leq & Y_d^{\delta,2} & \leq & Y_d^{\delta,2,+}\\
		\end{array} 
		\]
		\caption{An overview of the almost sure inequalities that we obtain by coupling.}
		\label{tab: inequalities}
	\end{table} Now arguments analogous to Lemma \ref{Lemma: Generalized EthierKurz} show that for any initial condition from a compact set, the processes $(X_a^{\delta,*,\diamond},X_d^{\delta,*,\diamond},Y_{a}^{\delta,*,\diamond},Y_d^{\delta,*,\diamond})$ converge in probability to the solutions of the respective differential equations with $K\to\infty$. Furthermore, Lemma \ref{Lemma: Sufficient condition} implies that with high probability in $m\veps$ the criterion for a good initial condition (\ref{eq: Growth condition}) is satisfied for each of the coupled processes after a time shorter than $\log\log(1/(m\veps))$. Indeed, when applying the lemma, notice that we can substitute $T_\veps^K$ by $0$ due to our choice of the starting condition. Hence, by Lemma \ref{Lemma: Convergence Lemma} the solutions of the differential equations converge towards the equilibria $(0,0,\bar{y}_a^{\delta,*,\diamond},\bar{y}_d^{\delta,*,\diamond})$. In particular, there exists a finite time such that for all initial conditions from a compact set as in the proposition, the process is in a neighbourhood of $(0,0,\bar{y}_a^{\delta,*,\diamond},\bar{y}_d^{\delta,*,\diamond})$ with high probability in $m\veps$. Thus, the claim follows.
\end{proof}

With a similar proof we also obtain the same result for negative transfer. In this situation we assume the process $(Y_a^K,Y_d^K)$ to be initially resident and the process $(X_a^K,X_d^K)$ to be invading.
\begin{proposition}\label{Proposition: 4Dnegative}
	Assume that the reverse inequalities of (\ref{eq: sufficientpositiveeigenvalue}) and (\ref{eq: sufficientnegativeeigenvalue}) are true that is \begin{align}
	-(b_1+\tau-d_1-C\bar{x}_a)>\frac{\sigma_2qC\bar{x}_a}{d_2+\sigma_2}\quad\text{ and }\quad
	-(a_1-\tau-d_1-C\bar{y}_a)>\frac{\sigma_2 p C\bar{y}_a}{d_2+\sigma_2},\label{eq: Inverseinequalities}
	\end{align} which indicates the approximating process $(\hat{Y}_a,\hat{Y}_d)$ to be subcritical and the approximating process $(\hat{X}_a,\hat{X}_d)$ to be supercritical. Further assume (\ref{eq: Convergence}). Consider the process $(X_a^K,X_d^K,Y_a^K,Y_d^K)$ with initial condition $\tfrac{1}{K}(Y_{a,0}^K,Y_{d,0}^K)\in[\bar{y}_a-\veps,\bar{y}_a+\veps]\times[\bar{y}_d-\veps,\bar{y}_d+\veps]$ and $\tfrac{1}{K}(X_{a,0}^K+X_{d,0}^K)=m\veps$ for some $\epsi$ and $m>0$ sufficiently small. Then for any $\veps'>0$, there exists a finite time $T=T(m,\veps,\veps')$ such that \[
	\lim\limits_{K\to\infty}\P\lr{Y_{a,T}^K+Y_{d,T}^K\leq \veps'K,\ \frac{X_{a,T}^K}{K}\in [\bar{x}_a-\veps',\bar{x}_a+\veps'],\ \frac{X_{d,T}^K}{K}\in [\bar{x}_d-\veps',\bar{x}_d+\veps']}\geq 1-o_\veps(1).
	\]
\end{proposition}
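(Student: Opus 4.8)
The plan is to mirror the proof of Proposition \ref{Proposition: 4Dpositive} almost verbatim, interchanging the roles of the two bi-type processes, so that the resident population is now $(Y_a^K,Y_d^K)$ sitting near its equilibrium $(\bar y_a,\bar y_d)$ and the invader is $(X_a^K,X_d^K)$ started at total size $m\veps K$. The reversed inequalities (\ref{eq: Inverseinequalities}) are precisely condition (\ref{eq: sufficientpositiveeigenvalue}) for the invader $(\hat X_a,\hat X_d)$ (with $\bar y_a$ in place of $\bar x_a$, $p$ in place of $q$) and condition (\ref{eq: sufficientnegativeeigenvalue}) for the now-subcritical process $(\hat Y_a,\hat Y_d)$; so the hypotheses of Lemma \ref{Lemma: OnlyEquilibria} hold with the labels $x\leftrightarrow y$ and $p\leftrightarrow q$ swapped. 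Consequently, the dynamical system (\ref{eq: Dynamical System}), read with $(x_a,x_d)$ now representing the small invading population and $(y_a,y_d)$ the resident, has only the non-negative equilibria $(0,0,0,0)$, $(\bar y_a,\bar y_d,0,0)$ and $(0,0,\bar x_a,\bar x_d)$ (in the swapped coordinates), the last being asymptotically stable, by Lemma \ref{Lemma: OnlyEquilibria} applied with the relabelling.

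First I would invoke the convergence hypothesis (\ref{eq: Convergence}) to set up, for small $\delta>0$ and $K$ large, a coordinatewise coupling of $(X_a^K,X_d^K,Y_a^K,Y_d^K)$ between two processes with constant rates $a_1\pm\delta$, $b_1\pm\delta$, $d_1\mp\delta$, $d_2\mp\delta$, and — when $\tau>0$ — a further coupling adding or subtracting $\delta$ to the transfer rate $\tau$, exactly as in the proof of Proposition \ref{Proposition: 4Dpositive}; the immigration terms $\gamma_1^K,\gamma_2^K$ are discarded at the cost of the birth-rate perturbation, justified (as in Lemma \ref{Lemma: convergence to equilibrium}) because the invading population grows away from zero. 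For $\delta$ small the equilibria of the bounding systems lie within $\tfrac{\veps\wedge\veps'}2$ of $(\bar x_a,\bar x_d)$ and $(\bar y_a,\bar y_d)$. Next I would apply Lemma \ref{Lemma: Sufficient condition} (with $p$ and $q$ swapped, and $\bar y_a$ in place of $\bar x_a$) to the coupled processes: since the invader $(\hat X_a,\hat X_d)$ is supercritical by the first inequality in (\ref{eq: Inverseinequalities}), with high probability in $m\veps$ the frequency ratio $X_{a}^K/(X_a^K+X_d^K)$ reaches a neighbourhood of the Perron frequency of the mean matrix $\tilde J$ before the resident leaves its equilibrium window, within time $O(\log\log(1/(m\veps)))$ — and here, as in Proposition \ref{Proposition: 4Dpositive}, the hitting time $T_\veps^K$ may be replaced by $0$ because the invader already starts at size $\asymp\veps K$. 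By Lemma \ref{Lemma: RefLemma} (relabelled) this means the rescaled coupled processes land in the region where the growth condition (\ref{eq: Growth condition}) is met, so by Lemma \ref{Lemma: Convergence Lemma} the limiting ODE flows to the stable equilibrium $(0,0,\bar x_a^{\delta,*,\diamond},\bar x_d^{\delta,*,\diamond})$ (swapped coordinates).

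Finally I would combine the ODE convergence with Lemma \ref{Lemma: Generalized EthierKurz}: for every initial datum from the compact set specified in the proposition, there is a finite deterministic time after which the upper and lower bounding processes are simultaneously in an arbitrarily small neighbourhood of their respective limit equilibria with probability $1-o_\veps(1)$; squeezing $(X_a^K,X_d^K,Y_a^K,Y_d^K)$ between them and letting $\delta\downarrow0$ gives $Y_{a,T}^K+Y_{d,T}^K\le\veps'K$ and $X_{a,T}^K/K\in[\bar x_a-\veps',\bar x_a+\veps']$, $X_{d,T}^K/K\in[\bar x_d-\veps',\bar x_d+\veps']$ with the claimed probability. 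The only real point requiring care — and the step I expect to be the main obstacle — is verifying that Lemmata \ref{Lemma: OnlyEquilibria}, \ref{Lemma: Convergence Lemma}, \ref{Lemma: RefLemma}, \ref{Lemma: veps xi growth lemma}, \ref{Lemma: Fast growth}, \ref{Lemma: Sufficient condition} really do apply symmetrically under the interchange $x\leftrightarrow y$, $p\leftrightarrow q$: these lemmata were stated with $(X_a^K,X_d^K)$ resident, so one must check that the sign structure used in their proofs (e.g.\ the role of $p\ge q$ versus $q>p$ in Lemma \ref{Lemma: OnlyEquilibria}, and the direction of the transfer term $\tau x_ay_a/(x_a+y_a)$ which now acts \emph{against} the invader) is preserved. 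Since the system (\ref{eq: Dynamical System}) is genuinely asymmetric in the transfer term, one cannot simply quote symmetry; instead each appeal must be made after the explicit relabelling of variables and constants, and condition (\ref{eq: Growth condition}) must be re-derived in the swapped form. Once that bookkeeping is done the argument is identical to Proposition \ref{Proposition: 4Dpositive}, so I would write the proof as: ``The proof is entirely analogous to that of Proposition \ref{Proposition: 4Dpositive}, exchanging the roles of $(X_a^K,X_d^K)$ and $(Y_a^K,Y_d^K)$ and of $p$ and $q$ throughout, and using the reversed inequalities (\ref{eq: Inverseinequalities}); the relevant lemmata apply after this relabelling.''
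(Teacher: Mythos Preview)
Your overall strategy---couple with constant-rate processes, show the invader's frequency reaches the Perron direction, then invoke ODE convergence to the stable equilibrium---matches the paper's. You also correctly flag the asymmetry of the transfer term as the central obstacle. Where your proposal falls short is in the resolution of that obstacle: swapping $x\leftrightarrow y$, $p\leftrightarrow q$, $a_1\leftrightarrow b_1$ in (\ref{eq: Dynamical System}) does \emph{not} reproduce (\ref{eq: Dynamical System}), because the transfer term $\tau\,x_ay_a/(x_a+y_a)$ keeps its sign. In the relabelled system the invader is \emph{losing} to transfer, so Lemmata \ref{Lemma: Convergence Lemma}--\ref{Lemma: Sufficient condition} (whose proofs place the transfer contribution in the invader's birth rate) do not apply after mere renaming.

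The paper therefore does exactly what your caveat anticipates but your one-line summary suppresses: it states and proves fresh analogues (Lemmata \ref{Lemma: Appendix 1}--\ref{Lemma: Sufficient condition2}) tailored to an invader with negative transfer. The changes are not purely notational. The growth condition (\ref{eq: Growth condition}) is replaced by the distinct condition (\ref{eq: Growth condition2}). In the analogue of Lemma \ref{Lemma: Fast growth} the lower-bounding process carries the transfer contribution in its \emph{death} rate rather than its birth rate. The analogue of Lemma \ref{Lemma: veps xi growth lemma} is proved by adapting \cite[Lemma C.2]{blath2020interplay} rather than \cite[Lemma 4.2]{blath2020invasion}, because the positive-transfer argument no longer works. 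And the analogue of Lemma \ref{Lemma: Sufficient condition} delivers only probability $1-o_\veps(1)$, owing to weaker available bounds on the hitting time $T_{\veps,\veps/M}^K$; this is precisely why the conclusion of Proposition \ref{Proposition: 4Dnegative} reads $\ge 1-o_\veps(1)$ rather than $=1$. Your proposed proof is correct in spirit, but the sentence ``the relevant lemmata apply after this relabelling'' should be replaced by an appeal to these new lemmata.
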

The proof follows a very similar structure of the previous proposition. We give an outline of the necessary results. 
	\begin{lemma}\label{Lemma: Appendix 1}
		Consider the system (\ref{eq: Dynamical System}) and assume (\ref{eq: Inverseinequalities}). If the initial condition $(x_a,x_d,y_a,y_d)$ satsfies \begin{align}
		\frac{pC(x_a+y_a)}{d_2+\sigma_2}>\frac{x_d}{x_a}>\frac{d_1-a_1+C(x_a+y_a)+\tau\frac{y_a}{x_a+y_a}}{\sigma_2},\label{eq: Growth condition2}
		\end{align}
		then \[
		\lim\limits_{t\to\infty}(x_a(t),x_d(t),y_a(t),y_d(t))=(\bar{x}_a,\bar{x}_d,0,0).
		\]
	\end{lemma}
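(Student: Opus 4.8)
The plan is to prove Lemma \ref{Lemma: Appendix 1} by adapting the proof of Lemma \ref{Lemma: Convergence Lemma}, i.e.\ of \cite[Lemma 4.7]{blath2020invasion}, interchanging the roles of the pairs $(x_a,x_d)$ and $(y_a,y_d)$ and of the parameter pairs $(a_1,p)$ and $(b_1,q)$; the one genuinely new feature is that the transfer term $\tau\tfrac{x_ay_a}{x_a+y_a}$ in \eqref{eq: Dynamical System} now \emph{depletes} the surviving population $(x_a,x_d)$ instead of feeding it, which is exactly why the $\tau$-term in \eqref{eq: Growth condition2} enters with the opposite sign to the one in \eqref{eq: Growth condition}. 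The starting observation is that \eqref{eq: Growth condition2} is nothing but the requirement $\dot x_a(0)>0$ and $\dot x_d(0)>0$: dividing the first line of \eqref{eq: Dynamical System} by $x_a$ gives $\dot x_a>0\Leftrightarrow \tfrac{x_d}{x_a}>\tfrac{d_1-a_1+C(x_a+y_a)+\tau y_a/(x_a+y_a)}{\sigma_2}$, while $\dot x_d>0\Leftrightarrow \tfrac{x_d}{x_a}<\tfrac{pC(x_a+y_a)}{d_2+\sigma_2}$.

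First I would show that the region $\calG:=\{\dot x_a>0,\ \dot x_d>0\}$ is forward invariant for the flow of \eqref{eq: Dynamical System}; passing to $\xi:=x_d/x_a$, this is the statement that the open interval in \eqref{eq: Growth condition2} is trapping for $\xi$, and it comes down to checking that on the face $\{\dot x_a=0\}$ one has $\tfrac{\d}{\d t}\dot x_a\ge 0$ and on the face $\{\dot x_d=0\}$ one has $\tfrac{\d}{\d t}\dot x_d\ge 0$, i.e.\ the vector field points into $\calG$ along $\partial\calG$. Second, on $\calG$ both $x_a$ and $x_d$ are increasing; together with the standard a priori upper bound on the total mass $x_a+x_d+y_a+y_d$ (obtained by comparing $\tfrac{\d}{\d t}(x_a+x_d+y_a+y_d)$ with a scalar logistic vector field, using $p,q<1$) this forces $x_a(t)\uparrow x_a^\infty>0$ and $x_d(t)\uparrow x_d^\infty>0$. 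Third, the $\omega$-limit set of the bounded trajectory is nonempty, compact, connected and invariant, and since $x_a,x_d$ converge it lies in $\{x_a=x_a^\infty,\ x_d=x_d^\infty\}$; invariance then forces $y_a$ and $y_d$ to be constant on it as well (solve $\dot x_d=0$ for $y_a$ and then $\dot x_a=\dot y_a=0$ for $y_d$, using $x_a^\infty>0$), so the $\omega$-limit set is a single equilibrium. By the analogue of Lemma \ref{Lemma: OnlyEquilibria} valid under the hypotheses \eqref{eq: Inverseinequalities} --- whose proof is the symmetric counterpart of the one given there, with the case distinction $p\ge q$ / $q>p$ and with \eqref{eq: Inverseinequalities} in place of \eqref{eq: sufficientpositiveeigenvalue}--\eqref{eq: sufficientnegativeeigenvalue} --- the only coordinatewise nonnegative equilibria are $(0,0,0,0)$, $(\bar x_a,\bar x_d,0,0)$ and $(0,0,\bar y_a,\bar y_d)$, with $(\bar x_a,\bar x_d,0,0)$ asymptotically stable; since $x_a^\infty,x_d^\infty>0$ the limit must be $(\bar x_a,\bar x_d,0,0)$, as claimed.

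The hard part is the forward invariance in the first step: on the face $\{\dot x_a=0\}$ the sign of $\tfrac{\d}{\d t}\dot x_a$ has to be read off an expression still containing the competition term $-Cx_a(x_a+y_a)$ and the transfer-loss term $-\tau x_ay_a/(x_a+y_a)$, and --- unlike in the transfer-free case --- its nonnegativity must be extracted from the reversed eigenvalue inequalities \eqref{eq: Inverseinequalities}; this computation is the crux where the hypotheses enter essentially, and it is also where the splitting of cases present in \cite[Lemma 4.7]{blath2020invasion} reappears. A further point worth isolating is the heuristic behind ``$y_a,y_d\to 0$'': once $x_a+y_a$ is close to $\bar x_a$, the pair $(y_a,y_d)$ is governed to leading order by the linear system with matrix $J$ from \eqref{eq: meanmatrix}, whose eigenvalues are strictly negative under the first inequality in \eqref{eq: Inverseinequalities}, so that $y_a+y_d$ decays; turning this heuristic into the rigorous statement above relies on the convergence of the $x$-subsystem driven by a vanishing $y$, i.e.\ on the asymptotic stability of $(\bar x_a,\bar x_d,0,0)$.
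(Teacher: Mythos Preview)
Your high-level strategy---swap the roles of $(x_a,x_d)$ and $(y_a,y_d)$ in the proof of Lemma~\ref{Lemma: Convergence Lemma} (i.e.\ of \cite[Lemma 4.7]{blath2020invasion})---is exactly what the paper does; the paper's own proof is essentially a one-sentence remark to this effect.

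Where you diverge is in your handling of the transfer term. You identify as the ``hard part'' the forward-invariance check on $\{\dot x_a=0\}$, arguing that the transfer-loss term now carries an unfavourable sign and that its control has to be extracted from the reversed eigenvalue inequalities~\eqref{eq: Inverseinequalities}. The paper's key observation dissolves this concern: in the proof of Lemma~\ref{Lemma: Convergence Lemma} the term involving~$\tau$ is \emph{never explicitly used}. It enters only through the growth condition---which is nothing but the statement $\dot y_a>0,\ \dot y_d>0$ (respectively here $\dot x_a>0,\ \dot x_d>0$)---and every subsequent step operates on these full derivatives as black boxes, without isolating the $\tau$-contribution. Equivalently, the argument from \cite[Lemma 4.7]{blath2020invasion} is valid for any real value of~$\tau$, so that the role-swap (which formally amounts to replacing $\tau$ by $-\tau$ in the $(y_a,y_d)$-argument) goes through verbatim. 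No additional computation is required for the sign change.

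The hypotheses~\eqref{eq: Inverseinequalities} are therefore not needed in the invariance step; they enter only where you correctly locate them in your last paragraph---in the analogue of Lemma~\ref{Lemma: OnlyEquilibria}, to rule out a coordinatewise positive coexistence equilibrium and to establish the asymptotic stability of $(\bar x_a,\bar x_d,0,0)$. In short, your outline is correct and follows the paper's route, but the ``crux'' you single out is an artificial difficulty: drop the attempt to unpack~$\tau$ in the invariance step and the argument collapses to a formal role exchange.
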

	\begin{proof}
		The proof of this lemma is identical with the proof of Lemma \ref{Lemma: Convergence Lemma} where we reversed the roles of $(x_a,x_d)$ and $(y_a,y_d)$. The validity holds due to the fact that we never explicitly use the term involving $\tau$.
	\end{proof}
	Proceeding in the same manner, we define $(\pi_a,\pi_d)$ to be the unique normed left eigenvector of the matrix \begin{align}
	J=\begin{pmatrix}
	a_1-\tau-d_1-C\bar{y}_a & pC\bar{y}_a\\
	\sigma_2 & -d_2-\sigma_2
	\end{pmatrix}\label{eq:meanmatrix 2}
	\end{align}
	corresponding to the principal eigenvalue $\la>0$, which exists due to our assumption (\ref{eq: Inverseinequalities}). 
	\begin{lemma}\label{Lemma: AppendixA Lemma}
		Suppose that the initial condition $(x_a,x_d,y_a,y_d)$ of the dynamical system (\ref{eq: Dynamical System}) satisfies $y_a\in[\bar{y}_a-A\sqrt{\veps},\bar{y}_a+A\sqrt{\veps}]$, $y_d\in[\bar{y}_d-A\sqrt{\veps},\bar{y}_d+A\sqrt{\veps}]$ for some constant $A>0$ large enough and $x_a+x_d\in(0,\sqrt{\veps})$ with $\tfrac{x_d}{x_a}=\tfrac{\pi_d}{\pi_a}$. Then, for $\veps$ sufficiently small, $(x_a,x_d,y_a,y_d)$ satisfies (\ref{eq: Growth condition2}).
	\end{lemma}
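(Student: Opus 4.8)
The plan is to follow the proof of Lemma~\ref{Lemma: RefLemma} essentially verbatim, interchanging the roles of $(x_a,x_d)$ and $(y_a,y_d)$ and replacing the mean matrix from~(\ref{eq: meanmatrix}) by the matrix $J$ of~(\ref{eq:meanmatrix 2}). Let $(\pi_a,\pi_d)$ be its normed left eigenvector for the principal eigenvalue $\la>0$, which is positive by the standing assumption~(\ref{eq: Inverseinequalities}) (supercriticality of $(\hat{X}_a,\hat{X}_d)$). Reading off the two scalar equations contained in $(\pi_a,\pi_d)J=\la(\pi_a,\pi_d)$ gives
\[
a_1-\tau-d_1-C\bar{y}_a+\sigma_2\tfrac{\pi_d}{\pi_a}=\la=pC\bar{y}_a\tfrac{\pi_a}{\pi_d}-(d_2+\sigma_2),
\]
equivalently $\tfrac{\pi_d}{\pi_a}=\tfrac{\la-a_1+\tau+d_1+C\bar{y}_a}{\sigma_2}$ and $\tfrac{\pi_d}{\pi_a}=\tfrac{pC\bar{y}_a}{\la+d_2+\sigma_2}$. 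Since $\tfrac{x_d}{x_a}=\tfrac{\pi_d}{\pi_a}$ by hypothesis, these are exactly the two quantities to be compared with the two sides of~(\ref{eq: Growth condition2}).

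Next I would record the elementary a priori bounds. From $y_a\in[\bar{y}_a-A\sqrt{\veps},\bar{y}_a+A\sqrt{\veps}]$ and $0<x_a<x_a+x_d<\sqrt{\veps}$ one gets $\bar{y}_a-A\sqrt{\veps}<x_a+y_a<\bar{y}_a+(A+1)\sqrt{\veps}$ and $0\le\tfrac{x_a}{x_a+y_a}\le 1$, and for $\veps$ small $x_a+y_a$ is bounded away from $0$ since $\bar{y}_a>0$, so all the ratios below make sense. For the upper inequality in~(\ref{eq: Growth condition2}) I insert $\tfrac{x_d}{x_a}=\tfrac{pC\bar{y}_a}{\la+d_2+\sigma_2}$ and use $\tfrac{pC(x_a+y_a)}{d_2+\sigma_2}>\tfrac{pC(\bar{y}_a-A\sqrt{\veps})}{d_2+\sigma_2}$; after clearing denominators the claim reduces to $\bar{y}_a\la>A\sqrt{\veps}(\la+d_2+\sigma_2)$, which holds once $\veps$ is small because $\la>0$. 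For the lower inequality I insert the other expression for $\tfrac{\pi_d}{\pi_a}$; multiplying~(\ref{eq: Growth condition2}) by $\sigma_2$ and cancelling terms, it becomes $\la>C(x_a+y_a-\bar{y}_a)-\tau\tfrac{x_a}{x_a+y_a}$, whose right-hand side is at most $C(A+1)\sqrt{\veps}$, hence again $<\la$ for $\veps$ small. Choosing $\veps$ small enough for both estimates simultaneously finishes the proof.

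There is no genuine obstacle: the computation is a direct translation of the one in Lemma~\ref{Lemma: RefLemma}, and (as in Lemma~\ref{Lemma: Appendix 1}) the $\tau$-term only ever enters with a helpful sign, so no bound on $\tau$ is needed. The one point to be careful about is the order of quantifiers: $A$ is fixed first and $\veps$ afterwards, so the threshold $\veps_0=\veps_0(A)$ coming out of the two displayed inequalities legitimately depends on $A$ --- which is precisely the form in which the lemma is invoked when establishing the good initial condition for the invader $(X_a^K,X_d^K)$ in the proof of Proposition~\ref{Proposition: 4Dnegative}. I would also note that strictness throughout relies on $\la>0$, and that $x_a,x_d>0$ is automatic from $\tfrac{x_d}{x_a}=\tfrac{\pi_d}{\pi_a}\in(0,\infty)$ together with $x_a+x_d>0$.
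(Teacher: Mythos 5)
Your proposal is correct and mirrors the paper's own (one-line) proof, which simply directs the reader to adapt Lemma~\ref{Lemma: RefLemma}: you identify $\tfrac{\pi_d}{\pi_a}$ from the two eigenvector equations of the matrix in~(\ref{eq:meanmatrix 2}), use $\la>0$ plus the $O(\sqrt{\veps})$ proximity of $(y_a,y_d)$ to $(\bar{y}_a,\bar{y}_d)$ and smallness of $x_a+x_d$ to get both strict inequalities in~(\ref{eq: Growth condition2}), and correctly observe that the $\tau$-term enters with a favourable sign. The remark on the order of quantifiers ($A$ fixed before $\veps$) is also accurate and matches how the lemma is used downstream.
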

	\begin{proof}
		The proof in the case of positive transfer from Lemma \ref{Lemma: RefLemma} is easily adapted to this case. 
	\end{proof}
	Next in our series of Lemmata, we had shown bounds on some exit times in Lemma \ref{Lemma: veps xi growth lemma} in order to show that the assumptions from Lemma \ref{Lemma: AppendixA Lemma} are satisfied with high probability. Here, there will be a major difference in the proof, as at one point we made use of the positive transfer.
	\begin{lemma}\label{Lemma: AppendixA 2}
		Let $K\mapsto (m_1^K,m_2^K)$ be a function from $(0,\infty)$ to $[0,\infty)^2$ such that $(m_1^K,m_2^K)\in\tfrac{1}{K}\N_0^2$ and $\textstyle\lim_{K\to\infty}(m_1^K,m_2^K)=(\bar{y}_a,\bar{y}_d)$. Define the stopping times \[
		R_\veps^K\coloneqq\inf\lrset{t\geq 0\ \Big\vert\ \abs{\frac{Y_{a,t}^K}{K}-\bar{y}_a}>\veps\text{ or }\abs{\frac{Y_{d,t}^K}{K}-\bar{y}_d}>\veps}
		\]
		and
		\[
		T_x^K\coloneqq\inf\lrset{t>0\mid X_{a,t}^K+X_{d,t}^K=\lfloor xK\rfloor}.
		\]
		Then, for any $\xi\in[\tfrac{1}{2},1]$ and starting condition $\tfrac{1}{K}(X_{a,0}^K,X_{d,0}^K,Y_{a,0}^K,Y_{d,0}^K)=(\tfrac{n_1^K}{K},\tfrac{n_2^K}{K},m_1^K,m_2^K)$ with $n_1^K$ and $n_2^K$ such that $0<n_1^K+n_2^K<K\veps^\xi$, there exist some positive constants $ A,\veps_0>0$ such that for all $0<\veps\leq \veps_0$ \[
		\lim\limits_{K\to\infty}\P(R_{2A\veps^\xi}^K\leq T_{\veps^\xi}^K\wedge T_0^K)=0.
		\]
	\end{lemma}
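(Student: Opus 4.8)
The plan is to run the proof of Lemma~\ref{Lemma: veps xi growth lemma} (which itself adapts \cite[Lemma~4.2]{blath2020invasion}) with the roles of $(X^K_a,X^K_d)$ and $(Y^K_a,Y^K_d)$ interchanged, taking care at the one place where the direction of the transfer enters. I would work on the event $\calE^K_t\coloneqq\{R^K_{2A\veps^\xi}\wedge T^K_{\veps^\xi}\wedge T^K_0>t\}$, on which, for all $s\le t$, one has $\tfrac1K(X^K_{a,s}+X^K_{d,s})\in(0,2A\veps^\xi)$ while $\tfrac1K Y^K_{a,s}$ and $\tfrac1K Y^K_{d,s}$ stay within $2A\veps^\xi$ of $\bar y_a$ and $\bar y_d$, where $A$ is a large constant to be fixed at the end.

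First I would show that the resident $(Y^K_a,Y^K_d)$ cannot leave the $2A\veps^\xi$-box before a time of order $e^{KV}$ for some $V>0$. On $\calE^K_t$ the competition felt by $Y^K_a$ is $\tfrac{(1-q)C}{K}Y^K_a$ up to an additive error $O(\veps^\xi)$ (because $X^K_a\le 2A\veps^\xi K$), the gain into $Y^K_a$ by transfer occurs at total rate $\tau^K\tfrac{X^K_aY^K_a}{X^K_a+Y^K_a}\le\tau^K(X^K_a+X^K_d)=O(\veps^\xi K)$, and $d^K_1,d^K_2\to d_1,d_2$, $\gamma^K_2/K\to 0$ by \eqref{eq: Convergence}. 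Hence the usual coordinatewise coupling sandwiches $(Y^K_a,Y^K_d)$, for any $\delta>0$ and $K$ large, between two constant-rate $LBBP_K$ processes with parameters $(b_1\mp\delta,d_1\pm\delta,d_2\pm\delta,\sigma_2,q,C)$, the $O(\veps^\xi)$ perturbations and the $O(\veps^\xi K)$ transfer gain being absorbed into the $\pm\delta$ once $\veps$ is small; for $\delta,\veps$ small their stable equilibria lie within $A\veps^\xi$ of $(\bar y_a,\bar y_d)$. Applying Corollary~\ref{Corollary: Equilibriumcloseness} (via Lemma~\ref{Lemma: exponentialclosenessapproximation}) to these two bounding processes produces $V>0$ such that, with probability tending to $1$, neither leaves $[\bar y_a-A\veps^\xi,\bar y_a+A\veps^\xi]\times[\bar y_d-A\veps^\xi,\bar y_d+A\veps^\xi]$ before time $e^{KV}$.

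Next I would show that on $\calE^K_t$ the invader resolves in time $O(\log K)$. There $(X^K_a,X^K_d)$ feels competition $\tfrac{(1-p)C}{K}(X^K_a+Y^K_a)=C\bar y_a+O(\veps^\xi)$, a constant up to $O(\veps^\xi)$, and loses individuals from $X^K_a$ by transfer at rate $\tau^K\tfrac{X^K_aY^K_a}{X^K_a+Y^K_a}\le\tau^K X^K_a$, i.e.\ at per-capita rate at most $\tau$. So $(X^K_a,X^K_d)$ is trapped between two constant-rate bi-type branching processes whose rates tend, as $\veps,\delta\downarrow 0$, to those of the approximating process $(\hat X_a,\hat X_d)$ with mean matrix \eqref{eq:meanmatrix 2}: the upper bound discards the transfer loss, the lower bound keeps it (an extra death of $X^K_a$ at per-capita rate $\tau$, legitimate since $\tfrac{X^K_aY^K_a}{X^K_a+Y^K_a}\le X^K_a$). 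This is exactly the point where, in Lemma~\ref{Lemma: veps xi growth lemma}, the analogous transfer term was a \emph{gain} for the invading active component, whereas here it is a \emph{loss}: the remedy is that assumption \eqref{eq: Inverseinequalities} is precisely the statement that $(\hat X_a,\hat X_d)$ is supercritical (the principal eigenvalue $\la$ of \eqref{eq:meanmatrix 2} is positive), so by continuity of $\la$ in the rates the lower bound is still supercritical for $\veps,\delta$ small. A branching-process comparison of the kind used in \cite[Section~3]{camille2019emergence} and \cite[Appendix~A,B]{champagnat2019stochastic} then gives a constant $c_0$ with $\P(T^K_0\wedge T^K_{\veps^\xi}\le c_0\log K\mid\calE^K_{c_0\log K})\to 1$: the lower-bounding supercritical process either reaches $\lfloor\veps^\xi K\rfloor$ by time $\tfrac1\la\log(\veps^\xi K)+O(1)$, forcing $X^K_a+X^K_d$ to do so too, or goes extinct quickly, and on the latter event the upper-bounding process is extinct by $c_0\log K$ with probability tending to $1$.

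Finally I would combine the two steps as in Lemma~\ref{Lemma: veps xi growth lemma}: if $R^K_{2A\veps^\xi}\le T^K_{\veps^\xi}\wedge T^K_0$, then by the second step this minimum is at most $c_0\log K$ with high probability, hence $R^K_{2A\veps^\xi}\le c_0\log K$; but the coupling of the first step is valid up to $R^K_{2A\veps^\xi}$, and at that time the rescaled resident has just crossed the boundary of its $2A\veps^\xi$-box, which forces one of the two bounding $LBBP_K$ processes out of its own $A\veps^\xi$-box (the equilibria being within $A\veps^\xi$ of $(\bar y_a,\bar y_d)$ and the coupling coordinatewise), hence out before $c_0\log K$, which is $\ll e^{KV}$; by the first step this has vanishing probability. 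Therefore $\P(R^K_{2A\veps^\xi}\le T^K_{\veps^\xi}\wedge T^K_0)\to 0$. The hard part will be the second step: checking that the invading population stays supercritical although the transfer now works against it (which is exactly where \eqref{eq: Inverseinequalities} must be invoked, whereas in the positive-transfer analogue supercriticality was automatic), and arranging the branching-process estimate so that $\{T^K_0\wedge T^K_{\veps^\xi}>c_0\log K\}$ is controlled uniformly over the possibly tiny number $n^K_1+n^K_2$ of founders; the stability inputs (Lemma~\ref{Lemma: Generalized EthierKurz}, Lemma~\ref{Lemma: exponentialclosenessapproximation}, Corollary~\ref{Corollary: Equilibriumcloseness}) are exactly as in the proofs already given.
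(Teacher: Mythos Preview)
Your overall route coincides with the paper's (which simply defers to \cite[Lemma~C.2]{blath2020interplay}): interchange the roles of resident and invader in Lemma~\ref{Lemma: veps xi growth lemma}, trap the resident between two perturbed $LBBP_K$ processes and invoke Corollary~\ref{Corollary: Equilibriumcloseness}, and control the invader by branching-process comparison, the one substantive change being that transfer now enters the invading active component as an extra death, so that supercriticality of the lower bound is no longer automatic but must be drawn from \eqref{eq: Inverseinequalities}. Step~1 and your identification of this twist are correct.

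Step~2, however, has a real gap. First a cosmetic point: on $\calE^K_{c_0\log K}$ one has $T^K_0\wedge T^K_{\veps^\xi}>c_0\log K$ by definition, so the conditional probability you display is identically~$0$; the statement that actually plugs into Step~3 is $\P(\calE^K_{c_0\log K})\to 0$. The substantive problem is the justification: a sandwich between two supercritical bi-type branching processes does \emph{not} force the dichotomy. Extinction of the lower bound says nothing about the upper bound---discarding the transfer loss only raises the principal eigenvalue, so the upper bound is itself supercritical and survives with positive probability---hence the sentence ``on the latter event the upper-bounding process is extinct by $c_0\log K$ with probability tending to~$1$'' is false, and on the event $\{Z^-\text{ dies},\,Z^+\text{ survives}\}$ the true invader is left uncontrolled in $(0,\veps^\xi K)$. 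A standard repair is a restart argument: each time the current lower bound dies, re-couple a fresh lower-bounding supercritical BP from the invader's present nonzero state; each fresh copy survives with probability at least $1-q>0$ uniformly in that state and, on survival, reaches $\veps^\xi K$ in time $O(\log K)$, so a geometric number of restarts forces $T^K_{\veps^\xi}\le c_0\log K$ on $\{R^K_{2A\veps^\xi}>c_0\log K\}$. You rightly flagged the uniformity over possibly tiny founder numbers as the hard part; this is precisely where it bites, and your current wording papers over it.
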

	\begin{proof}
		This proof can be adapted from the proof of \cite[Lemma C.2]{blath2020interplay}. 
	\end{proof}
	
	From this lemma, we can show that with probability converging to $1$, the invasive trait reaches a critical population size.
	
	\begin{lemma}
		Under the assumptions of Lemma \ref{Lemma: AppendixA 2}, denoting $\mathbf{X}_0^K\coloneqq(X_{a,0}^K,X_{d,0}^K,Y_{a,0}^K,Y_{d,0}^K)$, it holds \[
		\lim_{K\to\infty}\P\lr{T_{\veps^\xi}^K<T_0^K\wedge R_{2A\veps^\xi}^K\ \Bigg\vert\ \mathbf{X}_0^K=K(\veps_1^K,\veps_2^K,m_1^K,m_2^K)}=1,
		\]
		where $\xi\in[\tfrac{1}{2},1]$ and $\veps_1^K\to\veps_1$, $\veps_2^K\to\veps_2$ are such that $\veps_1+\veps_2=\veps$ with $\veps,\veps_1,\veps_2>0$.
	\end{lemma}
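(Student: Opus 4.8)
The plan is to mirror the structure of Lemma~\ref{Lemma: Fast growth}, which is the exact analogue of this statement with the roles of $(X_a^K,X_d^K)$ and $(Y_a^K,Y_d^K)$ interchanged, so the present lemma should follow by ``transposing'' that proof. First I would condition on the event
\[
E_\veps\coloneqq\lrset{T_0^K\wedge T_{\veps^\xi}^K<R_{2A\veps^\xi}^K},
\]
whose probability tends to $1$ as $K\to\infty$ for $\veps$ small by Lemma~\ref{Lemma: AppendixA 2}. On this event, up to the time $t_\veps\coloneqq T_0^K\wedge T_{\veps^\xi}^K\wedge R_{2A\veps^\xi}^K$ the resident population $(Y_a^K,Y_d^K)$ stays within $2A\veps^\xi$ of $(\bar y_a,\bar y_d)$, so all rates that the invading process $(X_a^K,X_d^K)$ experiences — in particular the competition term $\tfrac{(1-p)C}{K}(X_a^K+Y_a^K)$, the switching rate $\tfrac{pC}{K}X_a^K(X_a^K+Y_a^K)$, and the (negative) transfer loss $\tau^K\tfrac{X_a^KY_a^K}{X_a^K+Y_a^K}$, which for $X_a^K\ll Y_a^K$ is $\approx\tau^K X_a^K$ — can be bounded below by the rates of a fixed, slightly pessimistic bi-type branching process $(X_a^{\veps,-},X_d^{\veps,-})$ with constant rates. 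Here I would use the convergence~(\ref{eq: Convergence}) to first replace $a_1^K,d_1^K,d_2^K,\tau^K$ by $a_1\mp\delta,\dots$ and then plug in the worst-case value $\bar y_a+2A\veps^\xi$ (resp.\ $\bar y_a-2A\veps^\xi$ where a lower bound on a switching rate is needed) for the resident active size; one also discards the immigration $\gamma_1^K$, which only helps. Crucially, because of assumption~(\ref{eq: Inverseinequalities}) the matrix $\tilde J$ from (\ref{eq: meanmatrix2}) has a positive eigenvalue, and by continuity of the principal eigenvalue in the rates this stays true for $(X_a^{\veps,-},X_d^{\veps,-})$ once $\veps,\delta$ are small enough — so the bounding process is supercritical with per-family extinction probabilities $q_a^{\veps,-},q_d^{\veps,-}<1$, and by continuity of extinction probabilities in the rates (\cite[Lemma A.3]{camille2019emergence}) $q^{\veps,-}\coloneqq q_a^{\veps,-}\vee q_d^{\veps,-}\to q<1$ as $\veps\to0$.

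The second step is the branching-process estimate itself. Letting $T_x^{\veps,-}$ denote the hitting time of level $\lfloor Kx\rfloor$ for the total size of $(X_a^{\veps,-},X_d^{\veps,-})$, the coupling on $E_\veps$ gives $\{T_{\veps^\xi}^{\veps,-}\le T_0^{\veps,-}\}\cap E_\veps\subseteq\{T_{\veps^\xi}^K\le T_0^K\}\cap E_\veps$. On the complement of extinction, a supercritical bi-type branching process starting from $\approx K\veps$ individuals reaches size $\approx K\veps^\xi$ (with $\xi\le1$) in bounded time, so $\P(T_{\veps^\xi}^{\veps,-}\le T_0^{\veps,-})\ge\P(T_0^{\veps,-}=\infty)-\delta\ge1-(q^{\veps,-})^{K(\veps-\delta)}-\delta$ for $K$ large, using that the $K(\veps_1^K+\veps_2^K)\ge K(\veps-\delta)$ initial families are i.i.d.\ copies of the single-family process. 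Sending $K\to\infty$ and then $\delta\to0$ gives $\liminf_K\P(T_{\veps^\xi}^{\veps,-}\le T_0^{\veps,-}, E_\veps)=1$, and combining with $\P(E_\veps)\to1$ yields $\lim_K\P(T_{\veps^\xi}^K\le T_0^K\wedge R_{2A\veps^\xi}^K)=1$, which is the assertion. (One should note the harmless caveat, exactly as in Lemma~\ref{Lemma: Fast growth}, that on $E_\veps$ the events $\{T_0^K\}$ and $\{T_{\veps^\xi}^K\}$ are the only possibilities before $R_{2A\veps^\xi}^K$, so ``$T_{\veps^\xi}^K\le T_0^K$'' and ``$T_{\veps^\xi}^K< T_0^K\wedge R_{2A\veps^\xi}^K$'' agree up to null events.)

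The main obstacle — and the only place this differs substantively from Lemma~\ref{Lemma: Fast growth} — is setting up the lower-bounding coupling in the presence of \emph{negative} transfer acting against the invading trait. In Lemma~\ref{Lemma: Fast growth} the transfer helped the invader and could simply be dropped for a lower bound; here $(X_a^K,X_d^K)$ loses mass to $(Y_a^K,Y_d^K)$ at rate $\tau^K\tfrac{X_a^KY_a^K}{X_a^K+Y_a^K}$, and one must control this term from above by a \emph{linear} function of $X_a^K$ to keep the bounding process a genuine (linear) branching process. On $E_\veps$ we have $Y_a^K\le K(\bar y_a+2A\veps^\xi)$ and $X_a^K\le X_a^K+X_d^K\le K\veps^\xi$ before $t_\veps$, so $\tfrac{X_a^KY_a^K}{X_a^K+Y_a^K}\le X_a^K$ always, and in fact one can afford the cruder bound $\le X_a^K$, incorporating the extra competition and transfer losses into the constant death/switching rates of $(X_a^{\veps,-},X_d^{\veps,-})$ — precisely the device used in the proof of \cite[Lemma C.2]{blath2020interplay} and in Lemma~\ref{Lemma: AppendixA 2} above, which is why I would explicitly invoke ``the proof can be adapted from \cite[Lemma C.2]{blath2020interplay}'' for the coupling bookkeeping rather than redo it. Once the supercriticality of $(X_a^{\veps,-},X_d^{\veps,-})$ is confirmed (this is where~(\ref{eq: Inverseinequalities}) and the $\veps,\delta\to0$ continuity arguments enter), the rest is the same routine estimate as in Lemma~\ref{Lemma: Fast growth} and \cite[Proposition 4.1]{blath2020invasion}, \cite[Proposition 3.1]{camille2019emergence}.
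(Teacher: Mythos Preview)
Your proposal is correct and follows essentially the same approach as the paper: condition on $E_\veps$ (via Lemma~\ref{Lemma: AppendixA 2}), couple $(X_a^K,X_d^K)$ below by a pessimistic bi-type branching process $(X_a^{\veps,-},X_d^{\veps,-})$ in which the negative transfer loss is absorbed into an enlarged linear death rate, check supercriticality of this bounding process for $\veps$ small via (\ref{eq: Inverseinequalities}), and conclude with the extinction-probability estimate exactly as in Lemma~\ref{Lemma: Fast growth}. One minor remark: at this stage the rates are already taken constant (the reduction via (\ref{eq: Convergence}) happens only later in the proof of Proposition~\ref{Proposition: 4Dnegative}), so your invocation of (\ref{eq: Convergence}) here is unnecessary, though harmless.
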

	\begin{proof}
		The proof now is almost identical with the one of Lemma \ref{Lemma: Fast growth}. The only difference arises in the coupling, where we now couple such that \begin{align*}
		X_v^{\veps,-}\leq \hat{X}_v\quad\text{ and }\quad	X_v^{\veps,-}\leq X_v^K
		\end{align*}
		where $v\in\{a,d\}$ by defining the transition rates as follows: For the process $(X_a^{\veps,-},X_d^{\veps,-})$, we choose the transition rates \[
		(n,m)\mapsto\begin{cases}
		(n+1,m)&\text{ at rate }na_1\\
		(n-1,m)&\text{ at rate }n\left(d_1+\tau\frac{\bar{x}_a+2A\veps^\xi}{\bar{x}_a-(2A+1)\veps^\xi}+(1-q)C (\bar{x}_a+2A\veps^\xi+\veps^\xi)\right.\\
		&\qquad\qquad\left.+qC(4A\veps^\xi+\veps^\xi)\right) \\
		(n,m-1)&\text{ at rate }md_2\\
		(n-1,m+1)&\text{ at rate }nqC(\bar{x}_a-2A\veps^\xi)\\
		(n+1,m-1)&\text{ at rate }m\sigma_2,\\
		\end{cases}.
		\]
		
		It is  only important to note that for $\veps>0$ sufficiently small, the process $(X_a^{\veps,-},X_d^{\veps,-})$ is again supercritical and thus the claim follows as in Lemma \ref{Lemma: Fast growth}
	\end{proof}
	
	Finally, we need to show that with high probability the assumptions of Lemma \ref{Lemma: AppendixA Lemma} are satisfied, which is the analogous version of Lemma \ref{Lemma: Sufficient condition}.
	
	\begin{lemma}\label{Lemma: Sufficient condition2}
		Assume (\ref{eq: Inverseinequalities}) and assume that  $\tfrac{1}{K}(X_{a,0}^K,X_{d,0}^K,Y_{a,0}^K,Y_{d,0}^K)=(\veps_1,\veps_2,m_1^K,m_2^K)$  is the initial condition with $\veps_1+\veps_2\leq\veps$ for some $\veps,\veps_1,\veps_2>0$ and $m_1^K,m_2^K$ as in Lemma \ref{Lemma: AppendixA 2}. Then for any $\delta>0$ with $\pi_a\pm\delta\in(0,1)$, it holds \[
		\liminf_{K\to\infty}\P\lr{\exists t\in [T_\veps^K,T_{\sqrt{\veps}}^K]\colon \pi_a-\delta< \frac{X_{a,t}^K}{X_{a,t}^K+X_{d,t}^K}<\pi_a+\delta}\geq 1-o_{\veps}(1).
		\]
		In fact, the bound on the frequency process $\tfrac{X_a^K}{X_a^K+X_d^K}$ will be satisfied by the time $T_\veps^K+\log\log(1/\veps)$ with probability larger than $1-o_\veps(1)$ as $K\to\infty$.
	\end{lemma}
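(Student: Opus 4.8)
The plan is to mirror the proof of Lemma~\ref{Lemma: Sufficient condition} essentially verbatim, with the roles of the resident and invading traits exchanged. First I would invoke the previous two lemmata to set up the regime: on the event $E_\veps=\{T_0^K\wedge T_{\veps^\xi}^K<R_{2A\veps^\xi}^K\}$, whose probability tends to $1$ by Lemma~\ref{Lemma: AppendixA 2}, the process $(Y_a^K,Y_d^K)$ stays within $2A\veps^\xi$ of its equilibrium $(\bar{y}_a,\bar{y}_d)$ while $(X_a^K,X_d^K)$ either dies out or reaches size $\lfloor\veps^\xi K\rfloor$; and by the lemma immediately preceding this one, $T_{\veps^\xi}^K<T_0^K\wedge R_{2A\veps^\xi}^K$ with probability $1-o_\veps(1)$. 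Conditioning on $E_\veps$ and on $\{T_{\veps^\xi}^K<T_0^K\}$, the process $(X_a^K,X_d^K)$ on $[T_\veps^K,T_{\veps^\xi}^K]$ is (up to a coupling that modifies the competition-induced rates by $O(\veps^\xi)$) a supercritical two-type branching process whose mean matrix converges, as $\veps\to0$, to $J$ from \eqref{eq:meanmatrix 2}.

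The core of the argument is then the Kesten--Stigum theorem: for a supercritical two-type branching process, the frequency vector $\bigl(\tfrac{X_a}{X_a+X_d},\tfrac{X_d}{X_a+X_d}\bigr)$ converges almost surely on the survival event to the normed left Perron eigenvector $(\pi_a,\pi_d)$ of the mean matrix $J$ in \eqref{eq:meanmatrix 2}. Concretely, I would couple $(X_a^K,X_d^K)$ between two branching processes whose rates differ from the true ones by $\pm\delta'$ (for $\delta'$ depending on $\veps$, $\delta$ and the $O(\veps^\xi)$ competition corrections), note that the associated eigenvectors $(\pi_a^\pm,\pi_d^\pm)$ lie within $\delta/2$ of $(\pi_a,\pi_d)$ once $\veps$ and $\delta'$ are small enough, and apply Kesten--Stigum (and continuity of the extinction probabilities, as in \cite[Lemma A.3]{camille2019emergence}) to each bounding process. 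One then shows that for $\veps$ small the bounding frequency processes, starting from a population of size $\Theta(\veps K)$, enter the window $(\pi_a-\delta,\pi_a+\delta)$ within a deterministic time of order $\log\log(1/\veps)$ with probability $1-o_\veps(1)$; since the true frequency process is squeezed between them, the same holds for it. The quantitative $\log\log(1/\veps)$ rate is obtained exactly as in the proof of \cite[Proposition 4.4]{blath2020invasion} and \cite[Proposition 3.2]{camille2019emergence}, by comparing the growth of the total size $X_a^K+X_d^K$ (which is at least $e^{(\la-\delta'')t}$ in expectation) with the time needed for the martingale limit in Kesten--Stigum to concentrate.

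The main obstacle, and the only genuine point of departure from Lemma~\ref{Lemma: Sufficient condition}, is that here the invading trait $(X_a^K,X_d^K)$ \emph{exerts} rather than \emph{receives} horizontal transfer against the resident $(Y_a^K,Y_d^K)$, which is why the relevant mean matrix is $J$ from \eqref{eq:meanmatrix 2} with the $-\tau$ term appearing in the active birth/death balance. This sign change propagates into the coupling in the preceding lemma (where the transfer term $\tau\frac{\bar{x}_a+2A\veps^\xi}{\bar{x}_a-(2A+1)\veps^\xi}$ enters the death rate rather than the birth rate) but, as observed in the proof of Lemma~\ref{Lemma: Appendix 1}, the convergence arguments never use the sign of the $\tau$-term explicitly, so the estimates carry over. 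I would therefore keep the exposition brief: state that the proof is the straightforward adaptation of Lemma~\ref{Lemma: Sufficient condition}, with $(\pi_a,\pi_d)$ now taken as the normed left eigenvector of \eqref{eq:meanmatrix 2}, the bounding branching processes as in the preceding lemma, and the $\log\log(1/\veps)$ bound following from the Kesten--Stigum concentration estimate exactly as in \cite[Proposition 4.4]{blath2020invasion} and \cite[Proposition 3.2]{camille2019emergence}.
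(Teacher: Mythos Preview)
Your high-level plan is on the right track and points to the correct external references, but it diverges from the paper's actual argument in method and glosses over the one technical point the paper singles out as genuinely different.

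First, the paper does not argue via Kesten--Stigum and a sandwich of bounding branching processes. Following \cite[Proposition 4.4]{blath2020invasion} it works directly with the semimartingale decomposition of the frequency process
\[
\frac{X_{a,t}^K}{X_{a,t}^K+X_{d,t}^K}=\frac{X_{a,T_\veps^K}^K}{X_{a,T_\veps^K}^K+X_{d,T_\veps^K}^K}+M^K(t)+V^K(t),
\]
bounding the martingale part on $[T_\veps^K,T_\veps^K+\log\log(1/\veps)]$ via Doob's inequality on $\langle M^K\rangle$, and showing the finite-variation part $V^K$ drifts toward $\pi_a$ at a uniformly positive rate. Kesten--Stigum appears in the paper only to \emph{identify} $(\pi_a,\pi_d)$; it is not the engine of the quantitative $\log\log(1/\veps)$ estimate. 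Your Kesten--Stigum route might be developed into a proof, but extracting a uniform $\log\log(1/\veps)$ time window from the almost-sure convergence statement is not immediate and is not what the cited proofs do.

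Second, and more to the point, your assertion that ``the estimates carry over'' is exactly what the paper flags as \emph{not} quite true. Because transfer now enters as an additional death term for $X_a^K$, the paper must first control two auxiliary events on $[T_\veps^K,T_\veps^K+\log\log(1/\veps)]$: that $X_a^K+X_d^K$ does not drop below $\veps K/M$ (via comparison with a pure death process whose rate now includes the transfer contribution), and that $T_{\sqrt\veps}^K$ has not yet occurred (via a Yule upper bound giving probability $\leq\tilde M\sqrt\veps(\log(1/\veps))^{a_1}$). Both of these are controlled only up to $o_\veps(1)$, not $o(1)$ as $K\to\infty$, and this propagates into the martingale bound: the paper explicitly writes ``with the slight exception of now only obtaining'' the weaker $1-o_\veps(1)$ conclusion, which is why the ``In fact'' clause here is weaker than in Lemma~\ref{Lemma: Sufficient condition}. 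You should name this degradation rather than claim the sign of $\tau$ plays no role in the estimates.
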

	\begin{proof}
		The proof is largely analogous to the one of \cite[Proposition 4.4]{blath2020invasion}, but there are subtle differences which we discuss here. We assume \[
		\frac{X_{a,T_{\veps}^K}^K}{X_{a,T_{\veps}^K}^K+X_{d,T_{\veps}^K}^K}\leq \pi_a-\delta.
		\]
		Then, we introduce the event \[
		\tilde{E}_\veps\coloneqq\lrset{T_{\sqrt{\veps}}^K<T_0^K\wedge R_{2A\sqrt{\veps}}^K},
		\]
		whose probability again converges to $1$ as $K\to\infty$ by the previous lemma. Also, we  define the stopping time \[
		T_{\veps,\veps/M}^K\coloneqq\inf\lrset{t\geq T_\veps^K\mid X_{a,t}^K+X_{d,t}^K\leq\tfrac{\veps K}{M}}.
		\]
		On the event $\tilde{E}_\veps$, we can bound our process from below on the time interval $[T_\veps^K,T_{\sqrt{\veps}}^K]$ by a pure death process $Z^K$, with initial condition  $Z_{T_\veps^K}^K=\veps K$ and death rate $(d_1+\tau\tfrac{\sqrt{\veps}(\bar{y}_a+(2A+1)\veps)}{\bar{y}_a-2A\veps})\vee d_2$. We are interested in $(\veps-\tfrac{\veps}{M})K$ individuals dying from the process $Z^K$. This takes longer than $\log\log(1/\veps)$ with probability converging to $1$ for $M>1$. Therefore, we have \[
		\lim_{K\to\infty}\P(T_{\veps,\veps/M}^K<T_\veps^K+\log\log(1/\veps)\mid T_\veps^K+\log\log(1/\veps)<T_{\sqrt{\veps}}^K, \tilde{E}_\veps )=0.
		\]
		Furthermore, the population size of $X_{a}^K+X_d^K$ can be bounded from above by a Yule process with birth rate $a_1$. Hence, \cite[Lemma A.2]{camille2019emergence} implies \begin{align*}
		\lim\limits_{K\to\infty}\P(T_{\sqrt{\veps}}^K<T_\veps^K+\log\log(\tfrac{1}{\veps})|\tilde{E}_\veps)\leq \tilde{M}\sqrt{\veps}(\log(\tfrac{1}{\veps}))^{a_1}.
		\end{align*}
		Since the probability of $\tilde{E}_\veps$ converges to $1$, we obtain \begin{align}
		\lim_{K\to\infty}\P(T_{\veps,\veps/M}^K< T_\veps^K+\log\log(1/\veps))\leq o_\veps(1)\quad\text{ and }\quad \lim\limits_{K\to\infty}\P(T_{\sqrt{\veps}}^K<T_\veps^K+\log\log(\tfrac{1}{\veps}))\leq o_\veps(1).\label{eq: weak bound}
		\end{align}
		
		The next step in this proof is the semimartingale decomposition, for which we will satisfy ourselves with the abstract representation \[
		\frac{X_{a,t}^K}{X_{a,t}^K+X_{d,t}^K}=\frac{X_{a,T_\veps^K}^K}{X_{a,T_\veps^K}^K+X_{d,T_\veps^K}^K}+M^K(t)+V^K(t)\quad\text{ for }t\geq T_\veps^K,
		\]
		where $M^K$ is a martingale and $V^K$ a process of finite variation. The remainder of the proof can be adapted from \cite[Proposition 4.4]{blath2020invasion}, with the slight exception of now only obtaining \begin{align*}
		&\lim\limits_{K\to\infty}\P\Big(\sup_{T_\veps^K\leq s\leq T_\veps^K+\log\log(1/\veps)}\abs{M^K(t)}\geq\veps\Big)\nonumber\\
		&\leq\lim\limits_{K\to\infty}\P\Big(\sup_{T_\veps^K\leq s\leq (T_\veps^K+\log\log(1/\veps))\wedge T_{\veps,\veps/M}}\abs{M^K(t)}\geq\veps\Big)+\P\lr{T_{\veps,\veps/M}<T_\veps^K+\log\log(\tfrac{1}{\veps})}\\
		&\leq \lim\limits_{K\to\infty}\reci{\veps^2}\E[\langle M^K\rangle_{(T_\veps^K+\log\log(1/\veps))\wedge T_{\veps,\veps/M}}]+o_\veps(1)=o_\veps(1),
		\end{align*}
		due to our weaker bound (\ref{eq: weak bound}) and thus afterwards only having \[
		\pi_a-\frac{\delta}{2}\geq\frac{X_{a,t}^K}{X_{a,t}^K+X_{d,t}^K}\geq\frac{\theta}{2}\lr{\log\log(1/\veps)\wedge (t_a^\veps-T_\veps^K)}-\veps
		\]
		with probability $1-o_\veps(1)$ as $K\to\infty$ for the corresponding time $t_a^\veps$, which does not change the claim, since we are considering only high probabilities for small $\veps$.
	\end{proof}
	We can now discuss the proof of Proposition \ref{Proposition: 4Dnegative}, where the transition rates are not assumed to be constant.
	\begin{proof}[Proof of Proposition \ref{Proposition: 4Dnegative}]
		We can copy the proof of Proposition \ref{Proposition: 4Dpositive} until the point, where we have found the couplings $(X_a^{\delta,*,\diamond},X_d^{\delta,*,\diamond},Y_a^{\delta,*,\diamond},Y_d^{\delta,*,\diamond}).$ Now, the only difference is that we need to use our Lemmata applying to this case. The general argument is not changed. That is, from Lemma \ref{Lemma: Sufficient condition2} we see that with high probability in $m\veps$ the convergence condition from Lemma \ref{Lemma: AppendixA Lemma} is satisfied and therefore Lemma \ref{Lemma: Appendix 1} implies convergence to the equilibrium $(\bar{x}_a^{\delta,*,\diamond},\bar{x}_d^{\delta,*,\diamond},0,0)$. In particular, there exists a finite time such that for all initial conditions from a compact set we have entered a neighbourhood of this equilibrium, which as before implies the claim.
	\end{proof}

\subsection{Competition Between Bi-Type and Single-Type Processes with Transfer}
We consider a three-dimensional branching process very similar to the one from the previous section. The transfer rates of this process $(X_a^K,X_d^K,Y^K)$ are  \begin{align*} (i,j,k)\to\begin{cases}
(i+1,j,k) &\text{ at rate }ia_1^K(\omega,t)+\gamma_1^K(\omega,t)\\
(i,j,k+1) &\text{ at rate }kb_1^K(\omega,t)+\gamma_2^K(\omega,t)\\
(i-1,j,k) &\text{ at rate }i(d_1^K(\w,t)+\tfrac{(1-p)C}{K}(i+k))\\
(i,j,k-1) &\text{ at rate }k(d_1^K(\w,t)+\tfrac{C}{K}(i+k))\\
(i,j-1,k) &\text{ at rate }jd_2^K(\omega,t) \\
(i-1,j+1,k) &\text{ at rate }i\tfrac{pC}{K}(i+k) \\
(i+1,j-1,k)&\text{ at rate }j\sigma_2\\
(i-1,j,k+1)&\text{ at rate }\tau^K(\w,t)\tfrac{ik}{i+k}
\end{cases}
\end{align*}
with predictable, non-negative functions $a_1^K,b_1^K,d_1^K,d_2^K,\tau^K,\gamma_1^K,\gamma_2^K\colon\Omega\times[0,\infty)\to\R$ and constants $C,\sigma_2>0$, $p\in(0,1)$. As before, we assume for some $s>0$ the convergence \begin{align}
\sup_{0\leq t\leq s\log K}&\abs{a_1^K(t)-a_1}+\abs{b_1^K(t)-b_1}+\abs{d_1^K(t)-d_1}\nonumber\\
&+\abs{d_2^K(t)-d_2}+\abs{\tau^K(t)-\tau}+\abs{\tfrac{\gamma_1^K(t)}{K}}+\abs{\tfrac{\gamma_2^K(t)}{K}}\xrightarrow{K\to\infty}0\label{eq: Convergence2}
\end{align}
in probability, where $a_1,b_1,d_1,d_2>0$, $\tau\geq 0$ and $a_1,b_1>d_1$. If $\tau=0$ we again assume $\tau^K\equiv 0$. Then, we can approximate the process $Y^K$ in a population close to the equilibrium size of the process $(X_a^K,X_d^K)$ which as in the previous section is given by $K(\bar{x}_a,\bar{x}_d)$ by the process $\hat{Y}$, which has the transitions \[
n\mapsto\begin{cases}
n+1,&\quad\text{ at rate } nb_1+\tau\\
n-1,&\quad\text{ at rate } n(d_1+C\bar{x}_a).
\end{cases}
\]
We want the growth rate to be strictly positive, so that we have invasion of this trait. In the case of one dimensional branching processes, this is equivalent to \begin{align}
b_1+\tau-d_1-C\bar{x}_a>0,\label{sufficientpositiveeigenvalue2}
\end{align}
which also coincides with (\ref{eq: sufficientpositiveeigenvalue}) in the case $q=0$. The approximation of the process $(X_a^K,X_d^K)$ in a population, where $Y^K$ is close to its equilibrium size $K\bar{y}$ with \[
\bar{y}=\frac{b_1-d_1}{C},
\]
can be done as before. That is, we approximate using the process $(\hat{X}_a,\hat{X}_d)$ with transitions \[
(n,m)\mapsto\begin{cases}
(n+1,m)&\text{ at rate }na_1\\
(n-1,m)&\text{ at rate }n(d_1+\tau+(1-p)C \bar{y}) \\
(n,m-1)&\text{ at rate }md_2\\
(n-1,m+1)&\text{ at rate }npC\bar{y}\\
(n+1,m-1)&\text{ at rate }m\sigma_2.
\end{cases}
\]
In order to guarantee a successful invasion, we want this process to be subcritical, which as before coincides with the criterion (\ref{eq: sufficientnegativeeigenvalue}), that is \begin{align}
-(a_1-\tau-d_1-C\bar{y})>\frac{\sigma_2 p C\bar{y}}{d_2+\sigma_2}. \label{eq:sufficientnegativeeigenvalue2}
\end{align}

We are now in a position to formulate our competition results for this case. Note, that a generalized form, in the sense that the invading process is initially of size contained in the interval $[\tfrac{m\veps K}{2},m\veps K]$, can be proven as indicated in Remark \ref{Remark: Important Proposition Remark}.

\begin{proposition}\label{Proposition: 3Dpositive}
	Assume that conditions (\ref{eq: Convergence2}), (\ref{sufficientpositiveeigenvalue2}) and (\ref{eq:sufficientnegativeeigenvalue2}) are true. Consider the process $(X_a^K,X_d^K,Y^K)$ with initial condition $\tfrac{1}{K}(X_{a,0}^K,X_{d,0}^K)\in[\bar{x}_a-\veps,\bar{x}_a+\veps]\times[\bar{x}_d-\veps,\bar{x}_d+\veps]$ and $\tfrac{Y^K}{K}=m\veps$ for some $\epsi$ and $m>0$ sufficiently small. Then for any $\veps'>0$, there exists a finite time $T=T(m,\veps,\veps')$ such that \[
	\lim\limits_{K\to\infty}\P\lr{X_{a,T}^K+X_{d,T}^K\leq \veps'K,\ \frac{Y_T^K}{K}\in [\bar{y}-\veps',\bar{y}+\veps']}=1.
	\]
\end{proposition}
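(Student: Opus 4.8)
The plan is to adapt the proof of Proposition~\ref{Proposition: 4Dpositive} to the degenerate situation in which the invading trait carries no dormant component, i.e.\ formally $q=0$ in the four-dimensional system~(\ref{eq: Dynamical System}), so that $y_d$ vanishes identically and $(X_a^K,X_d^K,Y^K)$ is governed by the three-dimensional dynamical system obtained from~(\ref{eq: Dynamical System}) by deleting the $y_d$-equation and setting $y_d\equiv 0$. The single-type nature of the invader actually simplifies the argument: the approximating process $\hat Y$ is a one-type branching process with birth rate $b_1+\tau$ and death rate $d_1+C\bar x_a$, which is supercritical precisely under~(\ref{sufficientpositiveeigenvalue2}), and no Kesten--Stigum / frequency-equilibration step is needed since there is only one type; this is why the probability in the statement is $1$ rather than $1-o_\veps(1)$.

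First I would set up, as in the proof of Proposition~\ref{Proposition: 4Dpositive}, a coordinatewise coupling $(X_a^{\delta,*,\diamond},X_d^{\delta,*,\diamond},Y^{\delta,*,\diamond})$ sandwiching $(X_a^K,X_d^K,Y^K)$ from below and above by processes with constant rates $a_1\mp\delta$, $b_1\mp\delta$, $d_1\pm\delta$, $d_2\pm\delta$ and, if $\tau>0$, transfer rate $\tau\mp\delta$; by~(\ref{eq: Convergence2}) this is legitimate for $K$ large, and the immigration terms $\gamma_1^K,\gamma_2^K$ are negligible because the resident stays of order $K$ (exactly as justified in Lemma~\ref{Lemma: convergence to equilibrium}), so they may be absorbed into the $\delta$-perturbation of the birth rates. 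For $\delta$ small the equilibria of the coupled constant-rate systems lie within $\tfrac{\veps\wedge\veps'}{2}$ of $(\bar x_a,\bar x_d,0)$ and $(0,0,\bar y)$.

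Next I would record the three ingredients, obtained by specializing Section~\ref{SubSection: Competition} to this setting: (i) the analogue of Lemma~\ref{Lemma: OnlyEquilibria}, namely that under~(\ref{sufficientpositiveeigenvalue2}) and~(\ref{eq:sufficientnegativeeigenvalue2}) the only nonnegative equilibria of the reduced system are $(0,0,0)$, $(\bar x_a,\bar x_d,0)$ and the asymptotically stable $(0,0,\bar y)$ — the $q=0$ case always falls into the easy branch $p\ge q$ of that proof; (ii) the three-dimensional analogue of Lemma~\ref{Lemma: Convergence Lemma}, that from any initial condition with $(x_a,x_d)$ near $(\bar x_a,\bar x_d)$ and $y_a>0$ the solution converges to $(0,0,\bar y)$ (the adaptation of \cite[Lemma~4.7]{blath2020invasion} already used in \cite{blath2020interplay} for the bi-type-versus-single-type setting); and (iii) the analogues of Lemmata~\ref{Lemma: veps xi growth lemma} and~\ref{Lemma: Fast growth} for the one-type invader against the bi-type resident (one may alternatively combine Lemma~\ref{Lemma: veps xi growth lemma} with \cite[Lemma~C.3]{champagnat2019stochastic}), giving that, starting from $Y_0^K=\lfloor m\veps K\rfloor$ with the resident near equilibrium, with probability $\to1$ as $K\to\infty$ the process $Y^K$ reaches $\lfloor\veps^\xi K\rfloor$ before the resident leaves a $2A\veps^\xi$-neighbourhood of $(\bar x_a,\bar x_d)$ — the non-extinction probability is $1-q^{m\veps K}\to1$ for fixed $\veps$ since $q<1$, and no frequency lemma is required. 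Finally, conditioning on $Y^K\ge\veps^\xi K$ with $(X_a^K,X_d^K)$ still near equilibrium, I would apply a Lemma~\ref{Lemma: Generalized EthierKurz}-type ODE approximation with transfer (uniform over compact sets of initial conditions, as in the proof of Proposition~\ref{Proposition: 4Dpositive}) over the finite horizon furnished by (ii) to each coupled process, concluding that at some finite $T=T(m,\veps,\veps')$ one has $X_a^K+X_d^K\le\veps' K$ and $Y^K/K\in[\bar y-\veps',\bar y+\veps']$ with probability $\to1$; letting $\delta\to0$ ends the proof.

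The main obstacle is essentially bookkeeping: verifying that each lemma of Section~\ref{SubSection: Competition} remains valid — and in fact becomes easier — when the invader's dormancy parameter degenerates, in particular that the singular transfer term $\tau x_ay_a/(x_a+y_a)$ causes no new difficulty in the global-stability statement (ii); it does not, being handled exactly as in \cite[Lemma~4.7]{blath2020invasion} and \cite{blath2020interplay}. Everything else is a transcription of the proof of Proposition~\ref{Proposition: 4Dpositive} with the $y_d$-coordinate and the frequency-convergence step removed.
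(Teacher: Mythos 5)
Your proposal matches the paper's proof in substance: the paper also proves Proposition~\ref{Proposition: 3Dpositive} by coupling $(X_a^K,X_d^K,Y^K)$ above and below with constant-rate processes (adding $\pm\delta$ to all rates including $\tau^K$), invoking a global-stability lemma for the three-dimensional ODE~(\ref{eq: DynamicalSystem2}), and then using a Lemma~\ref{Lemma: Generalized EthierKurz}-type approximation over a finite horizon. The ODE ingredient you call item~(ii) is exactly Lemma~\ref{Lemma: AppendixA convergence} in the paper; note though that the paper proves it by a perturbation argument along the locus $C(x_a+y)-\tau x_a/(x_a+y)=b_1-d_1$ (taken from \cite[Proposition 5.4]{blath2020interplay}), not by the eigenvector/Lyapunov route of \cite[Lemma 4.7]{blath2020invasion} underlying Lemma~\ref{Lemma: Convergence Lemma} — both give the same conclusion, so this is only a point of attribution. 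You are also right that, the invader being single-type, the Kesten--Stigum/frequency-equilibration step (Lemmas~\ref{Lemma: RefLemma}--\ref{Lemma: Sufficient condition}) drops out and with it the $o_\veps(1)$ error, which is why the statement has probability tending to~$1$. In fact, your item~(iii) is slightly more careful than the paper's own write-up: the paper asserts directly that ``for $\veps$ sufficiently small the initial condition from Lemma~\ref{Lemma: AppendixA convergence} is satisfied'', which is not literally true for $m<1$ (then $m\veps<\veps$, so $Y_0^K/K\notin[\veps,\sqrt\veps]$), whereas you spell out the needed growth phase taking $Y^K$ from $\lfloor m\veps K\rfloor$ up to $\lfloor\veps^\xi K\rfloor$ while the resident stays near its equilibrium, essentially the content of Remark~\ref{Remark: Important Proposition Remark}.
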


For the proof of this proposition, we use arguments from \cite[Section 5]{blath2020interplay}. The structure of the proof is as usual: First, we assume the population to satisfy a suitable initial condition and then approximate the dynamics of the system by a differential equation, whose solutions will converge to the corresponding fixed points.

Assume that the initial condition of the process $\tfrac{1}{K}(X_a^K,X_d^K,Y^K)$ is contained in the set \[
\calA_\veps^3\coloneqq [\bar{x}_a-2A\veps^\xi,\bar{x}_a+2A\veps^\xi]\times [\bar{x}_d-2A\veps^\xi,\bar{x}_d+2A\veps^\xi]\times[\veps,\sqrt{\veps}]
\]
for fixed $\epsi$.
We want to show that the solution of the dynamical system 
\begin{align}\begin{aligned}
\dot{x}_a&=x_a(a_1-d_1-C(x_a+y))+x_d\sigma_2-\tau\frac{x_a y}{x_a+y}\\
\dot{x}_d&=pCx_a(x_a+y)-(d_2+\sigma_2)x_d\\
\dot{y}&=y(b_1-d_1-C(x_a+y))+\tau\frac{x_ay}{x_a+y}
\end{aligned}\label{eq: DynamicalSystem2}
\end{align}
converges towards the equilibrium $(0,0,\bar{y})$ for any starting condition from $\calA_\veps^3$.

\begin{lemma}\label{Lemma: AppendixA convergence}
	Consider the dynamical system (\ref{eq: DynamicalSystem2}). If the initial condition $(x_a(0),x_d(0),y)$ is contained in the set $\calA_\veps^3$ and the inequalities (\ref{sufficientpositiveeigenvalue2}) and (\ref{eq:sufficientnegativeeigenvalue2}) are satisfied, then \[
	\lim\limits_{t\to\infty}(x_a(t),x_d(t),y(t))=(0,0,\bar{y}).
	\]
\end{lemma}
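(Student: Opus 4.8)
The plan is to prove Lemma~\ref{Lemma: AppendixA convergence} as the deterministic backbone of Proposition~\ref{Proposition: 3Dpositive}, in exact parallel with the bi-type case (Lemmas~\ref{Lemma: OnlyEquilibria}, \ref{Lemma: Convergence Lemma}, \ref{Lemma: RefLemma}) and \cite[Section~5]{blath2020interplay}. Formally, \eqref{eq: DynamicalSystem2} is the system \eqref{eq: Dynamical System} with $q=0$ restricted to the invariant set $\{y_d=0\}$, and the part of the earlier analysis concerning equilibria and their stability (Lemma~\ref{Lemma: OnlyEquilibria}) transfers almost verbatim; what does \emph{not} transfer is the convergence argument, because the growth condition \eqref{eq: Growth condition} degenerates (at $q=0$ it reads $0>y_d/y_a$, which is empty). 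I would therefore carry out the three-dimensional convergence directly, in three steps: (a) a priori bounds and the classification of equilibria; (b) a forward-invariant ``growth region'' that is entered from $\calA_\veps^3$; (c) a LaSalle/Lyapunov argument pinning the $\omega$-limit set to $(0,0,\bar y)$.

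\textbf{Step 1.} Non-negativity of all coordinates is clear, and boundedness of trajectories follows as in \cite[Lemma~4.6]{blath2020invasion}: the transfer terms cancel in $\dot x_a+\dot y$, giving $\frac{\d}{\d t}(x_a+y)=(a_1-d_1)x_a+(b_1-d_1)y-C(x_a+y)^2+\sigma_2 x_d$, while $\dot x_d\le pC(x_a+y)^2-(d_2+\sigma_2)x_d$; comparing $x_d$ with $\tfrac{pC}{d_2+\sigma_2}\sup(x_a+y)^2$ and using $\tfrac{p\sigma_2}{d_2+\sigma_2}<1$ bounds $\sup_t(x_a+y)$ and then $\sup_t x_d$. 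The computation of Lemma~\ref{Lemma: OnlyEquilibria} with $q=0$, $y_d\equiv0$ shows the only non-negative equilibria are $(0,0,0)$, $(\bar x_a,\bar x_d,0)$ and $(0,0,\bar y)$. At $(0,0,\bar y)$ the Jacobian has $e_3$ as eigenvector with eigenvalue $-C\bar y<0$, and the induced map on the $(x_a,x_d)$-plane is $\tilde J^{T}$ with $\bar y_a=\bar y$, where $\tilde J$ is the matrix \eqref{eq: meanmatrix2}; by \eqref{eq:sufficientnegativeeigenvalue2} this block has negative trace and positive determinant, so $(0,0,\bar y)$ is asymptotically stable. At $(\bar x_a,\bar x_d,0)$ the eigenvalue transverse to $\{y=0\}$ is $b_1+\tau-d_1-C\bar x_a>0$ by \eqref{sufficientpositiveeigenvalue2}, so this equilibrium is a saddle that the trajectory, starting with $y(0)\ge\veps>0$, leaves for good.

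\textbf{Steps 2 and 3.} Dividing the $\dot y$-equation by $y>0$, the condition $\dot y>0$ is $b_1-d_1-C(x_a+y)+\tau\tfrac{x_a}{x_a+y}>0$; let $\mathcal G$ be the intersection of this set with the absorbing box from Step~1. On $\calA_\veps^3$ one has $x_a=\bar x_a+O(\veps^\xi)$ and $y\le\sqrt\veps$, so the left-hand side equals $b_1+\tau-d_1-C\bar x_a+o(1)$, positive by \eqref{sufficientpositiveeigenvalue2} for $\veps$ small; hence $\calA_\veps^3\subseteq\mathcal G$. I would then check that $\mathcal G$ is forward-invariant until the trajectory enters a fixed neighbourhood of $(0,0,\bar y)$, and, crucially, that within $\mathcal G$ a weighted sum $\alpha x_a+x_d$ is strictly decreasing off $\{x_a=x_d=0\}$: writing $\frac{\d}{\d t}(\alpha x_a+x_d)=x_a[\alpha(a_1-d_1-C(x_a+y)-\tau\tfrac{y}{x_a+y})+pC(x_a+y)]+x_d(\alpha\sigma_2-(d_2+\sigma_2))$, the two bracketed coefficients can be made simultaneously negative (for $x_a+y$ near $\bar y$, which is where $\dot y>0$ drives the flow) by choosing $\alpha\in\bigl(\tfrac{pC\bar y}{d_1+\tau+C\bar y-a_1},\,\tfrac{d_2+\sigma_2}{\sigma_2}\bigr)$, an interval that is \emph{nonempty precisely because of} \eqref{eq:sufficientnegativeeigenvalue2}. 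Since $y$ is then monotone increasing and bounded while $\alpha x_a+x_d$ is eventually decreasing, the $\omega$-limit set is nonempty, compact, connected and, by LaSalle, consists of equilibria on which $\alpha x_a+x_d$ and $y$ are constant; this forces $x_a=x_d=0$ and, since $y\ge\veps>0$ is maintained, $y=\bar y$, whence $(x_a(t),x_d(t),y(t))\to(0,0,\bar y)$.

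\textbf{Main obstacle and a shortcut.} The delicate point is Step~2: making the growth region genuinely forward-invariant and the decay of $(x_a,x_d)$ effective \emph{throughout} the portion of phase space traversed before reaching $(0,0,\bar y)$, i.e. controlling the mutual dependence ``$y$ bounded below $\Leftrightarrow$ $x_a$ decaying''. This is exactly the estimate carried out carefully in \cite[Lemma~4.7]{blath2020invasion} and reused in Lemma~\ref{Lemma: Appendix 1}, and I would adapt it rather than redo it from scratch. The cleaner alternative, which I would actually follow in the write-up, is to observe that \eqref{eq: DynamicalSystem2} is precisely the dynamical system analysed in \cite{blath2020interplay} for competition of a bi-type against a single-type population, so that the lemma reduces to the inclusion $\calA_\veps^3\subseteq\mathcal G$ (checked above from \eqref{sufficientpositiveeigenvalue2}) together with the convergence result proved there — in the same way Lemma~\ref{Lemma: RefLemma} feeds into Lemma~\ref{Lemma: Convergence Lemma}.
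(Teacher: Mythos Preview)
Your Lyapunov route in Step~2 has a concrete gap, and the paper takes a different line that avoids it. The function $V=\alpha x_a+x_d$ is not decreasing on the early part of the trajectory: at a point with $x_a+y=\bar x_a$ and $y/(x_a+y)=0$ (the $\veps\to0$ limit of $\calA_\veps^3$), the requirement ``coefficient of $x_a$ negative'' reads $\alpha(a_1-d_1-C\bar x_a)+pC\bar x_a<0$, i.e.\ $\alpha>\tfrac{pC\bar x_a}{C\bar x_a-(a_1-d_1)}$, and a direct computation with the explicit $\bar x_a$ shows this lower bound equals $(d_2+\sigma_2)/\sigma_2$, \emph{exactly} the upper bound coming from the $x_d$-coefficient. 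So for no fixed $\alpha$ are both brackets strictly negative uniformly near $(\bar x_a,\bar x_d,0)$; the interval you exhibit only opens up once $x_a+y$ has already migrated to a neighbourhood of $\bar y$, which is the circularity you yourself flag. Your two fallbacks do not close this: the growth condition of \cite[Lemma~4.7]{blath2020invasion} is precisely \eqref{eq: Growth condition}, which you correctly observed is vacuous at $q=0$, and the convergence statement in \cite[Proposition~5.4]{blath2020interplay} is not a black-box ``$\mathcal G$-initial-condition $\Rightarrow$ convergence'' result but the very argument the paper reproduces.

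The paper's proof sidesteps forward invariance of $\mathcal G$ entirely by working on the nullcline $\{\dot y=0\}$. There the identity $C(x_a+y)-\tau\tfrac{x_a}{x_a+y}=b_1-d_1$ gives $C(x_a+y)+\tau\tfrac{y}{x_a+y}=b_1-d_1+\tau$ and $pCx_a(x_a+y)\le p(b_1-d_1+\tau)x_a$, so on the nullcline the $(x_a,x_d)$ equations are dominated by the constant linear system with matrix $\bigl(\begin{smallmatrix}a_1-b_1-\tau&\sigma_2\\ p(b_1-d_1+\tau)&-(d_2+\sigma_2)\end{smallmatrix}\bigr)$; a short computation using \eqref{sufficientpositiveeigenvalue2} for the determinant and \eqref{eq:sufficientnegativeeigenvalue2} for the trace shows both eigenvalues are strictly negative. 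One then argues by a dichotomy on $y$: either $y$ is eventually monotone, in which case $\dot y\to 0$ and for every $\delta>0$ the trajectory is eventually $\delta$-close to the nullcline, where the $\delta$-perturbed comparison matrix still has negative eigenvalues and forces $(x_a,x_d)\to(0,0)$; or $y$ has infinitely many local extrema, each lying exactly on the nullcline, and the same linear bound applies at those times. In either case $y\to\bar y$ follows.
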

\begin{proof}
	This proof is taken from \cite[Proposition 5.4]{blath2020interplay} and adapted to our case. Firstly, we notice that $y(t)$ is strictly increasing as long as \[
	C(x_a+y)-\tau\frac{x_a}{x_a+y}<b_1-d_1.
	\]
	By our choice of the starting conditions, we see that \[
	C(x_a+y)-\tau\frac{x_a}{x_a+y}\leq C(\bar{x}_a+(2A+1)\sqrt{\veps})-\tau\frac{\bar{x}_a-2A\sqrt{\veps}}{\bar{x}_a+(2A+1)\sqrt{\veps}}\leq C\bar{x}_a-\tau+C_*\sqrt{\veps}< b_1-d_1,
	\]
	where we used (\ref{sufficientpositiveeigenvalue2}) and $C_*$ is a sufficiently large constant. Hence, initially $y(t)$ is strictly increasing and will do so until \begin{align}
	C(x_a+y)-\tau\frac{x_a}{x_a+y}=b_1-d_1.\label{eq: Perturbtion assumption}
	\end{align}
	We now consider the resident population dynamics $(x_a,x_d)$ in the case where this equality is true and then we perform a perturbation argument. Assuming the equality, we easily compute \begin{align*}
	C(x_a+y)+\tau\frac{y}{x_a+y}=C(x_a+y)-\tau\frac{x_a}{x_a+y}+\tau=b_1-d_1+\tau
	\end{align*}
	and similarly\begin{align*}
	pCx_a(x_a+y)=px_a\lr{C(x_a+y_a)-\tau\frac{x_a}{x_a+y}+\tau\frac{x_a}{x_a+y}}\leq px_a(b_1-d_1+\tau).
	\end{align*}
	Hence, we can dominate the dynamics of $(x_a,x_d)$ under the assumption (\ref{eq: Perturbtion assumption}) by the solutions to the system
	\begin{align}
	\begin{aligned}
	\dot{x}_a&=x_a(a_1-b_1-\tau)+\sigma_2 x_d\\
	\dot{x}_d&=px_a(b_1-d_1+\tau)-(d_2+\sigma_2)x_d.
	\end{aligned}\label{eq: Approximation}
	\end{align}
	This system has the unique equilibrium $(0,0)$ if the coefficient matrix of the corresponding linear system \[
	A=\begin{pmatrix}
	a_1-b_1-\tau & \sigma_2\\
	p(b_1-d_1+\tau) & -(d_2+\sigma_2)
	\end{pmatrix}
	\] has non-zero determinant. Indeed, the determinant is $0$ if and only if \begin{align*}
	a_1&=b_1+\tau-\frac{p\sigma_2(C\bar{y}+\tau)}{d_2+\sigma_2}.
	\end{align*}
	However, this choice of $a_1$ would imply \[
	\bar{x}_a=\frac{(a_1-d_1)(d_2+\sigma_2)}{C(d_2+(1-p)\sigma_2)}=\frac{(b_1+\tau-d_1)(d_2+\sigma_2)-p\sigma_2(b_1-d_1+\tau)}{C(d_2+(1-p)\sigma_2)}=\frac{b_1+\tau-d_1}{C},
	\]
	so \[
	b_1+\tau-d_1-C\bar{x}_a=0,
	\]
	which contradicts (\ref{sufficientpositiveeigenvalue2})
	and hence the only equilibrium is $(0,0)$.\\
	
	Next, we will show that the system converges towards the equilibrium $(0,0)$. For this, we need to show that both eigenvalues of the matrix $A$ are negative. Note that for the existence of a positive eigenvalue we need a positive trace and hence $a_1-b_1-\tau>0$. Then, we obtain \[
	a_1-\tau-d_1-C\bar{y}=a_1-\tau-b_1>0,
	\]
	which contradicts the condition (\ref{eq:sufficientnegativeeigenvalue2}) since now \[
	0>-(a_1-\tau-d_1-C\bar{y})>\frac{\sigma_2pC\bar{y}}{d_2+\sigma_2}\geq 0.
	\] Hence, there cannot be a positive eigenvalue. Moreover, due to the determinant of $A$ being non-zero, both eigenvalues must be strictly negative. Since the dynamics of the solutions are determined by the eigenvalues, the solution will converge for any positive starting condition to $(0,0)$.\\
	
	Now we turn again to the general dynamical system (\ref{eq: DynamicalSystem2}). We distinguish two cases:\\
	
	\textbf{Case 1: Monotonicity.} In this case we assume that $y(t)$ eventually will be a monotone function. By boundedness of $y(t)$ this implies that $\textstyle\lim_{t\to\infty}\dot{y}(t)=0$ and since $y(t)$ is bounded away from $0$, we must have \[
	C(x_a(t)+y(t))-\tau\frac{x_a(t) }{x_a(t)+y(t)}\xrightarrow{t\to\infty}b_1-d_1.
	\]
	In particular, for any $\delta>0$ we can find a time $t_1(\delta)>0$ such that for all starting conditions in $\calA_\veps^3$ we have \[
	b_1-d_1-\delta<C(x_a(t)+y(t))-\tau\frac{x_a }{x_y+a}< b_1-d_1+\delta
	\]
	for all $t>t_1$. Then, we can consider the system (\ref{eq: Approximation}) altered in the corresponding places by $\pm\delta$ to account for the bounds above. We still obtain convergence to $(0,0)$ for both systems and therefore we must have \[
	\lim\limits_{t\to\infty}(x_a(t),x_d(t))=(0,0).
	\]
	As $C(x_a(t)+y(t))-\tau\tfrac{x_a(t) }{x_a(t)+y(t)}\to b_1-d_1$ we have $y(t)\to\bar{y}$.\\
	
	\textbf{Case 2: Non-monotone convergence.} It is possible that $t\mapsto y(t)$ is not monotone on any interval $(s,\infty)$ for $s\geq 0$. In this case, due to the boundedness of $s$ there must be a countable number of local extrema of $y$, for if there were only finitely many, then $y$ would become monotone eventually. At any local minimum or maximum, since $y$ is bounded away from $0$, we must have \[
	C(x_a+y)-\tau\frac{x_a}{x_a+y}=b_1-d_1.
	\]
	Using our observations from Case 1, we see that at any maximum and at any minimum the population of $(x_a,x_d)$ is decreasing and converges to $(0,0)$ even monotonically. Since this holds independently of the size of $y(t)$, we obtain that \[\lim\limits_{t\to\infty}(x_a(t),x_d(t))=(0,0).\] As before, we derive $\textstyle\lim_{t\to\infty}y(t)=\bar{y}$.
	
\end{proof}

\begin{proof}[Proof of Proposition \ref{Proposition: 3Dpositive}]
	As usual, we couple the process $(X_a^K,X_d^K,Y^K)$ with two processes from above and below by \[
	(X_a^{\delta,-},X_d^{\delta,-},Y^{\delta,-})\leq (X_a^K,X_d^K,Y^K)\leq (X_a^{\delta,+},X_d^{\delta,+},Y^{\delta,+})
	\]
	by accordingly increasing or decreasing the birth, death and switching rates by some term involving $\delta>0$. We further couple with processes $(X_a^{\delta,*,\diamond},X_d^{\delta,*,\diamond},Y^{\delta,*,\diamond})$ as in Proposition \ref{Proposition: 4Dpositive} by increasing or decreasing the transfer rate $\tau^K$ by $\delta$. For $\epsi$ sufficiently small, the initial condition from Lemma \ref{Lemma: AppendixA convergence} is satisfied and thus after a finite time, the solutions to the differential equations converge towards their equilibria as time tends to infinity. By arguments similar to Lemma \ref{Lemma: Generalized EthierKurz}, the processes converge to the solutions of the corresponding differential equations. In particular, as in Proposition \ref{Proposition: 4Dpositive} we see that for $\delta>0$ sufficiently small, the processes are inside a neighbourhood of the equilibrium after a finite time with probability converging to $1$.
\end{proof}

A similar result holds for the inverse invasion.

\begin{proposition}\label{Proposition: 3Dnegative}
	Assume that (\ref{eq: Convergence2}) and the inverse inequalities of (\ref{sufficientpositiveeigenvalue2}) and (\ref{eq:sufficientnegativeeigenvalue2}) are true, that is \begin{align}
	b_1+\tau-d_1-C\bar{x}_a<0\quad\text{ and }\quad -(a_1-\tau-d_1-C\bar{y})<\frac{\sigma_2 p C\bar{y}}{d_2+\sigma_2}.\label{eq: Inverseinequalities2}
	\end{align} Consider the process $(X_a^K,X_d^K,Y^K)$ with initial condition $\tfrac{1}{K}(X_{a,0}^K+X_{d,0}^K)=m\veps$ and $\tfrac{Y^K}{K}\in[\bar{y}-\veps,\bar{y}+\veps]$ for some $\epsi$ and $m>0$ sufficiently small. Then for any $\veps'>0$, there exists a finite time $T=T(m,\veps,\veps')$ such that \[
	\lim\limits_{K\to\infty}\P\lr{Y_{T}^K\leq \veps'K,\ \frac{X_{a,T}^K}{K}\in [\bar{x}_a-\veps',\bar{x}_a+\veps'],\ \frac{X_{d,T}^K}{K}\in [\bar{x}_d-\veps',\bar{x}_d+\veps']}\geq 1-o_\veps(1).
	\]
\end{proposition}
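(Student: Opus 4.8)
The plan is to mirror the proof of Proposition~\ref{Proposition: 3Dpositive} with the roles of the invading and the resident trait interchanged, exactly as Proposition~\ref{Proposition: 4Dnegative} is obtained from Proposition~\ref{Proposition: 4Dpositive}; indeed Proposition~\ref{Proposition: 3Dnegative} is essentially the specialisation of Proposition~\ref{Proposition: 4Dnegative} to $q=0$ (no dormancy for the single-type trait $Y$), the two reverse inequalities in~(\ref{eq: Inverseinequalities2}) being precisely the $q=0$ instances of~(\ref{eq: Inverseinequalities}). First I would, as in Lemma~\ref{Lemma: convergence to equilibrium} and in all the competition proofs, use~(\ref{eq: Convergence2}) to sandwich $(X_a^K,X_d^K,Y^K)$ coordinatewise between processes $(X_a^{\delta,-},X_d^{\delta,-},Y^{\delta,-})$ and $(X_a^{\delta,+},X_d^{\delta,+},Y^{\delta,+})$ with constant rates obtained by shifting $a_1,b_1,d_1,d_2$ by $\mp\delta$, and, when $\tau>0$, between further couplings $(X_a^{\delta,*,\diamond},X_d^{\delta,*,\diamond},Y^{\delta,*,\diamond})$ with $*\in\{-,+\}$, $\diamond\in\{-,+\}$ whose transfer rate is $\tau\pm\delta$; the immigration terms $\gamma_i^K$ may be absorbed into the birth rates because the resident $Y^K$ stays of order $K$, just as in Lemma~\ref{Lemma: convergence to equilibrium}. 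For $\delta$ small the perturbed invasion equilibria $(\bar x_a^{\delta,*,\diamond},\bar x_d^{\delta,*,\diamond})$ lie within $\tfrac{\veps\wedge\veps'}{2}$ of $(\bar x_a,\bar x_d)$.

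The only genuinely new analytic ingredient is a deterministic convergence lemma for the limiting system~(\ref{eq: DynamicalSystem2}) under the reversed sign conditions~(\ref{eq: Inverseinequalities2}). Under~(\ref{eq: Inverseinequalities2}) the approximating bi-type process $(\hat X_a,\hat X_d)$ (transitions displayed just before~(\ref{eq:sufficientnegativeeigenvalue2})) is supercritical, so its mean matrix has a positive principal eigenvalue~$\la$ with normalised left eigenvector $(\pi_a,\pi_d)$; I claim that whenever the initial datum of~(\ref{eq: DynamicalSystem2}) satisfies $y\in[\bar y-A\sqrt{\veps},\bar y+A\sqrt{\veps}]$, $x_a+x_d\in(0,\sqrt{\veps})$ and $x_d/x_a=\pi_d/\pi_a$, then $(x_a(t),x_d(t),y(t))\to(\bar x_a,\bar x_d,0)$ as $t\to\infty$. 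This would follow from a perturbation argument as in Lemma~\ref{Lemma: AppendixA convergence} (equivalently \cite[Proposition 5.4]{blath2020interplay}) with $(x_a,x_d)$ and $y$ interchanged: near the surface $\{\,b_1-d_1=C(x_a+y)-\tau\tfrac{x_a}{x_a+y}\,\}$ on which $\dot y$ vanishes one bounds the bi-type subdynamics $(x_a,x_d)$ below and above by linear systems of the type~(\ref{eq: Approximation}), whose $2\times2$ coefficient matrices are checked---using one of the inequalities in~(\ref{eq: Inverseinequalities2}) to exclude a vanishing determinant and the other to exclude a positive eigenvalue---to be asymptotically stable towards $(\bar x_a,\bar x_d)$, and one then concludes separately in the case that $y(t)$ is eventually monotone and in the case that it has countably many local extrema, as in the two cases of Lemma~\ref{Lemma: AppendixA convergence}.

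For the probabilistic part I would reuse, at $q=0$, the chain of estimates already developed for Proposition~\ref{Proposition: 4Dnegative}: the exit-time bound of Lemma~\ref{Lemma: AppendixA 2} (where $R_\veps^K$ now reduces to a one-dimensional neighbourhood of $\bar y$ for the single-type resident, whose slow exit is controlled by \cite[Lemma C.1]{champagnat2019stochastic}), the fast-growth statement following Lemma~\ref{Lemma: AppendixA 2} (giving $T_{\veps^\xi}^K<T_0^K\wedge R_{2A\veps^\xi}^K$ with probability tending to $1$), and the frequency lemma~\ref{Lemma: Sufficient condition2} (showing $X_a^K/(X_a^K+X_d^K)$ enters $(\pi_a-\delta,\pi_a+\delta)$ within time $T_\veps^K+\log\log(1/\veps)$ with probability $1-o_\veps(1)$). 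Together with Lemma~\ref{Lemma: AppendixA Lemma}, these place the coupled processes $(X_a^{\delta,*,\diamond},X_d^{\delta,*,\diamond},Y^{\delta,*,\diamond})$ into the good initial configuration of the second paragraph with high probability; since by the Ethier--Kurtz-type tracking argument of Lemmata~\ref{Lemma: Generalized EthierKurz} and~\ref{Lemma: convergence to equilibrium} the corresponding rescaled processes follow the solution of~(\ref{eq: DynamicalSystem2}), which converges to $(\bar x_a^{\delta,*,\diamond},\bar x_d^{\delta,*,\diamond},0)$, there is a finite time after which all of them lie in a neighbourhood of that equilibrium with probability $\to1$. Applying the sandwich, letting $\delta\to0$, and covering the interval $[\tfrac{m\veps K}{2},m\veps K]$ of initial sizes as in Remark~\ref{Remark: Important Proposition Remark} yields the claim.

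The step I expect to be the main obstacle is the reversed-role deterministic lemma of the second paragraph: establishing the monotonicity/perturbation dichotomy for the resident $y(t)$ and, in particular, verifying the asymptotic stability of the linear subsystems of the form~(\ref{eq: Approximation}) requires---unlike in Proposition~\ref{Proposition: 3Dpositive}, where~(\ref{sufficientpositiveeigenvalue2}) enters directly---the simultaneous use of both inequalities in~(\ref{eq: Inverseinequalities2}), and some care is needed so that the argument does not degenerate when $\tau=0$. Everything else is a routine $q=0$ specialisation of results already proven for the four-dimensional case.
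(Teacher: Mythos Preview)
Your high-level strategy---treat the situation as the $q=0$ specialisation of Proposition~\ref{Proposition: 4Dnegative} and reuse its probabilistic chain (Lemmata~\ref{Lemma: AppendixA 2}--\ref{Lemma: Sufficient condition2})---is exactly what the paper does. The paper's proof is in fact nothing more than that one observation: since the proof of Proposition~\ref{Proposition: 4Dnegative} never uses the dormancy of the initially resident trait $(Y_a,Y_d)$, the entire chain, \emph{including the deterministic Lemma~\ref{Lemma: Appendix 1}}, applies verbatim once the couplings are adjusted for a one-type resident.

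Where you diverge, and where there is a genuine gap, is in your second paragraph. You propose to replace the deterministic input (Lemma~\ref{Lemma: Appendix 1} at $q=0$, proved via the growth-condition inequality~\eqref{eq: Growth condition2} as in \cite{blath2020invasion}) by a perturbation/monotonicity argument modelled on Lemma~\ref{Lemma: AppendixA convergence}. That argument does not transfer when the roles are swapped. On the surface $\{\dot y=0\}$ the $(x_a,x_d)$-subdynamics are still bounded by the \emph{same} homogeneous linear system~\eqref{eq: Approximation} with matrix $A=\begin{pmatrix} a_1-b_1-\tau & \sigma_2\\ p(b_1-d_1+\tau) & -(d_2+\sigma_2)\end{pmatrix}$; a homogeneous linear system can only be stable towards $(0,0)$, never towards $(\bar x_a,\bar x_d)$. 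Worse, under the reversed fitness inequalities~\eqref{eq: Inverseinequalities2} one computes $\det A<0$ (strictly, when $\tau>0$), so $A$ has one positive eigenvalue: the system~\eqref{eq: Approximation} is a saddle and only tells you that $(x_a,x_d)$ grows along some direction, not that it converges anywhere. The monotonicity dichotomy for $y$ then no longer closes the argument.

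The fix is simply not to introduce a new lemma: the frequency estimate (Lemma~\ref{Lemma: Sufficient condition2}) and Lemma~\ref{Lemma: AppendixA Lemma} already place the coupled processes into the growth-condition region~\eqref{eq: Growth condition2}, and Lemma~\ref{Lemma: Appendix 1} (which, as its proof notes, never uses the sign of the transfer term) then gives $(x_a,x_d,y_a,y_d)\to(\bar x_a,\bar x_d,0,0)$; specialising to $y_d\equiv 0$ yields exactly the three-dimensional statement you need. Everything else in your outline is fine.
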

\begin{proof}[Proof of Proposition \ref{Proposition: 3Dnegative}]
	Here, we are looking at the invasion of a population with dormancy which does not benefit from horizontal transfer. This is the same situation as in Proposition \ref{Proposition: 4Dnegative}, where the difference is only in the dormancy of the initially resident trait. However, since we have never used this aspect in the proof of Proposition \ref{Proposition: 4Dnegative}, the chain of arguments remains valid, where of course in the couplings we need to account for the lack of dormancy in the initially resident trait.
\end{proof}

In addition, we also need the same results, but with inverted horizontal transfer. That is, we now consider the process $(X_a^K,X_d^K,Y^K)$ which has the same transitions as in the beginning of the section except for the transition  $(i,j,k)\to (i-1,j,k+1)$ to be replaced with the transition $(i,j,k)\to (i+1,j,k-1)$. Then we can still approximate the processes $(X_a^K,X_d^K)$ and $Y^K$ as before, but we need to switch the addition of $\tau$ in the rates from the birth to the death rate and vice versa. In particular, we want the inequalities \begin{align}
b_1-\tau-d_1-C\bar{x}_a>0\quad\text{ and }\quad -(a_1+\tau-d_1-C\bar{y})>\frac{\sigma_2 pC\bar{y}}{d_2+\sigma_2}\label{eq: Moreinequalities}
\end{align}
to hold. Then we obtain the same results as above.

\begin{proposition}\label{Proposition: 3Dpositive2}
	Assume that (\ref{eq: Convergence2}) and (\ref{eq: Moreinequalities}) are true. Consider the process $(X_a^K,X_d^K,Y^K)$ with initial condition $\tfrac{1}{K}(X_{a,0}^K,X_{d,0}^K)\in[\bar{x}_a-\veps,\bar{x}_a+\veps]\times[\bar{x}_d-\veps,\bar{x}_d+\veps]$ and $\tfrac{Y^K}{K}=m\veps$ for some $\epsi$ and $m>0$ sufficiently small. Then for any $\veps'>0$, there exists a finite time $T=T(m,\veps,\veps')$ such that \[
	\lim\limits_{K\to\infty}\P\lr{X_{a,T}^K+X_{d,T}^K\leq \veps'K,\ \frac{Y_T^K}{K}\in [\bar{y}-\veps',\bar{y}+\veps']}=1.
	\]
\end{proposition}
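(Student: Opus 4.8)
The plan is to follow the proof of Proposition~\ref{Proposition: 3Dpositive} essentially verbatim, the only change being that horizontal transfer now acts from $Y^K$ onto $X_a^K$ rather than the reverse; accordingly $+\tau$ and $-\tau$ swap roles in all relevant rates and in the limiting ODE, and the two sign-reversed conditions in \eqref{eq: Moreinequalities} replace \eqref{sufficientpositiveeigenvalue2} and \eqref{eq:sufficientnegativeeigenvalue2}. First I would record that the rescaled process $\tfrac1K(X_a^K,X_d^K,Y^K)$ is now approximated by the solution of
\begin{align*}
	\dot{x}_a&=x_a\bigl(a_1-d_1-C(x_a+y)\bigr)+\sigma_2 x_d+\tau\tfrac{x_ay}{x_a+y},\\
	\dot{x}_d&=pCx_a(x_a+y)-(d_2+\sigma_2)x_d,\\
	\dot{y}&=y\bigl(b_1-d_1-C(x_a+y)\bigr)-\tau\tfrac{x_ay}{x_a+y},
\end{align*}
and then prove the analogue of Lemma~\ref{Lemma: AppendixA convergence}: for initial conditions in $\calA_\veps^3$ and $\veps$ small, the solution tends to $(0,0,\bar y)$ with $\bar y=(b_1-d_1)/C$.

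For this lemma I would argue as in its proof. Here $y$ is strictly increasing while $C(x_a+y)+\tau\tfrac{x_a}{x_a+y}<b_1-d_1$, which holds at the starting conditions by the first inequality in \eqref{eq: Moreinequalities}; hence $y$ increases until the set $C(x_a+y)+\tau\tfrac{x_a}{x_a+y}=b_1-d_1$ is reached. On that set one computes $\dot x_a=x_a(a_1+\tau-b_1)+\sigma_2 x_d$ exactly, while $C(x_a+y)\le b_1-d_1$ forces $\dot x_d\le p x_a(b_1-d_1)-(d_2+\sigma_2)x_d$, so $(x_a,x_d)$ is dominated by the linear system with matrix $A=\begin{pmatrix} a_1+\tau-b_1 & \sigma_2\\ p(b_1-d_1) & -(d_2+\sigma_2)\end{pmatrix}$. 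Using $C\bar y=b_1-d_1$ one checks that $\det A>0$ is exactly the second inequality in \eqref{eq: Moreinequalities}, and then $a_1+\tau-b_1<-\tfrac{\sigma_2 pC\bar y}{d_2+\sigma_2}<0<d_2+\sigma_2$ gives $\operatorname{tr}A<0$; thus all eigenvalues of $A$ have negative real part and the dominating system tends to $(0,0)$, so $(x_a,x_d)\to(0,0)$. The monotone versus oscillating dichotomy for $y$ and the $\pm\delta$ perturbation argument of the original proof then carry over unchanged and yield $y\to\bar y$.

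With this ODE lemma in hand, the rest is the usual coupling of Proposition~\ref{Proposition: 3Dpositive} (and Proposition~\ref{Proposition: 4Dpositive}): using \eqref{eq: Convergence2} one sandwiches $(X_a^K,X_d^K,Y^K)$ coordinatewise between constant-rate processes obtained by shifting the birth/death rates by $\pm\delta$ and the transfer rate by $\pm\delta$, checks that for $\delta$ small the corresponding equilibria lie within $\tfrac{\veps\wedge\veps'}{2}$ of $(\bar x_a,\bar x_d)$ and $\bar y$, and that the invading one-dimensional process $\hat Y$ with perturbed rates stays supercritical (its effective growth rate near $X$'s equilibrium being $b_1-\tau-d_1-C\bar x_a\mp C_*\delta>0$ by the first inequality in \eqref{eq: Moreinequalities}). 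The one-dimensional analogues of Lemma~\ref{Lemma: veps xi growth lemma} and Lemma~\ref{Lemma: Fast growth} (as in \cite[Section~5]{blath2020interplay}), together with the fact that an initial batch of $m\veps K$ supercritical families survives with probability $1-o_K(1)$ since the per-family extinction probability $q<1$ satisfies $q^{m\veps K}\to 0$, show that with probability tending to $1$ the invading process reaches size $\sqrt\veps K$ while $(X_a^K,X_d^K)/K$ stays inside the box defining $\calA_\veps^3$. From there, Lemma~\ref{Lemma: Generalized EthierKurz}-type convergence of each bounding process to its ODE, plus the lemma above, places the whole system in a neighbourhood of $(0,0,\bar y)$ after a finite time with probability converging to $1$; letting $\delta\downarrow 0$ finishes the proof. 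I expect the only genuinely new point to be the stability check in the ODE lemma — that with the transfer reversed the dominating linear system for $(x_a,x_d)$ is still asymptotically stable, i.e.\ that $\det A>0$ and $\operatorname{tr}A<0$ follow from \eqref{eq: Moreinequalities} — together with the matching sign bookkeeping for super-/subcriticality of the approximating branching processes; everything else (the constants $A,\veps_0$, the exact perturbed rates, the semimartingale estimates behind the one-dimensional auxiliary lemmata) is identical to Section~\ref{SubSection: Competition} and \cite{blath2020interplay}.
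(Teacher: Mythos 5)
Your proposal matches the paper's proof almost exactly: the paper proves the same ODE convergence lemma for the transfer-reversed system (you correctly compute $\dot x_a=x_a(a_1+\tau-b_1)+\sigma_2 x_d$ on the level set and bound $\dot x_d$, and identify the same dominating matrix $A$), then transfers the conclusion to the stochastic system via the same coupling used for Proposition~\ref{Proposition: 3Dpositive} and~\ref{Proposition: 4Dpositive}. Your stability check is if anything slightly more explicit than the paper's — you verify $\det A>0$ directly from the second inequality in~\eqref{eq: Moreinequalities}, whereas the paper only records that $\det A\neq 0$ and that $a_1<b_1-\tau-p\sigma_2C\bar y/(d_2+\sigma_2)$ forces both eigenvalues negative — but the argument is the same.
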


The idea for proving Proposition \ref{Proposition: 3Dpositive2} is very similar to the one of Proposition \ref{Proposition: 3Dpositive}. The only difficulty arises in the preceding lemma, where we have used explicitly that the invading population is benefiting from horizontal transfer. As before, assume that the initial condition of the process $\tfrac{1}{K}(X_a^K,X_d^K,Y^K)$ is contained in the set \[
\calA_\veps^3\coloneqq [\bar{x}_a-2A\veps^\xi,\bar{x}_a+2A\veps^\xi]\times [\bar{x}_d-2A\veps^\xi,\bar{x}_d+2A\veps^\xi]\times[\veps,\sqrt{\veps}]
\]
for fixed $\epsi$.
We want to show that the solution to the dynamical system 
\begin{align}\begin{aligned}
\dot{x}_a&=x_a(a_1-d_1-C(x_a+y))+x_d\sigma_2+\tau\frac{x_a y}{x_a+y}\\
\dot{x}_d&=pCx_a(x_a+y)-(d_2+\sigma_2)x_d\\
\dot{y}&=y(b_1-d_1-C(x_a+y))-\tau\frac{x_ay}{x_a+y}
\end{aligned}\label{eq: DynamicalSystem3}
\end{align}
converges towards the equilibrium $(0,0,\bar{y})$ for any starting condition from $\calA_\veps^3$.
\begin{lemma}
	Consider the dynamical system (\ref{eq: DynamicalSystem3}). If the initial condition $(x_a(0),x_d(0),y)$ is contained in the set $\calA_\veps^3$ and the inequalities (\ref{eq: Moreinequalities}) are satisfied, then \[
	\lim\limits_{t\to\infty}(x_a(t),x_d(t),y(t))=(0,0,\bar{y}).
	\]
\end{lemma}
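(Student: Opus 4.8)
The plan is to mirror the proof of Lemma~\ref{Lemma: AppendixA convergence} (which itself follows \cite[Proposition 5.4]{blath2020interplay}), adapting it to the reversed-transfer system~(\ref{eq: DynamicalSystem3}), where the single-type invader $y$ now \emph{loses} individuals to horizontal transfer rather than gaining them. The structural change amounts to a handful of sign flips, and they fit together with the hypotheses~(\ref{eq: Moreinequalities}): the first inequality will encode ``$y$ initially grows'', while the second will give the asymptotic stability of the linear system governing the decay of the resident. Throughout, the trajectory stays bounded by a Gronwall estimate on $x_a+x_d+y$, exactly as for the analogous lemmata.

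First I would rewrite $\dot y = y\bigl[\,b_1-d_1-C(x_a+y)-\tau\tfrac{x_a}{x_a+y}\,\bigr]$, so that $y$ is strictly increasing precisely while $C(x_a+y)+\tau\tfrac{x_a}{x_a+y}<b_1-d_1$. On the starting set $\calA_\veps^3$ one has $x_a\in[\bar x_a-2A\veps^\xi,\bar x_a+2A\veps^\xi]$ and $y\le\sqrt\veps$, hence $C(x_a+y)+\tau\tfrac{x_a}{x_a+y}=C\bar x_a+\tau+O(\sqrt\veps)$, which is $<b_1-d_1$ for $\veps$ small by the first inequality in~(\ref{eq: Moreinequalities}) (equivalently~(\ref{sufficientpositiveeigenvalue2})). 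Thus $y$ increases, and keeps doing so until the first time — if ever — the curve $C(x_a+y)+\tau\tfrac{x_a}{x_a+y}=b_1-d_1$ is hit; in particular $y$ stays bounded below by its initial value up to that time.

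The core step is the behaviour of the resident pair $(x_a,x_d)$ on and near that curve. Using $\tfrac{x_a}{x_a+y}+\tfrac{y}{x_a+y}=1$, a short computation shows that on the curve $a_1-d_1-C(x_a+y)+\tau\tfrac{y}{x_a+y}=a_1-b_1+\tau$ and $C(x_a+y)=b_1-d_1-\tau\tfrac{x_a}{x_a+y}\le b_1-d_1$, so that
\[
\dot x_a = (a_1-b_1+\tau)\,x_a+\sigma_2 x_d,\qquad \dot x_d \le p(b_1-d_1)\,x_a-(d_2+\sigma_2)x_d .
\]
Since the off-diagonal coefficients are nonnegative, Kamke's comparison theorem bounds $(x_a,x_d)$ componentwise by the solution, from the same initial value, of the linear cooperative system with matrix
\[
A=\begin{pmatrix}a_1-b_1+\tau & \sigma_2\\ p(b_1-d_1) & -(d_2+\sigma_2)\end{pmatrix}.
\]
Using $\bar y=(b_1-d_1)/C$ one has $a_1-b_1+\tau=a_1+\tau-d_1-C\bar y$, so the second inequality in~(\ref{eq: Moreinequalities}) yields at once $a_1-b_1+\tau<0$, hence $\mathrm{tr}\,A<0$, and $-(a_1-b_1+\tau)(d_2+\sigma_2)>\sigma_2 pC\bar y=\sigma_2 p(b_1-d_1)$, hence $\det A>0$. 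Therefore $A$ has two negative eigenvalues (automatically real, the matrix being cooperative), so $(x_a,x_d)\to(0,0)$; replacing the curve equality by a $\delta$-neighbourhood perturbs $A$ by $O(\delta)$ and preserves this for $\delta$ small.

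I would finish with the same dichotomy as in the cited proof. If $y$ is eventually monotone, then $\dot y\to 0$, which forces $C(x_a+y)+\tau\tfrac{x_a}{x_a+y}\to b_1-d_1$, so the resident falls into the $\delta$-perturbed regime above and $(x_a,x_d)\to(0,0)$. If instead $y$ has infinitely many local extrema (it cannot have finitely many without being eventually monotone), then the curve equality holds at each of them, and the computation above makes $(x_a,x_d)$ decrease toward $(0,0)$ independently of the current value of $y$. In either case $y$ stays bounded away from $0$, and once $x_a\to 0$ its limit $L$ must satisfy $b_1-d_1-CL=0$; since $b_1-d_1>\tau\ge 0$ the value $L=0$ is impossible, so $L=\bar y$, giving $(x_a,x_d,y)\to(0,0,\bar y)$. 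The step I expect to be most delicate is the infinitely-many-extrema case: one must control $(x_a,x_d)$ between consecutive critical times of $y$, where $C(x_a+y)$ can be transiently large; the boundedness of the trajectory together with the cooperative structure of $A$ (real negative eigenvalues, positive Perron eigenvector) is what makes it go through, exactly as in \cite[Proposition 5.4]{blath2020interplay}. The only other point requiring care is the $\tau$-bookkeeping — the transfer term sitting on the death side rather than the birth side — which is why the matrix $A$ here carries $+\tau$ in place of the $-\tau$ of the forward-transfer case, and why the two inequalities in~(\ref{eq: Moreinequalities}) end up playing exactly the roles described above.
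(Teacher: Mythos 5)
Your argument is correct and takes essentially the same route as the paper's proof: you reduce the analysis of $\dot y$ to the boundary curve $C(x_a+y)+\tau\tfrac{x_a}{x_a+y}=b_1-d_1$, derive the same cooperative comparison matrix $A=\left(\begin{smallmatrix}a_1-b_1+\tau & \sigma_2\\ p(b_1-d_1) & -(d_2+\sigma_2)\end{smallmatrix}\right)$ for $(x_a,x_d)$ on that curve, and close with the identical monotone/non-monotone dichotomy borrowed from Lemma~\ref{Lemma: AppendixA convergence}. Your explicit check that $\mathrm{tr}\,A<0$ and $\det A>0$ under the second inequality of~(\ref{eq: Moreinequalities}) is a slightly cleaner statement of the paper's eigenvalue argument (the paper establishes negativity by a contrapositive on the trace together with $\det A\neq 0$), but the substance is the same.
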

\begin{proof}
	The concept of the proof stays the same. Initially the function $y(t)$ is increasing until eventually \[
	C(x_a+y)+\tau\frac{x_a}{x_a+y}=b_1-d_1.
	\]
	We again consider the system for $(x_a,x_d)$ under this condition and need to show that this implies $(x_a(t),x_d(t))\to 0$ as $t\to \infty$. As before, we calculate \[
	C(x_a+y)-\tau\frac{y}{x_a+y}=C(x_a+y)+\tau\frac{x_a}{x_a+y}-\tau=b_1-d_1-\tau
	\]
	and \[
	pCx_a(x_a+y)=px_a\lr{C(x_a+y_a)+\tau\frac{x_a}{x_a+y}-\tau\frac{x_a}{x_a+y}}\leq px_a(b_1-d_1).
	\]
	Thus, we consider \begin{align}
	\begin{aligned}
	\dot{x}_a&=x_a(a_1-b_1+\tau)+\sigma_2 x_d\\
	\dot{x}_d&=px_a(b_1-d_1)-(d_2+\sigma_2)x_d.
	\end{aligned}\label{eq: Approximation2}
	\end{align}
	Again, we compute the determinant of the coefficient matrix \[
	A=\begin{pmatrix}
	a_1-b_1+\tau & \sigma_2 \\
	p(b_1-d_1) & -(d_2+\sigma_2)
	\end{pmatrix}
	\]
	to be $0$ if and only if \begin{align*}
	a_1=b_1-\tau -\frac{p\sigma_2 C\bar{y}}{d_2+\sigma_2}.
	\end{align*}
	From (\ref{eq: Moreinequalities}) we know that\[
	a_1<-\frac{p\sigma_2 C\bar{y}}{d_2+\sigma_2}-\tau+d_1+C\bar{y}=-\frac{p\sigma_2 C\bar{y}}{d_2+\sigma_2}-\tau+b_1,
	\]
	so $(0,0)$ is again the only equilibrium and the eigenvalues of $A$ are negative, so for any positive initial condition the solution of the system (\ref{eq: DynamicalSystem3}) converges to $(0,0)$. The remainder of the proof can be taken from the proof of Lemma \ref{Lemma: AppendixA convergence}.
\end{proof}
Now, we can prove Proposition \ref{Proposition: 3Dpositive2}.
\begin{proof}[Proof of Proposition \ref{Proposition: 3Dpositive2}]
	The proof is identical with the one of Proposition \ref{Proposition: 3Dpositive}, except for the need of using Lemma A.7 in this section instead of Lemma \ref{Lemma: AppendixA convergence}.
\end{proof}

\begin{proposition}\label{Proposition: 3Dnegative2}
	Assume that (\ref{eq: Convergence2}) and the inverse inequalities of (\ref{eq: Moreinequalities}) are true, that is \begin{align}
	b_1-\tau-d_1-C\bar{x}_a<0\quad\text{ and }\quad -(a_1+\tau-d_1-C\bar{y})<\frac{\sigma_2 p C\bar{y}}{d_2+\sigma_2}.\label{eq: Moreinequalities2}
	\end{align} Consider the process $(X_a^K,X_d^K,Y^K)$ described above before Proposition \ref{Proposition: 3Dpositive2} with initial condition $\tfrac{1}{K}(X_{a,0}^K+X_{d,0}^K)=m\veps$ and $\tfrac{Y^K}{K}\in[\bar{y}-\veps,\bar{y}+\veps]$ for some $\epsi$ and $m>0$ sufficiently small. Then for any $\veps'>0$, there exists a finite time $T=T(m,\veps,\veps')$ such that \[
	\lim\limits_{K\to\infty}\P\lr{Y_{T}^K\leq \veps'K,\ \frac{X_{a,T}^K}{K}\in [\bar{x}_a-\veps',\bar{x}_a+\veps'],\ \frac{X_{d,T}^K}{K}\in [\bar{x}_d-\veps',\bar{x}_d+\veps']}\geq 1-o_\veps(1).
	\]
\end{proposition}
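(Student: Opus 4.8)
The plan is to follow the same scheme as the proofs of Proposition \ref{Proposition: 3Dnegative} (which reduces to Proposition \ref{Proposition: 4Dnegative}) and Proposition \ref{Proposition: 3Dpositive2}, tracking the two differences: $Y^K$ is here single-type, which --- as already noted for Proposition \ref{Proposition: 3Dnegative} --- causes no essential change (the one-dimensional equilibrium estimates of \cite[Appendix C]{champagnat2019stochastic} replace the bi-type ones of this section for the resident), and the direction of horizontal transfer is reversed, so the term $\pm\tau\frac{x_ay}{x_a+y}$ in the limiting system (\ref{eq: DynamicalSystem3}) carries the opposite sign. Concretely, first, exactly as in the proof of Proposition \ref{Proposition: 3Dpositive}, I would couple $(X_a^K,X_d^K,Y^K)$ coordinatewise between constant-rate processes $(X_a^{\delta,*,\diamond},X_d^{\delta,*,\diamond},Y^{\delta,*,\diamond})$, $*\in\{-,+\}$, $\diamond\in\{-,+\}$, obtained by perturbing the birth, death and switching rates by $\pm\delta$ and the transfer rate $\tau^K$ by $\pm\delta$; as in Lemma \ref{Lemma: convergence to equilibrium} the immigration rates $\gamma_1^K,\gamma_2^K$ are absorbed into an increase of the birth rates since the resident $Y^K$ stays bounded away from $0$ with high probability. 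By an argument as in Lemma \ref{Lemma: Generalized EthierKurz}, for fixed $\delta$ these rescaled processes converge uniformly on compact intervals to the solution of (\ref{eq: DynamicalSystem3}) with the perturbed coefficients.

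The deterministic ingredient needed is the analogue of Lemma \ref{Lemma: Appendix 1}: under the inverse inequalities (\ref{eq: Moreinequalities2}) --- which make the approximating invader $(\hat X_a,\hat X_d)$ supercritical and the approximating resident $\hat Y$ subcritical --- any solution of (\ref{eq: DynamicalSystem3}) started with $y\in[\bar y-A\sqrt{\veps},\bar y+A\sqrt{\veps}]$ and $x_a+x_d\in(0,\sqrt{\veps})$, $x_d/x_a=\pi_d/\pi_a$, where $(\pi_a,\pi_d)$ is the normalised left Perron eigenvector of the mean matrix of $(\hat X_a,\hat X_d)$, converges to $(\bar x_a,\bar x_d,0)$. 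I would establish this exactly as Lemma \ref{Lemma: AppendixA convergence} (equivalently, Lemma \ref{Lemma: Appendix 1}) with the labels $(x_a,x_d)$ and $y$ interchanged: $x_a+x_d$ increases until the balance relation for $\dot y$ is attained, after which the resident component $y$ is dominated from above by a scalar/linear system whose coefficients have, by the first inequality in (\ref{eq: Moreinequalities2}), the correct (negative) sign, forcing $y\to0$ and $(x_a,x_d)\to(\bar x_a,\bar x_d)$; the sign of the $\tau$-term is immaterial here, just as in the proof of Lemma \ref{Lemma: Appendix 1}. One also needs the analogue of Lemma \ref{Lemma: AppendixA Lemma} showing that this ``good'' initial condition satisfies the inequality replacing (\ref{eq: Growth condition2}), which again follows from the eigenvector identity for $(\hat X_a,\hat X_d)$.

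Finally, to show that the good initial condition is reached with probability $1-o_\veps(1)$ within a time of order $\log\log(1/(m\veps))$, I would reproduce the chain of hitting-time lemmas behind Proposition \ref{Proposition: 4Dnegative}: the exit/entry estimate of Lemma \ref{Lemma: AppendixA 2}, the fast-growth statement following it, and the frequency-stabilisation result of Lemma \ref{Lemma: Sufficient condition2}, all of which carry over, with the single-type $Y^K$ only simplifying the resident estimates. Once the good initial condition is in place, the ODE convergence above together with Lemma \ref{Lemma: Generalized EthierKurz} puts each coupled process within an arbitrarily small neighbourhood of $(\bar x_a,\bar x_d,0)$ in finite time with probability tending to $1$; letting $\delta\to0$, and then invoking Remark \ref{Remark: Important Proposition Remark} to upgrade to the interval-valued initial condition for $X_a^K+X_d^K$, yields the claim. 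The main obstacle will be the bookkeeping in the coupling of the invader $(X_a^K,X_d^K)$: because transfer is now a gain rather than a loss for $X$, the $\tau$-contribution must be moved from the death rate (where it sits in the fast-growth lemma after Lemma \ref{Lemma: AppendixA 2}) into the birth rate of the lower-bounding bi-type branching process, with an $O(\veps^\xi)$ correction, and one must verify that this lower bound stays supercritical --- and the matching upper bound subcritical --- uniformly for small $\veps$, which is exactly what (\ref{eq: Moreinequalities2}) guarantees.
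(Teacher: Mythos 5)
Your proposal has the right ingredients but is more roundabout than the paper's argument, and one step in your outline as written would not go through. The paper simply observes that Proposition~\ref{Proposition: 3Dnegative2} is the invasion of a bi-type population $(X_a^K,X_d^K)$ carrying dormancy and \emph{gaining} from horizontal transfer against a resident that lacks dormancy --- precisely the situation of Proposition~\ref{Proposition: 4Dpositive} (there the beneficiary-invader is called $(Y_a^K,Y_d^K)$), except that the resident here is single-type; since the resident's dormancy is never used in the proof of Proposition~\ref{Proposition: 4Dpositive} or in Lemmas~\ref{Lemma: Convergence Lemma}--\ref{Lemma: Sufficient condition}, the whole chain carries over verbatim, with only the couplings for the resident needing the obvious simplification. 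You instead frame the proof as a $\tau$-sign-flip of the Proposition~\ref{Proposition: 4Dnegative}/\ref{Proposition: 3Dnegative} chain; this would also work, but the bookkeeping you flag as the ``main obstacle'' --- moving $\tau$ from the invader's death rate into its birth rate --- is automatic if one borrows from Proposition~\ref{Proposition: 4Dpositive} instead, where $\tau$ already sits in the invader's birth rate (cf.\ the coupling in the proof of Lemma~\ref{Lemma: Fast growth}).

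The one genuine slip is in your description of the deterministic step. Proving the ODE convergence ``exactly as Lemma~\ref{Lemma: AppendixA convergence} with the labels $(x_a,x_d)$ and $y$ interchanged'' does not work as stated: that lemma tracks the \emph{scalar} invader $y$ until the balance $\dot y=0$ is attained and then dominates the bi-type resident by the linear system~(\ref{eq: Approximation}); the argument exploits the invader being scalar (a single balance equation), so a label interchange does not yield a proof when, as here, the invader is the bi-type $(x_a,x_d)$ and the resident is scalar. Also, Lemma~\ref{Lemma: AppendixA convergence} and Lemma~\ref{Lemma: Appendix 1} are not ``equivalent'' --- the former uses the balance-then-dominate argument, the latter the Lyapunov-type argument citing \cite[Lemma~4.7]{blath2020invasion}. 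The correct deterministic ingredient for a bi-type dormancy-invader gaining from transfer is Lemma~\ref{Lemma: Convergence Lemma} (the $4$-dimensional version, together with the growth condition~(\ref{eq: Growth condition}) and Lemma~\ref{Lemma: RefLemma}), specialised to a single-type resident by setting the resident's dormancy parameter to zero. You do reference Lemma~\ref{Lemma: Appendix 1} and (\ref{eq: Growth condition2}) in passing, so the right idea is present, but the ``balance relation for $\dot y$'' route you sketch is not the argument that is actually available here.
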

\begin{proof}[Proof of Proposition \ref{Proposition: 3Dnegative2}]
	In this case, we are looking at the invasion of a population with dormancy which benefits from horizontal transfer. This is the same situation as in Proposition \ref{Proposition: 4Dpositive}, where the difference is only in the dormancy of the initially resident trait. As above, we have never used this aspect in the proof and hence the chain of arguments is still valid, where again the couplings with the initially resident population need to be slightly adapted.
\end{proof}

\subsection*{Acknowledgements}
JB was supported by DFG Priority Programme 1590 ``Probabilistic Structures in Evolution’’, project 1105/5-1, and by Berlin Mathematics Research Center MATH+, project EF 4-7. TP was supported by Berlin Mathematics Research Center MATH+, project EF 4-7. AT was supported by DFG Priority Programme 1590 ``Probabilistic Structures in Evolution’’.

\bibliography{Literature}

\newcommand{\etalchar}[1]{$^{#1}$}
\begin{thebibliography}{BGCKWB20}

\bibitem[Bal13]{BALTRUS2013489}
D.~A. Baltrus.
\newblock Exploring the costs of horizontal gene transfer.
\newblock {\em Trends Ecol. Evol.}, 28(8):489 -- 495, 2013.

\bibitem[BB18]{BB18}
M.~Baar and A.~Bovier.
\newblock The polymorphic evolution sequence for populations with phenotypic
  plasticity.
\newblock {\em Elect. J. Prob.}, 23(72):1--27, 2018.

\bibitem[BCF{\etalchar{+}}16]{BILLIARD201648}
S.~Billiard, P.~Collet, R.~Ferrière, S.~Méléard, and V.~C. Tran.
\newblock The effect of competition and horizontal trait inheritance on
  invasion, fixation, and polymorphism.
\newblock {\em J. Theoret. Biol.}, 411:48 -- 58, 2016.

\bibitem[BCF{\etalchar{+}}18]{BCFMT18}
S.~Billiard, P.~Collet, R.~Ferrière, S.~Méléard, and V.~C. Tran.
\newblock Stochastic dynamics for adaptation and evolution of microorganisms.
\newblock {\em J. Euro. Math. Soc., special issue for the Proceedings ECM2016},
  pages 527--552, 2018.

\bibitem[BCS19]{BCS19}
A.~Bovier, L.~Coquille, and C.~Smadi.
\newblock Crossing a fitness valley as a metastable transition in a stochastic
  population model.
\newblock {\em Ann. Appl. Prob.}, 29(6):3541–3589, 2019.

\bibitem[BEGC{\etalchar{+}}15]{Genetics}
J.~Blath, B.~Eldon, A.~González-Casanova, N.~Kurt, and M.~Wilke~Berengurer.
\newblock Genetic variability under the seed bank coalescent.
\newblock {\em Genetics}, 200:921--934, 2015.

\bibitem[Ben08]{multires}
P.~M. Bennett.
\newblock Plasmid encoded antibiotic resistance: acquisition and transfer of
  antibiotic resistance genes in bacteria.
\newblock {\em Br. J. Pharmacol.}, 153:Suppl 1: S347--57, 2008.

\bibitem[BGCKWB16]{Blath}
J.~Blath, A.~González-Casanova, N.~Kurt, and M.~Wilke~Berenguer.
\newblock A new coalescent for seed-bank models.
\newblock {\em Ann. Appl. Prob.}, 26:857--891, 04 2016.

\bibitem[BGCKWB20]{BGKW18}
J.~Blath, A.~González-Casanova, N.~Kurt, and M.~Wilke~Berenguer.
\newblock The seed bank coalescent with simultaneous switching.
\newblock {\em Elect. J. Prob.}, 25(27):21 pp., 2020.

\bibitem[BP14]{BP14}
F.~Baumdicker and P.~Pfaffelhuber.
\newblock The infinitely many genes model with horizontal gene transfer.
\newblock {\em Elect. J. Prob.}, 19(115):1--27, 2014.

\bibitem[BT20]{blath2020invasion}
J.~Blath and A.~Tóbiás.
\newblock Invasion and fixation of microbial dormancy traits under competitive
  pressure.
\newblock {\em Stoch. Process. Their Appl.}, 130(12):7363--7395, 2020.

\bibitem[BT21]{blath2020interplay}
J.~Blath and A.~Tóbiás.
\newblock The interplay of dormancy and transfer in bacterial populations:
  Invasion, fixation and coexistence regimes.
\newblock {\em Theoret. Pop. Biol.}, 139:18--49, 2021.

\bibitem[CCL{\etalchar{+}}21]{camille2019emergence}
C.~Coron, M.~Costa, F.~Laroche, H.~Leman, and C.~Smadi.
\newblock Emergence of homogamy in a two-loci stochastic population model.
\newblock {\em ALEA, Lat. Am. J. Probab. Math. Stat.}, 18(469-508), 2021.

\bibitem[Cha06]{champagnat2005microscopic}
N.~Champagnat.
\newblock A microscopic interpretation for adaptive dynamics trait substitution
  sequence models.
\newblock {\em Stoch. Process. Their Appl.}, 116:1127--1160, 2006.

\bibitem[CKS21]{coquille2020stochastic}
L.~Coquille, A.~Kraut, and C.~Smadi.
\newblock Stochastic individual-based models with power law mutation rate on a
  general finite trait space.
\newblock {\em Electron. J. Probab.}, 26:1--37, 2021.

\bibitem[CMT21]{champagnat2019stochastic}
N.~Champagnat, S.~M{\'e}l{\'e}ard, and V.~C. Tran.
\newblock Stochastic analysis of emergence of evolutionary cyclic behavior in
  population dynamics with transfer.
\newblock {\em Ann. Appl. Prob.}, 31:1820--1867, 2021.

\bibitem[Coh66]{Co66}
D.~Cohen.
\newblock Optimizing reproduction in a randomly varying environment.
\newblock {\em J. Theoret. Biol.}, 12:119--129, 1966.

\bibitem[DE97]{dupuis1997weak}
P.~Dupuis and R.~S Ellis.
\newblock {\em A weak convergence approach to the theory of large deviations}.
\newblock John Wiley \& Sons, 1997.

\bibitem[DM11]{DM11}
R.~Durrett and J.~Mayberry.
\newblock Traveling waves of selective sweeps.
\newblock {\em Ann. Appl. Prob.}, 21(2):699–744, 2011.

\bibitem[EK86]{ethier2009markov}
S.~N. Ethier and T.~G. Kurtz.
\newblock {\em Markov Processes: {C}haracterization and convergence}.
\newblock Wiley, New York, 1986.

\bibitem[FM04]{FM04}
N.~Fournier and S.~Méléard.
\newblock A microscopic probabilistic description of a locally regulated
  population and macroscopic approximations.
\newblock {\em Ann. Appl. Probab.}, 14(4):1880--1919, 2004.

\bibitem[FW98]{freidlin1998random}
M.~I. Freidlin and A.~D. Wentzell.
\newblock {\em Random Perturbations of Dynamical Systems}.
\newblock Springer, second edition, 1998.

\bibitem[GB03]{10.2307/1428501}
H.-O. Georgii and E.~Baake.
\newblock Supercritical multitype branching processes: The ancestral types of
  typical individuals.
\newblock {\em Adv. Appl. Prob.}, 35(4):1090--1110, 2003.

\bibitem[KKL01]{kaj_krone_lascoux_2001}
I.~Kaj, S.~M. Krone, and M.~Lascoux.
\newblock Coalescent theory for seed bank models.
\newblock {\em J. Appl. Prob.}, 38(2):285–300, 2001.

\bibitem[KL05]{KL05}
E.~Kussell and S.~Leibler.
\newblock Phenotypic diversity, population growth, and information in
  fluctuating environments.
\newblock {\em Science}, 309:2075--2078, 2005.

\bibitem[KW12]{KW12}
E.~V. Koonin and Y.~I. Wolf.
\newblock Evolution of microbes and viruses: a paradigm shift in evolutionary
  biology?
\newblock {\em Front. Cell. Inf. Microbiol.}, 2(119), 2012.

\bibitem[LdHWBB21]{LdHWB20}
J.~T. Lennon, F.~T.~W den Hollander, M.~Wilke~Berenguer, and J.~Blath.
\newblock Principles of seed banks: complexity emerging from dormancy.
\newblock {\em Nat. Commun.}, 12:1--16, 2021.

\bibitem[Lew10]{Le10}
K.~Lewis.
\newblock Persister cells.
\newblock {\em Ann. Rev. Microbiol.}, 64:357--372, 2010.

\bibitem[LFL17]{LFL17}
K.~J. Locey, M.~C. Fisk, and J.~T. Lennon.
\newblock Microscale insight into microbial seed banks.
\newblock {\em Front. Microbiol.}, 7:2040, 2017.

\bibitem[LJ11]{LJ11}
J.~T. Lennon and S.~E. Jones.
\newblock Microbial seed banks: the ecological and evolutionary implications of
  dormancy.
\newblock {\em Nat. Rev. Microbiol.}, 9:119--130, 2011.

\bibitem[LT46]{LT46}
J.~Lederberg and E.~L. Tatum.
\newblock Gene recombination in escherichia coli.
\newblock {\em Nature}, 158:558, 1946.

\bibitem[SL77]{SL77}
F.M. Stewart and B.R. Levin.
\newblock The population biology of bacterial plasmids: A priori conditions for
  the existence of conjugationally transmitted factors.
\newblock {\em Genetics}, 2(87), 1977.

\bibitem[Sma17]{Charline2017}
C.~Smadi.
\newblock The effect of recurrent mutations on genetic diversity in a large
  population of varying size.
\newblock {\em Acta Applic. Math.}, 149(1):11--15, 2017.

\end{thebibliography}
\bibliographystyle{alpha}

\end{document}